\documentclass[reqno,11pt]{amsart}
\PassOptionsToPackage{svgnames}{xcolor}
\sloppy
\usepackage[object=vectorian]{pgfornament}
\usepackage{a4wide}
\usepackage{color,eucal,enumerate,mathrsfs}
\usepackage[normalem]{ulem}
\usepackage{mdwlist}
\usepackage{mathtools}
\usepackage{tikz-cd}
\usepackage{amsmath}
\usepackage{esint}
\usepackage{amssymb,epsfig,bbm}
\numberwithin{equation}{section}
\usepackage[colorlinks,citecolor=green,linkcolor=red]{hyperref}
\usepackage{cite}

\makeatletter
\@namedef{subjclassname@2020}{%
  \textup{2020} Mathematics Subject Classification}
\makeatother

\newcommand{\sectionlinetwo}[2]{
\nointerlineskip\vspace{.5\baselineskip}\hspace{\fill}
{\resizebox{0.5\linewidth}{1.2ex}
{\pgfornament[color = #1]{#2}}}
\hspace{\fill}
\par\nointerlineskip\vspace{.5\baselineskip}}

\usepackage[latin1]{inputenc}

\newcommand{\B}{\mathbb{B}}
\newcommand{\N}{\mathbb{N}}
\newcommand{\Q}{\mathbb{Q}}
\newcommand{\R}{\mathbb{R}}
\newcommand{\sfd}{{\sf d}}
\newcommand{\sfC}{{\sf C}}
\newcommand{\sfR}{{\sf R}}
\renewcommand{\d}{{\mathrm d}}
\newcommand{\la}{{\langle}}
\newcommand{\ra}{{\rangle}}
\newcommand{\eps}{\varepsilon}  
\newcommand{\nchi}{{\raise.3ex\hbox{$\chi$}}}
\newcommand{\mto}{\twoheadrightarrow}
\newcommand{\limi}{\varliminf}
\newcommand{\lims}{\varlimsup}
\newcommand{\fr}{\penalty-20\null\hfill$\blacksquare$}
\newcommand{\X}{{\rm X}}
\newcommand{\Y}{{\rm Y}}
\newcommand{\nnorm}{\boldsymbol{\sf n}}
\newcommand{\mm}{\mathfrak m}

\newtheorem{theorem}{Theorem}[section]
\newtheorem{corollary}[theorem]{Corollary}
\newtheorem{lemma}[theorem]{Lemma}
\newtheorem{proposition}[theorem]{Proposition}
\newtheorem{problem}[theorem]{Problem}

\newtheorem{definition}[theorem]{Definition}

\newtheorem{example}[theorem]{Example}
\newtheorem{remark}[theorem]{Remark}
\linespread{1.15}
\setcounter{tocdepth}{2}
\title{Representation theorems for normed modules}

\author{Simone Di Marino}
\address{Universit\`a di Genova (DIMA), MaLGa, Via Dodecaneso 35, 16146 Genova, Italy}
\email{simone.dimarino@unige.it}

\author{Danka Lu\v{c}i\'{c}}
\address{Universit\`{a} di Pisa, Dipartimento di Matematica,
Largo Bruno Pontecorvo 5, 56127 Pisa, Italy}
\email{danka.lucic@dm.unipi.it}

\author{Enrico Pasqualetto}
\address{Scuola Normale Superiore, Piazza dei Cavalieri 7,
56126 Pisa, Italy}
\email{enrico.pasqualetto@sns.it}
\begin{document}
\date{\today}
\keywords{Normed module, measurable Banach bundle, lifting theory,
Serre--Swan theorem}
\subjclass[2020]{53C23, 28A51, 46G15, 13C05, 18F15, 30L05}
\begin{abstract}
In this paper we study the structure theory of normed modules,
which have been introduced by Gigli. The aim is twofold: to
extend von Neumann's theory of liftings to the framework of
normed modules, thus providing a notion of precise representative
of their elements; to prove that each separable normed module can
be represented as the space of sections of a measurable Banach bundle.
By combining our representation result with Gigli's differential structure,
we eventually show that every metric measure space (whose Sobolev space
is separable) is associated with a cotangent bundle in a canonical way.
\end{abstract}
\maketitle
\tableofcontents
\section{Introduction}
\addtocontents{toc}{\protect\setcounter{tocdepth}{1}}
In recent years, a great deal of attention has been devoted to
the study of weakly differentiable objects on abstract metric measure
spaces. In this regard, an important contribution is represented by
N.\ Gigli's book \cite{Gigli14}, where he proposed a first-order
differential calculus tailored to this framework. A key role was played
by the notion of \emph{normed \(L^0\)-module}, which has been
subsequently refined in \cite{Gigli17}; such notion is a variant
of a similar concept introduced by N.\ Weaver in \cite{Weaver01},
who was in turn inspired by J.-L.\ Sauvageot's papers
\cite{Sauvageot89,Sauvageot90}. A strictly related notion is that
of `randomly normed space', which was extensively studied in \cite{HLR91}.
\bigskip

Let \((\X,\sfd,\mm)\) be a metric measure space. We denote by
\(L^0(\mm)\) the commutative ring of real-valued Borel functions
on \(\X\), quotiented up to \(\mm\)-a.e.\ equality. Then a
\emph{normed \(L^0(\mm)\)-module} is an algebraic \(L^0(\mm)\)-module
\(\mathscr M\) equipped with a \emph{pointwise norm} operator
\(|\cdot|\colon\mathscr M\to L^0(\mm)\), which induces a complete
distance \(\sfd_{\mathscr M}\); we refer to Definition
\ref{def:normed_L0_mod} for the precise definition.

Roughly speaking, a normed \(L^0(\mm)\)-module can be thought of as
the space of measurable sections (up to \(\mm\)-a.e.\ equality) of
some notion of measurable Banach bundle. Nevertheless, this interpretation
might be not entirely correct, since it is currently unknown whether
every normed \(L^0(\mm)\)-module actually admits this sort of
representation. The main purpose of the present paper is, in fact,
to show that all those normed \(L^0(\mm)\)-modules \(\mathscr M\)
for which the distance \(\sfd_{\mathscr M}\) is separable can be
written as spaces of sections of a separable Banach bundle.
The result generalises a previous theorem, obtained by the second
and third named authors in \cite{LP18}, for normed \(L^0(\mm)\)-modules
that are `locally finitely-generated', in a suitable sense.
\bigskip

We now pass to a more detailed description of the contents of
this manuscript. We point out that, even though in the rest of
this introduction we will just focus (for simplicity) on the case
of a metric measure space, most of the results can be formulated
and proven in the more general framework of \(\sigma\)-finite measure
spaces, as we will see later on in the paper.
\subsection*{Liftings of normed modules}
A propotypical example of normed \(L^0(\mm)\)-module is the space
\(L^0(\mm)\) itself, which is generated by the function constantly equal
to \(1\). Its key feature is that it is possible to `take
precise representatives' of its elements, by just considering Borel
versions. A similar property is -- a priori, at least -- not shared
by all normed \(L^0(\mm)\)-modules, which are intrinsically defined
in the \(\mm\)-a.e.\ sense. This non-trivial issue needs to be addressed
in order to be able to provide a representation of (separable) normed
\(L^0(\mm)\)-modules as spaces of sections of measurable Banach bundles,
since the latter certainly have this sort of property. The whole
\S\ref{s:lift_norm_mod} is dedicated to achieve such a goal, as we are
now going to describe.
\bigskip

We denote by \(\mathcal L^\infty(\Sigma)\) the space of bounded,
\(\Sigma\)-measurable, real-valued functions on \(\X\), where
\(\Sigma\) stands for the completion of the Borel \(\sigma\)-algebra
\(\mathscr B(\X)\). Then there exist linear continuous mappings
\(\mathcal L\colon L^\infty(\mm)\to\mathcal L^\infty(\Sigma)\),
called \emph{liftings}, which preserve products and select
\(\mm\)-a.e.\ representatives; this is the statement of a highly
non-trivial result by von Neumann--Maharam
(cf.\ \S\ref{ss:liftings_fcs} below). A natural question arises:
\begin{equation}\label{eq:lift_quest}
\text{Can we generalise von Neumann--Maharam's theorem to normed modules?}
\end{equation}
For technical reasons -- namely, due to the fact
that von Neumann--Maharam's liftings cannot be defined on \(L^0(\mm)\),
according to \cite{SMM02} -- we need to work with normed modules over
\(L^\infty(\mm)\) and \(\mathcal L^\infty(\Sigma)\). The former have
been introduced in \cite{Gigli14}, but essentially never used
nor studied; we will investigate their properties in
\S\ref{ss:norm_Linftym_mod}. The latter will be introduced and
studied in \S\ref{ss:norm_LinftySigma_mod}.
With these tools at our disposal, we will prove (in Theorem
\ref{thm:lifting_mod}) that, given any normed \(L^\infty(\mm)\)-module
\(\mathscr M\), there exist a normed \(\mathcal L^\infty(\Sigma)\)-module
\(\bar{\mathscr M}\) and a \emph{lifting} map
\(\mathscr L\colon\mathscr M\to\bar{\mathscr M}\).
(Notice that the space \(\bar{\mathscr M}\) is not given a priori, but its
existence is part of the statement.) Given that the elements of
\(\bar{\mathscr M}\) are `everywhere defined', the above result shows
that it is possible to select precise representatives of the elements
of \(\mathscr M\), thus providing a positive answer to the question
\eqref{eq:lift_quest}; the fact of working with normed
\(L^\infty(\mm)\)-modules rather than normed \(L^0(\mm)\)-modules is
harmless, thanks to Lemma \ref{lem:C=inverse_R}. Observe that liftings
exist on \emph{every} normed \(L^\infty(\mm)\)-module, not just on the ones induced by a
separable \(L^0(\mm)\)-module.
Moreover, given any \(x\in\X\), we might consider
the \emph{fiber}
\(\bar{\mathscr M}_x\coloneqq\nchi_{\{x\}}\cdot\bar{\mathscr M}\) of \(\bar{\mathscr M}\) at \(x\),
which turns out to be a Banach space. The properties of these fibers --
that we will study in \S\ref{ss:fibers} -- play an important
role in \S\ref{s:sep_Ban_bundle}, in the proof of the representation
theorem for separable \(L^0(\mm)\)-modules.
\subsection*{Representation theorems for normed modules}
First of all, we propose in \S\ref{ss:def_sep_Ban_bundle} a notion
of separable Banach bundle: if \(\B\) is a given universal separable Banach space -- \emph{i.e.}, wherein all separable Banach spaces can be
embedded linearly and isometrically -- then we define the
\emph{separable Banach \(\B\)-bundles} over \(\X\) as those
weakly measurable (multi-valued) mappings that associate to any
\(x\in\X\) a closed linear subspace of \(\B\); see Definition
\ref{def:sep_BBbundle}.

Given a separable Banach \(\B\)-bundle \(\textbf E\),
there is a natural way to define the space \(\Gamma({\textbf E})\)
of its \(L^0(\mm)\)-sections; see Definition \ref{def:sect_sep_Bb}
and \eqref{eq:def_sect_E}. Moreover, it is straightforward to check
that the space \(\Gamma({\textbf E})\) is a separable normed
\(L^0(\mm)\)-module, cf.\ Remark \ref{rmk:bar_Gamma_mod} and
Lemma \ref{lem:Gamma(E)_separable}.
\bigskip

On the other hand, the difficult task is actually to prove that every
separable normed \(L^0(\mm)\)-module \(\mathscr M\) is isomorphic to
the space of measurable sections \(\Gamma({\textbf E})\) of some separable
Banach \(\B\)-bundle \(\textbf E\) over \(\X\). Let us briefly outline
the strategy that we will adopt:
\begin{itemize}
\item Consider the space \({\sf R}(\mathscr M)\) of all bounded
elements of \(\mathscr M\), which is a normed \(L^\infty(\mm)\)-module.
Fix a sequence \((v_n)_n\subseteq{\sf R}(\mathscr M)\) generating
\(\mathscr M\) and a lifting
\(\mathscr L\colon{\sf R}(\mathscr M)\to\bar{\mathscr M}\).
\item For any \(x\in\X\) and \(n\in\N\), we can evaluate
\(\mathscr L(v_n)\) at \(x\), thus obtaining an element
\(\mathscr L(v_n)_x\) of \(\bar{\mathscr M}_x\). The closure \(E(x)\)
of \(\big\{\mathscr L(v_n)_x\,:\,n\in\N\big\}\) is a Banach subspace
of \(\bar{\mathscr M}_x\).
\item The resulting family \(\big\{E(x)\big\}_{x\in\X}\) is a
\emph{measurable collection of separable Banach spaces}; namely,
even if the spaces \(E(x)\) live in different fibers \(\bar{\mathscr M}_x\),
they depend on \(x\) in a measurable way, in a
weak sense. This working definition is given in Definition
\ref{def:meas_coll_Banach}.
\item The key step is to embed all the spaces \(E(x)\) in the
ambient space \(\B\) in a measurable fashion, thus obtaining a separable
Banach \(\B\)-bundle \(\textbf E\). This technical result, achieved
in Proposition \ref{thm:meas_family_embed} and Corollary
\ref{cor:img_Ix_bundle}, follows from a careful analysis of
Banach--Mazur theorem (cf.\ \S\ref{ss:embeddings}),
which states that \(C([0,1])\) is a universal separable Banach space.
\item The only remaining fact to check is that
\(\Gamma({\mathbf E})\) is isomorphic to \(\mathscr M\); cf.\ Theorem
\ref{thm:representation}.
\end{itemize}
Once we have proven that each separable normed \(L^0(\mm)\)-module is
the space of sections of some separable Banach \(\B\)-bundle, it is
natural to investigate the relation between bundles and modules
at the level of categories. It turns out that the functor associating
to any bundle its module of sections is an equivalence of categories:
in analogy with the terminology used in \cite{LP18}, we refer to this
result as the \emph{Serre--Swan theorem}; see Theorem \ref{thm:Serre-Swan}.
In \S\ref{s:pullback_bundle}, we will introduce the concept of
\emph{pullback bundle} and investigate its relation with the
pullback of a normed module, a notion extensively used
in the field of nonsmooth differential geometry.
\bigskip

The duality bundle-module described above resembles -- and somehow
extends -- the theory of direct integrals of Hilbert spaces
\cite{Takesaki79}. However, it is worth pointing out that our results
provide a full correspondence between the `horizontal' notion of normed
module and the `(purely) vertical' notion of Banach bundle, in the
sense that we do not need to require the existence of a countable dense
family of sections in order to obtain the Serre--Swan theorem.

At a late stage of development of the present paper, we discovered
that a comprehensive theory of measurable Banach bundles has been
thoroughly studied by A.E.\ Gutman (see, \emph{e.g.}, the paper
\cite{Gutman93}). It would be definitely interesting -- but outside
the scopes of this manuscript -- to investigate the relations between
our approach and Gutman's one.
\bigskip

Finally, we will propose in \S\ref{s:repr_via_DL} an alternative
(but less descriptive) method to represent separable normed
\(L^0(\mm)\)-modules as spaces of sections of some bundle.
More precisely, the idea is the following: since separable
normed \(L^0(\mm)\)-modules can be approximated by finitely-generated
ones, a representation theorem can be achieved by
using the variant of Serre--Swan theorem for finitely-generated
modules that has been proven in \cite{LP18}.
See Theorem \ref{thm:Gamma_surj_obj} below.
\subsection*{A notion of cotangent bundle on metric measure spaces}
As already mentioned at the beginning of this introduction,
the interest towards the theory of normed modules was mainly
motivated by the development of a significant measure-theoretical
tensor calculus on metric measure spaces. Although in this paper
we just focus on `abstract' normed modules, let us spend a
few words about some possible applications of our results.
\bigskip

Several (essentially) equivalent concepts of Sobolev space
on \((\X,\sfd,\mm)\) were introduced and studied in
the last two decades, cf.\ \cite{Cheeger00,Shanmugalingam00,
AmbrosioGigliSavare11,Ambrosio-DiMarino14}. A common point of
all these approaches is that each Sobolev function \(f\) is
associated with a minimal object \(|Df|\), which behaves more
like the modulus of some weak differential of \(f\) than the
differential itself. Due to this reason, Gigli proposed the
theory of normed modules with the aim of providing a linear
structure underlying the Sobolev calculus; in a few words,
the idea was to define the differential \(\d f\)
rather than the modulus of the differential. In this regard,
the key object is the so-called \emph{cotangent module}
\(L^0({\rm T}^*\X)\), which can be thought of as the space of
measurable \(1\)-forms and contains the differentials
of all Sobolev functions. The module \(L^0({\rm T}^*\X)\) has a rich
and flexible functional-analytic structure, whence it plays a
central role in many works (\emph{e.g.}, \cite{GPS18,GT20}).
\bigskip

In \S\ref{s:cotg_bundle} we propose an alternative viewpoint
on the differential structure of a vast class of metric measure
spaces (\emph{i.e.}, those having separable Sobolev space, which
is a quite mild assumption). By combining our Serre--Swan
Theorem \ref{thm:Serre-Swan} with the results proven in \cite{Gigli14},
we show in Theorem \ref{thm:cotg_bundle} that \((\X,\sfd,\mm)\)
is canonically associated with a \emph{cotangent bundle} \({\rm T}^*\X\),
which does not need the language of normed modules to be formulated.
This way, at \(\mm\)-almost every point \(x\in\X\) we obtain
a weak notion of cotangent space \({\rm T}^*_x\X\), which has
a Banach space structure. However, in this manuscript we do not
investigate further the possible geometric and analytic consequences
of this more `concrete' representation of the cotangent module.
\section{Preliminaries}
\addtocontents{toc}{\protect\setcounter{tocdepth}{2}}
In this section we collect many preliminary definitions and results,
which will be needed in the sequel. The contents of \S\ref{ss:liftings_fcs},
\S\ref{ss:correspondences}, and \S\ref{ss:embeddings} are classical.
The material in \S\ref{ss:norm_L0_mod} is mostly taken from
\cite{Gigli14,Gigli17}, apart from a few technical statements.
The discussion in \S\ref{ss:norm_Linftym_mod} is new, but
strongly inspired by already known results.
\medskip

Throughout the whole paper, we will always tacitly adopt the
following convention: given any measurable space \((\X,\Sigma)\),
it holds that
\begin{equation}\label{eq:hp_singletons}
\{x\}\in\Sigma\quad\text{ for every }x\in\X.
\end{equation}
In fact, this assumption plays a role only when considering
fibers of normed \(\mathcal L^\infty(\Sigma)\)-modules
(starting from \S\ref{ss:fibers}). Notice that in the setting
of metric measure spaces (cf.\ \S\ref{s:cotg_bundle}), the
condition in \eqref{eq:hp_singletons} is always in force, since
all singletons are closed and thus Borel measurable.
\subsection{Liftings of measurable functions}\label{ss:liftings_fcs}
Let us recall the concept of `lifting of a measure space',
in the sense of \cite[Definition 341A]{Fremlin3}:
\begin{definition}[Lifting]\label{def:lifting}
Let \((\X,\Sigma,\mm)\) be a measure space. Then a map \(\ell\colon\Sigma\to\Sigma\)
is said to be a \emph{lifting} of the measure \(\mm\) provided the following
properties are satisfied:
\begin{itemize}
\item[\(\rm i)\)] \(\ell\) is a Boolean homomorphism, \emph{i.e.},
it holds that \(\ell(\emptyset)=\emptyset\), \(\ell(\X)=\X\), and
\[\begin{split}
\ell(A\Delta B)=&\ell(A)\Delta\ell(B),\\
\ell(A\cap B)=&\ell(A)\cap\ell(B),
\end{split}\quad\text{ for every }A,B\in\Sigma,
\]
\item[\(\rm ii)\)] \(\ell(N)=\emptyset\) for every \(N\in\Sigma\) such that \(\mm(N)=0\),
\item[\(\rm iii)\)] \(\mm\big(A\Delta\ell(A)\big)=0\) for every \(A\in\Sigma\).
\end{itemize}
\end{definition}

Liftings can be proven to exist in high generality.
For a proof of the following extremely deep result,
we refer to \cite[Theorem 341K]{Fremlin3}.
\begin{theorem}[von Neumann--Maharam]\label{thm:von_Neumann-Maharam}
Let \((\X,\Sigma,\mm)\) be a complete, \(\sigma\)-finite measure space
such that \(\mm(\X)>0\).
Then there exists a lifting \(\ell\colon\Sigma\to\Sigma\) of the measure \(\mm\).
\end{theorem}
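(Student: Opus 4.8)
The plan is to follow the classical route via \emph{lower densities}, of which the sought lifting is a special case. Recall that a lower density of \(\mm\) is a map \(\theta\colon\Sigma\to\Sigma\) satisfying \(\theta(\emptyset)=\emptyset\), \(\theta(\X)=\X\), the multiplicativity \(\theta(A\cap B)=\theta(A)\cap\theta(B)\), the null-set invariance \(\mm(A\Delta B)=0\Rightarrow\theta(A)=\theta(B)\), and \(\mm\big(A\Delta\theta(A)\big)=0\) for all \(A,B\in\Sigma\); a lifting is exactly a lower density that is moreover a Boolean homomorphism, i.e.\ that additionally satisfies \(\theta(\X\setminus A)=\X\setminus\theta(A)\). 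Accordingly, the argument splits into three parts: a measure-theoretic reduction, the construction of a lower density, and the promotion of a lower density to a lifting.

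First I would reduce to the case of a complete probability space. Exploiting \(\sigma\)-finiteness, write \(\X=\bigsqcup_n X_n\) as a countable disjoint union of sets of finite positive measure and normalise \(\mm\) on each piece to a probability measure; since the lifting property is insensitive to rescaling (it depends only on the null ideal and the algebra structure), a lifting \(\ell_n\) on each piece combines coordinatewise, via \(\ell(A)\coloneqq\bigsqcup_n\ell_n(A\cap X_n)\), into a lifting of \(\mm\). Hence it suffices to treat the case \(\mm(\X)=1\).

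The heart of the proof --- and the step I expect to be the main obstacle --- is the construction of a lower density on the complete probability space. The guiding idea is to imitate the Lebesgue density theorem: one wants \(\theta(A)\) to be the set of \emph{density points} of \(A\). Concretely, one approximates \(\Sigma\) by an increasing net of finite sub-\(\sigma\)-algebras, forms the associated conditional expectations \(\mathbb{E}[\chi_A\mid\cdot]\), and reads off \(\theta(A)\) from the locus where this martingale tends to \(1\). Turning this into a rigorous definition for an \emph{arbitrary} measure algebra, rather than for \([0,1]\) with Lebesgue measure, requires serious machinery: martingale convergence together with a structural input, namely Maharam's classification reducing the measure algebra to a direct sum of homogeneous pieces, each isomorphic to that of a product \(\{0,1\}^\kappa\) whose coordinate filtration supplies the martingale needed for the density argument. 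This is precisely the deep content packaged in \cite[Theorem 341K]{Fremlin3}, which I would invoke rather than reprove from scratch.

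Finally, I would upgrade the lower density \(\theta\) to a genuine lifting by a Zorn's lemma argument. One orders the family of lower densities \(\phi\) dominating \(\theta\) (in the sense \(\theta(A)\subseteq\phi(A)\) for all \(A\)) by pointwise inclusion, verifies with a short measure-theoretic argument that chains admit upper bounds, and extracts a maximal element \(\phi\). The decisive point is that maximality forces complementation: if some point \(x\) lay outside \(\phi(A)\cup\phi(\X\setminus A)\) for a given \(A\), one could adjoin \(x\) to the appropriate values of \(\phi\) and thereby strictly enlarge it while preserving the lower-density axioms, contradicting maximality. Hence \(\phi(\X\setminus A)=\X\setminus\phi(A)\) for every \(A\), so \(\phi\) is a Boolean homomorphism, and the resulting map is the desired lifting \(\ell\).
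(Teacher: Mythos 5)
The first thing to note is that the paper contains no proof of this statement at all: it is quoted as a deep known result, with the proof deferred wholesale to \cite[Theorem 341K]{Fremlin3}. Your proposal is a correct reconstruction of the classical architecture behind that result -- reduction to a complete probability space, construction of a lower density, and promotion of a maximal lower density to a lifting -- and in that sense it is strictly more informative than the paper's own treatment. Your step 1 is fine (the coordinatewise combination \(\ell(A)=\bigsqcup_n\ell_n(A\cap X_n)\) is easily checked to be a lifting), and your step 3 is the standard argument, with two details worth making explicit: measurability of the pointwise union along an uncountable chain is not automatic but is rescued by completeness, since \(\theta(A)\subseteq\phi(A)\subseteq\X\setminus\theta(\X\setminus A)\) and both bounds differ from \(A\) by a null set; and ``adjoining \(x\) to the appropriate values'' really means a filter-extension argument -- for \(F_x=\{[B]:x\in\phi(B)\}\) in the measure algebra, multiplicativity forces at least one of \([A]\), \([\X\setminus A]\) to be compatible with \(F_x\), and one adds \(x\) to \(\phi(B)\) exactly for \(B\) in the generated filter. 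The one genuine flaw is the citation in your middle step: Fremlin's Theorem 341K \emph{is} the full lifting theorem (for complete strictly localizable spaces, which covers the complete \(\sigma\)-finite case), so invoking it there already yields the conclusion outright and renders your first and third steps logically redundant. What your outline actually needs at that point is the existence of a lower density on a complete probability space -- Theorem 341F in the same chapter of \cite{Fremlin3}, proved by transfinite induction over an increasing family of \(\sigma\)-subalgebras together with martingale convergence (Maharam's homogeneity classification is an alternative route, not a prerequisite; the lower-density-to-lifting step is 341J there). With that citation corrected, your sketch becomes an honest proof outline rather than a restatement of the theorem it invokes.
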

\begin{remark}{\rm
We point out that Theorem \ref{thm:von_Neumann-Maharam} strongly
relies upon the Axiom of Choice. Consequently, all our results
concerning liftings of normed modules will rely upon the Axiom of
Choice as well. Nevertheless, in the study of separable normed
modules, the usage of liftings (thus, of the Axiom of Choice)
can be avoided; see Remark \ref{rmk:repr_thm_no_AC} for the details.
\fr}\end{remark}

Given any measure space \((\X,\Sigma,\mm)\), let us denote by \(\mathcal L^\infty(\Sigma)\)
the space of all bounded measurable functions
\(\bar f\colon\X\to\R\), which is a vector space and a commutative ring with respect to the
natural pointwise operations. It turns out that \(\mathcal L^\infty(\Sigma)\) is a Banach
space and a topological ring when endowed with the norm
\({\|\bar f\|}_{\mathcal L^\infty(\Sigma)}\coloneqq\sup_\X|\bar f|\). 
Consider the equivalence relation on \(\mathcal L^\infty(\Sigma)\) given by
\(\mm\)-a.e.\ equality: for any \(\bar f,\bar g\in\mathcal L^\infty(\Sigma)\), we set
\[
\bar f\sim_\mm\bar g\quad\Longleftrightarrow
\quad\bar f(x)=\bar g(x)\,\text{ for }\mm\text{-a.e.\ }x\in\X.
\]
Then we denote by \(L^\infty(\mm)\) the quotient space
\(\mathcal L^\infty(\Sigma)/\sim_\mm\). It holds that \(L^\infty(\mm)\) is a Banach
space and a topological ring when endowed with the quotient norm
\({\|f\|}_{L^\infty(\mm)}\coloneqq{\rm ess\,sup}_\X|f|\).

We denote by \([\,\cdot\,]_\mm\colon\mathcal L^\infty(\Sigma)\to L^\infty(\mm)\)
the projection map, which turns out to be a linear and continuous operator.
Let us define the family of simple functions
\(\overline{{\sf Sf}}(\Sigma)\subseteq\mathcal L^\infty(\Sigma)\) as
\[
\overline{{\sf Sf}}(\Sigma)\coloneqq\bigg\{\sum_{i=1}^n a_i\,\nchi_{A_i}\;\bigg|
\;n\in\N,\,(a_i)_{i=1}^n\subseteq\R,\,(A_i)_{i=1}^n\subseteq\Sigma
\text{ partition of }\X\bigg\}.
\]
Here, \(\nchi_A\) stands for the characteristic function of the
set \(A\subseteq\X\), namely, we set \(\nchi_A(x)\coloneqq 1\)
for every \(x\in A\) and \(\nchi_A(x)\coloneqq 0\) for every
\(x\in\X\setminus A\). It can be readily proved that
\(\overline{{\sf Sf}}(\Sigma)\) is a dense vector subspace and
subring of \(\mathcal L^\infty(\Sigma)\).

We define \({\sf Sf}(\mm)\subseteq L^\infty(\mm)\) as the image
of \(\overline{\sf Sf}(\Sigma)\) under the map \([\,\cdot\,]_\mm\), namely
\begin{equation}\label{eq:def_Sf}
{\sf Sf}(\mm)\coloneqq\bigg\{\sum_{i=1}^n a_i\,[\nchi_{A_i}]_\mm\;\bigg|
\;n\in\N,\,(a_i)_{i=1}^n\subseteq\R,\,(A_i)_{i=1}^n\subseteq\Sigma
\text{ partition of }\X\bigg\}.
\end{equation}
Therefore, \({\sf Sf}(\mm)\) is a dense vector subspace and subring
of \(L^\infty(\mm)\).
\bigskip

As we are going to prove, any lifting of a measure space can be promoted
to a `lifting of measurable functions'. This statement is taken from
\cite[Exercise 341X(e)]{Fremlin3}:
\begin{theorem}[Lifting of measurable functions]\label{thm:lifting_Linfty}
Let \((\X,\Sigma,\mm)\) be a measure space and
let \(\ell\colon\Sigma\to\Sigma\) be a lifting of the measure \(\mm\).
Then there exists a unique linear and continuous operator
\(\mathcal L\colon L^\infty(\mm)\to\mathcal L^\infty(\Sigma)\) such that
\begin{equation}\label{eq:def_ell}
\mathcal L\big([\nchi_A]_\mm\big)=\nchi_{\ell(A)}\quad\text{ for every }A\in\Sigma.
\end{equation}
Moreover, the following properties hold:
\begin{itemize}
\item[\(\rm i)\)] \(\mathcal L\) is an isometry,
\emph{i.e.}, \({\big\|\mathcal L(f)\big\|}_{\mathcal L^\infty(\Sigma)}=
{\|f\|}_{L^\infty(\mm)}\) for every \(f\in L^\infty(\mm)\).
\item[\(\rm ii)\)] \(\mathcal L\big([c\,]_\mm\big)=c\) for every constant \(c\in\R\).
\item[\(\rm iii)\)] \(\mathcal L\) is a right inverse of \([\,\cdot\,]_\mm\),
\emph{i.e.}, \(\big[\mathcal L(f)\big]_\mm=f\) for every \(f\in L^\infty(\mm)\).
\item[\(\rm iv)\)] \(\mathcal L(fg)=\mathcal L(f)\,\mathcal L(g)\)
for every \(f,g\in L^\infty(\mm)\).
\item[\(\rm v)\)] \(\big|\mathcal L(f)\big|=\mathcal L\big(|f|\big)\)
for every \(f\in L^\infty(\mm)\).
\item[\(\rm vi)\)] If \(f,g\in L^\infty(\mm)\) satisfy \(f\geq g\) in the
\(\mm\)-a.e.\ sense, then \(\mathcal L(f)\geq\mathcal L(g)\).
\end{itemize}
\end{theorem}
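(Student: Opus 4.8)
The plan is to build $\mathcal{L}$ first on the dense subring $\mathsf{Sf}(\mm)$ of simple functions and then extend it by continuity, exploiting that $\mathcal{L}^\infty(\Sigma)$ is complete. I would begin by promoting the Boolean homomorphism $\ell$ to an operator at the level of genuine, everywhere-defined simple functions. Since $\ell$ commutes with $\Delta$ and $\cap$, it also commutes with finite unions and complements (because $A\cup B=(A\Delta B)\Delta(A\cap B)$ and $A^c=\X\Delta A$), hence it maps partitions of $\X$ into partitions of $\X$. This lets me define $T\colon\overline{\mathsf{Sf}}(\Sigma)\to\overline{\mathsf{Sf}}(\Sigma)$ by $T\big(\sum_{i=1}^n a_i\,\nchi_{A_i}\big)\coloneqq\sum_{i=1}^n a_i\,\nchi_{\ell(A_i)}$, and the preservation of the Boolean operations makes $T$ a well-defined, unital, linear and multiplicative operator with $T(\nchi_A)=\nchi_{\ell(A)}$.

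Next I would show that $T$ descends to the quotient $\mathsf{Sf}(\mm)$. If $\phi\in\overline{\mathsf{Sf}}(\Sigma)$ vanishes $\mm$-a.e., then writing it in canonical form $\phi=\sum_k c_k\,\nchi_{C_k}$ with $c_k\neq 0$ forces $\mm(C_k)=0$, whence $\ell(C_k)=\emptyset$ by property ii) of the lifting, so $T(\phi)=0$ everywhere. By linearity of $T$ this means that $T(\phi)$ depends only on $[\phi]_\mm$, giving a well-defined linear, multiplicative operator $\mathcal{L}_0\colon\mathsf{Sf}(\mm)\to\mathcal{L}^\infty(\Sigma)$ with $\mathcal{L}_0\big([\nchi_A]_\mm\big)=\nchi_{\ell(A)}$. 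The key quantitative step is to check that $\mathcal{L}_0$ is an \emph{isometry}: for $f=\sum_i a_i\,[\nchi_{A_i}]_\mm$ with the $A_i$ forming a partition, both ${\|f\|}_{L^\infty(\mm)}$ and ${\|\mathcal{L}_0(f)\|}_{\mathcal{L}^\infty(\Sigma)}$ equal the maximum of $|a_i|$ over a suitable index set, and these index sets coincide since $\mm(A_i)>0$ holds if and only if $\ell(A_i)\neq\emptyset$ — the forward implication uses property iii) of the lifting (so that $\mm(\ell(A_i))=\mm(A_i)$), the backward one property ii).

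With $\mathcal{L}_0$ an isometry on the dense subspace $\mathsf{Sf}(\mm)$ and $\mathcal{L}^\infty(\Sigma)$ complete, the standard extension-by-density argument yields a unique linear continuous $\mathcal{L}\colon L^\infty(\mm)\to\mathcal{L}^\infty(\Sigma)$ extending $\mathcal{L}_0$; this gives existence, the defining identity \eqref{eq:def_ell}, and the isometry property i). Uniqueness of any operator satisfying \eqref{eq:def_ell} is then clear, since \eqref{eq:def_ell} and linearity pin it down on $\mathsf{Sf}(\mm)$, hence on all of $L^\infty(\mm)$ by continuity and density. Property ii) is immediate from $c=c\,\nchi_\X$ and $\ell(\X)=\X$; property iii) reduces to $[\nchi_{\ell(A)}]_\mm=[\nchi_A]_\mm$, which holds because $\mm(A\Delta\ell(A))=0$, and then propagates to $L^\infty(\mm)$ by continuity of $[\,\cdot\,]_\mm$.

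For the remaining properties I would pass from simple functions to the general case by continuity. Property iv) follows because multiplication is continuous in the topological rings $L^\infty(\mm)$ and $\mathcal{L}^\infty(\Sigma)$ and it holds on $\mathsf{Sf}(\mm)$ thanks to the multiplicativity of $\mathcal{L}_0$; property v) holds on simple functions since the sets $\ell(A_i)$ are pairwise disjoint, and extends by continuity of $f\mapsto|f|$; and property vi) follows from v) after reducing, via linearity, to the statement that $h\geq 0$ $\mm$-a.e.\ implies $\mathcal{L}(h)=\mathcal{L}\big(|h|\big)=\big|\mathcal{L}(h)\big|\geq 0$. The only genuinely delicate points are the well-definedness of $\mathcal{L}_0$ on the quotient and the isometry estimate, both of which hinge on properties ii) and iii) of the lifting; everything else is a routine density-and-continuity argument.
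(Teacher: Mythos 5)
Your proposal is correct and follows essentially the same route as the paper: define the operator on simple functions via \(\ell\), verify well-posedness and the isometry using that \(\ell\) is a Boolean homomorphism annihilating \(\mm\)-null sets (so that \(\ell(A)\neq\emptyset\) if and only if \(\mm(A)>0\)), extend by density to a linear isometry, and propagate the algebraic and order properties by continuity. The only, harmless, variations are organizational: you factor the well-posedness through an everywhere-defined operator \(T\) on \(\overline{{\sf Sf}}(\Sigma)\) together with a kernel argument, and you deduce vi) directly from v) via \(h=|h|\) when \(h\geq 0\) \(\mm\)-a.e., which is slightly slicker than the paper's approximation by non-negative simple functions.
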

\begin{proof} First of all, we are forced to define the operator
\(\mathcal L\colon{\sf Sf}(\mm)\to\overline{\sf Sf}(\Sigma)\) as follows:
\begin{equation}\label{eq:lifting_L0_aux1}
\mathcal L\bigg(\sum_{i=1}^n a_i\,[\nchi_{A_i}]_\mm\bigg)
\coloneqq\sum_{i=1}^n a_i\,\nchi_{\ell(A_i)}
\quad\text{ for every }\sum_{i=1}^n a_i\,[\nchi_{A_i}]_\mm\in{\sf Sf}(\mm).
\end{equation}
We need to prove the well-posedness of \(\mathcal L\).
It thus remains to show that
\begin{equation}\label{eq:lifting_L0_aux2}
\sum_{i=1}^n a_i\,[\nchi_{A_i}]_\mm=\sum_{j=1}^m b_j\,[\nchi_{B_j}]_\mm\quad
\Longrightarrow\quad\sum_{i=1}^n a_i\,\nchi_{\ell(A_i)}=
\sum_{j=1}^m b_j\,\nchi_{\ell(B_j)}.
\end{equation}
The left-hand side of \eqref{eq:lifting_L0_aux2} is equivalent to saying that
\(\mm(A_i\cap B_j)=0\) for all \(i,j\) with \(a_i\neq b_j\); this implies that
\(\ell(A_i)\cap\ell(B_j)=\ell(A_i\cap B_j)=\emptyset\) for any such \(i,j\),
which is equivalent to the right-hand side, whence \eqref{eq:lifting_L0_aux2}
is proven. Moreover, it can be readily checked that \(\mathcal L\) is linear.
Given any simple function \(f=\sum_{i=1}^n a_i\,[\nchi_{A_i}]_\mm\in{\sf Sf}(\mm)\),
we have that \(|f|=\sum_{i=1}^n|a_i|\,[\nchi_{A_i}]_\mm\) \(\mm\)-a.e.\ and
\(\big|\mathcal L(f)\big|=\sum_{i=1}^n|a_i|\,\nchi_{\ell(A_i)}\), thus accordingly
\[
{\big\|\mathcal L(f)\big\|}_{\mathcal L^\infty(\Sigma)}
=\sup_{\X}\big|\mathcal L(f)\big|
=\underset{\substack{i=1,\ldots,n: \\ \ell(A_i)\neq \emptyset}}\max|a_i|
=\underset{\substack{i=1,\ldots,n: \\ \mm(A_i)>0}}\max|a_i|
=\underset{\X}{\rm ess\,sup}\,|f|={\|f\|}_{L^\infty(\mm)}.
\]
Then the map \(\mathcal L\) is an isometry from
\(\big({\sf Sf}(\mm),{\|\cdot\|}_{L^\infty(\mm)}\big)\)
to \(\big(\overline{\sf Sf}(\Sigma),{\|\cdot\|}_{\mathcal L^\infty(\Sigma)}\big)\),
so it can be uniquely extended to a linear isometry
\(\mathcal L\colon L^\infty(\mm)\to\mathcal L^\infty(\Sigma)\).
This proves existence, uniqueness, and item i). Item ii) immediately follows from
the \(1\)-homogeneity of \(\mathcal L\) and \eqref{eq:def_ell}.
To prove item iii), it suffices to observe that \(\big[\mathcal L(f)\big]_\mm=f\)
holds for every \(f\in{\sf Sf}(\mm)\) by construction, thus also for any
\(f\in L^\infty(\mm)\) by continuity of \([\,\cdot\,]_\mm\) and \(\mathcal L\).
Item iv) can be proved for \(f,g\in{\sf Sf}(\mm)\) by direct computation:
if \(f=\sum_{i=1}^n a_i\,[\nchi_{A_i}]_\mm\) and \(g=\sum_{j=1}^m b_j\,[\nchi_{B_j}]_\mm\),
then \(fg=\sum_{i,j}a_i\,b_j\,[\nchi_{A_i\cap B_j}]_\mm\), so that accordingly
\[
\mathcal L(fg)=\sum_{i,j}a_i\,b_j\,\nchi_{\ell(A_i\cap B_j)}=
\bigg(\sum_{i=1}^n a_i\,\nchi_{\ell(A_i)}\bigg)
\bigg(\sum_{j=1}^m b_j\,\nchi_{\ell(B_j)}\bigg)=\mathcal L(f)\,\mathcal L(g).
\]
Since \(L^\infty(\mm)\) and \(\mathcal L^\infty(\Sigma)\) are topological rings,
we deduce from the density of \({\sf Sf}(\mm)\) in \(L^\infty(\mm)\)
that \(\mathcal L(fg)=\mathcal L(f)\,\mathcal L(g)\) holds for every
\(f,g\in L^\infty(\mm)\), thus proving item iv) in full generality.
Item v) can be readily checked for \(f\in{\sf Sf}(\mm)\), whence the
general case follows from an approximation argument. Finally, let us
prove item vi). By linearity of \(\mathcal L\), it suffices to
consider \(f\in L^\infty(\mm)\) such that \(f\geq 0\) in the
\(\mm\)-a.e.\ sense. Choose any sequence \((f_n)_n\subseteq{\sf Sf}(\mm)\)
such that \(f_n\geq 0\) holds \(\mm\)-a.e.\ for each \(n\in\N\) and
\(\lim_n{\|f_n-f\|}_{L^\infty(\mm)}=0\). Given that \(\mathcal L\)
is continuous and \(\mathcal L(f_n)\geq 0\) holds for all \(n\in\N\) by construction,
we conclude that \(\mathcal L(f)\geq 0\), as
it is a uniform limit of non-negative functions. The proof
of the statement is complete.
\end{proof}
\subsection{Measurable correspondences}\label{ss:correspondences}
Aim of this subsection is to recall some basic definitions and results
concerning measurable correspondences. The whole material we are going
to discuss can be found, \emph{e.g.}, in \cite{AliprantisBorder99}.
\bigskip

Let \((\X,\Sigma)\) be a measurable space and let
\((\Y,\sfd_\Y)\) be a separable metric space.
Then any map \(\varphi\colon\X\to 2^\Y\) is said to be
a \emph{correspondence} from \(\X\) to \(\Y\) and is denoted
by \(\varphi\colon\X\mto\Y\).
\begin{definition}[Measurable correspondence]
A correspondence \(\varphi\colon\X\mto\Y\) is said to be:
\begin{itemize}
\item[\(\rm a)\)] \emph{weakly measurable}, provided
\(\big\{x\in\X\,:\,\varphi(x)\cap U\neq\emptyset\big\}\in\Sigma\)
for every \(U\subseteq\Y\) open,
\item[\(\rm b)\)] \emph{measurable}, provided
\(\big\{x\in\X\,:\,\varphi(x)\cap C\neq\emptyset\big\}\in\Sigma\)
for every \(C\subseteq\Y\) closed.
\end{itemize}
\end{definition}
Let us collect a few important properties of (weakly) measurable correspondences:
\begin{itemize}
\item[\(\rm i)\)] Every measurable correspondence from \(\X\) to \(\Y\) is
weakly measurable. Conversely, every weakly measurable correspondence
from \(\X\) to \(\Y\) with compact values is measurable.
\item[\(\rm ii)\)] Let \(\varphi\colon\X\mto\Y\) be a
single-valued correspondence, \emph{i.e.}, for every \(x\in\X\) there exists an
element \(\bar\varphi(x)\in\Y\) such that \(\varphi(x)=\big\{\bar\varphi(x)\big\}\).
Then \(\varphi\) is a measurable correspondence if and only if
\(\bar\varphi\colon\X\to\Y\) is a measurable map.
\item[\(\rm iii)\)] Let \(\varphi,\varphi'\colon\X\mto\Y\) be two measurable
correspondences with compact values. Then the intersection correspondence
\(\varphi\cap\varphi'\colon\X\mto\Y\), defined as
\((\varphi\cap\varphi')(x)\coloneqq\varphi(x)\cap\varphi'(x)\) for
every \(x\in\X\), is measurable.
\item[\(\rm iv)\)] A correspondence \(\varphi\colon\X\mto\Y\)
with non-empty values is weakly measurable if and only if
\(\X\ni x\mapsto\sfd_\Y\big(y,\varphi(x)\big)\)
is a measurable function for every \(y\in\Y\).
\item[\(\rm v)\)] Let \(f\colon\X\times\Y\to\R\) be a Carath\'{e}odory
function, \emph{i.e.},
\[\begin{split}
f(\cdot,y)\colon\X\to\R&\quad\text{ is measurable for every }y\in\Y,\\
f(x,\cdot)\colon\Y\to\R&\quad\text{ is continuous for every }x\in\X.
\end{split}\]
Define the correspondence \(\varphi\colon\X\mto\Y\) as
\(\varphi(x)\coloneqq\big\{y\in\Y\,:\,f(x,y)=0\big\}\) for every \(x\in\X\).
If the metric space \((\Y,\sfd_\Y)\) is compact, then \(\varphi\) is a
measurable correspondence.
\item[\(\rm vi)\)] Let \(\varphi\colon\X\mto\Y\) be a weakly measurable correspondence
with closed values. Then the graph of \(\varphi\) is measurable, \emph{i.e.},
it holds that
\[
\big\{(x,y)\in\X\times\Y\;\big|\;y\in\varphi(x)\big\}\in\Sigma\otimes\mathscr B(\Y).
\]
\item[\(\rm vii)\)] \textsc{Kuratowski--Ryll-Nardzewski theorem.} Suppose that
the metric space \((\Y,\sfd_\Y)\) is complete. Let \(\varphi\colon\X\mto\Y\) be
a weakly measurable correspondence with non-empty closed values. Then \(\varphi\)
admits a \emph{measurable selector} \(s\colon\X\to\Y\), \emph{i.e.}, the map \(s\)
is measurable and satisfies \(s(x)\in\varphi(x)\) for every \(x\in\X\).
\item[\(\rm viii)\)] Let \(\varphi\colon\X\mto\Y\) be a weakly measurable
correspondence. Then its \emph{closure correspondence}
\({\rm cl}_\Y(\varphi)\colon\X\mto\Y\), which is defined
as \({\rm cl}_\Y(\varphi)(x)\coloneqq{\rm cl}_\Y\big(\varphi(x)\big)\)
for every \(x\in\X\), is weakly measurable.
\end{itemize}
Furthermore, we will also need the following standard results about preimages
and compositions of measurable correspondences:
\begin{lemma}\label{lem:preimg_corr}
Let \((\X,\Sigma)\) be a measurable space. Let \((\Y,\sfd_\Y), ({\rm Z},\sfd_{\rm Z})\)
be separable metric spaces, with \((\Y,\sfd_\Y)\) compact.
Let \(\varphi\colon\X\mto{\rm Z}\) be a measurable correspondence and
let \(\psi\colon\Y\to{\rm Z}\) be a continuous map. Define the preimage correspondence
\(\psi^{-1}(\varphi)\colon\X\mto\Y\) as
\[
\psi^{-1}(\varphi)(x)\coloneqq\psi^{-1}\big(\varphi(x)\big)\subseteq\Y
\quad\text{ for every }x\in\X.
\]
Then \(\psi^{-1}(\varphi)\) is a measurable correspondence.
\end{lemma}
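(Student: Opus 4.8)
The plan is to verify the defining property of a measurable correspondence directly: for every closed set \(C\subseteq\Y\), I must check that \(\big\{x\in\X : \psi^{-1}(\varphi)(x)\cap C\neq\emptyset\big\}\in\Sigma\). The strategy is to rewrite this set in terms of \(\varphi\) and the \(\psi\)-image of \(C\), thereby reducing the claim to the assumed measurability of \(\varphi\).

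First I would record the elementary set-theoretic identity
\[
\big\{x\in\X : \psi^{-1}(\varphi)(x)\cap C\neq\emptyset\big\}
=\big\{x\in\X : \varphi(x)\cap\psi(C)\neq\emptyset\big\}.
\]
Indeed, \(\psi^{-1}\big(\varphi(x)\big)\cap C\neq\emptyset\) means precisely that there exists \(y\in C\) with \(\psi(y)\in\varphi(x)\), which is the same as saying that \(\psi(C)\) meets \(\varphi(x)\).

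The crucial geometric point -- and the place where the compactness of \(\Y\) enters -- is that \(\psi(C)\) is a \emph{closed} subset of \({\rm Z}\). Since \(C\) is a closed subset of the compact space \(\Y\), it is itself compact; as \(\psi\) is continuous, its image \(\psi(C)\) is compact in \({\rm Z}\), and hence closed, because the metric space \(({\rm Z},\sfd_{\rm Z})\) is Hausdorff. This is the main obstacle to keep in view: without the compactness of \(\Y\), the set \(\psi(C)\) need not be closed, and the reduction below would fail.

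Finally, having established that \(\psi(C)\) is closed, I would invoke the hypothesis that \(\varphi\colon\X\mto{\rm Z}\) is a measurable correspondence: by the very definition of measurability, \(\big\{x\in\X : \varphi(x)\cap\psi(C)\neq\emptyset\big\}\in\Sigma\). Combining this with the identity above gives \(\big\{x\in\X : \psi^{-1}(\varphi)(x)\cap C\neq\emptyset\big\}\in\Sigma\) for every closed \(C\subseteq\Y\), which is exactly the measurability of \(\psi^{-1}(\varphi)\).
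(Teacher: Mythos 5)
Your proposal is correct and coincides with the paper's own argument: the same set-theoretic identity reducing the question to \(\varphi(x)\cap\psi(C)\neq\emptyset\), the same use of compactness of \(\Y\) to conclude that \(\psi(C)\) is compact (hence closed) in \({\rm Z}\), and the same appeal to the measurability of \(\varphi\). Nothing to add.
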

\begin{proof}
Let \(C\subseteq\Y\) be a closed set. In particular, \(C\) is compact,
whence \(\psi(C)\subseteq{\rm Z}\) is compact (and thus closed) by continuity
of \(\psi\). Therefore, the measurability of \(\varphi\) grants that
\[
\big\{x\in\X\;\big|\;\psi^{-1}(\varphi)(x)\cap C\neq\emptyset\big\}
=\big\{x\in\X\;\big|\;\varphi(x)\cap\psi(C)\neq\emptyset\big\}\in\Sigma.
\]
By arbitrariness of \(C\), we conclude that \(\psi^{-1}(\varphi)\) is a
measurable correspondence.
\end{proof}
\begin{lemma}\label{lem:comp_corr}
Let \((\X,\Sigma_\X)\), \((\Y,\Sigma_\Y)\) be measurable spaces and
\(({\rm Z},\sfd_{\rm Z})\) a separable metric space. Let \(u\colon\X\to\Y\)
be a measurable map and \(\varphi\colon\Y\mto{\rm Z}\) a weakly measurable
correspondence. Consider the correspondence \(\varphi\circ u\colon\X\mto{\rm Z}\),
given by
\[
(\varphi\circ u)(x)\coloneqq\varphi\big(u(x)\big)\subseteq{\rm Z}
\quad\text{ for every }x\in\X.
\]
Then \(\varphi\circ u\) is a weakly measurable correspondence.
\end{lemma}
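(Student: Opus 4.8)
The plan is to verify weak measurability directly from its defining open-set criterion, since no deeper structure (closedness of values, completeness of the target, separability beyond what is assumed) is needed here. Concretely, I would fix an arbitrary open set $U\subseteq{\rm Z}$ and show that the set
\[
\big\{x\in\X\;\big|\;(\varphi\circ u)(x)\cap U\neq\emptyset\big\}
\]
belongs to $\Sigma_\X$. By the very definition of $\varphi\circ u$, membership of $x$ in this set is equivalent to $\varphi\big(u(x)\big)\cap U\neq\emptyset$, that is, to $u(x)$ lying in the set $V_U\coloneqq\big\{y\in\Y\;\big|\;\varphi(y)\cap U\neq\emptyset\big\}$.

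The crux is therefore the elementary set identity
\[
\big\{x\in\X\;\big|\;(\varphi\circ u)(x)\cap U\neq\emptyset\big\}
=u^{-1}(V_U),
\]
which I would record explicitly. Once this is in place, the two hypotheses combine immediately: the weak measurability of $\varphi$ gives $V_U\in\Sigma_\Y$, and the measurability of the map $u\colon\X\to\Y$ then yields $u^{-1}(V_U)\in\Sigma_\X$. Since $U$ was an arbitrary open subset of ${\rm Z}$, this proves that $\varphi\circ u$ is weakly measurable.

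In truth there is no genuine obstacle to overcome in this statement: it is a pure unwinding of definitions, and the separability of $({\rm Z},\sfd_{\rm Z})$ is not even used in the argument (it is presumably retained only to keep the standing conventions of the subsection uniform). The only point deserving a word of care is making sure the preimage identity is stated for the \emph{open}-set formulation of weak measurability, so that it pairs correctly with the hypothesis on $\varphi$; had one instead tried to argue through the closed-set (strong measurability) criterion, the same preimage trick would work but would require $\varphi$ to be measurable rather than merely weakly measurable, which is not assumed. Hence I would deliberately phrase the whole proof in terms of open sets.
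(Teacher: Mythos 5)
Your proof is correct and is essentially identical to the paper's: both fix an arbitrary open set \(U\subseteq{\rm Z}\), use the preimage identity \(\{x\in\X\,:\,(\varphi\circ u)(x)\cap U\neq\emptyset\}=u^{-1}\big(\{y\in\Y\,:\,\varphi(y)\cap U\neq\emptyset\}\big)\), and combine the weak measurability of \(\varphi\) with the measurability of \(u\). Your side remarks (that separability of \({\rm Z}\) is unused, and that the open-set formulation is the right one to pair with the hypothesis) are accurate but do not change the argument.
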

\begin{proof}
Let \(U\subseteq{\rm Z}\) be an open set. Since \(\big\{y\in\Y\,:\,\varphi(y)
\cap U\neq\emptyset\big\}\in\Sigma_\Y\) and \(u\) is measurable, it holds that
\[
\big\{x\in\X\;\big|\;(\varphi\circ u)(x)\cap U\neq\emptyset\big\}
=u^{-1}\big(\big\{y\in\Y\;\big|\;\varphi(y)\cap U\neq\emptyset\big\}\big)\in\Sigma_\X.
\]
By arbitrariness of \(U\), we conclude that \(\varphi\circ u\)
is a weakly measurable correspondence.
\end{proof}
\subsection{Linear isometric embeddings of separable Banach spaces}\label{ss:embeddings}
In this section we collect some results about linear isometric embeddings
of Banach spaces.
\bigskip

Given any Banach space \(E\), we shall denote by \(B_E\) its closed unit ball
\(\big\{v\in E\,:\,\|v\|_E\leq 1\big\}\).
We use the notation \(E'\) to denote the continuous dual space of \(E\),
which is a Banach space.
We begin by recalling some classical definitions and results (which can be found,
\emph{e.g.}, in \cite{AliprantisBorder99}):
\begin{itemize}
\item[\(\rm i)\)] \textsc{Cantor set.} The \emph{Cantor set} is the
product \(\Delta\coloneqq\{0,2\}^\N\), where each factor \(\{0,2\}\)
is endowed with the discrete topology. The topology of \(\Delta\) is
induced by the distance
\[
\sfd_\Delta(a,b)\coloneqq\sum_{n=1}^\infty\frac{|a_n-b_n|}{3^n}
\quad\text{ for every }a=(a_n)_n,b=(b_n)_n\in\Delta.
\]
It holds that the Cantor set \(\Delta\) is compact. Moreover, \(\Delta\) is homeomorphic
to the closed subset \(C\coloneqq\big\{\sum_{n=1}^\infty a_n/3^n\,:\,a\in\Delta\big\}\)
of \([0,1]\) via the map \(\Delta\ni a\mapsto\sum_{n=1}^\infty a_n/3^n\in C\).
\item[\(\rm ii)\)] Let \(K\) be a non-empty closed subset of \(\Delta\).
Given any \(a\in\Delta\), there exists a unique element \(r(a)\in K\)
such that \(\sfd_\Delta\big(a,r(a)\big)=\sfd_\Delta(a,K)\). The resulting
map \(r\colon\Delta\to K\) is continuous and satisfies \(r|_K={\rm id}_K\).
We say that \(r\) is a \emph{retraction} of \(\Delta\) onto \(K\).
\item[\(\rm iii)\)] \textsc{Hilbert cube.} The \emph{Hilbert cube} is the product
topological space \(I^\infty\coloneqq[-1,1]^\N\). It is compact and its topology
is induced by the distance
\[
\sfd_{I^\infty}(\alpha,\beta)\coloneqq\sum_{k=1}^\infty\frac{|\alpha_k-\beta_k|}{2^k}
\quad\text{ for every }\alpha=(\alpha_k)_k,\beta=(\beta_k)_k\in I^\infty.
\]
Moreover, there exists a continuous surjective map \(\psi\colon\Delta\to I^\infty\).
\item[\(\rm iv)\)] Let \(E\) be a separable Banach space. Let \((v_k)_{k\in\N}\) be a fixed dense subset of \(B_E\). Then the map \(\iota\colon B_{E'}\to I^\infty\),
defined as \(\iota(\omega)\coloneqq\big(\omega[v_k]\big)_k\) for every
\(\omega\in B_{E'}\), is a homeomorphism with its image (when the domain
\(B_{E'}\) is endowed with the restricted weak\(^*\) topology).
\item[\(\rm v)\)] Let \(K\) be a compact Hausdorff topological space.
Then the space \(C(K)\) of real-valued continuous functions
on \(K\) is a separable Banach space if endowed with the norm
\[
\|g\|_{C(K)}\coloneqq\sup_{t\in K}\big|g(t)\big|
\quad\text{ for every }g\in C(K).
\]
\end{itemize}
We now recall the classical Banach--Mazur theorem \cite{BP75},
which states that any separable Banach space can be
embedded linearly and isometrically into \(C([0,1])\).
We also report the proof of this result, as the explicit construction
of such an embedding will be needed in \S\ref{s:sep_Ban_bundle}.
\begin{theorem}[Banach--Mazur]\label{thm:Banach-Mazur}
Let \(E\) be a separable Banach space. Then there exists a linear isometric
embedding \({\rm I}\colon E\to C([0,1])\).
\end{theorem}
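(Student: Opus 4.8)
The plan is to present $E$ as a closed subspace of $C([0,1])$ by composing a chain of linear isometric embeddings of spaces of continuous functions, transporting the construction from the unit ball of the dual $E'$ down to the Cantor set $\Delta$ and finally to $[0,1]$.

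First I would embed $E$ isometrically into $C(B_{E'})$, the base $B_{E'}$ being equipped with the weak$^*$ topology, via the evaluation map $J\colon E\to C(B_{E'})$, $J(v)(\omega)\coloneqq\omega[v]$. By the Banach--Alaoglu theorem $B_{E'}$ is weak$^*$-compact, and by separability of $E$ (concretely, through the homeomorphism $\iota\colon B_{E'}\to I^\infty$) it is metrizable, so that $C(B_{E'})$ is an honest space of continuous functions on a compact metric space. The isometry of $J$ is precisely the Hahn--Banach theorem, which yields $\sup_{\omega\in B_{E'}}|\omega[v]|=\|v\|_E$ for every $v\in E$.

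Next I would move the compact base space down to $\Delta$ and through it to $[0,1]$, exploiting the elementary fact that pulling back continuous functions along a continuous surjection is an isometric embedding of sup-norm spaces. Identifying $B_{E'}$ with the compact set $\iota(B_{E'})\subseteq I^\infty$, and using the continuous surjection $\psi\colon\Delta\to I^\infty$, I set $K\coloneqq\psi^{-1}\big(\iota(B_{E'})\big)$, a closed and hence compact subset of $\Delta$; the restriction $\psi|_K\colon K\to\iota(B_{E'})$ is then a continuous surjection, so $f\mapsto f\circ(\iota^{-1}\circ\psi|_K)$ embeds $C(B_{E'})$ isometrically into $C(K)$. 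Composing with $h\mapsto h\circ r$, where $r\colon\Delta\to K$ is the retraction, embeds $C(K)$ isometrically into $C(\Delta)$, the isometry being a consequence of $r|_K=\mathrm{id}_K$ (so that $r$ is onto $K$). Finally, $\Delta$ is identified with the closed subset $C\subseteq[0,1]$.

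I expect the final link -- extending a continuous function from $C$ to all of $[0,1]$ -- to be the one demanding care, since $C$ is totally disconnected whereas $[0,1]$ is connected, so no continuous retraction $[0,1]\to C$ exists and the ``pull back along a retraction'' device is no longer available. Instead I would extend each $g\in C(C)$ by affine interpolation across the countably many bounded complementary intervals of $C$: on a gap $(a,b)$ with $a,b\in C$, let the extension be the affine function joining $\big(a,g(a)\big)$ to $\big(b,g(b)\big)$. This extension is linear in $g$ and isometric, because affine interpolation never exits the range spanned by its endpoint values; the only genuine point to verify is its continuity at the points of $C$ accumulated by gaps, which follows from the uniform continuity of $g$ on the compact set $C$. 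Composing the five isometric embeddings produces the desired linear isometry $\mathrm I\colon E\to C([0,1])$.
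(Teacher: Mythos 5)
Your proposal is correct and follows essentially the same route as the paper's proof: the chain $E\to C(B_{E'})\to C(K)\to C(\Delta)\to C([0,1])$ you describe is exactly the paper's map ${\rm I}=e\circ{\rm I}'$, merely factored into its constituent isometric embeddings (evaluation via Hahn--Banach, pullback along $\iota^{-1}\circ\psi|_K$, composition with the retraction $r$, and affine interpolation across the gaps of the Cantor set). Your explicit attention to continuity of the affine extension at accumulation points of $C$ is a point the paper states without proof, but the argument is the same.
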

\begin{proof}
Let \(\iota\colon B_{E'}\to I^\infty\) be a continuous embedding  as in item iv).
Let \(\psi\colon\Delta\to I^\infty\) be a continuous surjection as in item iii).
Note that \(\psi^{-1}\big(\iota(B_{E'})\big)\subseteq\Delta\) is closed
(thus compact) by continuity of \(\psi\) and weak\(^*\) compactness of
\(B_{E'}\) (the latter is granted by Banach--Alaoglu theorem). Then consider
a retraction \(r\colon\Delta\to\psi^{-1}\big(\iota(B_{E'})\big)\) as in item ii).
We claim that the operator \({\rm I}'\colon E\to C(\Delta)\), which is defined as
\[
{\rm I}'[v](a)\coloneqq(\iota^{-1}\circ\psi\circ r)(a)[v]
\quad\text{ for every }v\in E\text{ and }a\in\Delta,
\]
is well-posed, linear, and isometric. The fact that \({\rm I}'[v]\in C(\Delta)\)
for every \(v\in E\) can be easily checked: if a sequence \((a^i)_i\subseteq\Delta\)
converges to some element \(a\in\Delta\), then \((\iota^{-1}\circ\psi\circ r)(a^i)\)
converges to \((\iota^{-1}\circ\psi\circ r)(a)\) in the weak\(^*\) topology, and
accordingly \({\rm I}'[v](a)=\lim_i{\rm I}'[v](a^i)\). Linearity of \(\rm I'\)
immediately follows from the definition. Moreover, it holds that
\[
\big\|{\rm I}'[v]\big\|_{C(\Delta)}=\sup_{a\in\Delta}\big|{\rm I}'[v](a)\big|
=\sup_{\omega\in B_{E'}}\big|\omega[v]\big|=\|v\|_E
\quad\text{ for every }v\in E,
\]
where the last equality follows from the fact that the canonical embedding
of the space \(E\) into its bidual \(E''\) is an isometric operator.
This shows that the map \(\rm I'\) is an isometry.

Finally, denote by \(h\colon C\to\Delta\) the homeomorphism described in item i)
and write \([0,1]\setminus C\) as a disjoint union
\(\bigsqcup_{i\in\N}(t_i,s_i)\). We define the map
\(e\colon C(\Delta)\to C([0,1])\) in the following way:
given any \(g\in C(\Delta)\), we set
\[
e(g)(t)\coloneqq\left\{\begin{array}{ll}
(g\circ h)(t)\\
(g\circ h)(t_i)+\frac{t-t_i}{s_i-t_i}(g\circ h)(s_i)
\end{array}\quad\begin{array}{ll}
\text{ if }t\in C,\\
\text{ if }t\in(t_i,s_i)\text{ for some }i\in\N.
\end{array}\right.
\]
It holds that \(e\) is linear and isometric. Then the map
\({\rm I}\colon E\to C([0,1])\), given by \({\rm I}\coloneqq e\circ{\rm I}'\),
is a linear isometric embedding as well. Therefore, the statement is achieved.
\end{proof}

The statement of Theorem \ref{thm:Banach-Mazur} can be
reformulated by using the following definition:
\begin{definition}[Universal separable Banach space]
A separable Banach space \(\B\) is said to be a
\emph{universal separable Banach space (up to linear isometry)}
provided for any separable Banach space \(E\) there exists a
linear isometric embedding \({\rm I}\colon E\to\B\).
\end{definition}

Therefore, Theorem \ref{thm:Banach-Mazur} reads as follows:
{\it \(C([0,1])\) is a universal separable Banach space}.
\begin{remark}{\rm
As a byproduct of the proof of Theorem \ref{thm:Banach-Mazur},
we see that \(C(\Delta)\) is a universal separable Banach space.
\fr}\end{remark}
\subsection{Normed \texorpdfstring{\(L^0(\mm)\)}{L0(m)}-modules}
\label{ss:norm_L0_mod}
Let \((\X,\Sigma,\mm)\) be a given \(\sigma\)-finite measure space.
We shall denote by \(L^0(\mm)\) the space of all equivalence classes
(up to \(\mm\)-a.e.\ equality) of measurable
functions from \(\X\) to \(\R\),
which is a vector space and a commutative ring with respect to the
natural pointwise operations.
\bigskip

Given a probability measure \(\mm'\) on \((\X,\Sigma)\) satisfying
\(\mm\ll\mm'\ll\mm\) -- for instance, pick any sequence \((A_n)_n\subseteq\Sigma\)
such that \(0<\mm(A_n)<+\infty\) for every \(n\in\N\) and
\(\X=\bigcup_{n\in\N}A_n\), and consider
the measure \(\mm'\coloneqq\sum_{n\in\N}\frac{\mm|_{A_n}}{2^n\,\mm(A_n)}\) on \(\X\)
-- we define the complete distance \(\sfd_{L^0(\mm)}\) as
\[
\sfd_{L^0(\mm)}(f,g)\coloneqq\int|f-g|\wedge 1\,\d\mm'
\quad\text{ for every }f,g\in L^0(\mm).
\]
It turns out that \(L^0(\mm)\) is a topological vector space and a topological
ring when endowed with the distance \(\sfd_{L^0(\mm)}\). The distance \(\sfd_{L^0(\mm)}\)
depends on the chosen measure \(\mm'\), but its induced topology does not.
Observe also that the space \(L^\infty(\mm)\) is \(\sfd_{L^0(\mm)}\)-dense in \(L^0(\mm)\).
\bigskip

We recall the notion of \emph{normed \(L^0(\mm)\)-module},
which has been introduced by Gigli in \cite{Gigli14}:
\begin{definition}[Normed \(L^0(\mm)\)-module]\label{def:normed_L0_mod}
Let \((\X,\Sigma,\mm)\) be a \(\sigma\)-finite measure space. Then a
\emph{normed \(L^0(\mm)\)-module} is a couple \(\big(\mathscr M,|\cdot|\big)\)
with the following properties:
\begin{itemize}
\item[\(\rm i)\)] \(\mathscr M\) is an algebraic \(L^0(\mm)\)-module.
\item[\(\rm ii)\)] The map \(|\cdot|\colon\mathscr M\to L^0(\mm)\), which
is called a \emph{pointwise norm} on \(\mathscr M\), satisfies
\begin{equation}\label{eq:ptwse_norm}\begin{split}
|v|\geq 0&\quad\text{ for every }v\in\mathscr M\text{, with equality if and only if }v=0,\\
|v+w|\leq|v|+|w|&\quad\text{ for every }v,w\in\mathscr M,\\
|f\cdot v|=|f||v|&\quad\text{ for every }f\in L^0(\mm)\text{ and }v\in\mathscr M,
\end{split}\end{equation}
where all (in)equalities are intended in the \(\mm\)-a.e.\ sense.
\item[\(\rm iii)\)] The distance \(\sfd_{\mathscr M}\) on \(\mathscr M\) associated
with \(|\cdot|\), which is defined as
\[
\sfd_{\mathscr M}(v,w)\coloneqq\sfd_{L^0(\mm)}\big(|v-w|,0\big)
\quad\text{ for every }v,w\in\mathscr M,
\]
is complete.
\end{itemize}
\end{definition}
\begin{remark}[Locality/glueing property]{\rm
Let \((\X,\Sigma,\mm)\) be a \(\sigma\)-finite measure space.
Then any normed \(L^0(\mm)\)-module \(\mathscr M\) has
the following properties:
\begin{itemize}
\item \textsc{Locality.} If \((A_n)_{n\in\N}\subseteq\Sigma\) is
a partition of \(\X\) and \(v,w\in\mathscr M\) are two elements such that
\([\nchi_{A_n}]_\mm\cdot v=[\nchi_{A_n}]_\mm\cdot w\) holds for every \(n\in\N\),
then \(v=w\).
\item \textsc{Glueing.} If \((A_n)_{n\in\N}\subseteq\Sigma\) is a
partition of \(\X\) and \((v_n)_{n\in\N}\) is any sequence in
\(\mathscr M\), then there exists \(v\in\mathscr M\) such that
\([\nchi_{A_n}]_\mm\cdot v=[\nchi_{A_n}]_\mm\cdot v_n\) for
every \(n\in\N\).
The element \(v\) -- which is uniquely determined by the
locality property -- will be denoted by
\(\sum_{n\in\N}[\nchi_{A_n}]_\mm\cdot v_n\).
It holds \(\sfd_{\mathscr M}\big(\sum_{n=1}^N[\nchi_{A_n}]_\mm
\cdot v_n,\sum_{n\in\N}[\nchi_{A_n}]_\mm\cdot v_n\big)\to 0\) as
\(N\to\infty\).
\end{itemize}
Both the locality property and the glueing property have been
proved in \cite{Gigli14}.
\fr}\end{remark}
Let us fix some useful notation about normed \(L^0(\mm)\)-modules:
\begin{itemize}
\item[\(\rm a)\)] \textsc{Morphism.} A map \(\Phi\colon\mathscr M\to\mathscr N\) between two
normed \(L^0(\mm)\)-modules \(\mathscr M\), \(\mathscr N\) is said to be
a \emph{morphism} provided it is an \(L^0(\mm)\)-linear contraction, \emph{i.e.},
\[\begin{split}
\Phi(f\cdot v+g\cdot w)=f\cdot\Phi(v)+g\cdot\Phi(w)&
\quad\text{ for all }f,g\in L^0(\mm)\text{ and }v,w\in\mathscr M,\\
\big|\Phi(v)\big|\leq|v|\;\;\;\mm\text{-a.e.}&
\quad\text{ for all }v\in\mathscr M.
\end{split}\]
This allows us to speak about the category \({\bf NMod}(\X,\Sigma,\mm)\)
of normed \(L^0(\mm)\)-modules.
\item[\(\rm b)\)] \textsc{Dual.} The \emph{dual} of \(\mathscr M\)
is defined as the space \(\mathscr M^*\) of all \(L^0(\mm)\)-linear continuous maps
from \(\mathscr M\) to \(L^0(\mm)\). It holds that \(\mathscr M^*\) is a normed
\(L^0(\mm)\)-module if endowed with the natural operations and the pointwise norm
\begin{equation}\label{eq:dual_ptwse_norm}
|\omega|\coloneqq{\rm ess\,sup\,}\big\{\omega(v)\;\big|
\;v\in\mathscr M,\,|v|\leq 1\;\mm\text{-a.e.}\big\}\in L^0(\mm)
\quad\text{ for all }\omega\in\mathscr M^*.
\end{equation}
\item[\(\rm c)\)] \textsc{Generators.} We say that a set \(\mathcal S\subseteq\mathscr M\)
\emph{generates} \(\mathscr M\) on some set \(A\in\Sigma\) provided
the smallest (algebraic) \(L^0(\mm)\)-module containing \([\nchi_A]_\mm\cdot\mathcal S\)
is dense in \([\nchi_A]_\mm\cdot\mathscr M\).
\item[\(\rm d)\)] \textsc{Linear independence.} Some elements \(v_1,\ldots,v_n\in\mathscr M\)
are said to be \emph{independent} on a set \(A\in\Sigma\) provided
for any \(f_1,\ldots,f_n\in L^0(\mm)\) one has
\(\sum_{i=1}^n[\nchi_A]_\mm\,f_i\cdot v_i=0\) if and only if
\(f_1,\ldots,f_n=0\) holds \(\mm\)-a.e.\ on \(A\).
\item[\(\rm e)\)] \textsc{Local basis.} We say that \(v_1,\ldots,v_n\in\mathscr M\)
constitute a \emph{local basis} for \(\mathscr M\) on \(A\) provided they
are independent on \(A\) and \(\{v_1,\ldots,v_n\}\) generates \(\mathscr M\) on \(A\).
\item[\(\rm f)\)] \textsc{Local dimension.} The module \(\mathscr M\) has \emph{local dimension}
equal to \(n\in\N\) on \(A\) if it admits a local basis on \(A\) made of exactly \(n\) elements.
\item[\(\rm g)\)] \textsc{Dimensional decomposition.} It holds that the normed \(L^0(\mm)\)-module
\(\mathscr M\) admits a unique \emph{dimensional decomposition}
\(\{D_n\}_{n\in\N\cup\{\infty\}}\subseteq\Sigma\), \emph{i.e.}, \(\mathscr M\) has local
dimension \(n\) on \(D_n\) for every \(n\in\N\) and is not finitely-generated
on any measurable subset of \(D_\infty\) having
positive \(\mm\)-measure. Uniqueness here is intended up to \(\mm\)-negligible sets.
\item[\(\rm h)\)] \textsc{Proper module.} A normed \(L^0(\mm)\)-module \(\mathscr M\), whose
dimensional decomposition is denoted by \(\{D_n\}_{n\in\N\cup\{\infty\}}\), is said to be
\emph{proper} provided \(\mm(D_\infty)=0\).
\end{itemize}
We refer to \cite{Gigli14,Gigli17} for a thorough discussion
about normed \(L^0(\mm)\)-modules.
\begin{definition}[Countably-generated module]
Let \((\X,\Sigma,\mm)\) be any \(\sigma\)-finite measure space.
Let \(\mathscr M\) be a normed \(L^0(\mm)\)-module. Then we say
that \(\mathscr M\) is \emph{countably-generated} provided there
exists a countable family \(\mathcal C\subseteq\mathscr M\)
that generates \(\mathscr M\) (on \(\X\)).
\end{definition}
We call \({\bf NMod}_{\rm cg}(\X,\Sigma,\mm)\) the
category of countably-generated normed \(L^0(\mm)\)-modules.
Another class of modules we are interested in is that of
\emph{separable} normed \(L^0(\mm)\)-modules, \emph{i.e.},
those normed \(L^0(\mm)\)-modules \(\mathscr M\) for which
\((\mathscr M,\sfd_{\mathscr M})\) is a separable metric space.
We denote by \({\bf NMod}_{\rm s}(\X,\Sigma,\mm)\) the category
of separable normed \(L^0(\mm)\)-modules. In the forthcoming
discussion, we investigate the relation between countably-generated
and separable modules.
\bigskip

Let \((\X,\Sigma,\mm)\) be a \(\sigma\)-finite measure space.
Given any \(A,B\in\Sigma\), we declare that \(A\sim_\mm B\) if and
only if \(\mm(A\Delta B)=0\). This way, we obtain an equivalence
relation \(\sim_\mm\) on \(\Sigma\). Given any finite measure
\(\mm'\) on \((\X,\Sigma)\) such that \(\mm\ll\mm'\ll\mm\)
(thus in particular \(\sim_{\mm'}\) and \(\sim_\mm\) coincide),
we define the distance \(\sfd_{\mm'}\) on the quotient set
\(\Sigma/\sim_\mm\) as
\[
\sfd_{\mm'}\big([A]_{\sim_\mm},[B]_{\sim_\mm}\big)\coloneqq
\mm'(A\Delta B)\quad\text{ for every }[A]_{\sim_\mm},[B]_{\sim_\mm}
\in\Sigma/\sim_\mm.
\]
Then we say that the measure space \((\X,\Sigma,\mm)\)
is \emph{separable} provided \((\Sigma/\sim_\mm,\sfd_{\mm'})\) is
a separable metric space for some finite measure \(\mm'\) on
\((\X,\Sigma)\) such that \(\mm\ll\mm'\ll\mm\).
\begin{lemma}\label{lem:equiv_meas_sep}
Let \((\X,\Sigma,\mm)\) be a given \(\sigma\)-finite measure space.
Then \((\X,\Sigma,\mm)\) is a separable measure space if and only if
\(\big(L^0(\mm),\sfd_{L^0(\mm)}\big)\) is separable.
\end{lemma}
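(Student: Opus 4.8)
The plan is to compare the metric space $(\Sigma/\sim_\mm,\sfd_{\mm'})$ with $(L^0(\mm),\sfd_{L^0(\mm)})$ through the assignment $[A]_{\sim_\mm}\mapsto[\nchi_A]_\mm$. Fix the probability measure $\mm'$ entering the definition of $\sfd_{L^0(\mm)}$. Since $|\nchi_A-\nchi_B|=\nchi_{A\Delta B}$ takes values in $\{0,1\}$, for all $A,B\in\Sigma$ we have
\[
\sfd_{L^0(\mm)}\big([\nchi_A]_\mm,[\nchi_B]_\mm\big)=\int\nchi_{A\Delta B}\,\d\mm'=\mm'(A\Delta B)=\sfd_{\mm'}\big([A]_{\sim_\mm},[B]_{\sim_\mm}\big),
\]
so that $[A]_{\sim_\mm}\mapsto[\nchi_A]_\mm$ is a well-defined isometric embedding of $(\Sigma/\sim_\mm,\sfd_{\mm'})$ into $(L^0(\mm),\sfd_{L^0(\mm)})$. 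This single identity drives both implications.

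For the implication ``$L^0(\mm)$ separable $\Rightarrow$ measure space separable'' I would simply use that any metric subspace of a separable metric space is separable: the image of the embedding above is then separable, hence so is its isometric copy $(\Sigma/\sim_\mm,\sfd_{\mm'})$, which is precisely the separability of the measure space (with the finite witness $\mm'$).

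For the converse the strategy is to manufacture a countable $\sfd_{L^0(\mm)}$-dense subset of $L^0(\mm)$ from a countable $\sfd_{\mm'}$-dense family $\{[A_k]_{\sim_\mm}\}_{k\in\N}$ in $\Sigma/\sim_\mm$. First I would reduce to approximating simple functions: because $L^\infty(\mm)$ is $\sfd_{L^0(\mm)}$-dense in $L^0(\mm)$, because ${\sf Sf}(\mm)$ is $L^\infty$-norm dense in $L^\infty(\mm)$, and because $\sfd_{L^0(\mm)}(f,g)\le\|f-g\|_{L^\infty(\mm)}$ (as $\mm'$ is a probability), the family ${\sf Sf}(\mm)$ is $\sfd_{L^0(\mm)}$-dense in $L^0(\mm)$. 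It then suffices to approximate any $f=\sum_{i=1}^n a_i\,[\nchi_{C_i}]_\mm\in{\sf Sf}(\mm)$, with $(C_i)_{i=1}^n$ a partition, by an element of the countable set $\mathcal D$ of all finite $\Q$-linear combinations of the $[\nchi_{A_k}]_\mm$. Given $\eps>0$ I would choose rationals $q_i$ with $|a_i-q_i|<\eps$ and indices $k_i$ with $\mm'(C_i\Delta A_{k_i})<\eps/\big(n(1+|q_i|)\big)$, and set $g\coloneqq\sum_{i=1}^n q_i\,[\nchi_{A_{k_i}}]_\mm\in\mathcal D$; the elementary bound
\[
\sfd_{L^0(\mm)}(f,g)\le\int|f-g|\,\d\mm'\le\sum_{i=1}^n|a_i-q_i|\,\mm'(C_i)+\sum_{i=1}^n|q_i|\,\mm'(C_i\Delta A_{k_i})<2\eps
\]
(using $\sum_i\mm'(C_i)=\mm'(\X)=1$) then yields density of the countable set $\mathcal D$, whence the separability of $L^0(\mm)$.

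The one genuinely delicate point is that the separability hypothesis for the measure space is only assumed for \emph{some} finite measure $\mm''$ with $\mm\ll\mm''\ll\mm$, which need not be the probability measure $\mm'$ appearing in $\sfd_{L^0(\mm)}$. I expect this to be the main (and only real) obstacle, and I would remove it by the absolute continuity of the integral: since $\mm'\ll\mm''$, for every $\eps>0$ there is $\delta>0$ with $\mm''(E)<\delta\Rightarrow\mm'(E)<\eps$, so any $\sfd_{\mm''}$-dense family is automatically $\sfd_{\mm'}$-dense. Hence separability of $(\Sigma/\sim_\mm,\cdot)$ is independent of the chosen equivalent finite measure, and the family $\{[A_k]_{\sim_\mm}\}_k$ used above may be taken $\sfd_{\mm'}$-dense. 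Beyond this measure-change remark the argument is a routine two-parameter approximation, and I anticipate no further difficulty.
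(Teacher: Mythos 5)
Your proposal is correct and follows essentially the same route as the paper: the isometry \([A]_{\sim_\mm}\mapsto[\nchi_A]_\mm\) together with the fact that subspaces of separable metric spaces are separable handles one implication, and approximating simple functions by \(\Q\)-linear combinations of indicators of sets from a countable \(\sfd_{\mm'}\)-dense family handles the other, exactly as in the paper's proof. The only (cosmetic) difference is your treatment of the mismatch between the separability witness \(\mm''\) and the probability \(\mm'\) defining \(\sfd_{L^0(\mm)}\): you transfer density via absolute continuity of the integral, whereas the paper simply takes \(\mm'\) to be the witness itself, relying on the previously stated fact that the topology of \(\sfd_{L^0(\mm)}\) (hence separability) does not depend on the chosen equivalent probability measure.
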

\begin{proof}\ \\
{\color{blue}\textsc{Necessity.}} Suppose \((\X,\Sigma,\mm)\) is
separable. Choose any probability measure \(\mm'\) on \((\X,\Sigma)\)
such that \(\mm\ll\mm'\ll\mm\) and \(\sfd_{\mm'}\) is separable. Observe
that \(L^0(\mm')\) and \(L^0(\mm)\) coincide. Fix a countable
\(\sfd_{\mm'}\)-dense subset \(\mathcal C\) of \(\Sigma/\sim_\mm\).
Now consider \(f\in L^0(\mm)\) and \(\eps>0\). As already observed,
we can find a simple function \(g\in{\sf Sf}(\mm)\)
-- see \eqref{eq:def_Sf} -- such that \(\sfd_{L^0(\mm)}(f,g)<\eps/2\).
Say that \(g=\sum_{i=1}^n a_i\,[\nchi_{A_i}]_\mm\). Without loss
of generality, we can assume that \(a_1,\ldots,a_n\in\Q\).
For any \(i=1,\ldots,n\), pick a set \(B_i\in\Sigma\) such that
\([B_i]_{\sim_\mm}\in\mathcal C\) and
\(\mm'(B_i\Delta A_i)\leq\eps/\big(2\,|a_i|\,n\big)\). Therefore,
it holds that
\[
\sfd_{L^0(\mm)}\bigg(g,\sum_{i=1}^n a_i\,[\nchi_{B_i}]_\mm\bigg)\leq
\sum_{i=1}^n|a_i|\int\big|[\nchi_{A_i}]_\mm-[\nchi_{B_i}]_\mm\big|\,\d\mm'
=\sum_{i=1}^n|a_i|\,\mm'(A_i\Delta B_i)\leq\frac{\eps}{2}.
\]
This means that the function
\(h\coloneqq\sum_{i=1}^n a_i\,[\nchi_{B_i}]_\mm\) satisfies
\(\sfd_{L^0(\mm)}(f,h)<\eps\). Given that
\[
\bigg\{\sum_{i=1}^n a_i\,[\nchi_{B_i}]_\mm\;\bigg|
\;n\in\N,\,(a_i)_{i=1}^n\subseteq\Q,\,(B_i)_{i=1}^n\subseteq\mathcal C\bigg\}
\]
is a countable family, we conclude that \(L^0(\mm)\) is separable,
as desired.\\
{\color{blue}\textsc{Sufficiency.}} Suppose \(L^0(\mm)\) is separable.
Observe that the map
\[(\Sigma/\sim_\mm,\sfd_{\mm'})\ni[A]_{\sim_\mm}
\longmapsto[\nchi_A]_\mm\in\big(L^0(\mm),\sfd_{L^0(\mm)}\big)
\]
is an isometry. Consequently,
we conclude that \((\Sigma/\sim_\mm,\sfd_{\mm'})\) is separable,
as required.
\end{proof}

Observe that, trivially, any separable normed \(L^0(\mm)\)-module \(\mathscr M\)
is countably-generated. The following result aims at determining in which cases
the converse implication is satisfied.
\begin{proposition}\label{prop:countable_gen}
Let \((\X,\Sigma,\mm)\) be any \(\sigma\)-finite measure space.
Then a countably-generated normed \(L^0(\mm)\)-module \(\mathscr M\)
is separable if and only if \((\X,\Sigma,\mm|_{\X\setminus D_0})\) is
a separable measure space, where \(\{D_n\}_{n\in\N\cup\{\infty\}}\subseteq\Sigma\)
stands for the dimensional decomposition of the module \(\mathscr M\).

In particular, if \((\X,\Sigma,\mm)\) is a separable measure space,
then any countably-generated normed \(L^0(\mm)\)-module is separable.
\end{proposition}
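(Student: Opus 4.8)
The plan is to prove both implications by relating the separability of \(\mathscr M\) to that of \(L^0(\mm|_{\X\setminus D_0})\), which by Lemma \ref{lem:equiv_meas_sep} is equivalent to the separability of the measure space \((\X,\Sigma,\mm|_{\X\setminus D_0})\). Write \(D\coloneqq\X\setminus D_0\). The starting observation is that, since \(\mathscr M\) has local dimension \(0\) on \(D_0\), one has \([\nchi_{D_0}]_\mm\cdot\mathscr M=\{0\}\); hence every element of \(\mathscr M\) is supported on \(D\), and for any generator \(v_k\) and any \(f\in L^0(\mm)\) one has \(f\cdot v_k=([\nchi_D]_\mm f)\cdot v_k\). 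I will also repeatedly use that scalar multiplication is continuous: if \(g_n\to g\) in \(L^0(\mm)\), then for fixed \(v\in\mathscr M\) one has \(\sfd_{\mathscr M}(g_n\cdot v,g\cdot v)=\int|g_n-g||v|\wedge 1\,\d\mm'\to 0\) by dominated convergence.

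For the implication from measure-space separability to module separability, fix a countable generating family \((v_k)_k\) and recall that finite \(L^0(\mm)\)-linear combinations of the \(v_k\) are \(\sfd_{\mathscr M}\)-dense in \(\mathscr M\). Assuming \((\X,\Sigma,\mm|_D)\) separable, pick a countable dense set \((g_j)_j\) in \(L^0(\mm|_D)\). I claim that the countable family \(\mathcal D\coloneqq\{\sum_{i=1}^n g_{j_i}\cdot v_{k_i}:n\in\N,\ j_i,k_i\in\N\}\) is dense: given \(w\in\mathscr M\) and \(\eps>0\), first approximate \(w\) to within \(\eps/2\) by a finite combination \(\sum_i f_i\cdot v_{k_i}\); then, using the continuity of \(g\mapsto g\cdot v_{k_i}\) at \([\nchi_D]_\mm f_i\), choose \(g_{j_i}\) close enough to \([\nchi_D]_\mm f_i\) that \(\sfd_{\mathscr M}(f_i\cdot v_{k_i},g_{j_i}\cdot v_{k_i})<\eps/(2n)\), and conclude by the triangle inequality.

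For the converse, assume \(\mathscr M\) separable. The main point is to produce a single \(v\in\mathscr M\) with \(|v|>0\) \(\mm\)-a.e.\ on \(D\). I first observe that \(\bigcup_k\{|v_k|>0\}\supseteq D\) up to \(\mm\)-negligible sets: otherwise there would be \(B\subseteq D\) with \(\mm(B)>0\) on which all generators vanish, forcing \([\nchi_B]_\mm\cdot\mathscr M=\{0\}\) and hence \(B\subseteq D_0\), a contradiction. Disjointifying the sets \(\{|v_k|>0\}\cap D\) into a partition \((B_k)_k\) of \(D\) and glueing (with value \(0\) on \(D_0\)) yields the desired \(v\). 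Then the map \(\bar\Phi\colon L^0(\mm|_D)\to\mathscr M\), \(f\mapsto f\cdot v\), is well-defined and injective, since \(f\cdot v=g\cdot v\) iff \(|f-g||v|=0\) \(\mm\)-a.e.\ iff \([\nchi_D]_\mm f=[\nchi_D]_\mm g\).

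The crux is to show that \(\bar\Phi\) is a homeomorphism onto its image. Continuity of \(\bar\Phi\) is the scalar-multiplication continuity noted above. For continuity of the inverse, suppose \(\sfd_{\mathscr M}(f_n\cdot v,f\cdot v)=\int_D|f_n-f||v|\wedge 1\,\d\mm'\to 0\); since \(|v|>0\) \(\mm\)-a.e.\ on \(D\), splitting \(D\) according to \(\{|v|>\eta\}\) and letting \(\eta\downarrow 0\) shows that \(f_n\to f\) in \(\mm'\)-measure on \(D\), i.e.\ \(f_n\to f\) in \(L^0(\mm|_D)\). Thus \(\bar\Phi\) is a homeomorphism onto its image, which is a subspace of the separable space \(\mathscr M\) and therefore separable; hence \(L^0(\mm|_D)\) is separable, and Lemma \ref{lem:equiv_meas_sep} gives that \((\X,\Sigma,\mm|_D)\) is a separable measure space. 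The \emph{in particular} assertion is then immediate, since the restriction of a separable measure space to a measurable subset is again separable. I expect the construction of \(v\) and the inverse-continuity estimate for \(\bar\Phi\) to be the only genuinely delicate points; everything else is a routine density or continuity argument.
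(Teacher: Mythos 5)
Your proof is correct and takes essentially the same route as the paper's: the forward implication is the same density argument via continuity of scalar multiplication, and the converse rests, as in the paper, on embedding \(L^0(\mm|_{\X\setminus D_0})\) into \(\mathscr M\) through \(f\mapsto f\cdot v\) for a suitable \(v\) and then invoking Lemma \ref{lem:equiv_meas_sep}. The only cosmetic difference is that the paper normalises \(v\) so that \(|v|=1\) holds \(\mm\)-a.e.\ on \(\X\setminus D_0\), making the embedding an isometry and rendering inverse continuity automatic, whereas you keep merely \(|v|>0\) and prove bi-continuity by hand; your disjointification construction of \(v\) in fact supplies the detail the paper dismisses as ``readily checked''.
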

\begin{proof}
Suppose \((\X,\Sigma,\mm|_{\X\setminus D_0})\) is separable.
Let \(\mathcal C\) be a countable set generating
\(\mathscr M\), \emph{i.e.},
\begin{equation}\label{eq:countable_gen_aux}
\bigg\{\sum_{i=1}^n f_i\cdot v_i\;\bigg|\;
n\in\N,\,(f_i)_{i=1}^n\subseteq L^0(\mm|_{\X\setminus D_0}),\,
(v_i)_{i=1}^n\subseteq\mathcal C\bigg\}
\quad\text{ is dense in }\mathscr M.
\end{equation}
Lemma \ref{lem:equiv_meas_sep} ensures that the space
\(L^0(\mm|_{\X\setminus D_0})\)
is separable, thus we can pick a countable dense subset \(\mathcal D\)
of \(L^0(\mm|_{\X\setminus D_0})\). Since the multiplication map
\(L^0(\mm)\times\mathscr M\ni(f,v)\mapsto f\cdot v\in\mathscr M\)
is continuous, we immediately conclude from
\eqref{eq:countable_gen_aux} that the countable family
\[
\bigg\{\sum_{i=1}^n f_i\cdot v_i\;\bigg|\;
n\in\N,\,(f_i)_{i=1}^n\subseteq\mathcal D,\,
(v_i)_{i=1}^n\subseteq\mathcal C\bigg\}
\quad\text{ is dense in }\mathscr M.
\]
This proves that the metric space \((\mathscr M,\sfd_{\mathscr M})\) is separable,
as desired.

Conversely, suppose \(\mathscr M\) is separable. It can be readily checked
that there exists \(v\in\mathscr M\) such that \(|v|=1\) holds
\(\mm\)-a.e.\ on \(\X\setminus D_0\). (Notice that \([\nchi_{D_0}]_\mm\cdot v=0\)
by definition of \(D_0\).) Therefore, it holds that the module generated by
\(\{v\}\) can be identified with \(L^0(\mm|_{\X\setminus D_0})\) (considered
as a normed \(L^0(\mm)\)-module), whence the space \(L^0(\mm|_{\X\setminus D_0})\)
is separable. We can conclude that \((\X,\Sigma,\mm|_{\X\setminus D_0})\) is
separable by Lemma \ref{lem:equiv_meas_sep}, thus completing the proof.
\end{proof}

In \S\ref{s:repr_via_DL} we shall need the concept of direct limit of
normed \(L^0(\mm)\)-modules. The following result -- which is taken from
\cite[Theorem 2.1]{Pas19} -- states that direct limits always exist in the category
of normed \(L^0(\mm)\)-modules. (Actually, the statement below was proven
in the case in which \(\X\) is a Polish space, \(\Sigma\) is the Borel
\(\sigma\)-algebra of \(\X\), and \(\mm\) is a Radon measure on \(\X\);
however, the very same proof works in the more general case of
a \(\sigma\)-finite measure space.) 
\begin{theorem}[Direct limits of normed \(L^0\)-modules]\label{thm:direct_limit}
Let \((\X,\Sigma,\mm)\) be any \(\sigma\)-finite measure space.
Let \(\big(\{\mathscr M_i\}_{i\in I},\{\varphi_{ij}\}_{i\leq j}\big)\)
be a direct system of normed \(L^0(\mm)\)-modules, \emph{i.e.},
\begin{itemize}
\item[\(\rm i)\)] \((I,\leq)\) is a directed set,
\item[\(\rm ii)\)] \(\{\mathscr M_i\,:\,i\in I\}\) is a family of normed
\(L^0(\mm)\)-modules,
\item[\(\rm iii)\)] \(\{\varphi_{ij}\,:\,i,j\in I,\,i\leq j\}\) is a family of normed
\(L^0(\mm)\)-module morphisms \(\varphi_{ij}\colon\mathscr M_i\to\mathscr M_j\)
such that \(\varphi_{ii}={\rm id}_{\mathscr M_i}\) for all \(i\in I\) and
\(\varphi_{ik}=\varphi_{jk}\circ\varphi_{ij}\) for all \(i,j,k\in I\) with
\(i\leq j\leq k\).
\end{itemize}
Then there exists a unique couple
\(\big(\varinjlim\mathscr M_\star,\{\varphi_i\}_{i\in I}\big)\),
where \(\varinjlim\mathscr M_\star\) is a normed \(L^0(\mm)\)-module
and each map \(\varphi_i\colon\mathscr M_i\to\varinjlim\mathscr M_\star\)
is a normed \(L^0(\mm)\)-module morphism, such that:
\begin{itemize}
\item[\(\rm a)\)] \(\big(\varinjlim\mathscr M_\star,\{\varphi_i\}_{i\in I}\big)\)
is a target for \(\big(\{\mathscr M_i\}_{i\in I},\{\varphi_{ij}\}_{i\leq j}\big)\),
\emph{i.e.},
\[\begin{tikzcd}
\mathscr M_i \arrow{r}{\varphi_{ij}} \arrow[swap]{rd}{\varphi_i}
& \mathscr M_j \arrow{d}{\varphi_j} \\
& \varinjlim\mathscr M_\star
\end{tikzcd}\]
is a commutative diagram for every \(i,j\in I\) with \(i\leq j\).
\item[\(\rm b)\)] Given any other target \(\big(\mathscr N,\{\psi_i\}_{i\in I}\big)\)
for \(\big(\{\mathscr M_i\}_{i\in I},\{\varphi_{ij}\}_{i\leq j}\big)\), there
exists a unique normed \(L^0(\mm)\)-module morphism
\(\Phi\colon\varinjlim\mathscr M_\star\to\mathscr N\) such that
\[\begin{tikzcd}
\mathscr M_i \arrow{r}{\varphi_i} \arrow[swap]{rd}{\psi_i}
& \varinjlim\mathscr M_\star \arrow{d}{\Phi} \\
& \mathscr N
\end{tikzcd}\]
is a commutative diagram for every \(i\in I\).
\end{itemize}
\end{theorem}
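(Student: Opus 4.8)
The plan is to adapt the classical construction of a direct limit to the normed setting, proceeding in three stages: first build the \emph{algebraic} direct limit together with its natural pointwise seminorm, then quotient out the null elements to obtain a genuine pointwise norm, and finally pass to the metric completion. First I would form, on the disjoint union $\bigsqcup_{i\in I}\mathscr M_i$, the equivalence relation declaring $v_i\in\mathscr M_i$ and $v_j\in\mathscr M_j$ equivalent whenever $\varphi_{ik}(v_i)=\varphi_{jk}(v_j)$ for some $k\geq i,j$; since $(I,\leq)$ is directed and the $\varphi_{ij}$ are compatible, this is an equivalence relation, and the quotient $\mathscr M_0$ inherits a well-defined $L^0(\mm)$-module structure (operations being performed after pushing representatives to a common index). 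Writing $[v_i]$ for the class of $v_i$, I would then set
\[
\big|[v_i]\big|\coloneqq\operatorname*{ess\,inf}_{j\geq i}\big|\varphi_{ij}(v_i)\big|\in L^0(\mm).
\]
The structural fact on which everything rests is that the net $j\mapsto|\varphi_{ij}(v_i)|$ is \emph{monotone non-increasing} in the $\mm$-a.e.\ order: from $\varphi_{ik}=\varphi_{jk}\circ\varphi_{ij}$ and the contraction property of $\varphi_{jk}$ one gets $|\varphi_{ik}(v_i)|=|\varphi_{jk}(\varphi_{ij}(v_i))|\leq|\varphi_{ij}(v_i)|$ for $i\leq j\leq k$. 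Since $L^0(\mm)$ is Dedekind complete, the essential infimum of this downward directed family is realised as an $\mm$-a.e.\ monotone limit along a countable decreasing subfamily; this makes the definition independent of the chosen representative.

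The main work is to verify that $|\cdot|$ is a pointwise norm. Homogeneity $|f\cdot[v_i]|=|f|\,|[v_i]|$ is immediate, because multiplication by $|f|\geq 0$ commutes with the monotone infimum. For the triangle inequality I would bring two classes to a common index $k$, so that both are represented by elements $V,W\in\mathscr M_k$, and exploit that the monotone limit of a sum equals the sum of the monotone limits: from $|\varphi_{kj}(V+W)|\leq|\varphi_{kj}(V)|+|\varphi_{kj}(W)|$ and the fact that all three nets are non-increasing, hence $\mm$-a.e.\ convergent, passing to the limit yields $|[V]+[W]|\leq|[V]|+|[W]|$. Quotienting $\mathscr M_0$ by the submodule $\{[v_i]:|[v_i]|=0\ \mm\text{-a.e.}\}$ then produces a \emph{pre-normed} $L^0(\mm)$-module, i.e.\ one fulfilling all of Definition \ref{def:normed_L0_mod} except possibly completeness.

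Next I would take $\varinjlim\mathscr M_\star$ to be the metric completion of this pre-normed module with respect to $\sfd_{\mathscr M_0}$, extending the module operations and the pointwise norm by continuity; here one uses that $|\cdot|$ is $1$-Lipschitz (the reverse triangle inequality gives $\sfd_{L^0(\mm)}(|v|,|w|)\leq\sfd_{\mathscr M_0}(v,w)$) and that scalar multiplication by a fixed $f\in L^0(\mm)$ is continuous, so that the limit object is a genuine normed $L^0(\mm)$-module. The maps $\varphi_i$ are the compositions $\mathscr M_i\to\mathscr M_0\to\varinjlim\mathscr M_\star$; they are morphisms since $|\varphi_i(v_i)|=|[v_i]|\leq|v_i|$, and diagram (a) commutes by construction.

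Finally I would establish the universal property (b), which also yields the asserted uniqueness up to a unique isomorphism by the standard categorical argument. Given any target $(\mathscr N,\{\psi_i\})$, I define $\Phi$ on $\mathscr M_0$ by $\Phi([v_i])\coloneqq\psi_i(v_i)$; this is well-posed, since $v_i\sim v_j$ forces $\psi_i(v_i)=\psi_k(\varphi_{ik}(v_i))=\psi_k(\varphi_{jk}(v_j))=\psi_j(v_j)$, and it is $L^0(\mm)$-linear. It is a contraction because $|\psi_i(v_i)|=|\psi_j(\varphi_{ij}(v_i))|\leq|\varphi_{ij}(v_i)|$ for every $j\geq i$, whence $|\Phi([v_i])|\leq\operatorname{ess\,inf}_{j}|\varphi_{ij}(v_i)|=|[v_i]|$; in particular $\Phi$ kills the null submodule and extends uniquely by continuity to a morphism $\Phi\colon\varinjlim\mathscr M_\star\to\mathscr N$ with $\Phi\circ\varphi_i=\psi_i$, its uniqueness following from the density of the images of the $\varphi_i$. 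The step I expect to be most delicate is the handling of the essential infimum over the possibly uncountable directed net in the verification of homogeneity and the triangle inequality: one must invoke the Dedekind completeness of $L^0(\mm)$ to reduce to countable monotone limits and thereby justify interchanging limits with sums and with multiplication.
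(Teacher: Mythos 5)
Your proposal is correct, and it follows essentially the same route as the proof this paper relies on: Theorem \ref{thm:direct_limit} is not proved here but quoted from \cite{Pas19}, where the construction is precisely the one you describe — the algebraic direct limit endowed with the pointwise seminorm obtained as the monotone (essential infimum) limit of \(\big|\varphi_{ij}(v_i)\big|\) along the directed set, followed by the quotient modulo the null submodule and the metric completion, with the universal property checked on the dense union of the images \(\varphi_i(\mathscr M_i)\). Your treatment of the genuinely delicate points (well-posedness of the essential infimum via Dedekind completeness of \(L^0(\mm)\), reduction to countable decreasing chains to pass limits through sums and through multiplication by \(|f|\), and the contraction estimate \(|\psi_i(v_i)|\leq{\rm ess\,inf}_{j\geq i}\big|\varphi_{ij}(v_i)\big|\) giving both well-posedness and the \(1\)-Lipschitz property of \(\Phi\)) is accurate and complete.
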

A simple example of direct limit of normed \(L^0(\mm)\)-modules is
given by the ensuing result:
\begin{lemma}\label{lem:countable_DL}
Let \((\X,\Sigma,\mm)\) be a \(\sigma\)-finite measure space.
Let \(\mathscr M\) be a normed \(L^0(\mm)\)-module. Let \((\mathscr M_n)_n\) be
an increasing sequence of normed \(L^0(\mm)\)-submodules of \(\mathscr M\) such
that \(\bigcup_{n\in\N}\mathscr M_n\) is dense in \(\mathscr M\).
Call \(\iota_{nm}\colon\mathscr M_n\hookrightarrow\mathscr M_m\) the inclusion map
for every \(n,m\in\N\) with \(n\leq m\). Then
\(\big(\{\mathscr M_n\}_{n\in\N},\{\iota_{nm}\}_{n\leq m}\big)\) is a direct
system of normed \(L^0(\mm)\)-modules and
\[
\varinjlim\mathscr M_\star\cong\mathscr M.
\]
\end{lemma}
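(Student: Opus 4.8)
The plan is to treat the two assertions in turn, the first being essentially bookkeeping and the second resting on the uniqueness built into Theorem \ref{thm:direct_limit}. For the first assertion, I would note that \((\N,\leq)\) is totally ordered, hence directed, and that each inclusion \(\iota_{nm}\colon\mathscr M_n\hookrightarrow\mathscr M_m\) is a morphism: it is manifestly \(L^0(\mm)\)-linear, and since it leaves elements unchanged it satisfies \(\big|\iota_{nm}(v)\big|=|v|\), so in particular it is a contraction. The identities \(\iota_{nn}=\mathrm{id}_{\mathscr M_n}\) and \(\iota_{nk}=\iota_{mk}\circ\iota_{nm}\) for \(n\leq m\leq k\) are obvious for inclusion maps. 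This verifies conditions i)--iii) of Theorem \ref{thm:direct_limit}.

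For the isomorphism, the key idea is that \(\mathscr M\) itself, equipped with the inclusions \(\iota_n\colon\mathscr M_n\hookrightarrow\mathscr M\), is a target enjoying the universal property of the direct limit; the conclusion then follows from the uniqueness (up to isomorphism of targets) asserted in Theorem \ref{thm:direct_limit}. The compatibility condition a) is trivial, as \(\iota_m\circ\iota_{nm}=\iota_n\) for \(n\leq m\) by the very definition of the inclusion maps. The substance lies in verifying the factorization property b).

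To this end, given any other target \(\big(\mathscr N,\{\psi_n\}_{n\in\N}\big)\), I would first define \(\Phi\) on the dense subspace \(\bigcup_{n\in\N}\mathscr M_n\) by setting \(\Phi(v)\coloneqq\psi_n(v)\) whenever \(v\in\mathscr M_n\). This is well posed: if \(v\in\mathscr M_n\) with \(n\leq m\), then the target identity \(\psi_n=\psi_m\circ\iota_{nm}\) gives \(\psi_m(v)=\psi_n(v)\). Because the family \((\mathscr M_n)_n\) is increasing, any finite collection of elements of \(\bigcup_n\mathscr M_n\) lies in a common \(\mathscr M_m\), which is an \(L^0(\mm)\)-submodule; using the \(L^0(\mm)\)-linearity and contractivity of \(\psi_m\) there, one checks that \(\Phi\) is \(L^0(\mm)\)-linear and satisfies \(\big|\Phi(v)\big|\leq|v|\) on \(\bigcup_n\mathscr M_n\). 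The contractivity makes \(\Phi\) \(1\)-Lipschitz from \(\big(\bigcup_n\mathscr M_n,\sfd_{\mathscr M}\big)\) to \(\big(\mathscr N,\sfd_{\mathscr N}\big)\), hence uniformly continuous; since \(\bigcup_n\mathscr M_n\) is dense in \(\mathscr M\) and \(\mathscr N\) is complete, \(\Phi\) extends uniquely to a continuous map \(\Phi\colon\mathscr M\to\mathscr N\).

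Finally I would check that the extended \(\Phi\) is again a morphism and that it is the unique one with \(\Phi\circ\iota_n=\psi_n\). The mild technical point here---the one I expect to require the most care---is that \(L^0(\mm)\)-linearity of \(\Phi\) passes to the extension: this is where the continuity of the scalar multiplication \(L^0(\mm)\times\mathscr M\to\mathscr M\) is used, allowing one to take limits in the identity \(\Phi(f\cdot v+g\cdot w)=f\cdot\Phi(v)+g\cdot\Phi(w)\) as \(v,w\) are approximated by elements of \(\bigcup_n\mathscr M_n\). Contractivity of the extension follows by continuity of the pointwise norm. Uniqueness is immediate, since any morphism agreeing with \(\Phi\) on the dense set \(\bigcup_n\mathscr M_n\) must coincide with it everywhere by continuity. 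Having verified a) and b), the universal characterization in Theorem \ref{thm:direct_limit} yields \(\varinjlim\mathscr M_\star\cong\mathscr M\).
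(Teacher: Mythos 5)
Your proof is correct and follows essentially the same route as the paper: exhibit \((\mathscr M,\{\iota_n\}_{n\in\N})\) as a target, build the comparison morphism \(\Phi\) on the dense union \(\bigcup_n\mathscr M_n\) and extend it by continuity and completeness, then invoke the uniqueness in Theorem \ref{thm:direct_limit}. The only difference is that you spell out the details (well-posedness, passage of \(L^0(\mm)\)-linearity and contractivity to the limit) that the paper dismisses as ``clearly'' and ``readily proven''.
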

\begin{proof}
Calling \(\iota_n\colon\mathscr M_n\hookrightarrow\mathscr M\) the inclusion map
for any \(n\in\N\), we see that \(\big(\mathscr M,\{\iota_n\}_{n\in\N}\big)\)
is a target for \(\big(\{\mathscr M_n\}_{n\in\N},\{\iota_{nm}\}_{n\leq m}\big)\).
Moreover, fix any other target \(\big(\mathscr N,\{\psi_n\}_{n\in\N}\big)\).
Since the vector space \(\bigcup_{n\in\N}\iota_n(\mathscr M_n)\)
is \(\sfd_{\mathscr M}\)-dense in \(\mathscr M\) by assumption,
there clearly exists a unique linear continuous map \(\Phi\colon\mathscr M\to\mathscr N\)
such that \(\Phi(v)=\psi_n(v)\) for every \(n\in\N\) and \(v\in\mathscr M_n\).
Finally, this map can be readily proven to be a morphism of normed \(L^0(\mm)\)-modules.
\end{proof}
We conclude the subsection by reminding the key notion
of pullback of a normed \(L^0\)-module:
\begin{theorem}[Pullback of normed \(L^0\)-modules]
\label{thm:pullback_mod}
Let \((\X,\Sigma_\X,\mm_\X)\) and \((\Y,\Sigma_\Y,\mm_\Y)\) be
\(\sigma\)-finite measure spaces. Let \(\varphi\colon\X\to\Y\)
be a measurable map satisfying \(\varphi_*\mm_\X\ll\mm_\Y\).
Let \(\mathscr M\) be a normed \(L^0(\mm_\Y)\)-module.
Then there exists a unique couple \((\varphi^*\mathscr M,\varphi^*)\)
such that \(\varphi^*\mathscr M\) is a normed \(L^0(\mm_\X)\)-module
and \(\varphi^*\colon\mathscr M\to\varphi^*\mathscr M\) is a linear map
with the following properties:
\begin{itemize}
\item[\(\rm i)\)] It holds that \(|\varphi^*v|=|v|\circ\varphi\) in the
\(\mm_\X\)-a.e.\ sense for every \(v\in\mathscr M\).
\item[\(\rm ii)\)] The family \(\{\varphi^*v\,:\,v\in\mathscr M\}\)
generates \(\varphi^*\mathscr M\).
\end{itemize}
Uniqueness has to be intended up to unique isomorphism: given any other couple
\((\mathscr N,T)\) satisfying the same properties, there exists a unique
isomorphism \(\Phi\colon\varphi^*\mathscr M\to\mathscr N\) of normed
\(L^0(\mm_\X)\)-modules such that
\[\begin{tikzcd}
\mathscr M \arrow{r}{\varphi^*} \arrow[swap]{rd}{T}
& \varphi^*\mathscr M \arrow{d}{\Phi} \\
& \mathscr N
\end{tikzcd}\]
is a commutative diagram.
\end{theorem}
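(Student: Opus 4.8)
The plan is to establish uniqueness first, since the universal property all but dictates the construction, and then to build $\varphi^*\mathscr M$ explicitly. The whole argument rests on one identity. First I would record that the scalar pullback $L^0(\mm_\Y)\ni g\mapsto g\circ\varphi\in L^0(\mm_\X)$ is a well-defined, linear, multiplicative, continuous ring homomorphism: the hypothesis $\varphi_*\mm_\X\ll\mm_\Y$ guarantees that $\mm_\Y$-negligible sets pull back to $\mm_\X$-negligible sets, so composition respects $\mm$-a.e.\ equality. The key reduction is then the following. If $\eta=\sum_i f_i\cdot\varphi^* v_i$ has \emph{simple} coefficients $f_i\in L^0(\mm_\X)$, one may refine to a common finite measurable partition $\{C_\ell\}$ of $\X$ on whose cells each $f_i$ is constant; using linearity of $\varphi^*$ one rewrites $\eta=\sum_\ell\nchi_{C_\ell}\cdot\varphi^*(w_\ell)$ with $w_\ell\in\mathscr M$, whence by locality and property (i),
\[
|\eta|=\sum_\ell\nchi_{C_\ell}\,\big(|w_\ell|\circ\varphi\big)\qquad\mm_\X\text{-a.e.}
\]
Thus the pointwise norm of such an $\eta$ is already completely determined by $\mathscr M$, $\varphi$, and (i) alone.

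For uniqueness, let $(\varphi^*\mathscr M,\varphi^*)$ and $(\mathscr N,T)$ both satisfy (i) and (ii). The algebraic $L^0(\mm_\X)$-module $G$ generated by $\{\varphi^* v:v\in\mathscr M\}$ is $\sfd$-dense by (ii). Given $\eta=\sum_i f_i\cdot\varphi^* v_i\in G$ with arbitrary $f_i$, I would approximate each $f_i$ by simple functions in $L^0(\mm_\X)$; by continuity of the module operations and of the pointwise norm, $|\eta|$ is the limit of norms of simple-coefficient elements, hence determined through the displayed identity. Defining $\Phi$ on $G$ by $\varphi^* v\mapsto T v$ and extending $L^0(\mm_\X)$-linearly, the same identity applied in both modules (each satisfying (i)) gives $|\Phi(\eta)|=|\eta|$ for every $\eta\in G$; in particular $\Phi$ is well-posed (if $\eta=0$ then $|\Phi(\eta)|=|\eta|=0$) and norm-preserving. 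Density of $G$ together with completeness then extends $\Phi$ to an $L^0(\mm_\X)$-linear isometric isomorphism with $\Phi\circ\varphi^*=T$, and its uniqueness follows from the density of $G$.

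For existence I would realise the norm by duality. Start from the free $L^0(\mm_\X)$-module $\mathscr F$ on the underlying set $\mathscr M$, with elements the formal finite sums $\eta=\sum_{i=1}^n f_i\,[v_i]$, and recall from \cite{Gigli14} the duality formula $|v|={\rm ess\,sup}\big\{\omega(v):\omega\in\mathscr M^*,\ |\omega|\leq 1\big\}$, realised along a countable subfamily of the dual unit ball. Then set
\[
|\eta|:={\rm ess\,sup}\Big\{\,\Big|\sum_{i=1}^n f_i\cdot\big(\omega(v_i)\circ\varphi\big)\Big|\ :\ \omega\in\mathscr M^*,\ |\omega|\leq 1\Big\}\quad\text{in }L^0(\mm_\X).
\]
One checks that $|\cdot|$ is a pointwise seminorm: subadditivity is inherited from the triangle inequality inside the essential supremum, and $L^0(\mm_\X)$-homogeneity follows since multiplication by a fixed nonnegative element of $L^0(\mm_\X)$ commutes with ${\rm ess\,sup}$. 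Moreover the quotient by $\{|\cdot|=0\}$ automatically enforces the balancing relations, since $|[g\cdot v]-(g\circ\varphi)\,[v]|=0$; thus, passing to the quotient $\mathscr M_0$ and completing with respect to $\sfd$, one obtains a normed $L^0(\mm_\X)$-module $\varphi^*\mathscr M$ (the module operations and the pointwise norm are Lipschitz, hence extend) with $\varphi^* v$ defined as the class of $[v]$. Property (ii) is then immediate because the classes of the $[v]$ generate a dense submodule.

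I expect the main obstacle to be precisely the verification, inside this existence step, that $|\varphi^* v|=|v|\circ\varphi$ \emph{with equality} rather than merely $\leq$. Here the fact that the essential supremum defining $|v|$ is attained along a countable family $(\omega_k)_k$ is essential: it lets the supremum commute with precomposition, so that ${\rm ess\,sup}_k\big(|\omega_k(v)|\circ\varphi\big)=\big({\rm ess\,sup}_k|\omega_k(v)|\big)\circ\varphi=|v|\circ\varphi$, where the transport of the a.e.\ identity under $\varphi$ again uses $\varphi_*\mm_\X\ll\mm_\Y$. The reverse bound is clear since each term is dominated by $|v|\circ\varphi$. Once this is settled, completeness and the module axioms on the completion are routine, and the construction delivers a couple satisfying (i) and (ii), which the uniqueness part then pins down up to unique isomorphism.
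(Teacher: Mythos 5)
Your proposal is correct in substance, but it takes a genuinely different route from the one this paper relies on: Theorem \ref{thm:pullback_mod} is recalled here \emph{without proof}, the construction being attributed to \cite{Gigli14} and, for the \(L^0\) variant, to \cite{GR17} and \cite{Ben18}. In those references existence is obtained by an explicit construction: one considers formal finite sums \(\sum_i\nchi_{A_i}\cdot v_i\), with \((A_i)_i\subseteq\Sigma_\X\) a partition of \(\X\) and \(v_i\in\mathscr M\), endowed with the pointwise norm \(\big|\sum_i\nchi_{A_i}\cdot v_i\big|\coloneqq\sum_i\nchi_{A_i}\,(|v_i|\circ\varphi)\), and then quotients out the null elements and completes; properties i) and ii) are immediate, and the main check is that the norm is independent of the chosen representation. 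Your existence step replaces this by a duality-defined norm on the free module. What this buys: the linearity of the functionals \(\omega\) makes the balancing relations \([v+w]\sim[v]+[w]\), \([g\cdot v]\sim(g\circ\varphi)\cdot[v]\) and the seminorm axioms automatic, so no representation-independence check is needed. What it costs: you must invoke the module Hahn--Banach theorem, \emph{i.e.}, that \(|v|={\rm ess\,sup}\{\omega(v):\omega\in\mathscr M^*,\,|\omega|\leq 1\}\), together with countable attainment of the essential supremum; note that \cite{Gigli14} establishes this duality formula for \(L^p\)-normed \(L^\infty\)-modules, so for a normed \(L^0(\mm_\Y)\)-module you should rather appeal to \cite{Gigli17} (or run the Kantorovich-type Hahn--Banach argument for \(L^0\)-valued functionals) -- a citation issue, not a gap. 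Your handling of the delicate point, namely equality rather than mere \(\leq\) in \(|\varphi^*v|=|v|\circ\varphi\), is sound: countable suprema commute with precomposition by \(\varphi\), and the \(\mm_\Y\)-a.e.\ identities transfer to \(\mm_\X\)-a.e.\ ones thanks to \(\varphi_*\mm_\X\ll\mm_\Y\); the same mechanism shows the duality norm returns \(\sum_\ell\nchi_{C_\ell}(|w_\ell|\circ\varphi)\) on partition-type sums, since an essential supremum is a lattice supremum and can be evaluated cell by cell, with no need for a single \(\omega\) to be optimal on all cells of a partition of \(\X\) simultaneously. Finally, your uniqueness half (reduction to simple coefficients via a common refinement, locality, then density and completeness) coincides with the argument used in the cited references, so there the two proofs agree.
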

The pullback of a normed module has been introduced in \cite{Gigli14},
but the variant for normed \(L^0\)-modules presented above has been
considered in \cite{GR17} and \cite{Ben18}.
\subsection{Normed \texorpdfstring{\(L^\infty(\mm)\)}{Linfty(m)}-modules}
\label{ss:norm_Linftym_mod}
We recall the notion of normed \(L^\infty(\mm)\)-module, which
has been introduced in \cite{Gigli14}:
\begin{definition}[Normed \(L^\infty(\mm)\)-module]\label{def:normed_Linfty_mod}
Let \((\X,\Sigma,\mm)\) be any \(\sigma\)-finite measure space.
Then a \emph{normed \(L^\infty(\mm)\)-module} is a couple \(\big(\mathscr M,|\cdot|\big)\)
having the following properties:
\begin{itemize}
\item[\(\rm i)\)] \(\mathscr M\) is an algebraic \(L^\infty(\mm)\)-module.
\item[\(\rm ii)\)] The map \(|\cdot|\colon\mathscr M\to L^\infty(\mm)\), which is
called a \emph{pointwise norm} on \(\mathscr M\), satisfies
\begin{equation}\label{eq:ptwse_norm_infty}\begin{split}
|v|\geq 0&\quad\text{ for every }v\in\mathscr M
\text{, with equality if and only if }v=0,\\
|v+w|\leq|v|+|w|&\quad\text{ for every }v,w\in\mathscr M,\\
|f\cdot v|=|f||v|&\quad\text{ for every }f\in L^\infty(\mm)\text{ and }v\in\mathscr M,
\end{split}\end{equation}
where all (in)equalities are intended in the \(\mm\)-a.e.\ sense.
\item[\(\rm iii)\)] The space \(\mathscr M\) has the \emph{glueing property}, \emph{i.e.},
given a partition \((A_n)_{n\in\N}\subseteq\Sigma\) of \(\X\) and a sequence
\((v_n)_{n\in\N}\subseteq\mathscr M\) with \(\sup_n{\rm ess\,sup}_{A_n}|v_n|<+\infty\),
there is (a unique) \(v\in\mathscr M\) such that
\([\nchi_{A_n}]_\mm\cdot v=[\nchi_{A_n}]_\mm\cdot v_n\) for all \(n\in\N\).
The element \(v\) is denoted by \(\sum_{n\in\N}[\nchi_{A_n}]_\mm\cdot v_n\).
\item[\(\rm iv)\)] The norm \({\|\cdot\|}_{\mathscr M}\) on \(\mathscr M\)
associated with \(|\cdot|\), which is defined as
\[
{\|v\|}_{\mathscr M}\coloneqq{\big\||v|\big\|}_{L^\infty(\mm)}
\quad\text{ for every }v\in\mathscr M,
\]
is complete.
\end{itemize}
\end{definition}
We point out that in iii) we do not require
\(\lim_N\big\|\sum_{n=1}^N[\nchi_{A_n}]_\mm\cdot v_n-
\sum_{n\in\N}[\nchi_{A_n}]_\mm\cdot v_n\big\|_{\mathscr M}=0\).
\begin{remark}{\rm
The above notion of normed \(L^\infty(\mm)\)-module is equivalent to the
concept of \emph{\(L^\infty(\mm)\)-normed \(L^\infty(\mm)\)-module} introduced
in \cite{Gigli14}. Here we do not specify -- for the sake of brevity --
that the pointwise norm operator takes values into the space \(L^\infty(\mm)\),
the reason being that in the present manuscript this is the only type
of pointwise norm we are going to consider over \(L^\infty(\mm)\)-modules.
However, in \cite{Gigli14,Gigli17} also
\(L^p(\mm)\)-normed \(L^\infty(\mm)\)-modules, for any exponent \(p\in[1,\infty)\),
are studied.
\fr}\end{remark}
\begin{remark}[Locality property]\label{rmk:locality_property}{\rm
It can be readily checked that normed \(L^\infty(\mm)\)-modules
have the \emph{locality property}: if \((A_n)_{n\in\N}\subseteq\Sigma\) is a partition
of \(\X\) and \(v,w\in\mathscr M\) are two elements such that
\([\nchi_{A_n}]_\mm\cdot v=[\nchi_{A_n}]_\mm\cdot w\) for every \(n\in\N\),
then \(v=w\). This ensures that in item iii) of Definition \ref{def:normed_Linfty_mod}
the element \(\sum_{n\in\N}[\nchi_{A_n}]_\mm\cdot v_n\) is uniquely determined.
\fr}\end{remark}
\begin{remark}[Lack of glueing]\label{rmk:lack_glueing}{\rm
We point out that item iii) of Definition \ref{def:normed_Linfty_mod} is not
granted by i), ii), and iv). For instance, let us consider the Radon measure
\(\mm\coloneqq\sum_{n\in\N}\delta_n\) on \(\N\) and the space \(c_0\) of
all real-valued sequences that converge to \(0\), which is an algebraic module
over the ring \(\ell^\infty\cong L^\infty(\mm)\). We define the
pointwise norm \(|\cdot|\colon c_0\to\ell^\infty\) as
\(\big|(a_n)_n\big|\coloneqq\big(|a_n|\big)_n\) for every \((a_n)_n\in c_0\),
which clearly satisfies items ii) and iv). Nevertheless, the glueing property fails:
the elements \(e_n\coloneqq(\delta_{in})_{i\in\N}\) with \(n\in\N\) belong to \(c_0\),
but by `glueing' them we would obtain the sequence constantly equal to \(1\),
which is not an element of \(c_0\).
\fr}\end{remark}

We now aim to investigate the relation between normed \(L^\infty(\mm)\)-modules
and normed \(L^0(\mm)\)-modules. For the sake of clarity, we
denote by \(\mathscr M^\infty\) the former, by \(\mathscr M^0\) the latter.
\begin{definition}[Completion/restriction]\label{def:compl/restr}
Let \((\X,\Sigma,\mm)\) be a \(\sigma\)-finite measure space.
\begin{itemize}
\item[\(\rm i)\)] \textsc{Completion.} Let \(\mathscr M^\infty\) be a normed
\(L^\infty(\mm)\)-module. Then its \emph{completion} \(\sfC(\mathscr M^\infty)\)
is defined as the metric completion of
\(\big(\mathscr M^\infty,\sfd_{\sfC(\mathscr M^\infty)}\big)\),
where the distance \(\sfd_{\sfC(\mathscr M^\infty)}\) is given by
\(\sfd_{\sfC(\mathscr M^\infty)}(v,w)\coloneqq\sfd_{L^0(\mm)}\big(|v-w|,0\big)\)
for every \(v,w\in\mathscr M^\infty\).
\item[\(\rm ii)\)] \textsc{Restriction.} Let \(\mathscr M^0\) be a normed
\(L^0(\mm)\)-module. Then its \emph{restriction} \(\sfR(\mathscr M^0)\) is defined as
\(\sfR(\mathscr M^0)\coloneqq\big\{v\in\mathscr M^0\,:\,|v|\in L^\infty(\mm)\big\}\).
\end{itemize}
\end{definition}
It can be readily checked -- by arguing as
in \cite[Theorem/Definition 2.7]{Gigli17} -- that \(\sfC(\mathscr M^\infty)\)
inherits a normed \(L^0(\mm)\)-module structure. Moreover, \(\sfR(\mathscr M^0)\)
is a normed \(L^\infty(\mm)\)-module.
\bigskip

The following result says that `the completion map is the inverse
of the restriction map':
\begin{lemma}[`\(\sfC=\sfR^{-1}\)']\label{lem:C=inverse_R}
Let \((\X,\Sigma,\mm)\) be a \(\sigma\)-finite measure space. Then it holds that:
\begin{itemize}
\item[\(\rm i)\)] \(\sfC\big(\sfR(\mathscr M^0)\big)\cong\mathscr M^0\)
for every normed \(L^0(\mm)\)-module \(\mathscr M^0\),
\item[\(\rm ii)\)] \(\sfR\big(\sfC(\mathscr M^\infty)\big)\cong\mathscr M^\infty\)
for every normed \(L^\infty(\mm)\)-module \(\mathscr M^\infty\).
\end{itemize}
\end{lemma}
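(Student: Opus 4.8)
The plan is to establish the two isomorphisms separately. In both cases there is an obvious candidate map, namely the canonical inclusion, and the substance of the statement lies in checking that this inclusion is onto. Part i) will follow from a soft density-plus-completeness argument, whereas part ii) is the genuinely non-trivial one and will require an Egorov-type argument combined with the glueing property of \(\mathscr M^\infty\).

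For i), I would observe that \(\sfR(\mathscr M^0)\) sits inside \(\mathscr M^0\) and that the distance used to define its completion, namely \(\sfd_{L^0(\mm)}(|v-w|,0)\), is exactly the restriction of \(\sfd_{\mathscr M^0}\). Thus it suffices to show that \(\sfR(\mathscr M^0)\) is \(\sfd_{\mathscr M^0}\)-dense in the complete space \(\mathscr M^0\): by uniqueness of metric completions this identifies \(\sfC(\sfR(\mathscr M^0))\) with \(\mathscr M^0\), and the \(L^0(\mm)\)-module operations together with the pointwise norm extend by continuity to yield an honest isomorphism of normed modules. Density is obtained by truncation: for \(v\in\mathscr M^0\) set \(A_n\coloneqq\{|v|\le n\}\) and \(v_n\coloneqq[\nchi_{A_n}]_\mm\cdot v\in\sfR(\mathscr M^0)\); then \(|v-v_n|=[\nchi_{\X\setminus A_n}]_\mm\,|v|\to 0\) \(\mm\)-a.e.\ and in \(L^0(\mm)\), whence \(\sfd_{\mathscr M^0}(v_n,v)\to 0\).

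For ii), the natural map is the canonical inclusion \(\iota\colon\mathscr M^\infty\to\sfC(\mathscr M^\infty)\), whose image lands in \(\sfR(\sfC(\mathscr M^\infty))\) since every \(v\in\mathscr M^\infty\) has \(|v|\in L^\infty(\mm)\). That \(\iota\) is \(L^\infty(\mm)\)-linear, pointwise-norm preserving (hence injective), and a morphism is immediate. The crux is surjectivity: given \(v\in\sfC(\mathscr M^\infty)\) with \(|v|\le C\) \(\mm\)-a.e., I must produce \(u\in\mathscr M^\infty\) with \(\iota(u)=v\). I would first choose \((v_n)\subseteq\mathscr M^\infty\) with \(v_n\to v\) in \(\sfd_{\sfC(\mathscr M^\infty)}\), and then replace \(v_n\) by its truncation \(g_n\cdot v_n\), where \(g_n\coloneqq(C+1)/\max(|v_n|,C+1)\in L^\infty(\mm)\). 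This preserves the approximating property, since \(|g_n\cdot v_n-v_n|\le|v_n-v|\) thanks to \(|v|\le C\), while enforcing the uniform bound \(|g_n\cdot v_n|\le C+1\). Hence I may assume the approximating sequence \((w_n)\) is uniformly bounded and converges to \(v\) in \(L^0(\mm)\).

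The passage from this \(L^0\)-convergence to membership in \(\mathscr M^\infty\) is the main obstacle, because the \(L^\infty(\mm)\)-structure demands uniform (essential-supremum) control that \(L^0\)-convergence alone does not supply. My plan is to localise via Egorov's theorem: reading \(|w_n-v|\to 0\) as convergence in \(\mm'\)-measure, I pass to a subsequence converging \(\mm\)-a.e.\ and, for each \(k\), extract a set \(G_k\) with \(\mm'(\X\setminus G_k)<1/k\) on which the convergence is uniform, taking the \(G_k\) increasing so that \(\bigcup_k G_k\) is \(\mm\)-conull. On each \(G_k\) the sequence \([\nchi_{G_k}]_\mm\cdot w_n\) is \(L^\infty(\mm)\)-Cauchy, so by completeness of \(\mathscr M^\infty\) it converges in norm to some \(u_k\in\mathscr M^\infty\) with \(\iota(u_k)=[\nchi_{G_k}]_\mm\cdot v\). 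Setting \(H_1\coloneqq G_1\) and \(H_k\coloneqq G_k\setminus G_{k-1}\) gives a partition of a conull set (completed by the \(\mm\)-null remainder) with \(\sup_k{\rm ess\,sup}_{H_k}\big|[\nchi_{H_k}]_\mm\cdot u_k\big|\le C\); the glueing property then furnishes \(u\in\mathscr M^\infty\) agreeing with \(u_k\) on each \(H_k\). A final check, using the locality of the normed \(L^0(\mm)\)-module \(\sfC(\mathscr M^\infty)\), shows \(\iota(u)=v\) on each \(H_k\) and hence everywhere, which establishes surjectivity. With \(\iota\) bijective and norm-preserving, it is the sought isomorphism of normed \(L^\infty(\mm)\)-modules.
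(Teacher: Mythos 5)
Your proof is correct, and part i) is exactly the paper's argument: truncation over \(A_n=\{|v|\le n\}\) gives density of \(\sfR(\mathscr M^0)\) in \(\mathscr M^0\), and uniqueness of metric completions does the rest. In part ii) you use the same two tools as the paper -- Egorov's theorem and the glueing property of \(\mathscr M^\infty\) -- but you arrange them differently. The paper fixes \(\eps>0\), uses Egorov to produce a partition \((A_i)_i\) and indices \(n_i\) with \({\rm ess\,sup}_{A_i}\big|\iota(v_{n_i})-w\big|\le\eps\), glues \(v=\sum_i[\nchi_{A_i}]_\mm\cdot v_{n_i}\) in \(\mathscr M^\infty\), and thereby shows that \(\iota(\mathscr M^\infty)\) is \(\eps\)-dense in \(\sfR\big(\sfC(\mathscr M^\infty)\big)\) for every \(\eps\); the concluding equality then rests, implicitly, on the fact that \(\iota(\mathscr M^\infty)\) is closed in the essential-supremum norm, being the isometric image of a complete space. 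You instead produce an exact preimage in a single pass: Egorov gives an increasing exhaustion \((G_k)_k\) on which the convergence is uniform, norm-completeness of \(\mathscr M^\infty\) yields exact elements \(u_k\) with \(\iota(u_k)=[\nchi_{G_k}]_\mm\cdot v\), and one application of glueing over the disjointified sets \(H_k\), combined with locality in \(\sfC(\mathscr M^\infty)\), gives \(\iota(u)=v\). What your arrangement buys is a surjectivity proof that never needs the closedness-of-the-image step, at the cost of invoking completeness fiber-by-fiber before glueing rather than once at the end. One small remark: your preliminary truncation \(w_n=g_n\cdot v_n\) is correct (the estimate \(|g_n\cdot v_n-v_n|\le|v_n-v|\) does hold when \(|v|\le C\)) but is actually unnecessary, since the bound required by the glueing property already follows from \(|u_k|=\big|\iota(u_k)\big|=[\nchi_{G_k}]_\mm|v|\le C\) \(\mm\)-a.e., and the Cauchy property on each \(G_k\) needs only Egorov, not a uniform bound on the approximating sequence.
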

\begin{proof}\ \\
{\color{blue}i)} Let \(\mathscr M^0\) be a normed \(L^0(\mm)\)-module.
Observe that \(\sfC\big(\sfR(\mathscr M^0)\big)\) can be identified with the
\(\sfd_{\mathscr M^0}\)-closure of \(\sfR(\mathscr M^0)\) in \(\mathscr M^0\),
thus to conclude it suffices to show that \(\sfR(\mathscr M^0)\) is
\(\sfd_{\mathscr M^0}\)-dense in \(\mathscr M^0\). To this aim, fix
\(v\in\mathscr M^0\) and call \(A_n\coloneqq\big\{|v|\leq n\big\}\) for every \(n\in\N\).
Hence, it clearly holds that
\(\big([\nchi_{A_n}]_\mm\cdot v\big)_n\subseteq\sfR(\mathscr M^0)\)
and \(\lim_n\sfd_{\mathscr M^0}\big([\nchi_{A_n}]_\mm\cdot v,v)=0\),
as required.\\
{\color{blue}ii)} Let \(\mathscr M^\infty\) be a normed \(L^\infty(\mm)\)-module.
We call \(\iota\) the \(L^\infty(\mm)\)-linear isometric embedding of
\((\mathscr M^\infty,\sfd_{\sfC(\mathscr M^\infty)})\) into \(\sfC(\mathscr M^\infty)\).
Notice that \(\iota(\mathscr M^\infty)\subseteq\sfR\big(\sfC(\mathscr M^\infty)\big)\)
by definition of \(\sfR\). To conclude, it is enough to prove that actually
\(\iota(\mathscr M^\infty)=\sfR\big(\sfC(\mathscr M^\infty)\big)\).
Fix any \(w\in\sfR\big(\sfC(\mathscr M^\infty)\big)\) and \(\eps>0\).
Pick a sequence \((v_n)_n\subseteq\mathscr M^\infty\) with
\(\lim_n\sfd_{\sfC(\mathscr M^\infty)}\big(\iota(v_n),w\big)=0\).
By using Egorov theorem, we can find a partition
\((A_i)_{i\in\N}\subseteq\Sigma\) of \(\X\) and a sequence \((n_i)_{i\in\N}\subseteq\N\)
such that
\[
\underset{A_i}{\rm ess\,sup}\,\big|\iota(v_{n_i})-w\big|\leq\eps
\quad\text{ for every }i\in\N.
\]
Notice that \(\sup_i{\rm ess\,sup}_{A_i}|v_{n_i}|\leq\eps+{\rm ess\,sup}_\X|w|<+\infty\),
thus the glueing property of \(\mathscr M^\infty\) grants the existence
of \(v\coloneqq\sum_{i\in\N}[\nchi_{A_i}]_\mm\cdot v_{n_i}\).
It holds that \(\sum_{i=1}^k[\nchi_{A_i}]_\mm\cdot v_{n_i}\to v\) as \(k\to\infty\)
with respect to the distance \(\sfd_{\sfC(\mathscr M^\infty)}\)
(but not with respect to \({\|\cdot\|}_{\mathscr M^\infty}\), in general).
This ensures that \(\sum_{i=1}^k[\nchi_{A_i}]_\mm\cdot\iota(v_{n_i})=
\iota\big(\sum_{i=1}^k[\nchi_{A_i}]_\mm\cdot v_{n_i}\big)\to\iota(v)\) as \(k\to\infty\),
thus accordingly the inequality \(\big|\iota(v)-w\big|\leq\eps\) holds \(\mm\)-a.e..
We conclude that \(\iota(\mathscr M^\infty)=\sfR\big(\sfC(\mathscr M^\infty)\big)\).
\end{proof}

Let us mention that both the correspondences
\(\mathscr M^\infty\mapsto\sfC(\mathscr M^\infty)\) and
\(\mathscr M^0\mapsto\sfR(\mathscr M^0)\) can be made into
functors, which turn out to be equivalences of categories
-- one the inverse of the other. We omit the details, referring
to \cite[Appendix B]{LP18} for a similar discussion.
\section{Liftings of normed modules}\label{s:lift_norm_mod}
Aim of this section is to generalise the theory of liftings to the
setting of normed modules. In \S\ref{ss:norm_LinftySigma_mod}
we introduce and study a notion of normed module over
\(\mathcal L^\infty(\Sigma)\), whose elements are
`everywhere defined'. In \S\ref{ss:lift_mod} we prove that every
normed \(L^\infty(\mm)\)-module can be lifted to a normed
\(\mathcal L^\infty(\Sigma)\)-module. In \S\ref{ss:fibers} we
focus our attention on the functional-analytic properties of
the fibers of a normed \(\mathcal L^\infty(\Sigma)\)-module.
\subsection{Definition of normed
\texorpdfstring{\(\mathcal L^\infty(\Sigma)\)}
{mathcalLinfty(Sigma)}-module.}\label{ss:norm_LinftySigma_mod}
We propose a notion of normed module over the
commutative ring \(\mathcal L^\infty(\Sigma)\):
\begin{definition}[Normed \(\mathcal L^\infty(\Sigma)\)-module]
\label{def:normed_Lsigma_mod}
Let \((\X,\Sigma,\mm)\) be a \(\sigma\)-finite measure space.
Then a \emph{normed \(\mathcal L^\infty(\Sigma)\)-module}
is a couple \(\big(\bar{\mathscr M},|\cdot|\big)\)
that satisfies the following properties:
\begin{itemize}
\item[\(\rm i)\)] \(\bar{\mathscr M}\) is an algebraic \(\mathcal L^\infty(\Sigma)\)-module.
\item[\(\rm ii)\)] The map \(|\cdot|\colon\bar{\mathscr M}\to\mathcal L^\infty(\Sigma)\),
which is called a \emph{pointwise norm} on \(\bar{\mathscr M}\),
satisfies
\begin{equation}\label{eq:bar_ptwse_norm}\begin{split}
|\bar v|\geq 0&\quad\text{ for every }\bar v\in\bar{\mathscr M}
\text{, with equality if and only if }\bar v=0,\\
|\bar v+\bar w|\leq|\bar v|+|\bar w|&\quad
\text{ for every }\bar v,\bar w\in\bar{\mathscr M},\\
|\bar f\cdot\bar v|=|\bar f||\bar v|&\quad
\text{ for every }\bar f\in\mathcal L^\infty(\Sigma)
\text{ and }\bar v\in\bar{\mathscr M}.
\end{split}\end{equation}
\item[\(\rm iii)\)] \(\bar{\mathscr M}\) satisfies the \emph{glueing property},
\emph{i.e.}, given any partition \((A_n)_{n\in\N}\subseteq\Sigma\) of \(\X\) and a sequence
\((\bar v_n)_{n\in\N}\subseteq\bar{\mathscr M}\) such that
\(\sup_n\sup_{A_n}|\bar v_n|<+\infty\),
there is (a unique) \(\bar v\in\bar{\mathscr M}\) such that
\(\nchi_{A_n}\cdot\bar v=\nchi_{A_n}\cdot\bar v_n\) for all \(n\in\N\).
The element \(\bar v\) is denoted by \(\sum_{n\in\N}\nchi_{A_n}\cdot\bar v_n\).
\item[\(\rm iv)\)] The norm \({\|\cdot\|}_{\bar{\mathscr M}}\) on \(\bar{\mathscr M}\)
associated with \(|\cdot|\), which is defined as
\[
{\|\bar v\|}_{\bar{\mathscr M}}\coloneqq{\big\||\bar v|\big\|}_{\mathcal L^\infty(\Sigma)}
\quad\text{ for every }\bar v\in\bar{\mathscr M},
\]
is complete.
\end{itemize}
\end{definition}
\begin{remark}[Locality property]{\rm
It can be readily checked that normed \(\mathcal L^\infty(\Sigma)\)-modules
have the \emph{locality property}: if \((A_n)_{n\in\N}\subseteq\Sigma\) is a partition
of \(\X\) and \(\bar v,\bar w\in\bar{\mathscr M}\) are two elements such that
\(\nchi_{A_n}\cdot\bar v=\nchi_{A_n}\cdot\bar w\) for every \(n\in\N\),
then \(\bar v=\bar w\). This ensures that in item iii) of Definition
\ref{def:normed_Lsigma_mod} the element
\(\sum_{n\in\N}\nchi_{A_n}\cdot\bar v_n\) is uniquely determined.
\fr}\end{remark}
\begin{remark}[Lack of glueing]{\rm
The glueing property is not granted by items i), ii), iv)
of Definition \ref{def:normed_Lsigma_mod}, as shown by the same counterexample
we provided in Remark \ref{rmk:lack_glueing}.
\fr}\end{remark}

Given any normed \(\mathcal L^\infty(\Sigma)\)-module \(\bar{\mathscr M}\),
we can introduce the following equivalence relation: two elements
\(\bar v,\bar w\in\bar{\mathscr M}\) are equivalent -- shortly, \(\bar v\sim\bar w\)
-- provided \(|\bar v-\bar w|=0\) holds \(\mm\)-a.e..
Then we define the space \(\Pi_\mm(\bar{\mathscr M})\) as follows:
\begin{equation}\label{eq:def_proj_Pi_m}
\Pi_\mm(\bar{\mathscr M})\coloneqq\bar{\mathscr M}/\sim.
\end{equation}
Given any element \(\bar v\in\bar{\mathscr M}\), we will denote by
\([\bar v]_\sim\in\Pi_\mm(\bar{\mathscr M})\) its equivalence class
modulo \(\sim\). The canonical projection map \(\bar v\mapsto[\bar v]_\sim\)
will be denoted by \(\pi_\mm\colon\bar{\mathscr M}\to\Pi_\mm(\bar{\mathscr M})\).
\begin{lemma}
Let \((\X,\Sigma,\mm)\) be a \(\sigma\)-finite measure space.
Let \(\bar{\mathscr M}\) be a normed \(\mathcal L^\infty(\Sigma)\)-module.
Then \(\mathscr M\coloneqq\Pi_\mm(\bar{\mathscr M})\) is a
normed \(L^\infty(\mm)\)-module and \(\pi_\mm\colon\bar{\mathscr M}\to\mathscr M\)
is linear and continuous.
\end{lemma}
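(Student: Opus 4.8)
The plan is to transport the structure of \(\bar{\mathscr M}\) down along the projection \(\pi_\mm\), verifying the four axioms of Definition \ref{def:normed_Linfty_mod} one at a time, after first checking that \(\sim\) is compatible with the ring quotient \([\,\cdot\,]_\mm\colon\mathcal L^\infty(\Sigma)\to L^\infty(\mm)\). First I would observe that \(\sim\) is the equivalence relation induced by the \(\mathcal L^\infty(\Sigma)\)-submodule \(\bar{\mathscr N}\coloneqq\{\bar v\in\bar{\mathscr M}:|\bar v|=0\ \mm\text{-a.e.}\}\), which is closed under sums by the triangle inequality and under the action since \(|\bar f\cdot\bar v|=|\bar f||\bar v|\); hence \(\mathscr M=\bar{\mathscr M}/\bar{\mathscr N}\) is at least an abelian group. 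To promote it to an \(L^\infty(\mm)\)-module I would set \([\bar f]_\mm\cdot[\bar v]_\sim\coloneqq[\bar f\cdot\bar v]_\sim\) and check independence of both representatives: if \(\bar f\sim_\mm\bar g\) then \(|\bar f\cdot\bar v-\bar g\cdot\bar v|=|\bar f-\bar g||\bar v|=0\) \(\mm\)-a.e., while if \(\bar v\sim\bar v'\) then \(|\bar f\cdot\bar v-\bar f\cdot\bar v'|=|\bar f||\bar v-\bar v'|=0\) \(\mm\)-a.e. The module axioms then descend from those of \(\bar{\mathscr M}\) by surjectivity of \(\pi_\mm\), which also yields the compatibility \(\pi_\mm(\bar f\cdot\bar v)=[\bar f]_\mm\cdot\pi_\mm(\bar v)\); this settles item i).

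For item ii) I would define the pointwise norm on \(\mathscr M\) by \(|[\bar v]_\sim|\coloneqq[|\bar v|]_\mm\in L^\infty(\mm)\), which is well posed because \(\bar v\sim\bar w\) forces \(\big||\bar v|-|\bar w|\big|\le|\bar v-\bar w|=0\) \(\mm\)-a.e. The three properties in \eqref{eq:ptwse_norm_infty} then follow from the corresponding ones of \(|\cdot|\) on \(\bar{\mathscr M}\) together with the fact that \([\,\cdot\,]_\mm\) is a positivity-preserving ring homomorphism; nondegeneracy holds since \([|\bar v|]_\mm=0\) means exactly \(\bar v\sim 0\). At the same stroke I obtain \(\|\pi_\mm(\bar v)\|_{\mathscr M}={\rm ess\,sup}_\X|\bar v|\le\sup_\X|\bar v|=\|\bar v\|_{\bar{\mathscr M}}\), so that \(\pi_\mm\) is linear and \(1\)-Lipschitz, which is the continuity assertion of the statement.

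The remaining two axioms, glueing (iii) and completeness (iv), both hinge on the one genuine difficulty: the glueing and completeness of \(\bar{\mathscr M}\) are phrased with honest pointwise suprema \(\sup_{A_n}\) and \(\sup_\X\), whereas projecting to \(\mathscr M\) only retains essential suprema. The device I would use throughout is truncation on \(\mm\)-null sets. For glueing, given a partition \((A_n)_n\) and \((v_n)_n\subseteq\mathscr M\) with \(C\coloneqq\sup_n{\rm ess\,sup}_{A_n}|v_n|<+\infty\), I pick representatives \(\bar v_n\) of \(v_n\), let \(N_n\coloneqq\{x\in A_n:|\bar v_n|(x)>C\}\in\Sigma\) (which is \(\mm\)-negligible), and replace \(\bar v_n\) by \(\bar w_n\coloneqq\nchi_{A_n\setminus N_n}\cdot\bar v_n\), so that \(\sup_{A_n}|\bar w_n|\le C\). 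The glueing property of \(\bar{\mathscr M}\) then furnishes \(\bar v=\sum_n\nchi_{A_n}\cdot\bar w_n\); setting \(v\coloneqq\pi_\mm(\bar v)\) and using \([\nchi_{A_n\setminus N_n}]_\mm=[\nchi_{A_n}]_\mm\) gives \([\nchi_{A_n}]_\mm\cdot v=[\nchi_{A_n}]_\mm\cdot v_n\), as required.

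For completeness I would take a Cauchy sequence in \(\mathscr M\), pass to a subsequence with \(\|v_{k+1}-v_k\|_{\mathscr M}<2^{-k}\), lift each difference to some \(\bar u_k\), and truncate on the null set \(\{|\bar u_k|>2^{-k}\}\) to obtain \(\bar u_k'\) with \(\sup_\X|\bar u_k'|\le 2^{-k}\) and \(\pi_\mm(\bar u_k')=v_{k+1}-v_k\). The series \(\sum_k\bar u_k'\) is absolutely convergent, hence convergent in the complete space \(\bar{\mathscr M}\); adding a fixed lift of \(v_1\) produces \(\bar v\in\bar{\mathscr M}\), and applying the continuous map \(\pi_\mm\) to the telescoping partial sums shows \(v_k\to\pi_\mm(\bar v)\). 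Since the original sequence is Cauchy, it converges to the same limit, establishing item iv). The main obstacle, as indicated, is precisely the passage from pointwise to essential suprema; once the truncation argument is in place, every step is a routine transfer of the corresponding property of \(\bar{\mathscr M}\).
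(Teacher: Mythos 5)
Your proposal is correct and takes essentially the same approach as the paper: define the operations and pointwise norm on the quotient, and overcome the only real issue --- that glueing and completeness of \(\bar{\mathscr M}\) are stated with genuine suprema while \(\mathscr M\) only sees essential suprema --- by truncating representatives on \(\mm\)-null sets before applying the corresponding axioms of \(\bar{\mathscr M}\). The sole cosmetic difference is in the completeness step, where the paper truncates the whole lifted Cauchy sequence on a single common null set and takes its limit in \(\bar{\mathscr M}\) directly, whereas you pass to a fast subsequence and sum an absolutely convergent telescoping series; these are interchangeable variants of the same truncation idea.
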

\begin{proof}
Given any \(v,w\in\mathscr M\) and \(f\in L^\infty(\mm)\) -- say
\(v=[\bar v]_\sim\), \(w=[\bar w]_\sim\), and \(f=[\bar f]_\mm\) -- we set
\[\begin{split}
v+w&\coloneqq[\bar v+\bar w]_\sim\in\mathscr M,\\
f\cdot v&\coloneqq[\bar f\cdot\bar v]_\sim\in\mathscr M,\\
|v|&\coloneqq\big[|\bar v|\big]_\mm\in L^\infty(\mm).
\end{split}\]
It can be readily checked that the above operations are well-posed,
meaning that they do not depend on the specific choice of the
representatives \(\bar v\), \(\bar w\), and \(\bar f\). Moreover, we have that
the operator \(|\cdot|\colon\mathscr M\to L^\infty(\mm)\)
satisfies \eqref{eq:ptwse_norm} as an immediate consequence
of \eqref{eq:bar_ptwse_norm}. To prove the glueing property
of \(\mathscr M\), fix a partition \((A_n)_{n\in\N}\subseteq\Sigma\) of \(\X\)
and a sequence \((v_n)_{n\in\N}\subseteq\mathscr M\) such that
\(\sup_n{\rm ess\,sup}_{A_n}|v_n|<+\infty\). Choose a representative
\(\bar v_n\) of \(v_n\) for each \(n\in\N\), so that there exists an \(\mm\)-negligible
set \(N\in\Sigma\) such that \(\sup_n\sup_{A_n\setminus N}|\bar v_n|<+\infty\).
Since \(\bar{\mathscr M}\) has the glueing property, there is
\(\bar v\in\bar{\mathscr M}\) such that
\(\nchi_{A_n\setminus N}\cdot\bar v=\nchi_{A_n\setminus N}\cdot\bar v_n\)
holds for all \(n\in\N\). Therefore,
\[
[\nchi_{A_n}]_\mm\cdot\pi_\mm(\bar v)
=\pi_\mm(\nchi_{A_n\setminus N}\cdot\bar v)
=\pi_\mm(\nchi_{A_n\setminus N}\cdot\bar v_n)
=[\nchi_{A_n}]_\mm\cdot v_n\quad\text{ holds for every }n\in\N,
\]
which shows that \(\mathscr M\) has the glueing property.
Now let us define \({\|v\|}_{\mathscr M}\coloneqq{\rm ess\,sup}_\X |v|\)
for every element \(v\in\mathscr M\). We aim to prove that
\(\big(\mathscr M,{\|\cdot\|}_{\mathscr M}\big)\) is a Banach space,
so fix a Cauchy sequence \((v_n)_n\subseteq\mathscr M\),
say \(v_n=[\bar v_n]_\sim\) for all \(n\).
Hence, there exists \(N\in\Sigma\) with \(\mm(N)=0\) such that
\(\sup_{\X\setminus N}|\bar v_n-\bar v_m|\to 0\) as \(n,m\to\infty\).
This means that the sequence \((\nchi_{\X\setminus N}\cdot\bar v_n)_{n\in\N}\)
is Cauchy in \(\bar{\mathscr M}\), thus
\({\|\nchi_{\X\setminus N}\cdot\bar v_n-\bar v\|}_{\bar{\mathscr M}}\to 0\)
for some \(\bar v\in\bar{\mathscr M}\). Then we have that
\[
{\big\|v_n-\pi_\mm(\bar v)\big\|}_{\mathscr M}
=\underset{\X}{\rm ess\,sup\,}\big|v_n-\pi_\mm(\bar v)\big|
\leq\sup_\X\big|\nchi_{\X\setminus N}\cdot\bar v_n-\bar v\big|
\longrightarrow 0\quad\text{ as }n\to\infty,
\]
as required. Finally, linearity and continuity of
\(\pi_\mm\colon\bar{\mathscr M}\to\mathscr M\) can be trivially verified.
\end{proof}
\subsection{Liftings of normed \texorpdfstring{\(L^\infty(\mm)\)}
{Linfty(m)}-modules}\label{ss:lift_mod}
The following result shows that any lifting of measurable functions
can be made into a `lifting of normed modules', much like in Theorem
\ref{thm:lifting_Linfty} we `raised' a lifting of a measure space to
a lifting of measurable functions. 
\begin{theorem}[Lifting of normed \(L^\infty(\mm)\)-modules]\label{thm:lifting_mod}
Let \((\X,\Sigma,\mm)\) be a \(\sigma\)-finite measure space.
Let \(\mathscr M\) be a normed \(L^\infty(\mm)\)-module.
Let \(\ell\) be a lifting of \(\mm\) and call
\(\mathcal L\colon L^\infty(\mm)\to\mathcal L^\infty(\Sigma)\)
its associated operator (as in Theorem \ref{thm:lifting_Linfty}).
Then there exists a unique couple \((\bar{\mathscr M},\mathscr L)\),
called the \emph{\(\mathcal L\)-lifting} of \(\mathscr M\),
where \(\bar{\mathscr M}\) is a normed \(\mathcal L^\infty(\Sigma)\)-module
and \(\mathscr L\colon\mathscr M\to\bar{\mathscr M}\) is a linear map
that satisfies the following properties:
\begin{itemize}
\item[\(\rm i)\)] It holds that \(\big|\mathscr L(v)\big|=\mathcal L\big(|v|\big)\)
for every \(v\in\mathscr M\).
\item[\(\rm ii)\)] The linear subspace \(\mathcal V\) of all elements \(\bar v\in\bar{\mathscr M}\)
of the form \(\bar v=\sum_{n\in\N}\nchi_{A_n}\cdot\mathscr L(v_n)\),
where \((A_n)_{n\in\N}\subseteq\Sigma\) is a partition of \(\X\) and
\((v_n)_{n\in\N}\subseteq\mathscr M\), is dense in \(\bar{\mathscr M}\).
\end{itemize}
Uniqueness is intended up to unique isomorphism: given any other couple
\((\bar{\mathscr N},\mathscr L')\) with the same properties, there exists
a unique \(\mathcal L^\infty(\Sigma)\)-module isomorphism
\(\Psi\colon\bar{\mathscr M}\to\bar{\mathscr N}\) preserving the pointwise norm such that
\begin{equation}\begin{tikzcd}
\mathscr M \arrow{r}{\mathscr L} \arrow[swap]{rd}{\mathscr L'}
& \bar{\mathscr M} \arrow{d}{\Psi} \\
 & \bar{\mathscr N}
\end{tikzcd}\end{equation}
is a commutative diagram.
\end{theorem}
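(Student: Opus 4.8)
The plan is to construct \(\bar{\mathscr M}\) explicitly as the completion of a space of ``glued lifts'', in direct analogy with the proof of Theorem~\ref{thm:lifting_Linfty}, where \(\mathcal L\) was first defined on simple functions and then extended. First I would consider the set \(\mathcal P\) of all data \(d=\big((A_n)_n,(v_n)_n\big)\), where \((A_n)_n\subseteq\Sigma\) is a countable partition of \(\X\) and \((v_n)_n\subseteq\mathscr M\) satisfies \(\sup_n{\rm ess\,sup}_{A_n}|v_n|<+\infty\); such a datum is meant to represent \(\sum_n\nchi_{A_n}\cdot\mathscr L(v_n)\). To any \(d\) I would attach the everywhere-defined function \(|d|\coloneqq\sum_n\nchi_{A_n}\,\mathcal L\big(|v_n|\big)\in\mathcal L^\infty(\Sigma)\), and to any two data \(d=\big((A_n),(v_n)\big)\), \(d'=\big((B_m),(w_m)\big)\) the quantity \(\rho(d,d')\coloneqq\sup_\X\sum_{n,m}\nchi_{A_n\cap B_m}\,\mathcal L\big(|v_n-w_m|\big)\). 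Using additivity, multiplicativity and monotonicity of \(\mathcal L\) (items iv), v), vi) of Theorem~\ref{thm:lifting_Linfty}), I would check that \(\rho\) is a well-defined pseudometric; then \(\bar{\mathscr M}\) is defined as the metric completion of the quotient \(\mathcal V\coloneqq\mathcal P/\{\rho=0\}\), and \(\mathscr L(v)\) as the class of the one-block datum \(\big(\{\X\},v\big)\).

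Second, I would install the \(\mathcal L^\infty(\Sigma)\)-module structure and the pointwise norm. Addition and \(\R\)-scaling of data are defined through a common refinement of the partitions, while the product \(\bar f\cdot d\) is first defined for \(\Sigma\)-simple \(\bar f\) (again by refining partitions) and then extended to arbitrary \(\bar f\in\mathcal L^\infty(\Sigma)\) by uniform approximation, exploiting the density of \(\overline{{\sf Sf}}(\Sigma)\). The pointwise norm is the continuous extension of \(d\mapsto|d|\); note that \(\big|\,|d|-|d'|\,\big|\le\rho(d,d')\) on \(\X\), so \(|\cdot|\) is \(1\)-Lipschitz and descends to the completion. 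The well-posedness of the pointwise norm on classes is the exact analogue of \eqref{eq:lifting_L0_aux2}, and relies on the multiplicativity and monotonicity of \(\mathcal L\). Property~i) is immediate from \(\big|\mathscr L(v)\big|=\mathcal L\big(|v|\big)\), while property~ii) holds by construction, since \(\mathcal V\) is exactly the set of (classes of) glued lifts and is dense in its completion.

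The main obstacle is to verify that the glueing property of Definition~\ref{def:normed_Lsigma_mod}~iii) survives the passage to the completion: the sup-norm does \emph{not} make the partial sums \(\sum_{n=1}^N\nchi_{A_n}\cdot\bar v_n\) of an infinite glueing converge, so glueing is genuinely extra information and cannot be obtained for free. I would resolve this by a \emph{uniform-rate} diagonal argument. Given a partition \((A_n)_n\subseteq\Sigma\) of \(\X\) and \((\bar v_n)_n\subseteq\bar{\mathscr M}\) with \(M\coloneqq\sup_n\sup_{A_n}|\bar v_n|<+\infty\), for every \(n\) I would use density of \(\mathcal V\) to pick \(\bar u_n^m\in\mathcal V\) with \({\|\bar u_n^m-\bar v_n\|}_{\bar{\mathscr M}}\le 2^{-m}\), the same rate \(2^{-m}\) being taken for all \(n\) simultaneously. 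Setting \(\bar z_m\coloneqq\sum_n\nchi_{A_n}\cdot\bar u_n^m\in\mathcal V\) (a legitimate glued lift, since \(\sup_n\sup_{A_n}|\bar u_n^m|\le M+1\)), the norm of a glueing being the supremum over blocks gives \({\|\bar z_m-\bar z_{m'}\|}_{\bar{\mathscr M}}=\sup_n{\|\nchi_{A_n}\cdot(\bar u_n^m-\bar u_n^{m'})\|}_{\bar{\mathscr M}}\le 2^{-m}+2^{-m'}\), so \((\bar z_m)_m\) is Cauchy and converges to some \(\bar v\in\bar{\mathscr M}\). Passing to the limit in \(\nchi_{A_n}\cdot\bar z_m=\nchi_{A_n}\cdot\bar u_n^m\) yields \(\nchi_{A_n}\cdot\bar v=\nchi_{A_n}\cdot\bar v_n\), which is exactly the required glueing; item~iv) holds trivially, as \(\bar{\mathscr M}\) is complete by construction.

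Finally, for uniqueness, given another couple \((\bar{\mathscr N},\mathscr L')\) with the same properties I would define \(\Psi\) on the dense subspace \(\mathcal V\subseteq\bar{\mathscr M}\) by \(\Psi\big(\sum_n\nchi_{A_n}\cdot\mathscr L(v_n)\big)\coloneqq\sum_n\nchi_{A_n}\cdot\mathscr L'(v_n)\). Property~i), applied in both modules, shows that the pointwise norm of a glued lift depends on the datum only through the functions \(\mathcal L\big(|\cdot|\big)\); hence \(\big|\Psi(\bar v)-\Psi(\bar w)\big|=|\bar v-\bar w|\) for \(\bar v,\bar w\in\mathcal V\), which simultaneously shows that \(\Psi\) is well-posed (the same element of \(\bar{\mathscr M}\) is sent to a single element of \(\bar{\mathscr N}\)) and that it is a pointwise-norm-preserving isometry. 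It therefore extends uniquely to a map \(\bar{\mathscr M}\to\bar{\mathscr N}\); it is surjective because glued lifts are dense in \(\bar{\mathscr N}\) by property~ii) for \((\bar{\mathscr N},\mathscr L')\), it is an \(\mathcal L^\infty(\Sigma)\)-module morphism since it commutes with multiplication by \(\Sigma\)-simple functions and these are dense, and \(\Psi\circ\mathscr L=\mathscr L'\) holds by construction. Uniqueness of \(\Psi\) follows from its being prescribed on a dense set.
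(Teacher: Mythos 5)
Your proposal is correct and follows essentially the same route as the paper: the paper likewise builds \(\bar{\mathscr M}\) as the completion of the space of glued data \(\big\{(A_n,v_n)\big\}_n\) modulo the relation \(\nchi_{A_n\cap B_m}\,\mathcal L\big(|v_n-w_m|\big)=0\) (your pseudometric \(\rho\) vanishes exactly on this relation, and equals the paper's sup-norm distance), proves the glueing property by the same uniform-rate Cauchy argument, and obtains uniqueness by prescribing \(\Psi\) on glued lifts and extending by density. The only point you gloss --- that the class of a datum in the completion coincides with the glued element \(\sum_{n\in\N}\nchi_{A_n}\cdot\mathscr L(v_n)\) of \(\bar{\mathscr M}\), which is what property ii) literally asserts --- is the short locality check (\(\mathcal V={\sf Pm}/\sim\)) that the paper carries out explicitly.
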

\begin{proof}\ \\
{\color{blue}\textsc{Existence.}}
First of all, let us define the \emph{pre-module} \(\sf Pm\) as
\[ 
{\sf Pm}\coloneqq\bigg\{\big\{(A_n,v_n)\big\}_{n\in\N}\;\bigg|\;\begin{array}{ll}
(A_n)_{n\in\N}\subseteq\Sigma\text{ is a partition of }\X,\\
(v_n)_n\subseteq\mathscr M,\,
\sup\nolimits_n{\rm ess\,sup}_{A_n}|v_n|<\infty
\end{array}\bigg\}.
\]
We declare that \(\big\{(A_n,v_n)\big\}_n\sim\big\{(B_m,w_m)\big\}_m\) provided
\(\nchi_{A_n\cap B_m}\cdot\mathcal L\big(|v_n-w_m|\big)=0\) holds for every \(n,m\in\N\).
We denote by \([A_n,v_n]_n\) the equivalence class of the sequence \(\big\{(A_n,v_n)\big\}_n\)
with respect to the equivalence relation \(\sim\). We endow \({\sf Pm}/\sim\) with the
following operations:
\begin{equation}\label{eq:operations_Pm}\begin{split}
[A_n,v_n]_n+[B_m,w_m]_m&\coloneqq[A_n\cap B_m,v_n+w_m]_{n,m},\\
\bigg(\sum_{i=1}^k c_i\,\nchi_{C_i}\bigg)\cdot[A_n,v_n]_n&\coloneqq[A_n\cap C_i,c_i\, v_n]_{n,i},\\
\big|[A_n,v_n]_n\big|&\coloneqq\sum_{n\in\N}\nchi_{A_n}\,\mathcal L\big(|v_n|\big),\\
{\big\|[A_n,v_n]_n\big\|}_{\bar{\mathscr M}}&\coloneqq
\sup_\X\big|[A_n,v_n]_n\big|,
\end{split}\end{equation}
for every \([A_n,v_n]_n,[B_m,w_m]_m\in{\sf Pm}/\sim\) and
\(\sum_{i=1}^k c_i\,\nchi_{C_i}\in\overline{\sf Sf}(\Sigma)\).
Then we define \(\bar{\mathscr M}\) as the completion of the normed space
\(\big({\sf Pm}/\sim,{\|\cdot\|}_{\bar{\mathscr M}}\big)\).
It can be readily checked -- since the space
\(\overline{\sf Sf}(\Sigma)\) is dense in \(\mathcal L^\infty(\Sigma)\) -- that
the operations in \eqref{eq:operations_Pm} can be uniquely extended
to \(\bar{\mathscr M}\), which has a natural structure of
\(\mathcal L^\infty(\Sigma)\)-module endowed with a pointwise norm
\(|\cdot|\colon\bar{\mathscr M}\to\mathcal L^\infty(\Sigma)\) satisfying
items ii) and iv) of Definition \ref{def:normed_Lsigma_mod}. Define
\(\mathscr L\colon\mathscr M\to\bar{\mathscr M}\) as
\(\mathscr L(v)\coloneqq[\X,v]\in{\sf Pm}/\sim\)
for every \(v\in\mathscr M\). It is clear that \(\mathscr L\) is a linear
operator that satisfies i).
To show that \(\bar{\mathscr M}\) has the glueing property, fix
a sequence \((\bar w_n)_{n\in\N}\subseteq\bar{\mathscr M}\) and
a partition \((A_n)_{n\in\N}\subseteq\Sigma\) of \(\X\).
Given any \(n,k\in\N\), there exist a sequence
\((v^k_{n,i})_{i\in\N}\subseteq\mathscr M\) and a partition
\((A^k_{n,i})_{i\in\N}\subseteq\Sigma\) of \(A_n\) such that
\({\big\|[A^k_{n,i},v^k_{n,i}]_i-\nchi_{A_n}\cdot\bar w_n\big\|}_{\bar{\mathscr M}}
\leq 2^{-k}\). Call \(\bar z^k\coloneqq[A^k_{n,i},v^k_{n,i}]_{n,i}\in\bar{\mathscr M}\)
for every \(k\in\N\). Given that \(\sup_{A_n}|\bar z^{k+1}-\bar z^k|\leq 2^{-k+1}\)
for all \(n,k\in\N\), we have
\({\big\|\bar z^{k+1}-\bar z^k\big\|}_{\bar{\mathscr M}}\leq 2^{-k+1}\) for all \(k\in\N\).
Then \({(\bar z^k)}_{k\in\N}\) is a Cauchy sequence, whence it converges to some element
\(\bar w\in\bar{\mathscr M}\). Since for any \(n\in\N\) it holds that
\(\nchi_{A_n}\cdot\bar w=\lim_k\nchi_{A_n}\cdot\bar z^k=\nchi_{A_n}\cdot\bar w_n\),
the glueing property is proved. Finally, it only remains to show ii).
Notice that it is enough to prove that \(\mathcal V={\sf Pm}/\sim\).
To this aim, fix a partition \((A_n)_{n\in\N}\subseteq\Sigma\) of \(\X\)
and a sequence \((v_n)_n\subseteq\mathscr M\). We denote by \(\bar v\) the
element \(\sum_{n\in\N}\nchi_{A_n}\cdot\mathscr L(v_n)\in\bar{\mathscr M}\).
Then it holds that \(\bar v=[A_n,v_n]_n\): for any \(m\in\N\), we have
\[
\nchi_{A_m}\cdot\bar v=\nchi_{A_m}\cdot\mathscr L(v_m)=
\nchi_{A_m}\cdot[\X,v_m]=[A_m\cap A_n,v_n]_n=\nchi_{A_m}\cdot[A_n,v_n]_n.
\]
This proves that \(\mathcal V={\sf Pm}/\sim\) and accordingly that ii) is verified.\\
{\color{blue}\textsc{Uniqueness.}} First of all, observe that we are forced to set
\[
\Psi\Big(\sum_{n\in\N}\nchi_{A_n}\cdot\mathscr L(v_n)\Big)
\coloneqq\sum_{n\in\N}\nchi_{A_n}\cdot\mathscr L'(v_n)
\quad\text{ for every }\sum_{n\in\N}\nchi_{A_n}\cdot\mathscr L(v_n)\in\mathcal V.
\]
Its well-posedness stems from the fact that
\(\sum_{n\in\N}\nchi_{A_n}\cdot\mathscr L(v_n)\) and
\(\sum_{n\in\N}\nchi_{A_n}\cdot\mathscr L'(v_n)\) have the same pointwise norm by i).
Hence, by ii) the map \(\Psi\) can be uniquely extended to an
\(\mathcal L^\infty(\Sigma)\)-linear operator
\(\Psi\colon\bar{\mathscr M}\to\bar{\mathscr N}\) preserving
the pointwise norm. Finally, since \(\Psi(\mathcal V)\) is dense in
\(\bar{\mathscr N}\) again by ii), we conclude that \(\Psi\) is a normed
\(\mathcal L^\infty(\Sigma)\)-module isomorphism.
\end{proof}
\begin{remark}{\rm
We highlight an important byproduct of the proof of Theorem \ref{thm:lifting_mod}:
\begin{equation}\label{eq:L-lin}
\mathscr L(f\cdot v)=\mathcal L(f)\cdot\mathscr L(v)
\quad\text{ for every }v\in\mathscr M\text{ and }f\in L^\infty(\mm).
\end{equation}
By density of \({\sf Sf}(\mm)\) in \(L^\infty(\mm)\), it is enough to prove
it for \(f=\sum_{i=1}^n a_i\,[\nchi_{A_i}]_\mm\in{\sf Sf}(\mm)\). Since
\[
\nchi_{\ell(A_i)}\cdot\mathcal L\big(|a_i\,v-f\cdot v|\big)
=\mathcal L\big([\nchi_{A_i}]_\mm\big)\,\mathcal L\big(|a_i\,v-f\cdot v|\big)
=\mathcal L\big([\nchi_{A_i}]_\mm\,|a_i\,v-f\cdot v|\big)=0
\]
holds for every \(i=1,\ldots,n\), we see that
\(\big[\ell(A_i),a_i\,v\big]_i=[\X,f\cdot v]\). Therefore, we deduce
from the second line of \eqref{eq:operations_Pm} that
\[
\mathcal L(f)\cdot\mathscr L(v)=
\Big(\sum_{i=1}^n a_i\,\nchi_{\ell(A_i)}\Big)\cdot[\X,v]=
\big[\ell(A_i),a_i\,v\big]_i=[\X,f\cdot v]=\mathscr L(f\cdot v),
\]
as required.
\fr}\end{remark}

A natural question arises: given a normed \(L^\infty(\mm)\)-module \(\mathscr M\)
and calling \(\bar{\mathscr M}\) its \(\mathcal L\)-lifting, do
\(\Pi_\mm(\bar{\mathscr M})\) and \(\mathscr M\) coincide?
The following result shows that the answer is positive.
\begin{lemma}
Let \((\X,\Sigma,\mm)\) be a \(\sigma\)-finite measure space.
Let \(\mathscr M\) be a normed \(L^\infty(\mm)\)-module.
Let \(\ell\) be any lifting of \(\mm\), with associated operator
\(\mathcal L\colon L^\infty(\mm)\to\mathcal L^\infty(\Sigma)\).
Denote by \((\bar{\mathscr M},\mathscr L)\) the \(\mathcal L\)-lifting
of \(\mathscr M\). Then it holds that \(\Pi_\mm(\bar{\mathscr M})\cong\mathscr M\).
\end{lemma}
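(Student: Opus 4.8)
The plan is to exhibit the natural candidate and verify it is an isomorphism of normed \(L^\infty(\mm)\)-modules. Set
\[
\Phi\coloneqq\pi_\mm\circ\mathscr L\colon\mathscr M\longrightarrow\Pi_\mm(\bar{\mathscr M}).
\]
First I would check that \(\Phi\) preserves the pointwise norm: by item i) of Theorem \ref{thm:lifting_mod} we have \(\big|\mathscr L(v)\big|=\mathcal L\big(|v|\big)\), hence \(\big|\Phi(v)\big|=\big[\mathcal L(|v|)\big]_\mm=|v|\) using item iii) of Theorem \ref{thm:lifting_Linfty} (the fact that \(\mathcal L\) is a right inverse of \([\,\cdot\,]_\mm\)). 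Next I would show \(L^\infty(\mm)\)-linearity: additivity is clear, while for \(f\in L^\infty(\mm)\) and \(v\in\mathscr M\) the identity \eqref{eq:L-lin} gives \(\mathscr L(f\cdot v)=\mathcal L(f)\cdot\mathscr L(v)\), and since \(\mathcal L(f)\) is a representative of \(f\) (again by item iii) of Theorem \ref{thm:lifting_Linfty}) the definition of the module structure on \(\Pi_\mm(\bar{\mathscr M})\) yields \(\Phi(f\cdot v)=\pi_\mm\big(\mathcal L(f)\cdot\mathscr L(v)\big)=f\cdot\Phi(v)\). Since \(\Phi\) preserves the pointwise norm it is in particular a norm-isometry, hence injective.

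The core of the argument, and the step I expect to be the main obstacle, is surjectivity. Because \(\Phi\) is a norm-isometry and \(\mathscr M\) is complete (item iv) of Definition \ref{def:normed_Linfty_mod}), the image \(\Phi(\mathscr M)\) is complete, hence closed in \(\Pi_\mm(\bar{\mathscr M})\). It therefore suffices to prove that \(\Phi(\mathscr M)\) is dense. Recall from the proof of Theorem \ref{thm:lifting_mod} that the elements of the form \(\bar v=\sum_{n\in\N}\nchi_{A_n}\cdot\mathscr L(v_n)\), with \((A_n)_{n}\) a partition of \(\X\) and \(\sup_n{\rm ess\,sup}_{A_n}|v_n|<\infty\), are dense in \(\bar{\mathscr M}\); since \(\pi_\mm\) is \(1\)-Lipschitz and surjective, it is enough to realise each such \(\pi_\mm(\bar v)\) as a value of \(\Phi\). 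Given one such \(\bar v\), I would use the glueing property of \(\mathscr M\) to form \(u\coloneqq\sum_{n\in\N}[\nchi_{A_n}]_\mm\cdot v_n\in\mathscr M\).

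It remains to verify that \(\Phi(u)=\pi_\mm(\bar v)\), i.e.\ that \(\big|\mathscr L(u)-\bar v\big|=0\) holds \(\mm\)-a.e. On each \(A_n\) one has \([\nchi_{A_n}]_\mm\cdot u=[\nchi_{A_n}]_\mm\cdot v_n\), so \(\nchi_{A_n}\cdot\bar v=\nchi_{A_n}\cdot\mathscr L(v_n)\) gives
\[
\nchi_{A_n}\,\big|\mathscr L(u)-\bar v\big|=\nchi_{A_n}\,\big|\mathscr L(u)-\mathscr L(v_n)\big|=\nchi_{A_n}\,\mathcal L\big(|u-v_n|\big).
\]
Here the delicate point is the passage between \(A_n\) and \(\ell(A_n)\): from \([\nchi_{A_n}]_\mm\,|u-v_n|=0\) and multiplicativity of \(\mathcal L\) (item iv) of Theorem \ref{thm:lifting_Linfty}) one gets \(\nchi_{\ell(A_n)}\,\mathcal L(|u-v_n|)=0\) everywhere, and since \(\mm\big(A_n\Delta\ell(A_n)\big)=0\) one has \(\nchi_{A_n}=\nchi_{\ell(A_n)}\) \(\mm\)-a.e., whence \(\nchi_{A_n}\,\mathcal L(|u-v_n|)=0\) \(\mm\)-a.e. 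As \((A_n)_n\) is a countable partition of \(\X\), this forces \(\big|\mathscr L(u)-\bar v\big|=0\) \(\mm\)-a.e., i.e.\ \(\Phi(u)=\pi_\mm(\bar v)\). Thus \(\Phi(\mathscr M)\) contains a dense subset and, being closed, equals \(\Pi_\mm(\bar{\mathscr M})\). Therefore \(\Phi\) is a pointwise-norm-preserving \(L^\infty(\mm)\)-linear bijection, which is precisely an isomorphism \(\Pi_\mm(\bar{\mathscr M})\cong\mathscr M\).
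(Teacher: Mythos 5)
Your proposal is correct and takes essentially the same route as the paper: the same candidate map \(\pi_\mm\circ\mathscr L\), the same use of \eqref{eq:L-lin} and of item i) of Theorem \ref{thm:lifting_mod} for \(L^\infty(\mm)\)-linearity and pointwise-norm preservation, and surjectivity via the dense subspace \(\mathcal V\) of glued elements \(\sum_{n\in\N}\nchi_{A_n}\cdot\mathscr L(v_n)\) combined with the glueing property of \(\mathscr M\). The only cosmetic difference lies in the last step: you conclude by noting that the isometric image of the complete space \(\mathscr M\) is closed, whereas the paper reaches the same conclusion by explicitly exhibiting, for a given \([\bar w]_\sim\), a Cauchy sequence in \(\mathscr M\) whose limit is a preimage --- the two arguments are interchangeable.
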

\begin{proof}
To prove the statement, it suffices to show that the map
\(T\coloneqq\pi_\mm\circ\mathscr L\colon\mathscr M\to\Pi_\mm(\bar{\mathscr M})\)
is an isomorphism of normed \(L^\infty(\mm)\)-modules.
We know that \(T\) is \(L^\infty(\mm)\)-linear: it is linear as composition
of linear operators, while for any \(f\in L^\infty(\mm)\) and \(v\in\mathscr M\) it holds that
\[
T(f\cdot v)=\pi_\mm\big(\mathscr L(f\cdot v)\big)\overset{\eqref{eq:L-lin}}=
\pi_\mm\big(\mathcal L(f)\cdot\mathscr L(v)\big)=
\big[\mathcal L(f)\cdot\mathscr L(v)\big]_\sim=
f\cdot\big[\mathscr L(v)\big]_\sim=f\cdot T(v).
\]
Furthermore, for every \(v\in\mathscr M\) we have that
\[
\big|T(v)\big|=\big|\big[\mathscr L(v)\big]_\sim\big|
=\big[\big|\mathscr L(v)\big|\big]_\mm=\big[\mathcal L\big(|v|\big)\big]_\mm
=|v|\quad\text{ holds }\mm\text{-a.e.\ on }\X,
\]
in other words, the map \(T\) preserves the pointwise norm.
In order to conclude, it suffices to prove that \(T\) is surjective.
Let \([\bar w]_\sim\in\Pi_\mm(\bar{\mathscr M})\) be fixed.
Then for any \(n\in\N\) we can pick an element
\(\bar w_n=\sum_{i\in\N}\nchi_{A^n_i}\cdot\mathscr L(v^n_i)\in\bar{\mathscr M}\)
in such a way that \(\lim_n{\|\bar w_n-\bar w\|}_{\bar{\mathscr M}}=0\).
Now let us set \(v_n\coloneqq\sum_{i\in\N}[\nchi_{A^n_i}]_\mm\cdot v^n_i\in\mathscr M\)
for all \(n\in\N\), which is well-defined as
\[
\sup_{i\in\N}\,\underset{A^n_i}{\rm ess\,sup}\,|v^n_i|
\leq\sup_{i\in\N}\,\sup_{A^n_i}\big|\mathscr L(v^n_i)\big|
={\|\bar w_n\|}_{\bar{\mathscr M}}<+\infty
\quad\text{ for every }n\in\N.
\]
Since for any \(n,m\in\N\) it holds that
\[
|v_n-v_m|=\sum_{i,j\in\N}[\nchi_{A^n_i\cap A^m_j}]_\mm\,|v^n_i-v^m_j|
=\sum_{i,j\in\N}[\nchi_{A^n_i\cap A^m_j}]_\mm\,
\Big[\big|\mathscr L(v^n_i)-\mathscr L(v^m_j)\big|\Big]_\mm
=\big[|\bar w_n-\bar w_m|\big]_\mm,
\]
we see that the sequence \((v_n)_{n\in\N}\subseteq\mathscr M\) is
Cauchy, thus it converges to some element \(v\in\mathscr M\).
In general \(\mathscr L(v)\) and \(\bar w\) may be different, but for sure one has
that \(\big[\big|\mathscr L(v)-\bar w\big|\big]_\mm=0\), indeed
\[
\Big[\big|\mathscr L(v)-\bar w\big|\Big]_\mm\leq
\Big[\big|\mathscr L(v-v_n)\big|\Big]_\mm+
\Big[\big|\mathscr L(v_n)-\bar w_n\big|\Big]_\mm+
\big[|\bar w_n-\bar w|\big]_\mm=
|v-v_n|+\big[|\bar w_n-\bar w|\big]_\mm
\]
is satisfied \(\mm\)-a.e.\ and \(|v-v_n|+\big[|\bar w_n-\bar w|\big]_\mm\to 0\)
with respect to the \(L^\infty(\mm)\)-norm. This means that \(T(v)=[\bar w]_\sim\),
whence the operator \(T\) is surjective, as desired.
\end{proof}
\subsection{Fibers of a normed
\texorpdfstring{\(\mathcal L^\infty(\Sigma)\)}
{Linfty(Sigma)}-module}\label{ss:fibers}
Let \((\X,\Sigma,\mm)\) be a \(\sigma\)-finite measure space and let
\(\bar{\mathscr M}\) be a normed \(\mathcal L^\infty(\Sigma)\)-module.
Given any \(x\in\X\), we define the submodule \(\bar{\mathscr M}_x\)
of \(\bar{\mathscr M}\) as
\[
\bar{\mathscr M}_x\coloneqq\nchi_{\{x\}}\cdot\bar{\mathscr M}.
\]
Since the ideal \((\nchi_{\{x\}})\subseteq\mathcal L^\infty(\Sigma)\) generated by
\(\nchi_{\{x\}}\) can be identified with the real field \(\R\), we deduce that
\(\bar{\mathscr M}_x\) inherits a vector space structure. Moreover, let us define
\[
{\|\bar v\|}_x\coloneqq|\bar v|(x)\quad\text{ for every }\bar v\in\bar{\mathscr M}_x.
\]
Therefore, \(\big(\bar{\mathscr M}_x,{\|\cdot\|}_x\big)\) is a Banach space.
We call it the \emph{fiber} of \(\bar{\mathscr M}\) over the point \(x\).
\begin{remark}{\rm
Given any element \(\bar v\in\bar{\mathscr M}\), we shall make use of the shorthand notation
\[
\bar v_x\coloneqq\nchi_{\{x\}}\cdot\bar v\in\bar{\mathscr M}_x
\quad\text{ for every }x\in\X.
\]
Then \(\bar v\in\bar{\mathscr M}\) can be thought of as a map assigning
to any \(x\in\X\) a vector \(\bar v_x\in\bar{\mathscr M}_x\).
\fr}\end{remark}

Fix a normed \(L^0(\mm)\)-module \(\mathscr M\) and consider the duality pairing
\begin{equation}\label{eq:duality_pairing_L0}
\la\cdot,\cdot\ra\colon\mathscr M^*\times\mathscr M\longrightarrow L^0(\mm),
\quad\la\omega,v\ra\coloneqq\omega(v)\;\;\;\text{for every }
\omega\in\mathscr M^*\text{ and }v\in\mathscr M.
\end{equation}
Choose any lifting \(\ell\) of \(\mm\). Denote by \(\mathcal L\)
the operator associated with \(\ell\) as in Theorem \ref{thm:lifting_Linfty}.
Call \((\bar{\mathscr M},\mathscr L)\) and \((\bar{\mathscr N},\mathscr L^*)\)
the \(\mathcal L\)-liftings of the normed \(L^\infty(\mm)\)-modules
\(\sfR(\mathscr M)\) and \(\sfR(\mathscr M^*)\), respectively.
Notice that the duality pairing in \eqref{eq:duality_pairing_L0}
restricts to an \(L^\infty(\mm)\)-bilinear and continuous map
\(\la\cdot,\cdot\ra\colon\sfR(\mathscr M^*)\times\sfR(\mathscr M)\to L^\infty(\mm)\).
We now want to lift it to a duality pairing between \(\bar{\mathscr M}\)
and \(\bar{\mathscr N}\). Given any two sequences \((v_n)_{n\in\N}\subseteq\sfR(\mathscr M)\),
\((\omega_m)_{m\in\N}\subseteq\sfR(\mathscr M^*)\) and any two partitions
\((A_n)_{n\in\N},(B_m)_{m\in\N}\subseteq\Sigma\) of \(\X\), let us define
\begin{equation}\label{eq:def_pairing_lift}
\bigg\langle\sum_{m\in\N}\nchi_{B_m}\cdot\mathscr L^*(\omega_m),
\sum_{n\in\N}\nchi_{A_n}\cdot\mathscr L(v_n)\bigg\rangle
\coloneqq\sum_{n,m\in\N}\nchi_{A_n\cap B_m}\,\mathcal L\big(\la\omega_m,v_n\ra\big)
\in\mathcal L^\infty(\Sigma).
\end{equation}
The well-posedness of the previous definition stems from the following inequality:
\[\begin{split}
\bigg|\sum_{n,m\in\N}\nchi_{A_n\cap B_m}\,\mathcal L\big(\la\omega_m,v_n\ra\big)\bigg|
&=\sum_{n,m\in\N}\nchi_{A_n\cap B_m}\,\mathcal L\big(\big|\la\omega_m,v_n\ra\big|\big)\\
&\leq\sum_{n,m\in\N}\nchi_{A_n\cap B_m}\,\mathcal L\big(|\omega_m||v_n|\big)\\
&=\sum_{n,m\in\N}\nchi_{A_n\cap B_m}\,\mathcal L\big(|\omega_m|\big)
\,\mathcal L\big(|v_n|\big)\\
&=\sum_{n,m\in\N}\nchi_{A_n\cap B_m}\,\big|\mathscr L^*(\omega_m)\big|
\,\big|\mathscr L(v_n)\big|\\
&=\bigg|\sum_{m\in\N}\nchi_{B_m}\cdot\mathscr L^*(\omega_m)\bigg|\,
\bigg|\sum_{n\in\N}\nchi_{A_n}\cdot\mathscr L(v_n)\bigg|.
\end{split}\]
The same inequality grants that the map in \eqref{eq:def_pairing_lift}
can be uniquely extended to a pairing
\[
\la\cdot,\cdot\ra\colon\bar{\mathscr N}\times\bar{\mathscr M}
\longrightarrow\mathcal L^\infty(\Sigma),
\]
\emph{i.e.}, to an \(\mathcal L^\infty(\Sigma)\)-bilinear and continuous map
satisfying the inequality
\begin{equation}\label{eq:pairing_lift_ineq}
\big|\la\bar\omega,\bar v\ra\big|\leq|\bar\omega||\bar v|
\quad\text{ everywhere on }\X
\end{equation}
for every \(\bar\omega\in\bar{\mathscr N}\) and \(\bar v\in\bar{\mathscr M}\).
\begin{remark}{\rm
We underline that
\begin{equation}\label{eq:pairing_mult}
\big\langle\mathscr L^*(\omega),\mathscr L(v)\big\rangle=
\mathcal L\big(\omega(v)\big)\quad\text{ for every }
v\in\sfR(\mathscr M)\text{ and }\omega\in\sfR(\mathscr M^*),
\end{equation}
by the very definition \eqref{eq:def_pairing_lift} of \(\la\cdot,\cdot\ra\colon
\bar{\mathscr N}\times\bar{\mathscr M}\to\mathcal L^\infty(\Sigma)\).
\fr}\end{remark}
\bigskip

Now fix a point \(x\in\X\). Then the pairing \(\la\cdot,\cdot\ra\colon
\bar{\mathscr N}\times\bar{\mathscr M}\to\mathcal L^\infty(\Sigma)\)
naturally induces a pairing
\(\la\cdot,\cdot\ra_x\colon\bar{\mathscr N}_x\times\bar{\mathscr M}_x\to\R\),
which is the bilinear and continuous function given by
\[
\la\bar\omega,\bar v\ra_x\coloneqq\la\bar\omega,\bar v\ra(x)\quad
\text{ for every }\bar\omega\in\bar{\mathscr N}_x\text{ and }
\bar v\in\bar{\mathscr M}_x.
\]
Observe that \(\bar{\mathscr M}_x\ni\bar v\mapsto\la\bar\omega,\bar v\ra_x\in\R\)
is a linear and continuous function for every \(\bar\omega\in\bar{\mathscr N}_x\).

Therefore, it makes sense to define the map
\({\rm R}_x\colon\bar{\mathscr N}_x\to(\bar{\mathscr M}_x)'\) as
\begin{equation}\label{eq:def_R_x}
{\rm R}_x(\bar\omega)\coloneqq\la\bar\omega,\cdot\ra_x
\quad\text{ for every }\bar\omega\in\bar{\mathscr N}_x.
\end{equation}
It is clear that \({\rm R}_x\) is a linear operator.
\begin{proposition}\label{prop:Rx_isometry}
Under the above assumptions, it holds that the map
\({\rm R}_x\colon\bar{\mathscr N}_x\to(\bar{\mathscr M}_x)'\) is an isometric embedding.
\end{proposition}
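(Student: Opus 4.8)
The plan is to establish the two inequalities $\big\|{\rm R}_x(\bar\omega)\big\|_{(\bar{\mathscr M}_x)'}\leq\|\bar\omega\|_x$ and $\big\|{\rm R}_x(\bar\omega)\big\|_{(\bar{\mathscr M}_x)'}\geq\|\bar\omega\|_x$ separately. The first one is immediate: evaluating the everywhere-inequality \eqref{eq:pairing_lift_ineq} at the point $x$ gives $\big|\la\bar\omega,\bar v\ra_x\big|\leq|\bar\omega|(x)\,|\bar v|(x)=\|\bar\omega\|_x\,\|\bar v\|_x$ for all $\bar v\in\bar{\mathscr M}_x$, whence ${\rm R}_x$ is $1$-Lipschitz and $\big\|{\rm R}_x(\bar\omega)\big\|\leq\|\bar\omega\|_x$. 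The whole difficulty lies in the reverse inequality, which I would reduce to a dense class of elements and then prove there.

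For the reduction, I would observe that the elements of the form $\mathscr L^*(\omega)_x$, with $\omega\in\sfR(\mathscr M^*)$, are dense in $\bar{\mathscr N}_x$. Indeed, the subspace $\mathcal V$ of item ii) of Theorem \ref{thm:lifting_mod} is dense in $\bar{\mathscr N}$, the fiber projection $\bar{\mathscr N}\ni\tilde\omega\mapsto\nchi_{\{x\}}\cdot\tilde\omega\in\bar{\mathscr N}_x$ is a surjective contraction, and it sends a simple element $\sum_m\nchi_{B_m}\cdot\mathscr L^*(\omega_m)$ to $\nchi_{\{x\}}\cdot\mathscr L^*(\omega_{m_0})=\mathscr L^*(\omega_{m_0})_x$, where $m_0$ is the unique index with $x\in B_{m_0}$. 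Since ${\rm R}_x$ is $1$-Lipschitz and the norm is continuous, it then suffices to prove $\big\|{\rm R}_x\big(\mathscr L^*(\omega)_x\big)\big\|\geq\big\|\mathscr L^*(\omega)_x\big\|_x=\mathcal L\big(|\omega|\big)(x)$ for every $\omega\in\sfR(\mathscr M^*)$.

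The key step is to realise the essential supremum defining $|\omega|$ by a single module element. Fix $\omega\in\sfR(\mathscr M^*)$ and $\eps>0$. By \eqref{eq:dual_ptwse_norm}, $|\omega|={\rm ess\,sup}\big\{\la\omega,v\ra\,:\,v\in\mathscr M,\,|v|\leq 1\big\}$, and this family is directed upwards (given $v_1,v_2$, the element $\nchi_E\cdot v_1+\nchi_{E^c}\cdot v_2$ with $E=\{\la\omega,v_1\ra\geq\la\omega,v_2\ra\}$ still has pointwise norm $\leq 1$ and realises the maximum), so I may pick a countable subfamily $(v_k)_k$ with $|v_k|\leq 1$ $\mm$-a.e.\ and $\sup_k\la\omega,v_k\ra=|\omega|$ $\mm$-a.e. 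Because $\sup_k\la\omega,v_k\ra=|\omega|$, the sets $\{\la\omega,v_k\ra>|\omega|-\eps\}$ cover $\X$ up to an $\mm$-negligible set; disjointifying them into a partition $(A_k)_k\subseteq\Sigma$ of $\X$ and glueing, I obtain $v\coloneqq\sum_k[\nchi_{A_k}]_\mm\cdot v_k\in\sfR(\mathscr M)$ (the glueing applies since $\sup_k{\rm ess\,sup}_{A_k}|v_k|\leq 1$), which satisfies $|v|\leq 1$ $\mm$-a.e.\ and, by locality and $L^0(\mm)$-linearity of $\omega$, $\la\omega,v\ra\geq|\omega|-\eps$ $\mm$-a.e. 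Now I lift: by \eqref{eq:pairing_mult} one has $\la\mathscr L^*(\omega),\mathscr L(v)\ra=\mathcal L\big(\la\omega,v\ra\big)$, and item vi) of Theorem \ref{thm:lifting_Linfty} (together with $\mathcal L$ of a constant being that constant) yields $\mathcal L\big(\la\omega,v\ra\big)\geq\mathcal L\big(|\omega|\big)-\eps$ \emph{everywhere} on $\X$. Taking $\bar v\coloneqq\mathscr L(v)_x$, which satisfies $\|\bar v\|_x=\mathcal L\big(|v|\big)(x)\leq 1$, and evaluating at $x$ gives $\la\mathscr L^*(\omega)_x,\bar v\ra_x\geq\mathcal L\big(|\omega|\big)(x)-\eps$, so that $\big\|{\rm R}_x\big(\mathscr L^*(\omega)_x\big)\big\|\geq\mathcal L\big(|\omega|\big)(x)-\eps$. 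Letting $\eps\downarrow 0$ closes the reverse inequality on the dense class, and the general case follows by density as above.

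The main obstacle is precisely the reverse inequality, and within it the fact that liftings do \emph{not} commute with countable suprema: one cannot simply push $\mathcal L$ through the identity $|\omega|=\sup_k\la\omega,v_k\ra$ and evaluate at $x$. The glueing property of $\sfR(\mathscr M)$ is what circumvents this, collapsing the countable supremum into a single element $v$ whose associated lower bound $\la\omega,v\ra\geq|\omega|-\eps$ holds $\mm$-a.e.; such an a.e.\ estimate \emph{is} preserved by $\mathcal L$ everywhere, thanks to its monotonicity, and in particular at the chosen point $x$.
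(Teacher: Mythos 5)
Your proof is correct and follows essentially the same route as the paper's: bound \({\rm R}_x\) by \eqref{eq:pairing_lift_ineq}, reduce to the dense class \(\big\{\mathscr L^*(\omega)_x\,:\,\omega\in\sfR(\mathscr M^*)\big\}\), produce a single \(v\in\sfR(\mathscr M)\) with \(|v|\leq 1\) and \(\omega(v)\geq|\omega|-\eps\) \(\mm\)-a.e., and push this estimate through \(\mathcal L\) by monotonicity before evaluating at \(x\). The only difference is that you spell out two steps the paper takes for granted -- the density of that class in \(\bar{\mathscr N}_x\) and the glueing construction of the near-optimizer \(v\) from \eqref{eq:dual_ptwse_norm} -- which is a welcome but inessential elaboration.
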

\begin{proof}
First of all, given any \(\bar\omega\in\bar{\mathscr N}_x\) it holds that
\(\big|\la\bar\omega,\bar v\ra_x\big|\leq{\|\bar\omega\|}_x{\|\bar v\|}_x\) for every
\(\bar v\in\bar{\mathscr M}_x\) as a consequence of \eqref{eq:pairing_lift_ineq},
whence accordingly
\begin{equation}\label{eq:Rx_isom_aux}
{\big\|{\rm R}_x(\bar\omega)\big\|}_{(\bar{\mathscr M}_x)'}\leq{\|\bar\omega\|}_x
\quad\text{ for every }\bar\omega\in\bar{\mathscr N}_x.
\end{equation}
In particular, the operator \({\rm R}_x\) is continuous. Then to prove the statement
it suffices to show that \({\rm R}_x\) is an isometry when restricted to any
dense subset of \(\bar{\mathscr N}_x\).

Observe that \(D\coloneqq\big\{\mathscr L^*(\omega)_x\,:\,\omega\in\sfR(\mathscr M^*)\big\}\) is a
dense subspace of \(\bar{\mathscr N}_x\) by the very definition of \(\mathcal L\)-lifting.
Fix any \(\omega\in\sfR(\mathscr M^*)\). Then it holds that
\begin{equation}\label{eq:Rx_isom_aux2}
\begin{split}
{\Big\|{\rm R}_x\big(\mathscr L^*(\omega)_x\big)\Big\|}_{(\bar{\mathscr M}_x)'}
&\overset{\phantom{\eqref{eq:pairing_mult}}}=
\sup_{\substack{\bar v\in\bar{\mathscr M}_x: \\ {\|\bar v\|}_x\leq 1}}
\big\langle\mathscr L^*(\omega)_x,\bar v\big\rangle_x
\geq\sup_{\substack{v\in\sfR(\mathscr M): \\ |v|\leq 1\ \mm\text{-a.e.}}}
\big\langle\mathscr L^*(\omega),\mathscr L(v)\big\rangle(x)\\
&\overset{\eqref{eq:pairing_mult}}=
\sup_{\substack{v\in\sfR(\mathscr M): \\ |v|\leq 1\ \mm\text{-a.e.}}}
\mathcal L\big(\omega(v)\big)(x).
\end{split}\end{equation}
Given any \(\eps>0\), there exists \(v\in\sfR(\mathscr M)\) such that \(|v|\leq 1\)
and \(\omega(v)\geq|\omega|-\eps\) are satisfied \(\mm\)-a.e.\ by \eqref{eq:dual_ptwse_norm}.
Thus \(\mathcal L\big(\omega(v)\big)\geq\mathcal L\big(|\omega|\big)-\eps\) and accordingly
\(\mathcal L\big(\omega(v)\big)(x)\geq{\big\|\mathscr L^*(\omega)_x\big\|}_x-\eps\).
This fact -- if read in conjunction with \eqref{eq:Rx_isom_aux} and
\eqref{eq:Rx_isom_aux2} -- ensures that the map \({\rm R}_x\) is an isometry
when restricted to \(D\), thus proving the statement.
\end{proof}
\begin{problem}
Are the maps \({\rm R}_x\colon\bar{\mathscr N}_x\to(\bar{\mathscr M}_x)'\)
defined in \eqref{eq:def_R_x} isomorphisms?
\end{problem}
\section{Representation of normed modules via embedding}\label{s:sep_Ban_bundle}
In this section we study separable Banach \(\B\)-bundles and
their strong connections with separable normed \(L^0(\mm)\)-modules.
Is \S\ref{ss:def_sep_Ban_bundle} we introduce our notion of separable
Banach bundle and we describe a constructive procedure to obtain one
such bundle. In \S\ref{ss:section_functor} we show that the space
of sections of a given bundle is a normed \(L^0(\mm)\)-module; also,
we extend the operation of `taking sections' to the level of categories,
thus obtaining a well-behaved section functor. In \S\ref{ss:repr_thm}
we prove that the section functor is actually an equivalence of
categories; this means, roughly speaking, that there is a full
correspondence between separable Banach bundles and separable
normed \(L^0(\mm)\)-modules.
\subsection{Separable Banach bundles}\label{ss:def_sep_Ban_bundle}
Given a Banach space \(\B\), we denote by \({\rm Gr}(\B)\) the
family of its closed linear subspaces.
\begin{definition}[Separable Banach bundle]\label{def:sep_BBbundle}
Let \((\X,\Sigma)\) be a measurable space and \(\B\) a universal
separable Banach space. Then a map \(\mathbf E\colon\X\to{\rm Gr}(\B)\)
is said to be a \emph{separable Banach \(\B\)-bundle} over
\(\X\) provided \(\mathbf E\colon\X\mto\B\) is a weakly
measurable correspondence.
\end{definition}

Given a separable Banach \(\B\)-bundle \(\mathbf E\) over \(\X\), let us define
\begin{equation}\label{eq:def:TE}
{\rm T}\mathbf E\coloneqq\bigcup_{x\in\X}\{x\}\times\mathbf E(x)\subseteq\X\times\B.
\end{equation}
Observe that \({\rm T}\mathbf E\in\Sigma\otimes\mathscr B(\B)\) by item vi)
of \S\ref{ss:correspondences}.
\bigskip

Any bundle is naturally associated with the space of its (measurable) sections:
\begin{definition}[Sections of a separable Banach bundle]
\label{def:sect_sep_Bb}
Let \((\X,\Sigma)\) be a measurable space and \(\B\) a universal separable
Banach space. Let \(\mathbf E\) be a separable Banach \(\B\)-bundle over \(\X\).
Then by \emph{section} of \(\mathbf E\) we mean a measurable selector
of \(\mathbf E\), namely, a measurable map
\(\bar s\colon\X\to\B\) such that \(\bar s(x)\in\mathbf E(x)\)
for every \(x\in\X\).
The vector space of sections of \(\mathbf E\) is denoted by \(\bar\Gamma(\mathbf E)\). Given any \(\bar s\in\bar\Gamma(\mathbf E)\), we
define the function \(|\bar s|\colon\X\to[0,+\infty)\) as
\begin{equation}\label{eq:ptwse_norm_section}
|\bar s|(x)\coloneqq\big\|\bar s(x)\big\|_\B\quad\text{ for every }x\in\X.
\end{equation}
Moreover, we define the linear subspace
\(\bar\Gamma_b(\mathbf E)\subseteq\bar\Gamma(\mathbf E)\) as
\(\bar\Gamma_b(\mathbf E)\coloneqq\big\{\bar s\in\bar\Gamma(\mathbf E)
\,:\,|\bar s|\in\mathcal L^\infty(\Sigma)\big\}\).
\end{definition}
\begin{remark}\label{rmk:bar_Gamma_mod}{\rm
It is straightforward to check that \(\bar\Gamma_b(\mathbf E)\)
is a normed \(\mathcal L^\infty(\Sigma)\)-module when endowed with
the natural pointwise operations and the pointwise norm
\(|\cdot|\colon\bar\Gamma_b(\mathbf E)\to\mathcal L^\infty(\Sigma)\)
that has been defined in \eqref{eq:ptwse_norm_section}.
\fr}\end{remark}
\begin{proposition}\label{prop:equiv_Banach_bundle}
Let \((\X,\Sigma)\) be a measurable space and \(\B\) a universal separable Banach space.
\begin{itemize}
\item[\(\rm i)\)] Let \((\bar s_n)_n\) be any sequence of measurable maps
\(\bar s_n\colon\X\to\B\). Define \(\mathbf E\colon\X\mto\B\) as
\[
\mathbf E(x)\coloneqq{\rm cl}_\B\Big({\rm span}\big\{\bar s_n(x)\;\big|\;n\in\N\big\}\Big)
\quad\text{ for every }x\in\X.
\]
Then \(\mathbf E\) is a separable Banach \(\B\)-bundle over \(\X\).
\item[\(\rm ii)\)] Let \(\mathbf E\) be a separable Banach \(\B\)-bundle
over \(\X\). Then there exists a countable \(\Q\)-linear subspace \(\mathcal C\)
of \(\bar\Gamma(\mathbf E)\) such that
\(\mathbf E(x)={\rm cl}_\B\big\{\bar s(x)\,:\,\bar s\in\mathcal C\big\}\)
for every \(x\in\X\).
\end{itemize}
\end{proposition}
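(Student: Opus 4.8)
The plan is to prove the two items separately, as they go in opposite directions: item i) builds a bundle from a sequence of sections, while item ii) extracts a countable generating family of sections from a given bundle. For item i), I would first observe that the map $x \mapsto \operatorname{span}\{\bar s_n(x) : n \in \N\}$ (finite rational-coefficient combinations) is a countable union of measurable maps, and that $\mathbf E$ is its fiberwise closure. The strategy is to verify weak measurability directly from the definition: for an open set $U \subseteq \B$, I must show $\{x : \mathbf E(x) \cap U \neq \emptyset\} \in \Sigma$. The cleanest route is to use property iv) of \S\ref{ss:correspondences}, namely that a correspondence with non-empty values is weakly measurable if and only if $x \mapsto \sfd_\B\big(y, \mathbf E(x)\big)$ is measurable for every $y \in \B$. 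Since $\mathbf E(x)$ is the closed span, the distance $\sfd_\B\big(y, \mathbf E(x)\big)$ equals the infimum over all finite rational-coefficient combinations of the $\bar s_n(x)$; this infimum over a countable set of measurable functions of $x$ is measurable, which gives the claim.

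For item ii), the idea is to reverse this: starting from a weakly measurable correspondence $\mathbf E$ with closed (hence complete) non-empty values, I want a countable family of measurable selectors whose fiberwise closed span recovers $\mathbf E(x)$. The natural tool is the Kuratowski--Ryll-Nardzewski theorem, property vii) of \S\ref{ss:correspondences}, which furnishes measurable selectors of weakly measurable closed-valued correspondences. The plan is to apply it not just to $\mathbf E$ itself but to a countable collection of auxiliary correspondences designed so that their selectors are dense in each fiber. Concretely, I would fix a countable base $\{U_k\}_{k\in\N}$ for the topology of the separable space $\B$ and, for each $k$, consider the correspondence $x \mapsto \operatorname{cl}_\B\big(\mathbf E(x) \cap U_k\big)$ on the set where this intersection is non-empty, extending by $\mathbf E(x)$ elsewhere; each such correspondence is weakly measurable (using that $x \mapsto \mathbf E(x) \cap U_k$ is non-empty precisely on a measurable set, together with property viii) on closures) and yields a measurable selector $\bar s_k$. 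Taking $\mathcal C$ to be the $\Q$-linear span of all these selectors gives a countable $\Q$-linear subspace of $\bar\Gamma(\mathbf E)$, and by construction every point of $\mathbf E(x)$ lying in some basic open set is approximated by a selected point, so $\operatorname{cl}_\B\{\bar s(x) : \bar s \in \mathcal C\} = \mathbf E(x)$.

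The main obstacle I anticipate is the bookkeeping in item ii): ensuring that the selectors are genuinely dense in \emph{every} fiber simultaneously, rather than only almost everywhere or only at points where the intersections happen to be non-empty. The subtlety is that $\mathbf E(x) \cap U_k$ may be empty for some $x$ and non-empty for others, so the auxiliary correspondences must be defined carefully (and measurably) on the splitting of $\X$ according to which intersections are inhabited. The convention \eqref{eq:hp_singletons} and the everywhere-defined nature of these bundles mean I cannot discard null sets, so each selector must be a genuine measurable selector on all of $\X$; this forces the extension-by-$\mathbf E(x)$ device above. Once the correspondences are set up so that each is weakly measurable with non-empty closed values on all of $\X$, Kuratowski--Ryll-Nardzewski applies uniformly and the density argument closes. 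The remaining verification that $\mathcal C \subseteq \bar\Gamma(\mathbf E)$ and that it is $\Q$-linear is routine, since finite $\Q$-combinations of measurable selectors are again measurable selectors of the fiberwise linear space $\mathbf E(x)$.
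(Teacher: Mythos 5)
Your proposal is correct and follows essentially the same route as the paper: for item i) both arguments exploit that \(\mathbf E(x)\) is the closure of the countable set of rational combinations of the \(\bar s_n(x)\) (you verify weak measurability via the distance-function criterion of item iv) of \S\ref{ss:correspondences}, the paper checks the open-set definition directly), and for item ii) both intersect \(\mathbf E\) with a countable base of balls, take fiberwise closures, extract Kuratowski--Ryll-Nardzewski selectors, and pass to their \(\Q\)-linear span. The only cosmetic difference is that on the measurable set where \(\mathbf E(x)\cap U_k=\emptyset\) the paper defines the auxiliary correspondence to be \(\{0_\B\}\) whereas you take \(\mathbf E(x)\); both choices keep every selector inside \(\mathbf E(x)\) and preserve weak measurability, so the density argument closes in the same way.
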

\begin{proof}\ \\
{\color{blue}i)} Given any open set \(U\subseteq\B\), it holds that
\[
\big\{x\in\X\;\big|\;\mathbf E(x)\cap U\neq\emptyset\big\}
=\bigcup_{n\in\N}\,\bigcup_{q_1,\ldots,q_n\in\Q}
\bigg\{x\in\X\;\bigg|\;\sum_{i=1}^n q_i\,\bar s_i(x)\in U\bigg\}\in\Sigma.
\]
By arbitrariness of \(U\), we conclude that \(\mathbf E\colon\X\to{\rm Gr}(\B)\)
is a separable Banach \(\B\)-bundle.\\
{\color{blue}ii)} Let \((v_n)_n\subseteq\B\) be a fixed dense sequence. Given any
\(n,k\in\N\), let us define the correspondence \(\varphi_{nk}\colon\X\mto\B\) as
\[
\varphi_{nk}(x)\coloneqq\left\{\begin{array}{ll}
{\rm cl}_\B\big(\mathbf E(x)\cap B_{1/k}(v_n)\big)\\
\{0_\B\}
\end{array}\quad\begin{array}{ll}
\text{ if }\mathbf E(x)\cap B_{1/k}(v_n)\neq\emptyset,\\
\text{ otherwise.}
\end{array}\right.
\]
Notice that \(\varphi_{nk}\) is weakly measurable as a consequence of
item viii) of \S\ref{ss:correspondences}. Then by applying Kuratowski--Ryll-Nardzewski
theorem (item vii) of \S\ref{ss:correspondences}) we obtain a measurable
selector \(\bar s_{nk}\) of the correspondence \(\varphi_{nk}\).
Given any \(x\in\X\), \(v\in\mathbf E(x)\), and \(\eps>0\), we can find
\(n,k\in\N\) such that \(1/k<\eps/2\) and \(\|v-v_n\|_\B<1/k\).
Therefore, it holds that \(\big\|v-\bar s_{nk}(x)\big\|_\B<\eps\).
This shows that \(\big\{\bar s_{nk}(x)\,:\,n,k\in\N\big\}\) is a dense
subset of \(\mathbf E(x)\) for every \(x\in\X\). The claim follows by
taking as \(\mathcal C\) the \(\Q\)-linear subspace of \(\bar\Gamma(\mathbf E)\)
generated by \(\{\bar s_{nk}\,:\,n,k\in\N\}\).
\end{proof}

In the sequel, we will need the following working definition
of a measurable collection of Banach spaces. Roughly speaking,
it is an intermediate construction that will be used to cook
up the separable Banach bundle underlying a given separable
normed \(L^0(\mm)\)-module. 
\begin{definition}[Measurable collection of separable Banach spaces]
\label{def:meas_coll_Banach}
Let \((\X,\Sigma)\) be a given measurable space. Then a family
\(\big\{E(x)\big\}_{x\in\X}\) of separable Banach spaces is said to
be a \emph{measurable collection of separable Banach spaces} provided
there exist elements \(v_n(x)\in E(x)\) and \(\omega_n(x)\in E(x)'\),
with \(n\in\N\) and \(x\in\X\), such that the following properties hold:
\begin{itemize}
\item[\(\rm a)\)] \(\big(v_n(x)\big)_n\) is a dense subset of \(E(x)\)
for every \(x\in\X\),
\item[\(\rm b)\)] \(\big\|\omega_n(x)\big\|_{E(x)'}=1\) and
\(\omega_n(x)\big[v_n(x)\big]=\big\|v_n(x)\big\|_{E(x)}\)
whenever \(n\in\N\) and \(x\in\X\) are such that \(v_n(x)\neq 0_{E(x)}\),
\item[\(\rm c)\)] \(\X\ni x\mapsto\omega_n(x)\big[v_k(x)\big]\in\R\) is a
measurable function for every \(n,k\in\N\).
\end{itemize}
In particular, the function \(\X\ni x\mapsto\big\|v_n(x)\big\|_{E(x)}\in\R\)
is measurable for every \(n\in\N\).
\end{definition}

In the next two technical results we explain how to get a
separable Banach bundle out of a measurable collection of
separable Banach spaces. Here, the explicit construction in
the proof of Banach--Mazur Theorem \ref{thm:Banach-Mazur} plays a role.
\begin{theorem}[Measurable family of embeddings]
\label{thm:meas_family_embed}
Let \((\X,\Sigma)\) be a measurable space. Let \(\B\) be a universal separable
Banach space and \(\big\{E(x)\big\}_{x\in\X}\) a measurable collection
of separable Banach spaces. Choose elements
\(\big(v_n(x)\big)_{n\in\N}\subseteq E(x)\) for \(x\in\X\) as in Definition
\ref{def:meas_coll_Banach}. Then there exists a family
\(\{{\rm I}_x\}_{x\in\X}\) of linear isometric embeddings
\({\rm I}_x\colon E(x)\to\B\) such that
\begin{equation}\label{eq:meas_sect}
\X\ni x\longmapsto{\rm I}_x\big[v_n(x)\big]\in\B
\quad\text{ is a measurable map for every }n\in\N.
\end{equation}
We say that \(\{{\rm I}_x\}_{x\in\X}\) is a
\emph{measurable family of (linear isometric) embeddings}.
\end{theorem}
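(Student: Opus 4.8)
The plan is to run the explicit Banach--Mazur construction from the proof of Theorem \ref{thm:Banach-Mazur} separately in each fibre $E(x)$, and then to check that every ingredient depends measurably on $x$; the natural target is $C(\Delta)$, which I finally compose with a fixed linear isometric embedding $\jmath\colon C(\Delta)\to\B$ (one exists because $\B$ is universal and $C(\Delta)$ is separable). The given data supply a dense sequence in $E(x)$ but what Banach--Mazur needs is a dense sequence in the \emph{unit ball}, so first I would set $u_n(x)\coloneqq v_n(x)/\big(1+\|v_n(x)\|_{E(x)}\big)$. Since $v\mapsto v/(1+\|v\|)$ is a homeomorphism of $E(x)$ onto its open unit ball, the sequence $\big(u_n(x)\big)_n$ is dense in $B_{E(x)}$, and $x\mapsto\omega_k(x)\big[u_n(x)\big]=\omega_k(x)\big[v_n(x)\big]/\big(1+\|v_n(x)\|_{E(x)}\big)$ is measurable by Definition \ref{def:meas_coll_Banach}. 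Exactly as in the cited proof I would then take $\iota_x\colon B_{E(x)'}\to I^\infty$, $\iota_x(\omega)\coloneqq\big(\omega[u_n(x)]\big)_n$, the weak\(^*\)-homeomorphism onto its compact image, set $K_x\coloneqq\psi^{-1}\big(\iota_x(B_{E(x)'})\big)\subseteq\Delta$ (with $\psi\colon\Delta\to I^\infty$ the fixed surjection of item iii) of \S\ref{ss:embeddings}), let $r_x\colon\Delta\to K_x$ be the retraction of item ii) of \S\ref{ss:embeddings}, put $\phi_x\coloneqq\iota_x^{-1}\circ\psi\circ r_x$, and define ${\rm I}'_x\colon E(x)\to C(\Delta)$ by ${\rm I}'_x[v](a)\coloneqq\phi_x(a)[v]$. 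By Theorem \ref{thm:Banach-Mazur} each ${\rm I}'_x$ is linear and isometric (the isometry comes from testing against the whole ball $B_{E(x)'}$, so it does not rely on the $\omega_n$'s).

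The computation that unlocks the measurability is the identity
\[
{\rm I}'_x\big[v_n(x)\big](a)=\big(1+\|v_n(x)\|_{E(x)}\big)\,\big[\psi(r_x(a))\big]_n\qquad\text{for all }a\in\Delta,
\]
which holds because $\phi_x(a)[u_m(x)]=[\psi(r_x(a))]_m$ for every $m$ by definition of $\iota_x^{-1}$, together with $v_n(x)=\big(1+\|v_n(x)\|_{E(x)}\big)u_n(x)$. Thus all of the $x$-dependence is concentrated in the measurable scalar $\|v_n(x)\|_{E(x)}$ and in the point $r_x(a)$, with $\psi$ held fixed. Consequently $x\mapsto{\rm I}'_x[v_n(x)]\in C(\Delta)$ will be measurable as soon as $x\mapsto r_x(a)\in\Delta$ is measurable for each fixed $a$: for a countable dense set $D\subseteq\Delta$ and any $g\in C(\Delta)$ one has $\big\|{\rm I}'_x[v_n(x)]-g\big\|_{C(\Delta)}=\sup_{a\in D}\big|{\rm I}'_x[v_n(x)](a)-g(a)\big|$, and a map into a separable metric space is measurable once its distance to each point of a countable dense family is measurable.

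The main obstacle is therefore the measurability of $x\mapsto r_x(a)$, which I would reduce to measurability of the correspondence $x\mapsto K_x$. Testing norm-$\le1$ Hahn--Banach extendability of the functional prescribed on ${\rm span}\{u_n(x)\}$ by the coordinates of $\psi(\gamma)$ yields
\[
K_x=\bigcap_{(c,m)}\Big\{\gamma\in\Delta\;\Big|\;\Big|\sum_i c_i\,[\psi(\gamma)]_{m_i}\Big|\le\Big\|\sum_i c_i\,u_{m_i}(x)\Big\|_{E(x)}\Big\},
\]
the intersection ranging over the countably many finite rational tuples $(c_i,m_i)$. Here $\gamma\mapsto|\sum_i c_i[\psi(\gamma)]_{m_i}|$ is continuous, while $x\mapsto\|\sum_i c_i\,u_{m_i}(x)\|_{E(x)}=\sup_{n:\,v_n(x)\ne 0_{E(x)}}\big|\sum_i c_i\,\omega_n(x)[u_{m_i}(x)]\big|$ is measurable: the identity $\|\cdot\|_{E(x)}=\sup_{n:\,v_n(x)\ne0_{E(x)}}|\omega_n(x)[\cdot]|$ follows from a)--b) of Definition \ref{def:meas_coll_Banach}, and the supremum over the measurable index set $\{n:v_n(x)\ne0_{E(x)}\}$ is measurable by the usual indicator trick. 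Each factor is hence a correspondence of the shape $\{\gamma:g(\gamma)\le h(x)\}$ with $g$ continuous and $h$ measurable, which is measurable because $\{x:\{g\le h(x)\}\cap C\ne\emptyset\}=\{x:h(x)\ge\min_C g\}$ for every closed $C\subseteq\Delta$; iterating item iii) of \S\ref{ss:correspondences} on the finite partial intersections, and using that a decreasing sequence of nonempty compacts has nonempty intersection, I obtain that $x\mapsto K_x$ is a measurable compact-valued correspondence. Finally, for fixed $a$ and closed $C\subseteq\Delta$, uniqueness of the nearest point gives $\{x:r_x(a)\in C\}=\big\{x:\sfd_\Delta(a,K_x\cap C)=\sfd_\Delta(a,K_x)\big\}$, which is measurable since both distance functions are measurable by item iv) of \S\ref{ss:correspondences} (the set $K_x\cap C$ being a measurable correspondence by item iii)); by item ii) of \S\ref{ss:correspondences} this proves that $x\mapsto r_x(a)$ is measurable. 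Setting ${\rm I}_x\coloneqq\jmath\circ{\rm I}'_x$ then yields the desired measurable family of linear isometric embeddings into $\B$.
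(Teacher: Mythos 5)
Your proposal is correct, and its skeleton coincides with the paper's: run the Banach--Mazur construction fibrewise, reduce the measurability of \(x\mapsto{\rm I}'_x[v_n(x)]\in C(\Delta)\) to that of \(x\mapsto r_x(a)\) for each fixed \(a\) through a countable dense subset of \(\Delta\), then compose with a fixed linear isometric embedding \(C(\Delta)\to\B\). Where you genuinely diverge is in the central measurability step. The paper establishes weak measurability of the compact-valued correspondence \(\X\ni x\mapsto\iota_x(B_{E(x)'})\subseteq I^\infty\) by producing a countable weak\(^*\)-dense family in each dual ball -- the rational convex combinations \(\sum_n q_n\,\omega_n(x)\) of the norming functionals, whose density is a Hahn--Banach \emph{separation} fact left implicit there -- then pulls back through \(\psi\) (Lemma \ref{lem:preimg_corr}) and extracts \(r_x(a)\) as the single value of \(Z_a\cap K\), where \(Z_a\) is handled by the Carath\'{e}odory item v) of \S\ref{ss:correspondences}. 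You instead characterize \(K_x\) inside \(\Delta\) as a countable intersection of correspondences of the form \(\{\gamma\,:\,g(\gamma)\le h(x)\}\) with \(g\) continuous and \(h\) measurable, via Hahn--Banach \emph{extension}, the dual data \(\omega_n\) entering only through the norming identity \(\|w\|_{E(x)}=\sup_{n:\,v_n(x)\neq 0_{E(x)}}\big|\omega_n(x)[w]\big|\); and you recover \(r_x(a)\) from \(\{x\,:\,r_x(a)\in C\}=\big\{x\,:\,\sfd_\Delta\big(a,K_x\cap C\big)=\sfd_\Delta\big(a,K_x\big)\big\}\), exploiting uniqueness of nearest points in \(\Delta\). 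Both routes are sound; yours is more self-contained (no density-of-convex-combinations claim, no Carath\'{e}odory machinery) and makes transparent what the \(\omega_n\) are for, but two small points deserve a patch: item iv) of \S\ref{ss:correspondences} is stated for non-empty-valued correspondences, while \(K_x\cap C\) can be empty -- the implication you need survives, since \(\{x\,:\,\sfd_\Delta(a,K_x\cap C)<r\}=\{x\,:\,(K_x\cap C)\cap B_r(a)\neq\emptyset\}\in\Sigma\) under the convention \(\sfd_\Delta(a,\emptyset)=+\infty\) -- and the passage from rational to arbitrary real coefficients in the Hahn--Banach characterization (immediate by continuity in the coefficients) should be recorded. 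Finally, a point in your favour: your key identity correctly carries the scalar factor \(1+\|v_n(x)\|_{E(x)}\), whereas the corresponding display in the paper's proof silently drops the analogous factor \(\|v_k(x)\|_{E(x)}\vee 1\); this is harmless in both cases, precisely because \(x\mapsto\|v_n(x)\|_{E(x)}\) is measurable.
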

\begin{proof}
First of all, let us define the objects we will need throughout the proof:
\begin{itemize}
\item[\(\rm i)\)] Given any \(k\in\N\), we denote by
\(\pi_k\colon I^\infty\to[-1,1]\) the projection on the \(k^{\rm th}\)
component of the Hilbert cube \(I^\infty\), namely, we define
\(\pi_k(\alpha)\coloneqq\alpha_k\) for every
\(\alpha=(\alpha_{k'})_{k'}\in I^\infty\). Each \(\pi_k\) is
continuous, as \(I^\infty\) is endowed with the product topology
(item iii) of \S\ref{ss:embeddings}).
\item[\(\rm ii)\)] Given any \(x\in\X\), we define the map
\(\iota_x\colon B_{E(x)'}\to I^\infty\) as
\begin{equation}\label{eq:def_iota_x}
\iota_x(\omega)\coloneqq\bigg(\frac{\omega\big[v_k(x)\big]}
{\big\|v_k(x)\big\|_{E(x)}\vee 1}\bigg)_k\in I^\infty
\quad\text{ for every }\omega\in B_{E(x)'}.
\end{equation}
Then it holds that \(\iota_x\) is a homeomorphism with its image
(when the domain \(B_{E(x)'}\) is endowed with the restricted weak\(^*\) topology).
Recall item iv) of \S\ref{ss:embeddings}.
\item[\(\rm iii)\)] Let us define the correspondence \(K'\colon\X\mto I^\infty\)
as \(K'(x)\coloneqq\iota_x\big(B_{E(x)'}\big)\) for every \(x\in\X\).
Observe that \(K'\) has compact values by virtue of Banach--Alaoglu theorem.
\item[\(\rm iv)\)] Fix a continuous surjective map \(\psi\colon\Delta\to I^\infty\)
(recall item iii) of \S\ref{ss:embeddings}).
\item[\(\rm v)\)] Denote by \(K\colon\X\mto\Delta\) the preimage
correspondence \(\psi^{-1}(K')\), defined as in Lemma \ref{lem:preimg_corr}.
Since \(\psi\) is continuous and \(\Delta\) is compact, it holds that \(K\)
has compact values.
\item[\(\rm vi)\)] Given any \(x\in\X\), we define the retraction
\(r_x\colon\Delta\to K(x)\) as in item ii) of \S\ref{ss:embeddings}. Namely,
for any point \(a\in\Delta\) we have that \(r_x(a)\) is the unique
element of \(K(x)\) satisfying the identity
\(\sfd_\Delta\big(a,r_x(a)\big)=\sfd_\Delta\big(a,K(x)\big)\).
\item[\(\rm vii)\)] Given any \(x\in\X\), we define the operator
\({\rm I}'_x\colon E(x)\to C(\Delta)\) as
\begin{equation}\label{eq:def_I'_x}
{\rm I}'_x[v](a)\coloneqq(\iota_x^{-1}\circ\psi\circ r_x)(a)[v]
\quad\text{ for every }v\in E(x)\text{ and }a\in\Delta.
\end{equation}
Then each map \({\rm I}'_x\) is a linear isometric embedding
(recall the proof of Theorem \ref{thm:Banach-Mazur}).
\item[\(\rm viii)\)] Fix a linear isometric map \({\rm I}\colon C(\Delta)\to\B\).
Given any \(x\in\X\), we define the linear isometric
embedding \({\rm I}_x\colon E(x)\to\B\) as
\({\rm I}_x[v]\coloneqq({\rm I}\circ{\rm I}'_x)[v]\)
for every \(v\in E(x)\).
\end{itemize}
It remains to prove that \(\X\ni x\mapsto{\rm I}_x\big[v_k(x)\big]\in C([0,1])\)
is a measurable map for any \(k\in\N\).
Fix \(\big(\omega_n(x)\big)_{n\in\N}\subseteq B_{E(x)'}\) for \(x\in\X\)
as in Definition \ref{def:meas_coll_Banach}. We set the family of
indexes \(Q\) as
\[
Q\coloneqq\bigg\{q=(q_n)_n\in\bigoplus_\N \Q\;\bigg|\;
q_n\geq 0\text{ for every }n\in\N\text{ and }\sum_{n\in\N}q_n=1\bigg\},
\]
where \(\bigoplus_\N \Q\) stands for the set of all sequences
\(q=(q_n)_n\in\Q^\N\) such that \(q_n=0\) for all but finitely
many \(n\in\N\). Let us define
\[
\omega^q(x)\coloneqq\sum_{n\in\N}q_n\,\omega_n(x)\in B_{E(x)'}
\quad\text{ for every }q\in Q\text{ and }x\in\X.
\]
Observe that
\(\big\{\omega^q(x)\big\}_{q\in Q}\) is weak\(^*\) dense in \(B_{E(x)'}\) for
every \(x\in\X\), whence it follows that \(\big\{\iota_x\big(\omega^q(x)\big)\big\}_{q\in Q}\)
is \(\sfd_{I^\infty}\)-dense in \(K'(x)\). Moreover, given any
\(\alpha\in I^\infty\) and \(\lambda>0\), we have that
\[\begin{split}
\Big\{x\in\X\;\Big|\;\sfd_{I^\infty}\big(\alpha,K'(x)\big)<\lambda\Big\}
&\overset{\phantom{\eqref{eq:def_iota_x}}}=
\bigcup_{q\in Q}\Big\{x\in\X\;\Big|\;\sfd_{I^\infty}
\big(\alpha,\iota_x\big(\omega^q(x)\big)\big)<\lambda)\Big\}\\
&\overset{\eqref{eq:def_iota_x}}=
\bigcup_{q\in Q}\bigg\{x\in\X\;\bigg|\;\sum_{k=1}^\infty\frac{1}{2^k}
\bigg|\alpha_k-\sum_{n\in\N}\frac{q_n\,\omega_n(x)\big[v_k(x)\big]}
{\big\|v_k(x)\big\|_{E(x)}\vee 1}\bigg|
<\lambda\bigg\}
\in\Sigma,\\
\end{split}\]
as a consequence of the measurability of each function
\(\X\ni x\mapsto\omega_n(x)\big[v_k(x)\big]\). Therefore, the function
\(\X\ni x\mapsto\sfd_{I^\infty}\big(\alpha,K'(x)\big)\) is measurable
for every \(\alpha\in I^\infty\), thus accordingly \(K'\) is a weakly
measurable correspondence by item iv) of \S\ref{ss:correspondences}.
Thanks to item i) of \S\ref{ss:correspondences}, we deduce that
\(K'\) is a measurable correspondence, whence \(K\) is a measurable
correspondence as well by Lemma \ref{lem:preimg_corr}. For any \(a\in\Delta\),
let us consider the correspondence \(Z_a\colon\X\mto\Delta\) given by
\[
Z_a(x)\coloneqq\Big\{b\in\Delta\;\Big|\;
\sfd_\Delta(a,b)=\sfd_\Delta\big(a,K(x)\big)\Big\}
\quad\text{ for every }x\in\X.
\]
It holds that
\(\X\times\Delta\ni(x,b)\mapsto\sfd_\Delta(a,b)-\sfd_\Delta\big(a,K(x)\big)\in\R\)
is a Carath\'{e}odory function, since \(\Delta\ni b\mapsto\sfd_\Delta(a,b)\)
is continuous and \(\X\ni x\mapsto\sfd_\Delta\big(a,K(x)\big)\) is measurable
(the latter follows from the measurability of \(K\), by taking items i) and
iv) of \S\ref{ss:correspondences} into account). Therefore, we have that
\(Z_a\colon\X\mto\Delta\) is a measurable correspondence by item v) of
\S\ref{ss:correspondences}, whence the intersection correspondence
\(Z_a\cap K\colon\X\mto\Delta\) is measurable as well by item iii) of
\S\ref{ss:correspondences}. Since it holds that
\(Z_a(x)\cap K(x)=\big\{r_x(a)\big\}\) for every \(x\in\X\),
we deduce from item ii) of \S\ref{ss:correspondences} that
\begin{equation}\label{eq:r_x_meas}
\X\ni x\longmapsto r_x(a)\in\Delta\quad
\text{ is a measurable map for every }a\in\Delta.
\end{equation}
Let us fix \(k\in\N\), a dense subset \((a^i)_{i\in\N}\) of \(\Delta\),
and an element \(g\in C(\Delta)\). Observe that
\[
{\rm I}'_x\big[v_k(x)\big](a^i)\overset{\eqref{eq:def_I'_x}}=
(\iota_x^{-1}\circ\psi\circ r_x)(a^i)\big[v_k(x)\big]
\overset{\eqref{eq:def_iota_x}}=(\pi_k\circ\psi)\big(r_x(a^i)\big)
\quad\text{ for all }x\in\X\text{ and }i\in\N.
\]
Being \(\pi_k\circ\psi\colon\Delta\to[-1,1]\) continuous,
we deduce from \eqref{eq:r_x_meas} that
\(\X\ni x\mapsto{\rm I}'_x\big[v_k(x)\big](a^i)\in\R\) is measurable
for every \(i\in\N\). Since we have that
\[\begin{split}
\Big\|g-{\rm I}'_x\big[v_k(x)\big]\Big\|_{C(\Delta)}
&\overset{\eqref{eq:def_I'_x}}=
\sup_{i\in\N}\Big|g(a^i)-{\rm I}'_x\big[v_k(x)\big](a^i)\Big|
\quad\text{ for every }x\in\X,
\end{split}\]
we deduce that
\(\X\ni x\mapsto\big\|g-{\rm I}'_x\big[v_k(x)\big]\big\|_{C(\Delta)}\in\R\)
is measurable for every element \(g\in C(\Delta)\). Therefore, it holds that
\(\X\ni x\mapsto{\rm I}'_x\big[v_k(x)\big]\in C(\Delta)\)
is a measurable map for all \(k\in\N\). Recalling that
\({\rm I}_x={\rm I}\circ{\rm I}'_x\) for every \(x\in\X\)
and that \(\rm I\) is continuous, we can finally conclude that
\(\X\ni x\mapsto{\rm I}_x\big[v_k(x)\big]\in\B\)
is a measurable map for all \(k\in\N\), as required.
\end{proof}
\begin{corollary}\label{cor:img_Ix_bundle}
Let \((\X,\Sigma)\) be a measurable space and \(\B\)
a universal separable Banach space. Let \(\big\{E(x)\big\}_{x\in\X}\)
be a measurable collection of separable Banach spaces. Consider the
associated measurable family \(\{{\rm I}_x\}_{x\in\X}\) of linear
isometric embeddings \({\rm I}_x\colon E(x)\to\B\) as in Theorem
\ref{thm:meas_family_embed}. Then the map
\(\X\ni x\mapsto\mathbf E(x)\coloneqq
{\rm I}_x\big(E(x)\big)\in{\rm Gr}(\B)\)
is a separable Banach \(\B\)-bundle over \(\X\).
\end{corollary}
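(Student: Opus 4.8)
The plan is to reduce everything to Proposition \ref{prop:equiv_Banach_bundle}(i), which already guarantees that the fiberwise closed linear span of a sequence of measurable \(\B\)-valued maps is a separable Banach \(\B\)-bundle. Thus the only task is to produce such a sequence of measurable sections whose closed span at each point \(x\) recovers the subspace \(\mathbf E(x)={\rm I}_x\big(E(x)\big)\).

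First I would define \(\bar s_n\colon\X\to\B\) by \(\bar s_n(x)\coloneqq{\rm I}_x\big[v_n(x)\big]\), where \(\big(v_n(x)\big)_n\) are the elements of \(E(x)\) fixed in Definition \ref{def:meas_coll_Banach}. The measurability of each \(\bar s_n\) is exactly the content of \eqref{eq:meas_sect} in Theorem \ref{thm:meas_family_embed}.

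Next I would verify that \(\mathbf E(x)\) is the closed linear span of \(\big\{\bar s_n(x):n\in\N\big\}\). Since \({\rm I}_x\) is a linear isometry, its image is complete, hence closed in \(\B\), and is a linear subspace, so \(\mathbf E(x)\in{\rm Gr}(\B)\); being an isometric embedding, \({\rm I}_x\) is moreover a homeomorphism onto this closed image. As \(\big(v_n(x)\big)_n\) is dense in \(E(x)\) by item a) of Definition \ref{def:meas_coll_Banach}, it follows that \({\rm cl}_\B\big\{\bar s_n(x):n\in\N\big\}={\rm I}_x\big({\rm cl}_{E(x)}\{v_n(x):n\in\N\}\big)={\rm I}_x\big(E(x)\big)=\mathbf E(x)\). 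The inclusions \({\rm cl}_\B\{\bar s_n(x)\}\subseteq{\rm cl}_\B\big({\rm span}\{\bar s_n(x)\}\big)\subseteq\mathbf E(x)\) then force the closed linear span of \(\{\bar s_n(x)\}\) to coincide with \(\mathbf E(x)\) as well.

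It remains only to invoke Proposition \ref{prop:equiv_Banach_bundle}(i): applied to the measurable maps \((\bar s_n)_n\), it yields directly that \(x\mapsto{\rm cl}_\B\big({\rm span}\big\{\bar s_n(x):n\in\N\big\}\big)=\mathbf E(x)\) is a separable Banach \(\B\)-bundle, \emph{i.e.}\ a weakly measurable correspondence into \(\B\) with closed linear values. I do not expect any genuine obstacle: the entire substance is the identity between the \(\B\)-closure of the images of a dense family and the image of the whole fiber, which is immediate from the isometry property, whereas the hard measurability work has already been discharged in Theorem \ref{thm:meas_family_embed}.
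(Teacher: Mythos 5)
Your proposal is correct and follows exactly the paper's own argument: define \(\bar s_n(x)\coloneqq{\rm I}_x\big[v_n(x)\big]\), use Theorem \ref{thm:meas_family_embed} for measurability, identify \(\mathbf E(x)\) with the closed linear span of \(\big\{\bar s_n(x):n\in\N\big\}\), and conclude via Proposition \ref{prop:equiv_Banach_bundle}(i). The only difference is that you spell out the identification of \({\rm I}_x\big(E(x)\big)\) with the closed span (closedness of the isometric image plus density of \(\big(v_n(x)\big)_n\)), which the paper states without detail.
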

\begin{proof}
Choose elements \(\big(v_n(x)\big)_{n\in\N}\subseteq E(x)\)
for \(x\in\X\) as in Definition \ref{def:meas_coll_Banach}. Let us define
\[
\bar s_n(x)\coloneqq{\rm I}_x\big[v_n(x)\big]
\quad\text{ for every }n\in\N\text{ and }x\in\X.
\]
Theorem \ref{thm:meas_family_embed} guarantees that each map \(\bar s_n\colon\X\to\B\)
is measurable, thus accordingly the correspondence \(\mathbf E\colon\X\mto\B\),
which is given by
\[
\mathbf E(x)={\rm I}_x\big(E(x)\big)=
{\rm cl}_\B\Big({\rm span}\big\{\bar s_n(x)\;\big|\;n\in\N\big\}\Big)
\quad\text{ for every }x\in\X,
\]
is a separable Banach \(\B\)-bundle over \(\X\) by item i) of Proposition \ref{prop:equiv_Banach_bundle}.
\end{proof}
\subsection{The section functor}\label{ss:section_functor}
Let \((\X,\Sigma,\mm)\) be a \(\sigma\)-finite measure space
and \(\B\) a universal separable Banach space. Let \(\mathbf E\) be
a separable Banach \(\B\)-bundle over \(\X\). Then we define
\begin{equation}\label{eq:def_sect_E}
\Gamma_b(\mathbf E)\coloneqq\Pi_\mm\big(\bar\Gamma_b(\mathbf E)\big),
\quad\Gamma(\mathbf E)\coloneqq\sfC\big(\Gamma_b(\mathbf E)\big),
\end{equation}
where the operations \(\Pi_\mm\) and \(\sfC\) are defined as
in \eqref{eq:def_proj_Pi_m} and Definition \ref{def:compl/restr}, respectively.
\begin{remark}\label{rmk:other_def_Gamma(E)}{\rm
Observe that \(\Gamma(\mathbf E)\) can be identified with the quotient
space \(\bar\Gamma(\mathbf E)/\sim\), where the equivalence relation
\(\sim\) on \(\bar\Gamma(\mathbf E)\) is defined in the following way:
given any \(\bar s,\bar t\in\bar\Gamma(\mathbf E)\), we declare that
\(\bar s\sim\bar t\) provided
\(\mm\big(\big\{x\in\X\,:\,\bar s(x)\neq\bar t(x)\big\}\big)=0\).
\fr}\end{remark}
\begin{lemma}\label{lem:Gamma(E)_separable}
Let \((\X,\Sigma,\mm)\) be a \(\sigma\)-finite measure space
and \(\B\) a universal separable Banach space. Let \(\mathbf E\) be
a separable Banach \(\B\)-bundle over \(\X\). Then the normed \(L^0(\mm)\)-module
\(\Gamma(\mathbf E)\) is countably-generated. In particular, if \((\X,\Sigma,\mm)\)
is separable, then \(\Gamma(\mathbf E)\) is separable.
\end{lemma}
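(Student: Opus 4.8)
The plan is to exhibit an explicit countable generating family for \(\Gamma(\mathbf E)\) and then to deduce separability from the results already proven. First I would apply item ii) of Proposition \ref{prop:equiv_Banach_bundle} to the bundle \(\mathbf E\), obtaining a countable \(\Q\)-linear subspace \(\mathcal C=\{\bar s_j\}_{j\in\N}\) of \(\bar\Gamma(\mathbf E)\) such that \(\{\bar s_j(x):j\in\N\}\) is dense in \(\mathbf E(x)\) for every \(x\in\X\). Inspecting the construction in that proposition shows that each selector is bounded (its values lie in balls of fixed radii around the \(v_n\)), hence \(\mathcal C\subseteq\bar\Gamma_b(\mathbf E)\); writing \(s_j\coloneqq[\bar s_j]_\sim\in\Gamma_b(\mathbf E)\subseteq\Gamma(\mathbf E)\) for the induced classes (cf.\ Remark \ref{rmk:other_def_Gamma(E)}), I claim that \(\{s_j\}_{j\in\N}\) generates \(\Gamma(\mathbf E)\).

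Since \(\Gamma(\mathbf E)=\sfC\big(\Gamma_b(\mathbf E)\big)\) is by construction the completion of \(\Gamma_b(\mathbf E)\), the latter is \(\sfd_{\Gamma(\mathbf E)}\)-dense, so it suffices to approximate an arbitrary \([\bar s]_\sim\in\Gamma_b(\mathbf E)\), with \(\bar s\) a bounded measurable section, by finite \(L^0(\mm)\)-combinations of the \(s_j\). The key step is a measurable selection argument. Fixing \(m\in\N\), each function \(x\mapsto\big\|\bar s(x)-\bar s_j(x)\big\|_\B\) is \(\Sigma\)-measurable, so the sets \(B_j\coloneqq\big\{x:\|\bar s(x)-\bar s_j(x)\|_\B<1/m\big\}\setminus\bigcup_{i<j}B_i\) form a measurable partition of \(\X\) (it covers \(\X\) because \(\bar s(x)\in\mathbf E(x)\) is a limit of the \(\bar s_j(x)\)). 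Gluing the chosen pieces yields a bounded measurable section \(\bar t_m\) with \(\bar t_m=\bar s_j\) on \(B_j\), whence \(\big\|\bar s(x)-\bar t_m(x)\big\|_\B<1/m\) for every \(x\in\X\).

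It then remains to recognise \([\bar t_m]_\sim\) as a limit of finite combinations of the generators. By locality one has \([\bar t_m]_\sim=\sum_{j\in\N}[\nchi_{B_j}]_\mm\cdot s_j\) in \(\Gamma(\mathbf E)\), and by the convergence built into the glueing property of normed \(L^0(\mm)\)-modules the partial sums \(\sum_{j=1}^N[\nchi_{B_j}]_\mm\cdot s_j\) — which are genuine finite \(L^0(\mm)\)-linear combinations of the \(s_j\) — converge to \([\bar t_m]_\sim\) with respect to \(\sfd_{\Gamma(\mathbf E)}\). Combined with the bound \(\big|[\bar s]_\sim-[\bar t_m]_\sim\big|\leq 1/m\) in the \(\mm\)-a.e.\ sense, which forces \(\sfd_{\Gamma(\mathbf E)}\big([\bar s]_\sim,[\bar t_m]_\sim\big)\to 0\), this shows that \([\bar s]_\sim\) lies in the closure of the algebraic \(L^0(\mm)\)-submodule generated by \(\{s_j\}_{j\in\N}\). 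Hence \(\{s_j\}_{j\in\N}\) generates \(\Gamma(\mathbf E)\), so \(\Gamma(\mathbf E)\) is countably-generated.

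Finally, the ``in particular'' assertion follows at once: if \((\X,\Sigma,\mm)\) is a separable measure space, the second part of Proposition \ref{prop:countable_gen} guarantees that every countably-generated normed \(L^0(\mm)\)-module — in particular \(\Gamma(\mathbf E)\) — is separable. The only delicate point in the whole argument is the measurable selection and glueing step of the second and third paragraphs: one must upgrade the fibrewise density of the \(\bar s_j(x)\) to a single measurable section \(\bar t_m\) approximating \(\bar s\) uniformly, and then verify that the resulting glued element is reachable as a \(\sfd_{\Gamma(\mathbf E)}\)-limit of \emph{finite} module combinations — which is exactly where the glueing property of \(L^0(\mm)\)-modules, together with its associated convergence, is essential.
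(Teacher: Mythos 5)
Your proof is correct and follows essentially the same route as the paper's: the countable fibrewise-dense family from item ii) of Proposition \ref{prop:equiv_Banach_bundle}, a measurable-partition-plus-glueing argument (with the convergence of partial sums reducing glueings to finite \(L^0(\mm)\)-combinations) to show it generates, and Proposition \ref{prop:countable_gen} for the separability claim. The only real difference is bookkeeping: the paper obtains bounded generators by truncating, setting \(s_{nk}=\pi_\mm(\nchi_{B_{nk}}\cdot\bar s_n)\) with \(B_{nk}=\{|\bar s_n|\leq k\}\) and then arranging the approximation (via the condition \(|t|<k-\eps/2\) on \(A\)) so the truncation is invisible, whereas you get boundedness for free by inspecting the construction inside Proposition \ref{prop:equiv_Banach_bundle}; both devices are legitimate.
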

\begin{proof}
Thanks to item ii) of Proposition \ref{prop:equiv_Banach_bundle},
we can find a sequence \((\bar s_n)_n\subseteq\bar\Gamma(\mathbf E)\)
such that the identity
\(\mathbf E(x)={\rm cl}_\B\big\{\bar s_n(x)\,:\,n\in\N\big\}\)
is satisfied for every \(x\in\X\). Given any \(n,k\in\N\), we define
\(B_{nk}\coloneqq\big\{x\in\X\,:\,|\bar s_n|(x)\leq k\big\}\)
and \(s_{nk}\coloneqq\pi_\mm(\nchi_{B_{nk}}\cdot\bar s_n)\),
where \(\pi_\mm\colon\bar\Gamma_b(\mathbf E)\to\Gamma_b(\mathbf E)\)
stands for the canonical projection map. We claim that
\((s_{nk})_{n,k}\) generates \(\Gamma(\mathbf E)\).
To prove it, fix \(t\in\Gamma(\mathbf E)\) and \(\eps>0\).
Then there exist \(k\in\N\) and \(A\in\Sigma\) with
\(|t|<k-\eps/2\) \(\mm\)-a.e.\ on \(A\) and
\(\sfd_{\Gamma(\mathbf E)}(t_0,t)<\eps/2\), where we
set \(t_0\coloneqq[\nchi_A]_\mm\cdot t\in\Gamma_b(\mathbf E)\).
Choose any element \(\bar t_0\in\bar\Gamma_b(\mathbf E)\) such that
\(t_0=\pi_\mm(\bar t_0)\). Pick a partition \((A_n)_n\subseteq\Sigma\)
of \(A\) such that \(|\bar s_n-\bar t_0|\leq\eps/2\) on \(A_n\)
for every \(n\in\N\). Therefore, we have that
\(\big|\sum_{n\in\N}[\nchi_{A_n}]_\mm\cdot s_{nk}-t_0\big|\leq\eps/2\)
holds \(\mm\)-a.e.\ on \(\X\). This implies that
\(\sfd_{\Gamma(\mathbf E)}\big(\sum_{n\in\N}[\nchi_{A_n}]_\mm
\cdot s_{nk},t\big)<\eps\), thus proving the first claim.
The second one is then an immediate consequence of Proposition \ref{prop:countable_gen}.
\end{proof}
\begin{definition}[Morphism of separable Banach bundles]
Let \((\X,\Sigma,\mm)\) be a \(\sigma\)-finite measure space
and \(\B\) a universal separable Banach space. Let \(\mathbf E,\mathbf F\)
be two separable Banach \(\B\)-bundles over \(\X\). Then a \emph{pre-morphism}
\(\bar\varphi\) from \(\mathbf E\) to \(\mathbf F\) is a measurable map
\(\bar\varphi\colon{\rm T}\mathbf E\to{\rm T}\mathbf F\) such that
\(\bar\varphi\big(\{x\}\times\mathbf E(x)\big)\subseteq\{x\}\times\mathbf F(x)
\cong\mathbf F(x)\) for every \(x\in\X\) and
\[
\bar\varphi(x,\cdot)\colon\mathbf E(x)\to\mathbf F(x)
\quad\text{ is a linear contraction for every }x\in\X.
\]
We declare two pre-morphisms \(\bar\varphi_1,\bar\varphi_2\) from
\(\mathbf E\) to \(\mathbf F\) to be \emph{equivalent} if there
exists a set \(N\in\Sigma\) with \(\mm(N)=0\) such that
\(\bar\varphi_1(x,\cdot)=\bar\varphi_2(x,\cdot)\) for every \(x\in\X\setminus N\).
This defines an equivalence relation, whose equivalence classes are
called \emph{morphisms} and usually denoted by \(\varphi\colon\mathbf E\to\mathbf F\).
\end{definition}

We denote by \(\mathbf{SBB}_\B(\X,\Sigma,\mm)\) the category having
the separable Banach \(\B\)-bundles over the space \(\X\) as objects
and the morphisms of separable Banach \(\B\)-bundles as arrows.
\bigskip

Let us consider two separable Banach \(\B\)-bundles
\(\mathbf E,\mathbf F\) over \(\X\) and a morphism
\(\varphi\colon\mathbf E\to\mathbf F\). Fix any pre-morphism
\(\bar\varphi\colon{\rm T}\mathbf E\to{\rm T}\mathbf F\) that is a
representative of \(\varphi\). Then we define the morphism of normed
\(L^0(\mm)\)-modules
\(\Gamma(\varphi)\colon\Gamma(\mathbf E)\to\Gamma(\mathbf F)\) as
follows: given any \(s\in\Gamma(\mathbf E)\), we define
\(\Gamma(\varphi)(s)\) as the equivalence class (under the
relation \(\sim\) introduced in Remark 
\ref{rmk:other_def_Gamma(E)}) of
\[
\X\ni x\longmapsto\bar\varphi\big(x,\bar s(x)\big)\in\mathbf F(x),
\]
where \(\bar s\in\bar\Gamma(\mathbf E)\) is any representative
of \(s\). It can be readily checked that this way we obtain
a covariant functor \(\Gamma\colon
\mathbf{SBB}_\B(\X,\Sigma,\mm)\to\mathbf{NMod}_{\rm cg}(\X,\Sigma,\mm)\),
which we call the \emph{section functor}. (For brevity, in our
notation the dependence of \(\Gamma\) on the space \(\B\) is omitted.)
\begin{lemma}[\(\Gamma\) is full]\label{lem:Gamma_full}
Let \((\X,\Sigma,\mm)\) be a \(\sigma\)-finite
measure space and \(\B\) a universal separable Banach space.
Let \(\mathbf E,\mathbf F\) be two separable Banach \(\B\)-bundles
and \(\Phi\colon\Gamma(\mathbf E)\to\Gamma(\mathbf F)\) a morphism
of normed \(L^0(\mm)\)-modules. Then there exists a morphism
\(\varphi\colon\mathbf E\to\mathbf F\) of separable Banach
\(\B\)-bundles such that \(\Gamma(\varphi)=\Phi\).
\end{lemma}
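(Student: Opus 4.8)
The plan is to build a representative pre-morphism \(\bar\varphi\colon{\rm T}\mathbf E\to{\rm T}\mathbf F\) fibrewise and then check that the induced morphism \(\varphi\) satisfies \(\Gamma(\varphi)=\Phi\). First I would invoke item ii) of Proposition \ref{prop:equiv_Banach_bundle} to fix a countable \(\Q\)-linear subspace \(\mathcal C=\{\bar s_n:n\in\N\}\subseteq\bar\Gamma(\mathbf E)\) with \(\mathbf E(x)={\rm cl}_\B\{\bar s(x):\bar s\in\mathcal C\}\) for every \(x\in\X\); in particular the \(\Q\)-subspace \(\{\bar s_n(x):n\in\N\}\) is dense in each fibre \(\mathbf E(x)\). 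Setting \(s_n\coloneqq[\bar s_n]_\sim\in\Gamma(\mathbf E)\) (using the identification in Remark \ref{rmk:other_def_Gamma(E)}) and \(t_n\coloneqq\Phi(s_n)\in\Gamma(\mathbf F)\), I would pick representatives \(\bar t_n\in\bar\Gamma(\mathbf F)\) of the \(t_n\). The whole point is that the \(\bar t_n\) will prescribe the values \(\bar\varphi(x,\bar s_n(x))=\bar t_n(x)\).

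The fibrewise construction rests on the morphism inequality \(|\Phi(\,\cdot\,)|\le|\,\cdot\,|\) \(\mm\)-a.e. Let \(Q\) denote the countable set of finitely supported rational sequences \(q=(q_n)_n\). For each \(q\in Q\) the element \(u_q\coloneqq\sum_n q_n s_n\in\Gamma(\mathbf E)\) has representative \(\sum_n q_n\bar s_n\), while \(\Phi(u_q)=\sum_n q_n t_n\) has representative \(\sum_n q_n\bar t_n\); since the pointwise norm on \(\Gamma(\mathbf F)\) is given \(\mm\)-a.e.\ by \(\|\cdot\|_\B\) of a representative, the inequality \(|\Phi(u_q)|\le|u_q|\) reads \(\big\|\sum_n q_n\bar t_n(x)\big\|_\B\le\big\|\sum_n q_n\bar s_n(x)\big\|_\B\) for \(\mm\)-a.e.\ \(x\). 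Intersecting these countably many full-measure sets over \(q\in Q\), I obtain a single \(N\in\Sigma\) with \(\mm(N)=0\) such that the above inequality holds for all \(q\in Q\) simultaneously on \(\X\setminus N\). On that set this at once forces \(\bar s_n(x)\mapsto\bar t_n(x)\) to be well posed, \(\Q\)-linear and \(1\)-Lipschitz on the dense \(\Q\)-subspace \(\{\bar s_n(x):n\in\N\}\) of \(\mathbf E(x)\) (if a rational combination of the \(\bar s_n(x)\) vanishes, so does the corresponding combination of the \(\bar t_n(x)\)); hence it extends uniquely to an \(\R\)-linear contraction \(\bar\varphi(x,\cdot)\colon\mathbf E(x)\to\mathbf F(x)\), using that \(\mathbf F(x)\) is closed in \(\B\). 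For \(x\in N\) I would simply set \(\bar\varphi(x,\cdot)\coloneqq 0\).

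The main obstacle is the joint measurability of the resulting map \(\bar\varphi\colon{\rm T}\mathbf E\to\B\), since the extension procedure is carried out separately in each fibre. I would settle it by approximation through the generators: the functions \(d_n(x,v)\coloneqq\|v-\bar s_n(x)\|_\B\) on \({\rm T}\mathbf E\) are measurable in \(x\) and continuous in \(v\), hence jointly measurable, and \(\inf_n d_n(x,v)=0\) by fibrewise density. Thus \(n_m(x,v)\coloneqq\min\{n:d_n(x,v)<1/m\}\) is a measurable \(\N\)-valued function, and \(\bar\varphi_m(x,v)\coloneqq\bar t_{n_m(x,v)}(x)=\sum_n\nchi_{\{n_m=n\}}(x,v)\,\bar t_n(x)\) is measurable. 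For \(x\in\X\setminus N\) one has \(\bar s_{n_m(x,v)}(x)\to v\) and therefore \(\bar\varphi_m(x,v)=\bar\varphi\big(x,\bar s_{n_m(x,v)}(x)\big)\to\bar\varphi(x,v)\) by continuity of \(\bar\varphi(x,\cdot)\); so \(\bar\varphi\) is a pointwise limit of measurable maps, hence measurable, and defines a pre-morphism whose class is a morphism \(\varphi\colon\mathbf E\to\mathbf F\).

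Finally I would verify \(\Gamma(\varphi)=\Phi\). By construction \(\Gamma(\varphi)(s_n)\) is the class of \(x\mapsto\bar\varphi\big(x,\bar s_n(x)\big)=\bar t_n(x)\), i.e.\ \(t_n=\Phi(s_n)\), so the two module morphisms agree on \(\{s_n\}_{n\in\N}\). Since the \(s_n\) generate \(\Gamma(\mathbf E)\) (the truncations \(s_{nk}=[\nchi_{B_{nk}}]_\mm\cdot s_n\) produced in the proof of Lemma \ref{lem:Gamma(E)_separable} already generate, and they lie in the \(L^0(\mm)\)-submodule generated by the \(s_n\)), and both \(\Gamma(\varphi)\) and \(\Phi\) are \(L^0(\mm)\)-linear contractions, hence continuous, they coincide on the dense submodule generated by \(\{s_n\}\) and therefore on all of \(\Gamma(\mathbf E)\). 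This yields \(\Gamma(\varphi)=\Phi\), completing the argument.
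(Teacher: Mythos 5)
Your proposal is correct, and its skeleton is the same as the paper's proof: fix a countable fibrewise-dense \(\Q\)-linear family of sections via Proposition \ref{prop:equiv_Banach_bundle}, use the \(\mm\)-a.e.\ contraction property of \(\Phi\) on countably many rational combinations to discard a single null set \(N\), define \(\bar\varphi(x,\cdot)\) on each fiber as the unique continuous \(\R\)-linear extension of \(\bar s_n(x)\mapsto\bar t_n(x)\), and conclude by generation plus \(L^0(\mm)\)-linearity and continuity. Two points differ, both to your credit. First, where the paper records three separate a.e.\ identities (additivity, \(\Q\)-homogeneity, contraction) for the chosen representatives, you compress them into the single family of inequalities \(\big\|\sum_n q_n\bar t_n(x)\big\|_\B\leq\big\|\sum_n q_n\bar s_n(x)\big\|_\B\), \(q\in Q\), which simultaneously yields well-posedness, \(\Q\)-linearity and the Lipschitz bound; this is an economical packaging of the same information. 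Second, and more substantially, the measurability of \(\bar\varphi\) is handled by a genuinely different technique: the paper verifies it directly by writing \(\bar\varphi^{-1}\big(A\times\bar B_r(w)\big)\) as a countable union/intersection of explicitly measurable sets, whereas you build measurable ``nearest-generator'' approximations \(\bar\varphi_m(x,v)=\bar t_{n_m(x,v)}(x)\) and pass to a pointwise limit, using joint measurability of the Carath\'{e}odory functions \(d_n(x,v)=\|v-\bar s_n(x)\|_\B\) (valid since \(\B\) is separable). Your route avoids the bookkeeping of the paper's preimage computation and is arguably more transparent. One small slip: for \(x\in N\) you set \(\bar\varphi(x,\cdot)=0\), but \(\bar\varphi_m(x,v)=\bar t_{n_m(x,v)}(x)\) need not converge to \(0\) there, so \(\bar\varphi\) is not literally the pointwise limit of the \(\bar\varphi_m\) on all of \({\rm T}\mathbf E\); this is repaired trivially by replacing \(\bar\varphi_m\) with \(\nchi_{(\X\setminus N)\times\B}\,\bar\varphi_m\) (or by redefining \(\bar t_n\equiv 0\) on \(N\)), after which the limit argument gives measurability on the nose.
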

\begin{proof}
Thanks to item ii) of Proposition \ref{prop:equiv_Banach_bundle},
there is a countable \(\Q\)-linear subspace \(\mathcal C\) of
\(\bar\Gamma(\mathbf E)\) such that the \(\Q\)-linear space
\(\mathcal C(x)\coloneqq\big\{\bar s(x)\,:\,\bar s\in\mathcal C\big\}\)
is dense in \(\mathbf E(x)\) for every \(x\in\X\). Given any
\(\bar s\in\mathcal C\), choose a representative
\(\bar\Phi(\bar s)\in\bar\Gamma(\mathbf F)\) of
\(\Phi\big([\bar s]_\sim\big)\in\Gamma(\mathbf F)\).
Then there exists \(N\in\Sigma\) with \(\mm(N)=0\) such that
the following properties hold:
\begin{equation}\label{eq:Gamma_full_aux}\begin{split}
\bar\Phi(\bar s+\bar t\,)(x)&=\bar\Phi(\bar s)(x)+\bar\Phi(\bar t\,)(x),\\
\bar\Phi(q\,\bar s)(x)&=q\,\bar\Phi(\bar s)(x),\qquad\qquad\quad
\text{ for every }x\in\X\setminus N,\text{ }\bar s,\bar t\in\mathcal C,
\text{ and }q\in\Q.\\
\big\|\bar\Phi(\bar s)(x)\big\|_{\mathbf F(x)}&\leq
\big\|\bar s(x)\big\|_{\mathbf E(x)},
\end{split}\end{equation}
Given any \(x\in\X\), we define the map
\(\bar\varphi_x\colon\mathcal C(x)\to\mathbf F(x)\) as
\[
\bar\varphi_x\big(\bar s(x)\big)\coloneqq\left\{\begin{array}{ll}
\bar\Phi(\bar s)(x)\\
0_{\mathbf F(x)}
\end{array}\quad\begin{array}{ll}
\text{ if }x\in\X\setminus N,\\
\text{ if }x\in N.
\end{array}\right.
\]
The properties in \eqref{eq:Gamma_full_aux} grant that
each map \(\bar\varphi_x\) is a \(\Q\)-linear contraction,
thus it can be uniquely extended to an \(\R\)-linear contraction
\(\bar\varphi_x\colon\mathbf E(x)\to\mathbf F(x)\). Then we
define \(\bar\varphi\colon{\rm T}\mathbf E\to{\rm T}\mathbf F\)
as \(\bar\varphi(x,v)\coloneqq\big(x,\bar\varphi_x(v)\big)\)
for every \(x\in\X\) and \(v\in\mathbf E(x)\). In order to prove that
\(\bar\varphi\) is a pre-morphism, it is sufficient to check its
measurability. To this aim, we just have to show that \(\bar\varphi^{-1}
\big(A\times\bar B_r(w)\big)\in\Sigma\otimes\mathscr B(\B)\) for any
\(A\in\Sigma\), \(w\in\B\), and \(r>0\). This follows from the identity
\[
\bar\varphi^{-1}\big(A\times\bar B_r(w)\big)=
S\cup\Big\{(x,v)\in\big((A\setminus N)\times\B\big)\cap{\rm T}\mathbf E
\;\Big|\;\bar\varphi_x(v)\in\bar B_r(w)\Big\}=
S\cup\bigcup_{n\in\N}\bigcap_{\bar s\in\mathcal C}A_{n,\bar s},
\]
where we set \(S\coloneqq\big((A\cap N)\times\B\big)\cap{\rm T}\mathbf E\in\Sigma\)
if \(\|w\|_\B\leq r\), while \(S\coloneqq\emptyset\) if \(\|w\|_\B>r\), and
\[
A_{n,\bar s}\coloneqq\bigg\{(x,v)\in\big((A\setminus N)\times\B\big)
\cap{\rm T}\mathbf E\;\bigg|\;\big\|v-\bar s(x)\big\|_\B<\frac{1}{k},
\;\big\|w-\bar\Phi(\bar s)(x)\big\|_\B<r+\frac{1}{k}\bigg\}\in\Sigma.
\]
Therefore, \(\bar\varphi\) is a pre-morphism from \(\mathbf E\)
to \(\mathbf F\). We denote by \(\varphi\colon\mathbf E\to\mathbf F\)
its equivalence class. Observe that
\(\Gamma(\varphi)\big([\bar s]_\sim\big)=\Phi\big([\bar s]_\sim\big)\)
for every \(\bar s\in\mathcal C\) by construction. Finally,
by arguing exactly as in the proof of Lemma \ref{lem:Gamma(E)_separable},
we deduce that \(\big\{[\bar s]_\sim\,:\,\bar s\in\mathcal C\big\}\)
generates \(\Gamma(\mathbf E)\), whence we can conclude that
\(\Gamma(\varphi)=\Phi\). Consequently, the statement is achieved.
\end{proof}
\begin{lemma}[\(\Gamma\) is faithful]\label{lem:Gamma_faithful}
Let \((\X,\Sigma,\mm)\) be a \(\sigma\)-finite
measure space and \(\B\) a universal separable Banach space.
Let \(\mathbf E,\mathbf F\) be separable Banach \(\B\)-bundles.
Let \(\varphi,\psi\colon\mathbf E\to\mathbf F\) be two
morphisms of separable Banach \(\B\)-bundles such that
\(\varphi\neq\psi\). Then \(\Gamma(\varphi)\neq\Gamma(\psi)\).
\end{lemma}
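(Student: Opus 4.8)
The plan is to detect the difference between \(\varphi\) and \(\psi\) on a single, cleverly chosen section, exploiting the countable dense family of sections furnished by item ii) of Proposition \ref{prop:equiv_Banach_bundle}. First I would fix pre-morphisms \(\bar\varphi,\bar\psi\colon{\rm T}\mathbf E\to{\rm T}\mathbf F\) representing \(\varphi\) and \(\psi\), respectively. By the very definition of the equivalence relation on pre-morphisms, the hypothesis \(\varphi\neq\psi\) says precisely that the set
\[
A\coloneqq\big\{x\in\X\;\big|\;\bar\varphi(x,\cdot)\neq\bar\psi(x,\cdot)\big\}
\]
is contained in no \(\mm\)-negligible set, \emph{i.e.}, it has positive outer \(\mm\)-measure.

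Next I would invoke item ii) of Proposition \ref{prop:equiv_Banach_bundle} to obtain a countable \(\Q\)-linear subspace \(\mathcal C\subseteq\bar\Gamma(\mathbf E)\) such that \(\mathcal C(x)\coloneqq\big\{\bar s(x)\,:\,\bar s\in\mathcal C\big\}\) is dense in \(\mathbf E(x)\) for every \(x\in\X\). For each \(\bar s\in\mathcal C\) I set
\[
A_{\bar s}\coloneqq\big\{x\in\X\;\big|\;\bar\varphi\big(x,\bar s(x)\big)\neq\bar\psi\big(x,\bar s(x)\big)\big\}.
\]
The key step is the inclusion \(A\subseteq\bigcup_{\bar s\in\mathcal C}A_{\bar s}\): if \(x\in A\), then the two \emph{continuous} linear contractions \(\bar\varphi(x,\cdot),\bar\psi(x,\cdot)\colon\mathbf E(x)\to\mathbf F(x)\) differ at some \(v\in\mathbf E(x)\), hence -- by continuity -- on an entire neighbourhood of \(v\), which must meet the dense set \(\mathcal C(x)\); thus \(x\in A_{\bar s}\) for some \(\bar s\in\mathcal C\). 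The reverse inclusion is immediate, so \(A=\bigcup_{\bar s\in\mathcal C}A_{\bar s}\).

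Each \(A_{\bar s}\) is measurable, because \(\X\ni x\mapsto\bar\varphi\big(x,\bar s(x)\big)-\bar\psi\big(x,\bar s(x)\big)\in\B\) is measurable (being built from the measurable maps \(\bar\varphi,\bar\psi,\bar s\)) and \(A_{\bar s}\) is the preimage of \(\B\setminus\{0_\B\}\) under it. Since \(\mathcal C\) is countable, the set \(\bigcup_{\bar s}A_{\bar s}\supseteq A\) is measurable and cannot be \(\mm\)-negligible, whence \(\mm(A_{\bar s})>0\) for at least one \(\bar s\in\mathcal C\). For such an \(\bar s\) I consider the section \(s\coloneqq[\bar s]_\sim\in\Gamma(\mathbf E)\), using the identification \(\Gamma(\mathbf E)\cong\bar\Gamma(\mathbf E)/\sim\) from Remark \ref{rmk:other_def_Gamma(E)}. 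By definition of the section functor, \(\Gamma(\varphi)(s)\) and \(\Gamma(\psi)(s)\) are the classes of \(x\mapsto\bar\varphi\big(x,\bar s(x)\big)\) and \(x\mapsto\bar\psi\big(x,\bar s(x)\big)\), which disagree exactly on \(A_{\bar s}\); as \(\mm(A_{\bar s})>0\), this gives \(\Gamma(\varphi)(s)\neq\Gamma(\psi)(s)\), and therefore \(\Gamma(\varphi)\neq\Gamma(\psi)\).

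I expect the only genuinely delicate point to be the density/continuity argument proving \(A\subseteq\bigcup_{\bar s}A_{\bar s}\); the remaining steps are routine bookkeeping with measurability and countability. The crux is that a disagreement at a single point of \(\mathbf E(x)\) propagates, by continuity of the fibrewise maps, to points of the dense family \(\mathcal C(x)\) -- precisely the reason one works with a countable \emph{generating} family of sections rather than with an unstructured pointwise-dense choice.
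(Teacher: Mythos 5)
Your proof is correct and follows essentially the same route as the paper's: both use the countable dense family of sections from item ii) of Proposition \ref{prop:equiv_Banach_bundle} together with the continuity of the fibrewise linear maps to transfer a fibrewise disagreement of \(\bar\varphi(x,\cdot)\) and \(\bar\psi(x,\cdot)\) to a disagreement on some single section over a set of positive measure. If anything, your decomposition \(A=\bigcup_{\bar s\in\mathcal C}A_{\bar s}\) into countably many measurable sets makes explicit the measurability point that the paper glosses over when it simply ``picks'' a measurable set \(P'\in\Sigma\) of positive measure on which the fibre maps differ.
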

\begin{proof}
Choose representatives \(\bar\varphi,\bar\psi\) of
\(\varphi,\psi\), respectively. Pick a set \(P'\in\Sigma\)
such that \(\mm(P')>0\) and \(\bar\varphi(x,\cdot)\neq\bar\psi(x,\cdot)\)
for every \(x\in P'\). By item ii) of Proposition
\ref{prop:equiv_Banach_bundle}, there exists
\((\bar s_n)_n\subseteq\bar\Gamma(\mathbf E)\) such that
\(\big(\bar s_n(x)\big)_n\) is dense in \(\mathbf E(x)\)
for all \(x\in\X\). Therefore, there exist \(n\in\N\) and
\(P\in\Sigma\) such that \(P\subseteq P'\), \(\mm(P)>0\), and
\(\bar\varphi\big(x,\bar s_n(x)\big)\neq\bar\psi\big(x,\bar s_n(x)\big)\)
for every \(x\in P\). This ensures that
\(\Gamma(\varphi)\big([\bar s_n]_\sim\big)\neq
\Gamma(\psi)\big([\bar s_n]_\sim\big)\) and thus
accordingly \(\Gamma(\varphi)\neq\Gamma(\psi)\), as required.
\end{proof}
\subsection{Representation theorem}\label{ss:repr_thm}
We are finally in a position -- by combining the whole machinery
developed so far -- to prove that every separable
normed \(L^0(\mm)\)-module is the space of sections of a
separable Banach bundle.
\begin{theorem}[Representation theorem]\label{thm:representation}
Let \((\X,\Sigma,\mm)\) be a complete, \(\sigma\)-finite measure space.
Let \(\mathscr M\) be a countably-generated normed \(L^0(\mm)\)-module.
Let \(\B\) be a universal separable Banach space. Then there exists a
separable Banach \(\B\)-bundle \(\mathbf E\) over \(\X\) such that
\(\Gamma(\mathbf E)\cong\mathscr M\).
\end{theorem}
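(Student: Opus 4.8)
The plan is to realise the fibers of a lifting of \(\mathscr M\) as a measurable collection of Banach spaces, to embed them into \(\B\) via the machinery of \S\ref{ss:embeddings}, and finally to identify \(\mathscr M\) with the space of sections of the resulting bundle. We may assume \(\mm(\X)>0\), the case \(\mm\equiv 0\) being trivial. First I would fix a lifting \(\ell\) of \(\mm\) (which exists by Theorem \ref{thm:von_Neumann-Maharam}, using completeness and \(\sigma\)-finiteness) and its associated operator \(\mathcal L\). Applying Theorem \ref{thm:lifting_mod} to the normed \(L^\infty(\mm)\)-modules \(\sfR(\mathscr M)\) and \(\sfR(\mathscr M^*)\) yields their \(\mathcal L\)-liftings \((\bar{\mathscr M},\mathscr L)\) and \((\bar{\mathscr N},\mathscr L^*)\), together with the fiberwise pairing \(\la\cdot,\cdot\ra_x\) and the isometries \({\rm R}_x\) of \S\ref{ss:fibers}. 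Since \(\mathscr M\) is countably-generated, after truncating generators on the sets \(\{|w|\le k\}\) and using the glueing property I can choose a sequence \((v_n)_n\subseteq\sfR(\mathscr M)\) generating \(\mathscr M\). For every \(x\in\X\) I then set
\[
E(x)\coloneqq{\rm cl}_{\bar{\mathscr M}_x}\Big({\rm span}\big\{\mathscr L(v_n)_x\,:\,n\in\N\big\}\Big),
\]
a separable Banach subspace of the fiber \(\bar{\mathscr M}_x\).

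Next I would verify that \(\{E(x)\}_{x\in\X}\) is a measurable collection of separable Banach spaces in the sense of Definition \ref{def:meas_coll_Banach}. Enumerating the \(\Q\)-linear combinations of the \(v_n\) as \((\tilde v_m)_m\subseteq\sfR(\mathscr M)\) and putting \(\tilde v_m(x)\coloneqq\mathscr L(\tilde v_m)_x\), linearity of \(\mathscr L\) makes these a dense subset of each \(E(x)\), which gives item a). For the norming functionals I would use the duality formula \eqref{eq:dual_ptwse_norm} to pick, for each \(m\), an element \(\omega_m\in\sfR(\mathscr M^*)\) with \(|\omega_m|\le 1\) and \(\omega_m(\tilde v_m)=|\tilde v_m|\) \(\mm\)-a.e., and set \(\omega_m(x)\coloneqq{\rm R}_x\big(\mathscr L^*(\omega_m)_x\big)|_{E(x)}\in E(x)'\). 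By \eqref{eq:pairing_mult} one computes
\[
\omega_m(x)\big[\tilde v_k(x)\big]=\big\langle\mathscr L^*(\omega_m)_x,\mathscr L(\tilde v_k)_x\big\rangle_x=\mathcal L\big(\omega_m(\tilde v_k)\big)(x),
\]
which is measurable in \(x\) (item c)), while monotonicity of \(\mathcal L\) and Proposition \ref{prop:Rx_isometry} give \(\|\omega_m(x)\|_{E(x)'}\le\mathcal L(|\omega_m|)(x)\le 1\); since moreover \(\omega_m(x)[\tilde v_m(x)]=\mathcal L(|\tilde v_m|)(x)=\|\tilde v_m(x)\|_{E(x)}\), item b) follows. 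With the measurable collection at hand, Theorem \ref{thm:meas_family_embed} supplies a measurable family of isometric embeddings \({\rm I}_x\colon E(x)\to\B\), and Corollary \ref{cor:img_Ix_bundle} produces a separable Banach \(\B\)-bundle \(\mathbf E(x)\coloneqq{\rm I}_x(E(x))={\rm cl}_\B({\rm span}\{{\rm I}_x(\mathscr L(v_n)_x)\})\).

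It then remains to produce the isomorphism. Writing \(\bar s_n(x)\coloneqq{\rm I}_x(\mathscr L(v_n)_x)\)---a measurable section of \(\mathbf E\) by \eqref{eq:meas_sect}---and \(s_n\coloneqq\pi_\mm(\bar s_n)\in\Gamma_b(\mathbf E)\), I would define \(T\) on the algebraic \(L^\infty(\mm)\)-submodule generated by \((v_n)\) by \(T(\sum_i f_i\cdot v_{n_i})\coloneqq\sum_i f_i\cdot s_{n_i}\). By \eqref{eq:L-lin} the section \(x\mapsto{\rm I}_x(\mathscr L(v)_x)\) is a representative of \(T(v)\) for every such \(v\), whence \(|T(v)|=\mathcal L(|v|)=|v|\) holds \(\mm\)-a.e.; thus \(T\) is well-posed, \(L^\infty(\mm)\)-linear, and preserves the pointwise norm. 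Truncating the \(L^0(\mm)\)-coefficients on the sets \(\{|f_i|\le k\}\) shows that this submodule is \(\sfd_{\mathscr M}\)-dense in \(\mathscr M\), so \(T\), being a pointwise-norm isometry, extends to an injective \(L^0(\mm)\)-module morphism \(\bar T\colon\mathscr M\to\Gamma(\mathbf E)\) (recall \(\sfC(\sfR(\mathscr M))\cong\mathscr M\) by Lemma \ref{lem:C=inverse_R}). Its image is a closed submodule containing every \(s_n\); arguing as in Lemma \ref{lem:Gamma(E)_separable}, the family \((s_n)_n\) generates \(\Gamma(\mathbf E)\), so \(\bar T\) is onto and hence an isomorphism.

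The main difficulty lies in this last step and stems from the mismatch between the \(L^0\)- and \(L^\infty\)-levels: the lifting \(\mathscr L\) is only available on bounded elements and does not commute with \(\sfd_{L^0(\mm)}\)-convergence, so the identification \(T(v)(x)={\rm I}_x(\mathscr L(v)_x)\) can be set up directly only on the \(L^\infty(\mm)\)-submodule generated by \((v_n)\) and must then be propagated to all of \(\mathscr M\) through the completion \(\sfC\). A second delicate point is item b) of Definition \ref{def:meas_coll_Banach}: it requires norming functionals that \emph{exactly} attain \(|\tilde v_m|\), so one must rely on the essential supremum in \eqref{eq:dual_ptwse_norm} being realised rather than merely approximated.
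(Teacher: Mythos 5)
Your proposal is correct and takes essentially the same approach as the paper's proof: lift \(\sfR(\mathscr M)\) and \(\sfR(\mathscr M^*)\), build the fibers \(E(x)\) from the lifted generators, verify Definition \ref{def:meas_coll_Banach} via the pairing and the maps \({\rm R}_x\), embed through Theorem \ref{thm:meas_family_embed} and Corollary \ref{cor:img_Ix_bundle}, and identify \(\mathscr M\) with \(\Gamma(\mathbf E)\) by a pointwise-norm-preserving morphism, concluding via Lemma \ref{lem:C=inverse_R}. The only deviation is cosmetic: you define the isometry on finite \(L^\infty(\mm)\)-combinations of the generators and extend by density, whereas the paper defines \(\mathcal I\) directly on all of \(\sfR(\mathscr M)\) (which requires showing \(\mathscr L(v)_x\in E(x)\) for \(\mm\)-a.e.\ \(x\) and using completeness of \((\X,\Sigma,\mm)\) for measurability of the resulting section); both delicate points you flag -- exact norming functionals attaining \eqref{eq:dual_ptwse_norm}, and generation of \(\Gamma(\mathbf E)\) by the image sections -- are used in exactly the same way in the paper's proof.
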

\begin{proof}\ \\
{\color{blue}\textsc{Step 1.}} First, fix a countable \(\Q\)-linear
subspace \((v_n)_n\) of \(\sfR(\mathscr M)\) that generates \(\mathscr M\).
Choose a sequence \((\omega_n)_n\subseteq\sfR(\mathscr M^*)\) such
that the identities \(|\omega_n|=1\) and \(\omega_n(v_n)=|v_n|\) hold
\(\mm\)-a.e.\ for every \(n\in\N\). Let \(\ell\) be any lifting
of \(\mm\), whose existence is granted by Theorem \ref{thm:von_Neumann-Maharam}.
Consider the operator \(\mathcal L\colon L^\infty(\mm)\to\mathcal L^\infty(\Sigma)\)
associated with \(\ell\) as in Theorem \ref{thm:lifting_Linfty}.
Denote by \((\bar{\mathscr M},\mathscr L)\) and \((\bar{\mathscr N},\mathscr L^*)\)
the \(\mathcal L\)-liftings of \(\sfR(\mathscr M)\) and \(\sfR(\mathscr M^*)\),
respectively; recall Theorem \ref{thm:lifting_mod}. Therefore,
\begin{equation}\label{eq:repr_thm_1}\begin{split}
\big|\mathscr L^*(\omega_n)\big|(x)&=1,\\
\big\langle\mathscr L^*(\omega_n),\mathscr L(v_n)\big\rangle(x)&=
\big|\mathscr L(v_n)\big|(x),\end{split}
\quad\text{ for every }n\in\N\text{ and }x\in\X.
\end{equation}
(Recall the discussion about the duality pairing \(\la\cdot,\cdot\ra\)
between \(\bar{\mathscr M}\) and \(\bar{\mathscr N}\) in \S\ref{ss:fibers}.)

Given any point \(x\in\X\), we define the separable Banach
subspace \(E(x)\) of \(\bar{\mathscr M}_x\) as
\begin{equation}\label{eq:def_E(x)}
E(x)\coloneqq{\rm cl}_{\bar{\mathscr M}_x}\big\{\mathscr L(v_n)_x\;\big|\;n\in\N\big\}.
\end{equation}
Consider the isometric embedding
\({\rm R}_x\colon\bar{\mathscr N}_x\to(\bar{\mathscr M}_x)'\),
which has been introduced in \eqref{eq:def_R_x} and studied in Proposition
\ref{prop:Rx_isometry}. Let us define
\[\begin{split}
\bar v_n(x)&\coloneqq\mathscr L(v_n)_x\in E(x),\\
\bar\omega_n(x)&\coloneqq{\rm R}_x\big(\mathscr L^*(\omega_n)_x\big)|_{E(x)}
\in E(x)',\end{split}
\quad\text{ for every }n\in\N\text{ and }x\in\X.
\]
It follows from the second line in \eqref{eq:repr_thm_1} that
\begin{equation}\label{eq:repr_thm_1bis}
\bar\omega_n(x)\big[\bar v_n(x)\big]=\big\|\bar v_n(x)\big\|_{E(x)}
\quad\text{ for every }n\in\N\text{ and }x\in\X.
\end{equation}
Moreover, observe that for any \(n\in\N\) and \(x\in\X\) one has that
\[
\big\|\bar\omega_n(x)\big\|_{E(x)'}\leq
\Big\|{\rm R}_x\big(\mathscr L^*(\omega_n)_x\big)\Big\|_{(\bar{\mathscr M}_x)'}
=\big\|\mathscr L^*(\omega_n)_x\big\|_{\bar{\mathscr N}_x}
\overset{\eqref{eq:repr_thm_1}}=1.
\]
Hence, if \(n\in\N\) and \(x\in\X\) satisfy \(\bar v_n(x)\neq 0_{E(x)}\),
then \eqref{eq:repr_thm_1bis} forces \(\big\|\bar\omega_n(x)\big\|_{E(x)'}=1\).
Finally, given any \(n,k\in\N\), we have that the function
\(\X\ni x\mapsto\bar\omega_n(x)\big[\bar v_k(x)\big]=\big\langle\mathscr L^*(\omega_n),
\mathscr L(v_k)\big\rangle(x)\) is measurable. All in all, we have proven that
\(\big\{E(x)\big\}_{x\in\X}\) is a measurable collection of separable Banach spaces
(in the sense of Definition \ref{def:meas_coll_Banach}) when equipped with
\((\bar v_n)_n,(\bar\omega_n)_n\). Therefore, let us consider a measurable
family \(\{{\rm I}_x\}_{x\in\X}\) of linear isometric embeddings
\({\rm I}_x\colon E(x)\to\B\), whose existence is granted by Theorem
\ref{thm:meas_family_embed}. We thus denote by \(\mathbf E\colon\X\to{\rm Gr}(\B)\)
the map \(\X\ni x\mapsto{\rm I}_x\big(E(x)\big)\), which is a separable Banach
\(\B\)-bundle over \(\X\) thanks to Corollary \ref{cor:img_Ix_bundle}.\\
{\color{blue}\textsc{Step 2.}} Let \(v\in\sfR(\mathscr M)\) be fixed. We claim that
\begin{equation}\label{eq:repr_thm_2}
\mathscr L(v)_x\in E(x)\quad\text{ for }\mm\text{-a.e.\ }x\in\X.
\end{equation}
Indeed, we can find a sequence \((u_k)_k\) -- where
\(u_k=\sum_{i=1}^{m_k}f^k_i\cdot u^k_i\) for some
\((f^k_i)_{i=1}^{m_k}\subseteq L^\infty(\mm)\) and
\((u^k_i)_{i=1}^{m_k}\subseteq\{v_n\}_n\) -- such that
\(\lim_k\sfd_{\mathscr M}(u_k,v)=0\). Then (up to taking a not relabelled
subsequence) we have that \(\big|\mathscr L(u_k)-\mathscr L(v)\big|(x)\to 0\)
for \(\mm\)-a.e.\ point \(x\in\X\), or equivalently that
\(\lim_k{\big\|\mathscr L(u_k)_x-\mathscr L(v)_x\big\|}_x=0\) for \(\mm\)-a.e.\ \(x\in\X\). Since
\[
\mathscr L(u_k)_x=\sum_{i=1}^{m_k}\mathcal L(f^k_i)(x)\,
\mathscr L(u^k_i)_x\in E(x)\quad\text{ for every }k\in\N\text{ and }x\in\X,
\]
we obtain \eqref{eq:repr_thm_2}. 
Now let us define the map \(\bar{\mathcal I}(v)\colon\X\to\B\) as
\begin{equation}\label{eq:bar_I(v)}
\bar{\mathcal I}(v)(x)\coloneqq\left\{\begin{array}{ll}
{\rm I}_x\big[\mathscr L(v)_x\big]\\
0_\B
\end{array}\quad\begin{array}{ll}
\text{ if }\mathscr L(v)_x\in E(x),\\
\text{ otherwise.}
\end{array}\right.
\end{equation}
Choose any set \(N\in\Sigma\) such that \(\mm(N)=0\) and
\(\mathscr L(v)_x=\lim_k\mathscr L(u_k)_x\) for every \(x\in\X\setminus N\).
Hence, we have that
\[
\bar{\mathcal I}(v)(x)=\lim_{k\to\infty}{\rm I}_x\big[\mathscr L(u_k)_x\big]
=\lim_{k\to\infty}\sum_{i=1}^{m_k}\mathcal L(f^k_i)(x)\,{\rm I}_x\big[\mathscr L(u^k_i)_x\big]
\quad\text{ for every }x\in\X\setminus N.
\]
By recalling Theorem \ref{thm:meas_family_embed}
and the fact that the measure space \((\X,\Sigma,\mm)\) is complete, we deduce
that \(\bar{\mathcal I}(v)\) is a measurable map from \(\X\) to \(\B\).
In other words, it holds that \(\bar{\mathcal I}(v)\in\bar\Gamma(\mathbf E)\).
Then let us denote by \(\mathcal I\colon\sfR(\mathscr M)\to\Gamma(\mathbf E)\)
the map given by \(\mathcal I(v)\coloneqq\big[\bar{\mathcal I}(v)\big]_\sim\)
for every \(v\in\sfR(\mathscr M)\).\\
{\color{blue}\textsc{Step 3.}} We aim to prove that
\(\mathcal I\) maps \(\sfR(\mathscr M)\) to \(\Gamma_b(\mathbf E)\)
and that \(\mathcal I\colon\sfR(\mathscr M)\to\Gamma_b(\mathbf E)\)
is an isomorphism of normed \(L^\infty(\mm)\)-modules. This is sufficient to
conclude that the spaces \(\mathscr M\) and \(\Gamma(\mathbf E)\) are isomorphic
as normed \(L^0(\mm)\)-modules by item i) of Lemma \ref{lem:C=inverse_R}.
We first check the \(L^\infty(\mm)\)-linearity of \(\mathcal I\):
if \(v,w\in\sfR(\mathscr M)\) and \(f,g\in L^\infty(\mm)\),
then for \(\mm\)-a.e.\ \(x\in\X\) we have
\[\begin{split}
\bar{\mathcal I}(f\cdot v+g\cdot w)(x)
&={\rm I}_x\big[\mathscr L(f\cdot v+g\cdot w)_x\big]
={\rm I}_x\big[\mathcal L(f)(x)\cdot\mathscr L(v)_x
+\mathcal L(g)(x)\cdot\mathscr L(w)_x\big]\\
&=\mathcal L(f)(x)\cdot\bar{\mathcal I}(v)(x)
+\mathcal L(g)(x)\cdot\bar{\mathcal I}(w)(x),
\end{split}\]
thus accordingly \(\mathcal I(f\cdot v+g\cdot w)=f\cdot\mathcal I(v)+g\cdot\mathcal I(w)\).
Moreover, given any \(v\in\sfR(\mathscr M)\) one has
\[
{\big\|\bar{\mathcal I}(v)(x)\big\|}_\B=
{\big\|\mathscr L(v)_x\big\|}_x
=\big|\mathscr L(v)\big|(x)=\mathcal L\big(|v|\big)(x)
\quad\text{ for }\mm\text{-a.e.\ }x\in\X,
\]
whence \(\big|\mathcal I(v)\big|=|v|\) holds in the
\(\mm\)-a.e.\ sense. This ensures that \(\mathcal I\)
maps \(\sfR(\mathscr M)\) to \(\Gamma_b(\mathbf E)\) and that
\(\mathcal I\colon\sfR(\mathscr M)\to\Gamma_b(\mathbf E)\)
is a morphism of normed \(L^\infty(\mm)\)-modules preserving the
pointwise norm. Finally, to prove that the map \(\mathcal I\) is
surjective, it is enough to show that its image is dense in
\(\Gamma_b(\mathbf E)\). Fix \(s\in\Gamma_b(\mathbf E)\) and \(\eps>0\).
Choose any representative \(\bar s\in\bar\Gamma_b(\mathbf E)\) of \(s\).
Given that the sequence \(\big(\bar{\mathcal I}(v_n)(x)\big)_n\) is dense
in \(\mathbf E(x)\) for every \(x\in\X\) by \eqref{eq:def_E(x)} and 
\eqref{eq:bar_I(v)}, we can find a partition \((A_n)_n\subseteq\Sigma\)
of \(\X\) such that
\(\big\|\bar s(x)-\bar{\mathcal I}(v_n)(x)\big\|\leq\eps\)
for every \(n\in\N\) and \(x\in A_n\). This implies that the inequality
\(\big|\bar s-\sum_{n\in\N}\nchi_{A_n}\cdot\bar{\mathcal I}(v_n)\big|\leq\eps\)
holds everywhere on \(\X\).

Now let us define
\(v\coloneqq\sum_{n\in\N}[\nchi_{A_n}]_\mm\cdot v_n\in\sfR(\mathscr M)\).
Clearly \(\big|s-\mathcal I(v)\big|\leq\eps\) holds \(\mm\)-a.e.\ by
construction, so that
\(\big\|s-\mathcal I(v)\big\|_{\Gamma_b(\mathbf E)}\leq\eps\).
This yields surjectivity of \(\mathcal I\), which is consequently
an isomorphism of normed \(L^\infty(\mm)\)-modules. Hence,
the statement is finally achieved.
\end{proof}
\begin{remark}[Proof of the representation theorem without the Axiom of Choice]\label{rmk:repr_thm_no_AC}
{\rm
In the above proof of Theorem \ref{thm:representation}, we made use of
the theory of liftings of normed modules that we developed in
\S\ref{s:lift_norm_mod}. Nevertheless, it is possible to provide 
an alternative proof which does not rely upon the Axiom of Choice
(and, thus, without appealing to von Neumann's theory of lifting).
Some weaker form of the Axiom of Choice is needed anyway, \emph{e.g.},
in the proof of Banach--Mazur Theorem \ref{thm:Banach-Mazur}, where
Banach--Alaoglu theorem is used. We now sketch the argument of the
alternative proof of Theorem \ref{thm:representation},
leaving its verification to the reader.

Fix a countable \(\Q\)-linear subspace \((v_n)_n\) of \(\mathscr M\) that
generates \(\mathscr M\). Choose \((\omega_n)_n\subseteq\mathscr M^*\)
such that \(|\omega_n|=1\) and \(\omega_n(v_n)=|v_n|\) hold \(\mm\)-a.e.\ for
every \(n\in\N\). Given any \(n,k\in\N\), pick a measurable representative
\(\overline{\omega_n(v_k)}\) of \(\omega_n(v_k)\). Then we can
find a \(\mm\)-null set \(N\subseteq\X\) such that
\[\begin{split}
\overline{\omega_n(v_k+v_{k'})}(x)&=\overline{\omega_n(v_k)}(x)
+\overline{\omega_n(v_{k'})}(x),\\
\overline{\omega_n(q v_k)}(x)&=q\,\overline{\omega_n(v_k)}(x),\\
\overline{\omega_n(v_k)}(x)&\leq\overline{\omega_k(v_k)}(x)
\end{split}\]
for every \(n,k,k'\in\N\), \(q\in\Q\), and \(x\in\X\setminus N\).
Observe that for any \(x\in\X\setminus N\) the family
\[
V_x\coloneqq\Big\{\big(\,\overline{\omega_n(v_k)}(x)\big)_{n\in\N}\;
\Big|\;k\in\N\Big\}\subseteq\ell^\infty
\]
is a \(\Q\)-linear subpace of \(\ell^\infty\). Hence, calling
\(E(x)\coloneqq\{0_{\ell^\infty}\}\) for all \(x\in N\) and
\(E(x)\coloneqq{\rm cl}_{\ell^\infty}(V_x)\) for all \(x\in\X\setminus N\),
we have that \(\big\{E(x)\big\}_{x\in\X}\) is a family of separable Banach
subspaces of \(\ell^\infty\). Moreover, for any \(x\in\X\) and \(n,k\in\N\),
we define \(\tilde v_k(x)\in E(x)\) and \(\tilde\omega_n(x)\in E(x)'\) as
follows: trivially, \(\tilde v_k(x)\coloneqq 0_{E(x)}\) and
\(\tilde\omega_n(x)\coloneqq 0_{E(x)'}\) if \(x\in N\); if \(x\notin N\),
then we set
\[
\tilde v_k(x)\coloneqq\big(\,\overline{\omega_{n'}(v_k)}(x)\big)_{n'\in\N}
\in E(x),
\]
while we denote by \(\tilde\omega_n(x)\colon E(x)\to\R\) the unique
linear and continuous operator satisfying
\(\tilde\omega_n(x)\big[\tilde v_{k'}(x)\big]=\overline{\omega_n(v_{k'})}(x)\)
for every \(k'\in\N\). Then it holds that \(\big\{E(x)\big\}_{x\in\X}\) is
a measurable collection of separable Banach spaces -- together with
\(\tilde v_k(x)\) and \(\tilde\omega_n(x)\). Finally, one can also prove
that the associated separable Banach \(\B\)-bundle \(\bf E\) satisfies
\(\Gamma({\bf E})\cong\mathscr M\), as desired.
\fr}\end{remark}

In analogy with \cite{LP18}, we have a Serre--Swan
theorem for separable normed \(L^0(\mm)\)-modules:
\begin{theorem}[Serre--Swan theorem]\label{thm:Serre-Swan}
Let \((\X,\Sigma,\mm)\) be a complete, \(\sigma\)-finite
measure space. Let \(\B\) be a universal separable Banach
space. Then the section functor
\[
\Gamma\colon\mathbf{SBB}_\B(\X,\Sigma,\mm)
\longrightarrow\mathbf{NMod}_{\rm cg}(\X,\Sigma,\mm)
\]
is an equivalence of categories. In particular, if \((\X,\Sigma,\mm)\)
is a separable measure space, then
\[
\Gamma\colon\mathbf{SBB}_\B(\X,\Sigma,\mm)
\longrightarrow\mathbf{NMod}_{\rm s}(\X,\Sigma,\mm)
\]
is an equivalence of categories.
\end{theorem}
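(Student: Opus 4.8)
The plan is to invoke the standard categorical criterion that a covariant functor is an equivalence of categories precisely when it is full, faithful, and essentially surjective, and then to observe that each of these three properties has already been established in the preceding results. Thus the proof reduces to assembling the three ingredients rather than proving anything genuinely new.

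First I would recall that faithfulness of \(\Gamma\) is exactly the content of Lemma \ref{lem:Gamma_faithful}: if two morphisms \(\varphi,\psi\colon\mathbf E\to\mathbf F\) of separable Banach \(\B\)-bundles satisfy \(\varphi\neq\psi\), then \(\Gamma(\varphi)\neq\Gamma(\psi)\). Next, fullness of \(\Gamma\) is precisely Lemma \ref{lem:Gamma_full}: every morphism \(\Phi\colon\Gamma(\mathbf E)\to\Gamma(\mathbf F)\) of normed \(L^0(\mm)\)-modules arises as \(\Gamma(\varphi)\) for some bundle morphism \(\varphi\colon\mathbf E\to\mathbf F\). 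Finally, essential surjectivity is the Representation Theorem \ref{thm:representation}: given any countably-generated normed \(L^0(\mm)\)-module \(\mathscr M\), there exists a separable Banach \(\B\)-bundle \(\mathbf E\) with \(\Gamma(\mathbf E)\cong\mathscr M\). That \(\Gamma\) indeed takes values in \(\mathbf{NMod}_{\rm cg}(\X,\Sigma,\mm)\) is guaranteed by Lemma \ref{lem:Gamma(E)_separable}. Combining these, \(\Gamma\) is an equivalence of categories.

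For the second assertion, when \((\X,\Sigma,\mm)\) is a separable measure space, I would argue that the full subcategories \(\mathbf{NMod}_{\rm cg}(\X,\Sigma,\mm)\) and \(\mathbf{NMod}_{\rm s}(\X,\Sigma,\mm)\) of \(\mathbf{NMod}(\X,\Sigma,\mm)\) actually coincide. Indeed, any separable normed \(L^0(\mm)\)-module is trivially countably-generated, while the converse implication --- that under the separability assumption on the measure space every countably-generated module is separable --- is exactly the final clause of Proposition \ref{prop:countable_gen}. Since both are full subcategories sharing the same objects, they are equal, and hence the restriction of the equivalence \(\Gamma\) to separable modules is again an equivalence.

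The main obstacle, such as it is, does not lie in the present statement but upstream: the entire weight of the argument rests on the Representation Theorem \ref{thm:representation}, whose proof required lifting the module to a normed \(\mathcal L^\infty(\Sigma)\)-module, passing to fibers, realizing those fibers as a measurable collection of separable Banach spaces, and embedding them uniformly into \(\B\) via the measurable family of Banach--Mazur embeddings of Theorem \ref{thm:meas_family_embed}. Once these facts are in hand, the only point to verify here is the purely formal one that essential surjectivity (isomorphism of objects, rather than literal equality) together with fullness and faithfulness yields a genuine equivalence of categories, which is standard.
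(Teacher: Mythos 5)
Your proposal is correct and follows exactly the same route as the paper: the first claim is obtained by combining Lemma \ref{lem:Gamma_full} (fullness), Lemma \ref{lem:Gamma_faithful} (faithfulness), and Theorem \ref{thm:representation} (essential surjectivity), while the second claim follows from the first via Proposition \ref{prop:countable_gen}, which under separability of the measure space identifies the countably-generated and separable module categories. The paper's own proof is just a more terse statement of the same assembly of ingredients.
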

\begin{proof}
The first claim follows from Lemma \ref{lem:Gamma_full}, Lemma
\ref{lem:Gamma_faithful}, and Theorem \ref{thm:representation}.
The second claim follows from the first one by taking
Proposition \ref{prop:countable_gen} into account.
\end{proof}
\begin{problem}
Does there exist some notion of measurable Banach bundle that is
sufficient to describe also the duals of separable normed
\(L^0(\mm)\)-modules? In this regard, it is well-known that
all duals of separable Banach spaces can be embedded
linearly and isometrically into \(\ell^\infty\), thus
the space \(\ell^\infty\) would be a good candidate for the
`ambient space' of the fibers of the bundle.
\end{problem}

Finally, we now extend Banach--Mazur Theorem \ref{thm:Banach-Mazur}
to the setting of separable normed \(L^0(\mm)\)-modules; \emph{i.e.},
as we are going to see, we prove the existence of universal such modules.
\begin{definition}[Universal separable normed \(L^0\)-module]
Let \((\X,\Sigma,\mm)\) be a \(\sigma\)-finite
measure space. Let \(\mathscr M\) be a separable normed
\(L^0(\mm)\)-module. Then we say that \(\mathscr M\) is
a \emph{universal separable normed \(L^0(\mm)\)-module}
if for any separable normed \(L^0(\mm)\)-module
\(\mathscr N\) there exists a normed \(L^0(\mm)\)-module morphism
\(\mathcal I\colon\mathscr N\to\mathscr M\) that preserves
the pointwise norm.
\end{definition}

Given a measurable space \((\X,\Sigma)\) and a universal
separable Banach space \(\B\), we shall denote by \(\Gamma(\B)\)
the space of sections of the separable Banach
\(\B\)-bundle \(\X\ni x\mapsto\B\in{\rm Gr}(\B)\).
\begin{theorem}[Existence of universal modules]
Let \((\X,\Sigma,\mm)\) be a complete, \(\sigma\)-finite, separable
measure space. Let \(\B\) be a universal separable Banach space.
Then \(\Gamma(\B)\) is a universal separable normed \(L^0(\mm)\)-module.
\end{theorem}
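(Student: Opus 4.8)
The plan is to reduce everything to the Representation Theorem \ref{thm:representation}, combined with the elementary observation that every fiber of a separable Banach \(\B\)-bundle sits isometrically inside the common ambient space \(\B\).

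First I would record that \(\Gamma(\B)\) is itself a separable normed \(L^0(\mm)\)-module, so that the statement makes sense. The constant correspondence \(\mathbf E_0\colon x\mapsto\B\) is trivially weakly measurable, since for a non-empty open set \(U\subseteq\B\) one has \(\{x\in\X:\mathbf E_0(x)\cap U\neq\emptyset\}=\X\in\Sigma\); hence \(\mathbf E_0\) is a separable Banach \(\B\)-bundle and Lemma \ref{lem:Gamma(E)_separable} gives that \(\Gamma(\B)=\Gamma(\mathbf E_0)\) is countably-generated, the separability of \((\X,\Sigma,\mm)\) upgrading this to separability.

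Now let \(\mathscr N\) be any separable normed \(L^0(\mm)\)-module. Since \(\mathscr N\) is in particular countably-generated, Theorem \ref{thm:representation} provides a separable Banach \(\B\)-bundle \(\mathbf E\) together with an isomorphism \(\Phi\colon\mathscr N\to\Gamma(\mathbf E)\); being an isomorphism of normed modules, \(\Phi\) preserves the pointwise norm. Next I would build the tautological inclusion morphism of bundles \(\iota\colon\mathbf E\to\mathbf E_0\): because \(\mathbf E(x)\) is by definition a closed linear subspace of \(\B=\mathbf E_0(x)\), the fiberwise inclusions assemble into the map \(\bar\iota\colon{\rm T}\mathbf E\to{\rm T}\mathbf E_0=\X\times\B\) given by \(\bar\iota(x,v)\coloneqq(x,v)\). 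This is the restriction to the measurable set \({\rm T}\mathbf E\in\Sigma\otimes\mathscr B(\B)\) of the identity on \(\X\times\B\), hence measurable, and \(\bar\iota(x,\cdot)\) is a linear isometry of \(\mathbf E(x)\) into \(\B\), in particular a contraction; thus \(\bar\iota\) is a pre-morphism and determines a morphism \(\iota\colon\mathbf E\to\mathbf E_0\) of separable Banach \(\B\)-bundles. Applying the section functor then yields a morphism \(\Gamma(\iota)\colon\Gamma(\mathbf E)\to\Gamma(\B)\).

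Finally I would check that \(\Gamma(\iota)\) preserves the pointwise norm and conclude. For \(s\in\Gamma(\mathbf E)\) with representative \(\bar s\in\bar\Gamma(\mathbf E)\), the section \(\Gamma(\iota)(s)\) is represented by \(x\mapsto\bar\iota(x,\bar s(x))=\bar s(x)\in\B\), so by \eqref{eq:ptwse_norm_section} its pointwise norm is \(x\mapsto\|\bar s(x)\|_\B=|s|(x)\); that is, \(\Gamma(\iota)\) preserves \(|\cdot|\). Setting \(\mathcal I\coloneqq\Gamma(\iota)\circ\Phi\colon\mathscr N\to\Gamma(\B)\) gives a normed \(L^0(\mm)\)-module morphism preserving the pointwise norm, which is precisely what the definition of universality demands. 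I do not expect a genuine obstacle here: the entire weight of the argument has already been absorbed into Theorem \ref{thm:representation}, and the only point to verify by hand is the measurability of the tautological inclusion \(\bar\iota\colon{\rm T}\mathbf E\to\X\times\B\), which is immediate from \({\rm T}\mathbf E\in\Sigma\otimes\mathscr B(\B)\).
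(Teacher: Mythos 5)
Your proposal is correct and follows essentially the same route as the paper's proof: represent \(\mathscr N\) as \(\Gamma(\mathbf E)\) (the paper cites Theorem \ref{thm:Serre-Swan}, whose object-level content is exactly Theorem \ref{thm:representation}), then apply the section functor to the fiberwise-inclusion morphism from \(\mathbf E\) into the constant bundle \(x\mapsto\B\), and compose. Your extra checks (measurability of the tautological inclusion, norm preservation of \(\Gamma(\iota)\), and separability of \(\Gamma(\B)\) itself via Lemma \ref{lem:Gamma(E)_separable}) are details the paper leaves implicit, and they are all verified correctly.
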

\begin{proof}
Let \(\mathscr M\) be any given separable normed \(L^0(\mm)\)-module.
Theorem \ref{thm:Serre-Swan} grants the existence of a separable Banach
\(\B\)-bundle \(\mathbf E\) over \(\X\) such that \(\Gamma(\mathbf E)\cong\mathscr M\).
With a slight abuse of notation, we use the symbol \(\B\) to denote the separable Banach
\(\B\)-bundle \(\X\ni x\mapsto\B\). Consider the pre-morphism
\(\bar\varphi\) from \(\mathbf E\) to \(\B\) defined as follows: given any
\(x\in\X\), we declare that \(\bar\varphi(x,\cdot)\colon\mathbf E(x)\to\B\)
is the inclusion map. Call \(\varphi\colon\mathbf E\to\B\) the equivalence
class of \(\bar\varphi\). Therefore, it holds that
\(\Gamma(\varphi)\colon\Gamma(\mathbf E)\to\Gamma(\B)\)
is a morphism of normed \(L^0(\mm)\)-modules that preserves the pointwise norm.
The statement is achieved.
\end{proof}
\appendix
\section{Representation of normed modules via direct limits}
\label{s:repr_via_DL}
In this section we provide an alternative description of separable
normed \(L^0(\mm)\)-modules, which builds upon the representation
results for proper modules that have been proven in \cite{LP18}.

Roughly speaking, the strategy we will adopt is the following:
any separable normed \(L^0(\mm)\)-module \(\mathscr M\) can be
obtained as direct limit of finite-dimensional normed
\(L^0(\mm)\)-modules \((\mathscr M_k)_k\); each module
\(\mathscr M_k\) is the space of sections of some finite-dimensional
Banach bundle \(E_k\), thus by `patching together' the bundles \(E_k\)
we obtain some notion of separable Banach bundle \(E\), whose space of
sections can be eventually identified with \(\mathscr M\). Even though
this approach is much more `implicit' than the one proposed in
\S\ref{s:sep_Ban_bundle}, it has the advantage of clarifying how
to approximate separable normed \(L^0(\mm)\)-modules by proper ones.

A word on notation: for simplicity, we will use some terminology that
has been already used in \S\ref{s:sep_Ban_bundle} (such as
`separable Banach bundle" and so on), but with a different meaning.
Since this section is independent of \S\ref{s:sep_Ban_bundle},
we believe that this will not cause any ambiguity.
\bigskip

Let \((\X,\Sigma,\mm)\) be a fixed \(\sigma\)-finite measure space.
For the sake of simplicity, we shall write
\[
\mm_A\coloneqq\mm|_A\quad\text{ for every }A\in\Sigma
\text{ such that }\mm(A)>0.
\]
As observed in \cite[Section 2]{GPS18}, any normed
\(L^0(\mm_A)\)-module \(\mathscr N\) can be canonically viewed
as a normed \(L^0(\mm)\)-module; we shall denote it by
\({\rm Ext}_A(\mathscr N)\) and call it the \emph{extension} of
\(\mathscr N\).

On the other hand, a normed \(L^0(\mm)\)-module \(\mathscr M\)
can be `localised' on \(A\) as follows: we define \(\mathscr M|_A\)
as the pullback of \(\mathscr M\) under the identity map from
\((\X,\Sigma,\mm_A)\) and \((\X,\Sigma,\mm)\)
(cf.\ \cite[Section 1.6]{Gigli14}), so that \(\mathscr M|_A\)
is a normed \(L^0(\mm_A)\)-module. It is clear that
\({\rm Ext}_A(\mathscr M|_A)\) is isomorphic to the normed
\(L^0(\mm)\)-submodule \([\nchi_A]_\mm\cdot\mathscr M=
\big\{[\nchi_A]_\mm\cdot v\,:\,v\in\mathscr M\big\}\) of \(\mathscr M\).
\subsection{Separable Banach bundles}
We propose an alternative notion of separable Banach bundle over \(\X\),
which extends the one that has been introduced in \cite{LP18}.
The language we adopt here is slightly different from that of \cite{LP18},
but it can be readily checked that the two resulting theories are fully
consistent.
\bigskip

We fix the notation \(\bar\N\coloneqq\N\cup\{\infty\}\).
Given any \(n\in\bar\N\), we define the vector space \(\mathbb V_n\) as
\[
\mathbb V_n\coloneqq\left\{\begin{array}{ll}
\R^n\\
c_{00}
\end{array}\quad\begin{array}{ll}
\text{ if }n<\infty,\\
\text{ if }n=\infty,
\end{array}\right.
\]
where \(c_{00}\) stands for the space of all sequences in \(\R\) having only
finitely many non-zero terms. Calling \((e_n)_{n\in\N}\) the canonical basis
of \(\mathbb V_\infty\) (\emph{i.e.}, \(e_n\coloneqq(\delta_{in})_{i\in\N}\)
for all \(n\in\N\)), we shall always implicitly identify \(\mathbb V_n\)
with the subspace of \(\mathbb V_\infty\) spanned by \(e_1,\ldots,e_n\).

The topology we shall consider on the space \(\mathbb V_\infty\) is the one
induced by the \(\ell^\infty\)-norm. It is straightforward to check
that a set \(S\subseteq c_{00}\) belongs to the Borel \(\sigma\)-algebra
associated to such topology if and only if \(S\cap\mathbb V_n\) is a
Borel subset of \((\R^n,\sfd_{\rm Eucl})\) for every \(n\in\N\).
\begin{definition}[Banach bundle of dimension $n$]\label{def:BB_dim_n}
Let \(n\in\bar\N\) be given. Then we say that a couple \(E=(A,\nnorm)\) is a
\emph{Banach bundle of dimension \(n\)} over a given set \(A\in\Sigma\)
provided the function \(\nnorm\colon A\times\mathbb V_n\to[0,+\infty)\)
is measurable and satisfies the following property:
\[
\nnorm(x,\cdot)\text{ is a norm on }\mathbb V_n
\quad\text{ for every }x\in A.
\]
\end{definition}

Notice that $(A,\nnorm)\) is a Banach bundle of dimension \(n\in\bar\N\)
if and only if \((A,\nnorm|_{A\times\mathbb V_k})\) is a Banach
bundle of dimension \(k\) for every \(k\in\N\) satisfying \(k\leq n\).
\begin{remark}{\rm
We observe that if \(n=\infty\), then the norms
\(\boldsymbol{\sf n}(x,\cdot)\) on \(\mathbb V_\infty\) cannot be complete,
as the vector space \(c_{00}\) does not support any complete norm;
cf.\ for instance \cite{AliprantisBorder99}.
\fr}\end{remark}

Let us consider a Banach bundle \(E=(A,\nnorm)\) of dimension \(n\in\N\).
Then the space \(\Gamma_A(E)\) of \emph{sections} of \(E\) is defined as the
family of all measurable maps \(s\colon A\to\R^n\), considered up to
\(\mm_A\)-a.e.\ equality. As shown in \cite{LP18}, it turns out that
\(\Gamma_A(E)\) is a normed \(L^0(\mm_A)\)-module when endowed with the
natural pointwise operations and the following pointwise norm:
\[
|s|(x)\coloneqq\boldsymbol{\sf n}\big(x,s(x)\big)
\quad\text{ for }\mm_A\text{-a.e.\ }x\in A
\]
for every \(s\in\Gamma_A(E)\). More precisely, \(\Gamma_A(E)\) is a free
\(L^0(\mm_A)\)-module of rank \(n\).
\bigskip

We now define the space of sections of a Banach bundle \(E=(A,\nnorm)\)
of any dimension \(n\in\bar\N\), possibly \(n=\infty\).
Call \(I\) the set of all \(k\in\N\) with \(k\leq n\).
We set \(E_k\coloneqq(A,\nnorm|_{A\times\mathbb V_k})\)
for all \(k\in I\). Given any \(j,k\in I\) with \(j\leq k\),
we have a canonical inclusion
map \(\iota_{jk}\colon\Gamma_A(E_j)\hookrightarrow\Gamma_A(E_k)\).
Namely, \(\iota_{jk}\) is the map sending (the equivalence class of)
any section \(s=(s_1,\ldots,s_j)\) of \(E_j\) to (the equivalence class of)
the section \((s_1,\ldots,s_j,0\ldots,0)\) of \(E_k\).
It can be readily checked that
\(\big(\big\{\Gamma_A(E_k)\big\}_{k\in I},\{\iota_{jk}\}_{j\leq k}\big)\)
is a direct system in the category of normed \(L^0(\mm_A)\)-modules.
Then we define
\[
\Gamma_A(E)\coloneqq\varinjlim\Gamma_A(E_\star),
\]
whose existence is granted by Theorem \ref{thm:direct_limit}.
Notice that such definition of \(\Gamma_A(E)\) is consistent with
the previous one when \(E\) is a Banach bundle of finite dimension;
cf.\ \cite[Lemma 2.10]{Pas19}.
\begin{definition}[Banach bundle]
We say that \(E=\big\{(A_n,E_n)\big\}_{n\in\overline\N}\) is a
\emph{separable Banach bundle} over \(\X\) provided
\(\{A_n\}_{n\in\overline\N}\subseteq\Sigma\) is a partition of \(\X\)
and each \(E_n=(A_n,\nnorm_n)\) is a Banach bundle of dimension \(n\).
Moreover, we say that \(E\) is \emph{proper} provided \(\mm(A_\infty)=0\).
\end{definition}

Let \(E=\big\{(A_n,E_n)\big\}_{n\in\overline\N}\)
be a separable Banach bundle over \(\X\).
Then we define the space of its sections as
\[
\Gamma(E)\coloneqq\prod_{n\in\overline\N}{\rm Ext}_{A_n}
\big(\Gamma_{A_n}(E_n)\big).
\]
The direct product \(\Gamma(E)\) inherits an (algebraic)
\(L^0(\mm)\)-module structure. Moreover, the fact that
the sets \(A_n\) are pairwise disjoint grants that
the following definition is meaningful:
\begin{equation}\label{eq:ptwse_norm_Gamma(E)}
|s|\coloneqq\sum_{n\in\overline\N}[\nchi_{A_n}]_\mm\,|s_n|
\quad\text{ for every }s=\{s_n\}_{n\in\overline\N}\in\Gamma(E).
\end{equation}
It is straightforward to verify that \eqref{eq:ptwse_norm_Gamma(E)}
actually defines a pointwise norm on \(\Gamma(E)\), whose associated
distance \(\sfd_{\Gamma(E)}\) is complete. Therefore, \(\Gamma(E)\)
is a normed \(L^0(\mm)\)-module.
\subsection{Representation theorem}
The purpose of this subsection is to show that any separable normed
\(L^0(\mm)\)-module \(\mathscr M\) is isomorphic to the space of sections
\(\Gamma(E)\) of some separable Banach bundle \(E\) over \(\X\).
\bigskip

In the sequel, we will need the following consequence of
\cite[Theorem 3]{LP18}, which we rephrase in the current language.
Actually, the result was obtained for modules on metric measure
spaces, but the same proof can be repeated verbatim in the
case of \(\sigma\)-finite measure spaces.
\begin{theorem}[Representation theorem for proper modules]
\label{thm:SS_pr}
Let \((\X,\Sigma,\mm)\) be a \(\sigma\)-finite measure space.
Let \(\mathscr M\) be a proper normed \(L^0(\mm)\)-module.
Then there exists a proper Banach bundle \(E\) over \(\X\)
such that \(\mathscr M\) is isomorphic to \(\Gamma(E)\) as
a normed \(L^0(\mm)\)-module.
\end{theorem}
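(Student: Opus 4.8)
The plan is to reduce to the finite-dimensional representation theorem of \cite{LP18} by splitting $\mathscr M$ along its dimensional decomposition, representing each (finite-dimensional) stratum separately, and then glueing the resulting bundles. First I would take the dimensional decomposition $\{D_n\}_{n\in\bar\N}\subseteq\Sigma$ of $\mathscr M$ and set $A_n\coloneqq D_n$. Since $\mathscr M$ is proper we have $\mm(A_\infty)=\mm(D_\infty)=0$, so $\{A_n\}_{n\in\bar\N}$ is a partition of $\X$ whose only infinite-dimensional stratum is $\mm$-negligible. For each $n\in\N$ the localised module $\mathscr M|_{A_n}$ admits, by definition of local dimension, a local basis of exactly $n$ elements on the whole of $A_n$; hence $\mathscr M|_{A_n}$ is a free normed $L^0(\mm_{A_n})$-module of rank $n$.

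Next I would apply \cite[Theorem 3]{LP18}, rephrased in the present language, to each stratum: this produces a Banach bundle $E_n=(A_n,\nnorm_n)$ of dimension $n$ together with an isomorphism $\Theta_n\colon\mathscr M|_{A_n}\to\Gamma_{A_n}(E_n)$ of normed $L^0(\mm_{A_n})$-modules that preserves the pointwise norm. Concretely, fixing a measurable local basis of $\mathscr M|_{A_n}$, one defines $\nnorm_n(x,\cdot)$ at $\mm_{A_n}$-a.e.\ $x$ as the norm on $\mathbb V_n$ that turns the coordinate map into an isometry; its joint measurability is inherited from that of the pointwise norm of $\mathscr M|_{A_n}$. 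On the $\mm$-null stratum $A_\infty$ I may attach any Banach bundle $E_\infty=(A_\infty,\nnorm_\infty)$ of dimension $\infty$ (say $\nnorm_\infty(x,\cdot)\coloneqq\|\cdot\|_{\ell^\infty}$), which will contribute nothing. This yields a separable Banach bundle $E=\{(A_n,E_n)\}_{n\in\bar\N}$ which is proper, as $\mm(A_\infty)=0$.

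It then remains to assemble the global isomorphism. Because $\{A_n\}_{n\in\bar\N}$ partitions $\X$, the glueing and locality properties of normed $L^0(\mm)$-modules identify $\mathscr M$ with the direct product $\prod_{n\in\bar\N}{\rm Ext}_{A_n}(\mathscr M|_{A_n})$, through $v\mapsto\{[\nchi_{A_n}]_\mm\cdot v\}_n$ with inverse $\{v_n\}_n\mapsto\sum_{n\in\bar\N}[\nchi_{A_n}]_\mm\cdot v_n$; here I use that ${\rm Ext}_{A_n}(\mathscr M|_{A_n})\cong[\nchi_{A_n}]_\mm\cdot\mathscr M$, as already recorded in the text. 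Applying ${\rm Ext}_{A_n}$ to each $\Theta_n$ and taking the product then gives
\[
\mathscr M\cong\prod_{n\in\bar\N}{\rm Ext}_{A_n}(\mathscr M|_{A_n})
\cong\prod_{n\in\bar\N}{\rm Ext}_{A_n}\big(\Gamma_{A_n}(E_n)\big)=\Gamma(E).
\]
Since each $\Theta_n$ preserves the pointwise norm and the pointwise norm on $\Gamma(E)$ is computed stratum-by-stratum along the partition, cf.\ \eqref{eq:ptwse_norm_Gamma(E)}, this isomorphism preserves the pointwise norm as well.

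The only substantive input is \cite[Theorem 3]{LP18}, which is used as a black box; everything else is bookkeeping. The point that requires care is that all the strata-wise data glue \emph{measurably and compatibly with the module structure}: one must verify that the family $\{\Theta_n\}_n$ combines into a single $L^0(\mm)$-linear isometry rather than a mere bijection, which is precisely what the glueing and locality properties furnish, and that the negligible stratum $A_\infty$ can be discarded throughout.
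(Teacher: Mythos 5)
Your proposal is correct and in substance coincides with the paper's treatment: the paper offers no independent proof of Theorem \ref{thm:SS_pr}, presenting it as \cite[Theorem 3]{LP18} ``rephrased in the current language'', with the remark that the proof there transfers verbatim from metric measure spaces to \(\sigma\)-finite ones. Your stratification along the dimensional decomposition, the stratum-wise application of \cite[Theorem 3]{LP18}, and the locality/glueing identification \(\mathscr M\cong\prod_{n\in\bar\N}{\rm Ext}_{A_n}(\mathscr M|_{A_n})\) are exactly the bookkeeping implicit in that rephrasing, the substantive input being the same black box in both cases.
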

\begin{remark}\label{rmk:build_approx}{\rm
Let \(\mathscr M\) be a separable normed \(L^0(\mm)\)-module that is not
finitely-generated on any measurable subset of \(\X\) having positive
\(\mm\)-measure. We claim that there exists an increasing sequence
\((\mathscr N_n)_{n\in\N}\) of normed \(L^0(\mm)\)-submodules of
\(\mathscr M\) with the following properties:
\begin{itemize}
\item[\(\rm i)\)] Each \(\mathscr N_n\) has local dimension \(n\) on \(\X\).
\item[\(\rm ii)\)] The set \(\bigcup_{n\in\N}\mathscr N_n\) is dense
in \(\mathscr M\).
\end{itemize}
We construct the desired modules \(\mathscr N_n\) is a recursive way.
Fix any dense subset \((v_n)_{n\in\N}\) of \(\mathscr M\).
We aim to build a sequence \((\mathscr N_n)_{n\in\N}\) of normed
\(L^0(\mm)\)-modules satisfying i) and such that each \(\mathscr N_n\)
contains the elements \(v_1,\ldots,v_n\). This would clearly imply ii).
First, we define \(\mathscr N_1\) as the normed \(L^0(\mm)\)-module
generated by the element
\[
\sum_{k\in\N}\big[\nchi_{\{|v_k|>0\}\setminus\bigcup_{j<k}\{|v_j|>0\}}
\big]_\mm\cdot v_k\in\mathscr M.
\]
Then \(\mathscr N_1\) has dimension \(1\) (as \(\mathscr M\) is
not finitely-generated on any measurable set) and \(v_1\in\mathscr N_1\).

Now suppose to have already defined \(\mathscr N_n\) for some \(n\in\N\).
We want to define \(\mathscr N_{n+1}\). Fix a local basis
\(w_1,\ldots,w_n\) of \(\mathscr N_n\). For any \(k\geq n+1\) we call
\(B'_k\) the set where \(w_1,\ldots,w_n,v_k\) are independent and
\(B_k\coloneqq B'_k\setminus\bigcup_{j=n+1}^{k-1}B'_j\); then we
define \(\mathscr N_{n+1}\) as the normed \(L^0(\mm)\)-module generated
by \(\mathscr N_n\cup\{w_{n+1}\}\), where we put
\(w_{n+1}\coloneqq\sum_{k\geq n+1}[\nchi_{B_k}]_\mm\cdot v_k\).
Hence, \(\mathscr N_{n+1}\) has local dimension equal to \(n+1\) on \(\X\)
(as \(\mathscr M\) is not finitely-generated on any measurable set)
and contains the elements \(v_1,\ldots,v_{n+1}\) by construction.
\fr}\end{remark}

By building on top of Theorem \ref{thm:SS_pr}, we can eventually
prove the following result:
\begin{theorem}[Representation theorem]\label{thm:Gamma_surj_obj}
Let \((\X,\Sigma,\mm)\) be any \(\sigma\)-finite measure space.
Let \(\mathscr M\) be a separable normed \(L^0(\mm)\)-module.
Then there exists a separable Banach bundle \(E\) over the space \(\X\)
such that \(\Gamma(E)\cong\mathscr M\).
\end{theorem}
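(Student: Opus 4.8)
The plan is to follow the blueprint sketched at the start of this section, splitting $\mathscr M$ according to its dimensional decomposition $\{D_n\}_{n\in\bar\N}$ and treating the proper part and the genuinely infinite-dimensional part $D_\infty$ separately. First I would restrict to $\X\setminus D_\infty$: by locality and glueing the submodule $[\nchi_{\X\setminus D_\infty}]_\mm\cdot\mathscr M$ is isomorphic to ${\rm Ext}_{\X\setminus D_\infty}(\mathscr M|_{\X\setminus D_\infty})$, and $\mathscr M|_{\X\setminus D_\infty}$ is a proper normed $L^0(\mm_{\X\setminus D_\infty})$-module (its dimensional decomposition has $\mm$-null $\infty$-part). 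Theorem \ref{thm:SS_pr} then produces a proper Banach bundle, i.e.\ a family $\{(A_n,E_n)\}_{n\in\N}$ with $\{A_n\}_{n\in\N}$ a partition of $\X\setminus D_\infty$ (agreeing with $\{D_n\}_{n\in\N}$ up to $\mm$-null sets) and $\Gamma_{A_n}(E_n)\cong\mathscr M|_{A_n}$ for each $n\in\N$. It thus remains only to construct, assuming $\mm(D_\infty)>0$, a single Banach bundle $(D_\infty,E_\infty)$ of dimension $\infty$ over $D_\infty$ whose section space realises $\mathscr M|_{D_\infty}$ (if $\mm(D_\infty)=0$, then $\mathscr M$ is already proper and Theorem \ref{thm:SS_pr} applies directly).

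For the infinite-dimensional piece, separability of $\mathscr M$ lets me invoke Remark \ref{rmk:build_approx}: there is an increasing sequence $(\mathscr N_n)_n$ of submodules of $\mathscr M|_{D_\infty}$, with $\mathscr N_n$ of local dimension $n$ and $\bigcup_n\mathscr N_n$ dense, together with elements $w_1,w_2,\ldots$ such that $w_1,\ldots,w_n$ is a local basis of $\mathscr N_n$ for every $n$. Rather than applying Theorem \ref{thm:SS_pr} to each $\mathscr N_n$ separately (which would not obviously yield nested bundles), I would build the norm function directly. For every finitely-supported rational vector $q=(q_i)_i\in\bigoplus_\N\Q$ I fix a $\Sigma$-measurable representative $h_q\colon D_\infty\to[0,+\infty)$ of the pointwise norm $\big|\sum_i q_i\,w_i\big|\in L^0(\mm_{D_\infty})$. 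Intersecting countably many $\mm$-null sets, I obtain a conull $G\subseteq D_\infty$ on which the $h_q$ are simultaneously subadditive and $\Q$-homogeneous (i.e.\ $h_{\lambda q}=|\lambda|\,h_q$ for $\lambda\in\Q$) in $q$, and strictly positive for every $q\neq 0$ (the latter using the independence of the $w_i$). On $G$ the subadditivity together with the bound $h_q(x)\leq\|q\|_\infty\sum_{i\leq k}|w_i|(x)$ makes $q\mapsto h_q(x)$ Lipschitz on each $\mathbb V_k$, so it extends uniquely by continuity to a function $\nnorm(x,\cdot)$ that is a genuine norm on $\mathbb V_\infty=\bigcup_k\mathbb V_k$; off $G$ I set $\nnorm(x,\cdot)$ to be any fixed norm. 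The resulting $\nnorm\colon D_\infty\times\mathbb V_\infty\to[0,+\infty)$ is a Carath\'eodory function on each slab $D_\infty\times\mathbb V_k$, hence jointly measurable there, and therefore measurable on $D_\infty\times\mathbb V_\infty$ by the characterisation of Borel subsets of $c_{00}$ recalled before Definition \ref{def:BB_dim_n}. Thus $E_\infty\coloneqq(D_\infty,\nnorm)$ is a Banach bundle of dimension $\infty$.

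With this choice, sending the $i$-th coordinate section to $w_i$ gives, for each finite $k$, an isomorphism $\Gamma_{D_\infty}\big((E_\infty)_k\big)\cong\mathscr N_k$ preserving the pointwise norm (by construction of $\nnorm$, and since $w_1,\ldots,w_k$ is a local basis of $\mathscr N_k$), and these isomorphisms are compatible with the canonical inclusions $\iota_{jk}$ and with $\mathscr N_j\hookrightarrow\mathscr N_k$. Passing to direct limits and using uniqueness in Theorem \ref{thm:direct_limit} together with Lemma \ref{lem:countable_DL} yields $\Gamma_{D_\infty}(E_\infty)=\varinjlim\Gamma_{D_\infty}\big((E_\infty)_\star\big)\cong\varinjlim\mathscr N_\star\cong\mathscr M|_{D_\infty}$. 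Finally I assemble $E\coloneqq\{(A_n,E_n)\}_{n\in\bar\N}$ with $A_\infty\coloneqq D_\infty$; the partition $\{A_n\}_{n\in\bar\N}$ of $\X$ and the glueing property give $\Gamma(E)=\prod_{n\in\bar\N}{\rm Ext}_{A_n}\big(\Gamma_{A_n}(E_n)\big)\cong\prod_{n\in\bar\N}{\rm Ext}_{A_n}\big(\mathscr M|_{A_n}\big)\cong\mathscr M$, the pointwise norms matching through \eqref{eq:ptwse_norm_Gamma(E)}. I expect the main obstacle to be the coherent construction of the single infinite-dimensional norm $\nnorm$: one must choose the countably many representatives $h_q$ so that the norm axioms, and in particular the nondegeneracy coming from independence, hold simultaneously $\mm$-almost everywhere, and then check that the continuous extension is consistent across all dimensions $\mathbb V_k$, so that the finite-dimensional truncations actually glue into one bundle rather than merely into a compatible family.
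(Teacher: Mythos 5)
Your overall architecture---splitting along the dimensional decomposition, handling \(\X\setminus D_\infty\) via Theorem \ref{thm:SS_pr}, approximating \(\mathscr M|_{D_\infty}\) by the increasing submodules \(\mathscr N_k\) of Remark \ref{rmk:build_approx} with a nested local basis \(w_1,\dots,w_k\), and concluding via Lemma \ref{lem:countable_DL} and the direct-limit definition of \(\Gamma_{D_\infty}(E_\infty)\)---is exactly the paper's. The difference lies in how you produce the infinite-dimensional bundle, and that is where there is a genuine gap. You claim that, on the conull set \(G\), the continuous extension of \(q\mapsto h_q(x)\) from \(\bigoplus_\N\Q\) to \(\mathbb V_\infty\) is ``a genuine norm'' because \(h_q(x)>0\) for every \(q\neq 0\). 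This implication is false: subadditivity and homogeneity pass to the limit, so the extension is a seminorm, but positive-definiteness does not. For instance, \(N(\lambda_1,\lambda_2)\coloneqq|\lambda_1+\sqrt{2}\,\lambda_2|\) is a Lipschitz seminorm on \(\mathbb V_2\) that is strictly positive at every nonzero rational vector, yet vanishes on the line spanned by \((-\sqrt 2,1)\). Nothing in your argument excludes that \(\nnorm(x,\cdot)\) degenerates in this way on a set of \(x\) of positive measure, and Definition \ref{def:BB_dim_n} requires an actual norm at every point; without it, \(E_\infty\) is not a Banach bundle and the claimed isomorphism \(\Gamma_{D_\infty}\big((E_\infty)_k\big)\cong\mathscr N_k\) does not even make sense.

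The gap can be closed, but only by using independence with \(L^0\)-coefficients rather than rational ones. One way: first show that \(\big|\sum_{i\leq k}f_i\cdot w_i\big|(x)=\nnorm\big(x,(f_1(x),\dots,f_k(x))\big)\) for \(\mm\)-a.e.\ \(x\) and all \(f_1,\dots,f_k\in L^0(\mm_{D_\infty})\) (approximating by rational-valued simple functions); then, if the set \(B\) of points where \(\nnorm(x,\cdot)|_{\mathbb V_k}\) has nontrivial kernel had positive measure, the correspondence \(x\mapsto\big\{\lambda\in S^{k-1}\,:\,\nnorm(x,\lambda)=0\big\}\) would be measurable (items v) and i) of \S\ref{ss:correspondences}, since \(\nnorm\) is Carath\'{e}odory on the slab), so the Kuratowski--Ryll-Nardzewski theorem would give a measurable unit kernel field \(u\), and \(\sum_{i\leq k}[u_i\,\nchi_B]_\mm\cdot w_i\) would be a nonzero element with vanishing pointwise norm, contradicting independence of \(w_1,\dots,w_k\). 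The paper sidesteps all of this: it applies Theorem \ref{thm:SS_pr} to each \(\mathscr N_k\), which already yields genuine pointwise norms \(\nnorm'_k\), and then obtains nestedness---precisely the concern that led you to abandon the per-level use of Theorem \ref{thm:SS_pr}---by the measurable change of basis \(\nnorm_k(x,\lambda)\coloneqq\nnorm'_k\big(x,\lambda_1\,\bar s^k_1(x)+\dots+\lambda_k\,\bar s^k_k(x)\big)\), where \(\bar s^k_i\) represent the sections corresponding to the basis elements; there the only delicate point is comparing two continuous norms that agree a.e.\ at rational vectors, which, unlike positive-definiteness, does pass to the limit.
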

\begin{proof}
Let us call \(\{A_n\}_{n\in\overline\N}\) the dimensional decomposition
of the module \(\mathscr M\). Consider the normed
\(L^0(\mm_{A_\infty})\)-module
\(\mathscr N\coloneqq\mathscr M|_{A_\infty}\). As shown in Remark
\ref{rmk:build_approx}, one can build an increasing sequence
\((\mathscr N_k)_{k\in\N}\) of normed \(L^0(\mm_{A_\infty})\)-submodules
of \(\mathscr N\) such that each \(\mathscr N_k\) has local dimension
equal to \(k\) on the set \(A_\infty\) and
\(\bigcup_{k\in\N}\mathscr N_k\) is dense in \(\mathscr N\).
Let us pick any sequence \((v_k)_{k\in\N}\subseteq\mathscr N\) such that
\begin{equation}\label{eq:Gamma_surj_obj_aux}
v_1,\ldots,v_k\;\text{ is a local basis for }\mathscr N_k
\text{ on }A_\infty\text{ for every }k\in\N.
\end{equation}
Given \(k\in\N\), we can find (by Theorem \ref{thm:SS_pr}) a Banach bundle
\(F'_k=(A_\infty,\boldsymbol{\sf n}'_k)\) of dimension \(k\) such that
\(\Gamma_{A_\infty}(F'_k)\cong\mathscr N_k\). Under such isomorphism,
the elements \(v_1,\ldots,v_k\in\mathscr N_k\) correspond to some sections
\(s^k_1,\ldots,s^k_k\in\Gamma(F'_k)\), respectively.
Pick representatives \(\bar s^k_1,\ldots,\bar s^k_k\) of them.
Then we know from \eqref{eq:Gamma_surj_obj_aux} that there is a
measurable set \(N\subseteq A_\infty\) with \(\mm(N)=0\) such that
\[
\bar s^k_1(x),\ldots,\bar s^k_k(x)\quad\text{ is a basis of }\R^k
\text{ for every }k\in\N\text{ and }x\in A_\infty\setminus N.
\]
Therefore, for any \(k\in\N\) we define a new Banach bundle
\(F_k=(A_\infty,\boldsymbol{\sf n}_k)\) of dimension \(k\) as
\[
\boldsymbol{\sf n}_k\big(x,(\lambda_1,\ldots,\lambda_k)\big)
\coloneqq\left\{\begin{array}{ll}
\boldsymbol{\sf n}'_k\big(x,\lambda_1\,
\bar s^k_1(x)+\ldots+\lambda_k\,\bar s^k_k(x)\big)\\
(\lambda_1^2+\ldots+\lambda_k^2)^{1/2}
\end{array}\quad\begin{array}{ll}
\text{ if }x\in A_\infty\setminus N,\\
\text{ if }x\in N.
\end{array}\right.
\]
It follows that \(\Gamma_{A_\infty}(F_k)\cong\mathscr N_k\) and
\(\boldsymbol{\sf n}_{k+1}|_{A_\infty\times\mathbb V_k}=
\boldsymbol{\sf n}_k\) for every \(k\in\N\). Hence, we can consider
the Banach bundle \(E_\infty=(A_\infty,\nnorm_\infty)\) of dimension
\(\infty\), where
\(\nnorm_\infty\colon A_\infty\times\mathbb V_\infty\to[0,+\infty)\)
is defined as the unique function satisfying
\(\nnorm_\infty|_{A_\infty\times\mathbb V_k}=\nnorm_k\) for all \(k\in\N\).
As granted by Lemma \ref{lem:countable_DL}, the direct limit of
\(\Gamma_{A_\infty}(F_k)\cong\mathscr N_k\) is isomorphic to \(\mathscr N\),
thus \(\Gamma_{A_\infty}(E_\infty)\cong\mathscr M|_{A_\infty}\).
To conclude, notice that \(\mathscr M|_{\X\setminus A_\infty}\)
is a proper normed \(L^0(\mm_{\X\setminus A_\infty})\)-module,
whence accordingly
\[
\mathscr M|_{\X\setminus A_\infty}\cong
\Gamma_{\X\setminus A_\infty}(E')\quad\text{ for some
proper Banach bundle }E'=\big\{(A_n,E_n)\big\}_{n\in\N}
\]
as a consequence of Theorem \ref{thm:SS_pr}.
Hence, the separable Banach bundle
\(E\coloneqq\big\{(A_n,E_n)\big\}_{n\in\overline\N}\)
satisfies \(\Gamma(E)\cong\mathscr M\), as required.
\end{proof}
\begin{remark}[`Serre--Swan theorem']{\rm
Given a \(\sigma\)-finite measure space \((\X,\Sigma,\mm)\)
and two separable Banach bundles \(E=\big\{(A_n,E_n)\big\}_{n\in\bar\N}\)
and \(F=\big\{(B_m,F_m)\big\}_{m\in\bar\N}\) over \(\X\), we can define a
\emph{pre-morphism} between \(E\) and \(F\) as a family
\(\varphi=\{\varphi_{nm}\}_{n,m\in\bar\N}\) of measurable maps
\[
\varphi_{nm}\colon(A_n\cap B_m)\times\mathbb V_n\longrightarrow
(A_n\cap B_m)\times\mathbb V_m
\]
such that \(\varphi_{nm}(x,\mathbb V_n)\subseteq\{x\}\times\mathbb V_m\) and
\[
\varphi_{nm}(x,\cdot)\colon\big(\mathbb V_n,\nnorm^E_n(x,\cdot)\big)
\longrightarrow\big(\mathbb V_m,\nnorm^F_m(x,\cdot)\big)
\quad\text{ is a linear contraction}
\]
for every \(n,m\in\bar\N\) and \(x\in A_n\cap B_m\),
where we call \(E_n=(A_n,\nnorm^E_n)\) and \(F_m=(B_m,\nnorm^F_m)\).

We declare two pre-morphisms \(\{\varphi_{nm}\}_{n,m\in\bar\N}\) and
\(\{\psi_{nm}\big\}_{n,m\in\bar\N}\) between \(E\) and \(F\) to be
equivalent provided there exists a set \(N\in\Sigma\) with \(\mm(N)=0\)
such that
\[
\varphi_{nm}(x,\cdot)=\psi_{nm}(x,\cdot)\quad\text{ for every }
x\in\X\setminus N\text{ and }n,m\in\bar\N.
\]
Therefore, it makes sense to consider the category of separable Banach
bundles over \(\X\) having the equivalence classes of pre-morphisms as
arrows. We point out that -- in analogy with what done in
\S\ref{ss:section_functor} -- it is possible to promote the correspondence
\(E\mapsto\Gamma(E)\) to a functor (called the \emph{section functor})
from the category of separable Banach bundles over \(\X\) to that of
separable normed \(L^0(\mm)\)-modules. Furthermore,
the section functor can be shown to be an equivalence of categories
(the so-called `Serre--Swan theorem'). We omit the details.
\fr}\end{remark}
\section{Cotangent bundle on metric measure spaces}\label{s:cotg_bundle}
As mentioned in the introduction, in the study of the differential structure
of metric measure spaces a key role is played by the so-called cotangent
module \(L^0({\rm T}^*\X)\), which has been introduced by Gigli in
\cite{Gigli14}. We now propose an alternative axiomatisation, at least in
the case in which \(W^{1,2}(\X)\) is separable. In the approach we are going
to present, we directly introduce a notion of \emph{cotangent bundle}
\({\rm T}^*\X\) over \((\X,\sfd,\mm)\), which does not require the theory
of normed modules to be formulated (cf.\ Remark \ref{rmk:no_mod}).
Even though we just consider \(p=2\) for simplicity, a similar
construction could be performed for any \(p\in(1,\infty)\).
\bigskip

By \emph{metric measure space} \((\X,\sfd,\mm)\) we mean a
complete, separable metric space \((\X,\sfd)\), endowed
with a non-negative Radon measure \(\mm\). Calling \(\Sigma\)
the completion of the Borel \(\sigma\)-algebra \(\mathscr B(\X)\)
and \(\bar\mm\) the completion of the measure \(\mm\), it holds
that \((\X,\Sigma,\bar\mm)\) is a complete, \(\sigma\)-finite measure
space, thus in particular the results of \S\ref{ss:repr_thm} can
be applied. Observe that normed \(L^0(\mm)\)-modules and normed
\(L^0(\bar\mm)\)-modules can be identified in a canonical way.

We denote by \({\rm LIP}(\X)\) the space of all real-valued Lipschitz
functions on \(\X\). Given any function \(f\in{\rm LIP}(\X)\), its
\emph{slope} \({\rm lip}(f)\colon\X\to[0,+\infty)\) is defined as
\[
{\rm lip}(f)(x)\coloneqq\lims_{y\to x}\frac{\big|f(x)-f(y)\big|}{\sfd(x,y)}
\quad\text{ if }x\in\X\text{ is an accumulation point,}
\]
and \({\rm lip}(f)(x)\coloneqq 0\) otherwise. We define the
\emph{Cheeger energy} \({\rm Ch}\colon L^2(\mm)\to[0,+\infty]\) on \(\X\) as
\[
{\rm Ch}(f)\coloneqq\inf\bigg\{\limi_{n\to\infty}\int{\rm lip}^2(f_n)
\,\d\mm\;\bigg|\;(f_n)_n\subseteq{\rm LIP}(\X),\,[f_n]_\mm\in L^2(\mm),
\,[f_n]_\mm\to f\text{ in }L^2(\mm)\bigg\}
\]
for every \(f\in L^2(\mm)\). Then (following \cite{Cheeger00})
we define the \emph{Sobolev space} on \((\X,\sfd,\mm)\) as
\[
W^{1,2}(\X)\coloneqq\Big\{f\in L^2(\mm)\;\Big|\;{\rm Ch}(f)<+\infty\Big\}.
\]
Given any \(f\in W^{1,2}(\X)\), there exists a distinguished
function \(|Df|\in L^2(\mm)\) -- called the \emph{minimal relaxed slope}
of \(f\) -- which provides the integral representation
\({\rm Ch}(f)=\int|Df|^2\,\d\mm\) of the Cheeger energy.
It turns out that \(W^{1,2}(\X)\) is a Banach space if endowed
with the norm
\[
\|f\|_{W^{1,2}(\X)}\coloneqq\bigg(\int|f|^2\,\d\mm+{\rm Ch}(f)\bigg)^{1/2}
\quad\text{ for every }f\in W^{1,2}(\X).
\]
In the case in which the measure \(\mm\) is boundedly finite,
some equivalent notions of Sobolev spaces have been introduced
in \cite{Shanmugalingam00,AmbrosioGigliSavare11,Ambrosio-DiMarino14}.
It is worth pointing out that in most cases the Sobolev space
\(W^{1,2}(\X)\) is separable. Indeed, as proven in \cite{ACM14},
the separability is granted by the reflexivity of the Sobolev space,
which is in turn known to hold on a vast class of metric measure
spaces; for instance, whenever the underlying metric space
\((\X,\sfd)\) is doubling (cf.\ \cite{ACM14}) or carries a `nice' geometric
structure (such as Euclidean spaces, Carnot groups, Finsler manifolds,
subRiemannian manifolds, and locally \({\sf CAT}(\kappa)\) spaces;
cf.\ the introduction of \cite{LDLP19}). To the best of our knowledge,
the only known examples of a non-separable Sobolev space are described
in \cite{ACM14}.
\bigskip

With the terminology introduced above at our disposal, we can
now state and prove our existence and uniqueness result about the
cotangent bundle of a metric measure space:
\begin{theorem}[Cotangent bundle]\label{thm:cotg_bundle}
Let \((\X,\sfd,\mm)\) be a metric measure space such that the Sobolev
space \(W^{1,2}(\X)\) is separable. Let \(\B\) be a universal separable
Banach space. Then there exists a unique couple \(({\rm T}^*\X,\d)\),
where \({\rm T}^*\X\) is a separable Banach \(\B\)-bundle over \(\X\)
(in the sense of Definition \ref{def:sep_BBbundle}) called the
\emph{cotangent bundle} of \((\X,\sfd,\mm)\) and
\(\d\colon W^{1,2}(\X)\to\Gamma({\rm T}^*\X)\) is a linear operator
called the \emph{differential}, such that the following properties
are satisfied:
\begin{itemize}
\item[\(\rm i)\)] It holds that \(|\d f|=|Df|\) in the \(\mm\)-a.e.\ sense
for every \(f\in W^{1,2}(\X)\).
\item[\(\rm ii)\)] Given any dense sequence \((f_n)_n\) in \(W^{1,2}(\X)\),
it holds that
\[
\big\{\d f_n(x)\;\big|\;n\in\N\big\}\;\text{ is dense in }
{\rm T}^*_x\X\coloneqq{\rm T}^*\X(x)\text{ for }\mm\text{-a.e.\ }x\in\X.
\]
\end{itemize}
Uniqueness is intended up to unique isomorphism: given any other couple
\((\mathbf E,\d')\) with the same properties, there exists a unique
isomorphism \(\varphi\colon{\rm T}^*\X\to\mathbf E\) such that
\[\begin{tikzcd}
W^{1,2}(\X) \arrow{r}{\d} \arrow[swap]{rd}{\d'}
& \Gamma({\rm T}^*\X) \arrow{d}{\Gamma(\varphi)} \\
& \Gamma(\mathbf E)
\end{tikzcd}\]
is a commutative diagram.
\end{theorem}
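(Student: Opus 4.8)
The plan is to deduce the statement by feeding Gigli's \emph{abstract} cotangent module into the Serre--Swan machinery of \S\ref{ss:repr_thm}. Recall from \cite{Gigli14} that $(\X,\sfd,\mm)$ carries a cotangent module $L^0({\rm T}^*\X)$ -- a normed $L^0(\mm)$-module -- together with a linear differential $\underline\d\colon W^{1,2}(\X)\to L^0({\rm T}^*\X)$ enjoying $|\underline\d f|=|Df|$ $\mm$-a.e.\ and such that $\{\underline\d f\,:\,f\in W^{1,2}(\X)\}$ generates $L^0({\rm T}^*\X)$; moreover this couple is characterised by these two properties up to a unique pointwise-norm-preserving isomorphism. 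First I would observe that $L^0({\rm T}^*\X)$ is separable: since $W^{1,2}$-convergence $f_k\to f$ yields $|D(f_k-f)|\to 0$ in $L^2(\mm)$ and hence $\underline\d f_k\to\underline\d f$ with respect to $\sfd_{L^0({\rm T}^*\X)}$, the differentials of a countable dense family of $W^{1,2}(\X)$ generate the module, so it is countably-generated; as $(\X,\Sigma,\bar\mm)$ is a $\sigma$-finite Borel measure space over a Polish space it is a separable measure space (Lemma \ref{lem:equiv_meas_sep}), whence $L^0({\rm T}^*\X)$ is separable by Proposition \ref{prop:countable_gen}.

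For existence, I would apply the Representation Theorem \ref{thm:representation} to the complete, $\sigma$-finite space $(\X,\Sigma,\bar\mm)$ and the module $L^0({\rm T}^*\X)$, obtaining a separable Banach $\B$-bundle, which I set to be ${\rm T}^*\X$, together with an isomorphism of normed modules $\Phi\colon L^0({\rm T}^*\X)\to\Gamma({\rm T}^*\X)$; such $\Phi$ automatically preserves the pointwise norm. Defining $\d\coloneqq\Phi\circ\underline\d$, property i) is immediate, since $|\d f|=|\Phi(\underline\d f)|=|\underline\d f|=|Df|$ $\mm$-a.e. For property ii), given a dense sequence $(f_n)_n$ in $W^{1,2}(\X)$ I would pass to Borel representatives and form the sub-bundle $\mathbf E'(x)\coloneqq{\rm cl}_\B\{\d f_n(x)\,:\,n\in\N\}$, which is a separable Banach $\B$-bundle with $\mathbf E'(x)\subseteq{\rm T}^*_x\X$ by item i) of Proposition \ref{prop:equiv_Banach_bundle}. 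The submodule $\Gamma(\mathbf E')\subseteq\Gamma({\rm T}^*\X)$ is complete, hence $\sfd$-closed; by continuity of $\d$ and density of $(f_n)_n$ it contains $\d\big(W^{1,2}(\X)\big)$, which generates $\Gamma({\rm T}^*\X)$. Therefore $\Gamma(\mathbf E')=\Gamma({\rm T}^*\X)$, and comparing with a fibrewise-dense countable family of sections of ${\rm T}^*\X$ provided by item ii) of Proposition \ref{prop:equiv_Banach_bundle} forces $\mathbf E'(x)={\rm T}^*_x\X$ for $\mm$-a.e.\ $x$, which is precisely ii).

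For uniqueness, let $(\mathbf E,\d')$ be another such couple. Property ii) for $\d'$, together with the construction underlying Lemma \ref{lem:Gamma(E)_separable}, shows that $\d'\big(W^{1,2}(\X)\big)$ generates $\Gamma(\mathbf E)$ (fibrewise density of a countable family of sections yields module generation), while i) gives $|\d' f|=|Df|$. Hence both $\big(\Gamma({\rm T}^*\X),\d\big)$ and $\big(\Gamma(\mathbf E),\d'\big)$ satisfy the characterising properties of the cotangent module, so the universal property from \cite{Gigli14} provides a unique pointwise-norm-preserving module isomorphism $\Psi\colon\Gamma({\rm T}^*\X)\to\Gamma(\mathbf E)$ with $\Psi\circ\d=\d'$. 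By fullness of the section functor (Lemma \ref{lem:Gamma_full}) we have $\Psi=\Gamma(\varphi)$ for some bundle morphism $\varphi\colon{\rm T}^*\X\to\mathbf E$, which is an isomorphism because $\Gamma$ is an equivalence of categories (Theorem \ref{thm:Serre-Swan}); faithfulness (Lemma \ref{lem:Gamma_faithful}) yields both the commutativity of the required diagram and the uniqueness of $\varphi$.

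The main obstacle I anticipate is the careful proof of property ii) for an \emph{arbitrary} dense sequence, rather than the specific generators used inside Theorem \ref{thm:representation}: one must convert the $L^0$-theoretic statement that the differentials generate the module of sections into the pointwise, fibrewise density statement, which hinges on the closedness of $\Gamma(\mathbf E')$ in $\Gamma({\rm T}^*\X)$ and on the comparison with a fibrewise-dense family from Proposition \ref{prop:equiv_Banach_bundle}. The remaining genuinely external input is the universal property of Gigli's cotangent module, which drives the entire uniqueness argument.
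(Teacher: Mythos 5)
Your overall strategy is the same as the paper's: feed Gigli's abstract cotangent module into the representation machinery of \S\ref{ss:repr_thm} for existence, and combine the universal property of the cotangent module with the Serre--Swan equivalence (fullness and faithfulness of \(\Gamma\)) for uniqueness. Your separability argument (countable generation plus Proposition \ref{prop:countable_gen} and Lemma \ref{lem:equiv_meas_sep}) and your uniqueness argument are correct and match the paper's, which is terse on both points. However, there is a genuine gap in your proof of property ii).

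You set \(\mathbf E'(x)\coloneqq{\rm cl}_\B\big\{\d f_n(x)\,:\,n\in\N\big\}\) and invoke item i) of Proposition \ref{prop:equiv_Banach_bundle}, but that item concerns the \emph{closed linear span} of \(\big\{\bar s_n(x)\big\}\). The plain closure of the set \(\big\{\d f_n(x)\big\}\) need not be a linear subspace of \(\B\), so as written \(\mathbf E'\) is not a map into \({\rm Gr}(\B)\) (hence not a bundle), \(\Gamma(\mathbf E')\) is not an \(L^0(\mm)\)-submodule, and the step ``\(\Gamma(\mathbf E')\) contains \(\d\big(W^{1,2}(\X)\big)\), which generates, hence \(\Gamma(\mathbf E')=\Gamma({\rm T}^*\X)\)'' cannot be run, since generation involves \(L^0\)-linear combinations. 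If instead you read \(\mathbf E'(x)\) as the closed span, the rest of your argument does go through, but it only proves that the closed span of \(\big\{\d f_n(x)\big\}\) equals \({\rm T}^*_x\X\) for \(\mm\)-a.e.\ \(x\), which is strictly weaker than ii): the statement asserts density of the set itself.

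The missing step is precisely where one must exploit that \((f_n)_n\) is dense in the \emph{vector space} \(W^{1,2}(\X)\): show that \({\rm cl}_\B\big\{\d f_n(x)\big\}\) is a.e.\ a linear subspace. For fixed \(n,m\in\N\) and \(q,q'\in\Q\), pick a subsequence \((f_{k_j})_j\) converging to \(qf_n+q'f_m\) in \(W^{1,2}(\X)\); then \(\big|\d f_{k_j}-\d(qf_n+q'f_m)\big|=\big|D(f_{k_j}-qf_n-q'f_m)\big|\to 0\) in \(L^2(\mm)\), hence pointwise \(\mm\)-a.e.\ along a further subsequence, so \(q\,\d f_n(x)+q'\,\d f_m(x)\in{\rm cl}_\B\big\{\d f_k(x)\,:\,k\in\N\big\}\) for \(\mm\)-a.e.\ \(x\). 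Intersecting over the countably many quadruples \((n,m,q,q')\) and using continuity of the vector operations, the closure is a.e.\ a closed \(\Q\)-linear, hence \(\R\)-linear, subspace, and therefore coincides a.e.\ with the closed span; your span argument then yields ii). With this insertion your proposal is a faithful (indeed more detailed) rendering of the paper's proof, which disposes of ii) by saying only that it follows ``by suitably adapting the arguments in the proof of Proposition \ref{prop:equiv_Banach_bundle}''.
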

\begin{proof}
First of all, recall the notion of cotangent module introduced
in \cite[Definition 2.2.1]{Gigli14} (cf.\ \cite[Proposition 4.1.8]{GP20}
for the formulation we will present, via normed
\(L^0(\mm)\)-modules): there exists a unique couple
\(\big(L^0({\rm T}^*\X),\d\big)\), where \(L^0({\rm T}^*\X)\) is a separable
normed \(L^0(\mm)\)-module and \(\d\colon W^{1,2}(\X)\to L^0({\rm T}^*\X)\)
is a linear operator, such that:
\begin{itemize}
\item[\(\rm i')\)] \(|\d f|=|Df|\) holds \(\mm\)-a.e.\ for every
\(f\in W^{1,2}(\X)\).
\item[\(\rm ii')\)] Given any \((f_n)_n\) dense in
\(W^{1,2}(\X)\), the family \(\{\d f_n\,:\,n\in\N\}\) generates
\(L^0({\rm T}^*\X)\).
\end{itemize}
(The separability of \(L^0({\rm T}^*\X)\) is granted, \emph{e.g.},
by \cite[Lemma 3.1.17]{GP20}.) Uniqueness is intended in the following
sense: given another couple \((\mathscr M,\d')\) having the same
properties, there exists a unique isomorphism
\(\Phi\colon L^0({\rm T}^*\X)\to\mathscr M\) of normed \(L^0(\mm)\)-modules
such that \(\Phi\circ\d=\d'\).

By applying Lemma \ref{lem:Gamma_full}, we find a
separable Banach \(\B\)-bundle \({\rm T}^*\X\) over the space \(\X\)
such that \(\Gamma({\rm T}^*\X)\cong L^0({\rm T}^*\X)\). It is clear
that the map \(\d\colon W^{1,2}(\X)\to\Gamma({\rm T}^*\X)\) satisfies
the item \(\rm i)\) as a consequence of \(\rm i')\). Moreover, the
validity of the item \(\rm ii)\) can be deduced from \(\rm ii')\) by
suitably adapting the arguments
in the proof of Proposition \ref{prop:equiv_Banach_bundle}.
Finally, the uniqueness of the couple \(({\rm T}^*\X,\d)\) up to unique
isomorphism can be obtained by combining the analogous property of
\(\big(L^0({\rm T}^*\X),\d\big)\) with the fact (Theorem
\ref{thm:Serre-Swan}) that the section functor is an equivalence
of categories. Therefore, the statement is achieved.
\end{proof}
\begin{remark}\label{rmk:no_mod}{\rm
The theory of normed modules is not really used in Theorem
\ref{thm:cotg_bundle}, in the sense that the spaces \(\Gamma({\rm T}^*\X)\)
and \(\Gamma({\textbf E})\) can be just considered as vector spaces,
without looking at their module structure. The same observation
applies to the morphism \(\Gamma(\varphi)\) as well.
\fr}\end{remark}
\section{The pullback bundle}\label{s:pullback_bundle}
In the field of nonsmooth differential geometry, a prominent role is played
by the notion of pullback of a normed module, which we recalled in Theorem
\ref{thm:pullback_mod}. The primary purpose of this section is to show that
separable Banach bundles come with a natural notion of pullback (see Definition
\ref{def:pullback_bundle}) which is consistent with that of normed
modules (see Theorem \ref{thm:pullback_sect_comm}). For technical reasons
(namely, because we will need the Disintegration Theorem \ref{thm:disint}),
we will mostly work with metric measure spaces (instead of general \(\sigma\)-finite measure
spaces). Furthermore, we study the projection operator
\({\rm Pr}_\varphi\) associated with a normed \(L^0\)-module \(\mathscr M\)
(see Theorem \ref{thm:Pr_phi_mod}), which is a distinguished left inverse of
the pullback map \(\varphi^*\colon\mathscr M\to\varphi^*\mathscr M\). In the setting
of separable normed modules, we also provide a fiberwise description of the
operator \({\rm Pr}_\varphi\); see Proposition \ref{prop:char_Pr_phi_mod} for the details.
\subsection{Pullback and section functors commute}
Let us begin with the definition of pullback of a separable Banach bundle.
\begin{definition}[Pullback bundle]\label{def:pullback_bundle}
Let \((\X,\Sigma_\X)\), \((\Y,\Sigma_\Y)\) be measurable spaces and
\(\B\) a universal separable Banach space. Let \(\varphi\colon\X\to\Y\) be a
measurable map. Let \(\mathbf E\) be a separable Banach \(\B\)-bundle
over \(\Y\). Then we define the map
\(\varphi^*\mathbf E\colon\X\to{\rm Gr}(\B)\) as
\[
\varphi^*\mathbf E(x)\coloneqq\mathbf E\big(\varphi(x)\big)
\quad\text{ for every }x\in\X.
\]
It follows from Lemma \ref{lem:comp_corr} that \(\varphi^*\mathbf E\) is a
separable Banach \(\B\)-bundle over \(\X\). We call it the \emph{pullback bundle}
of \(\mathbf E\) under the map \(\varphi\).
\end{definition}
Let \((\X,\sfd_\X)\), \((\Y,\sfd_\Y)\) be separable metric spaces.
By \(\mathscr P(\X)\) we mean the family of all Borel probability measures
\(\mu\) on \(\X\), \emph{i.e.}, with \(\mu(\X)=1\). Then a family
\(\{\mu^y\}_{y\in\Y}\subseteq\mathscr P(\X)\) is said to be a
\emph{Borel family of measures} provided for any Borel function
\(f\colon\X\to[0,+\infty]\) it holds that
\[
\Y\ni y\longmapsto\int f\,\d\mu^y\in[0,+\infty]
\quad\text{ is Borel measurable.}
\]
We need the following classical result, whose proof can be found,
\emph{e.g.}, in \cite[Theorem 5.3.1]{AmbrosioGigliSavare08}.
\begin{theorem}[Disintegration theorem]\label{thm:disint}
Let \((\X,\sfd_\X,\mm_\X)\), \((\Y,\sfd_\Y,\mm_\Y)\) be metric measure spaces,
with \(\mm_\X\), \(\mm_\Y\) finite. Let \(\varphi\colon\X\to\Y\) be a Borel map
satisfying \(\varphi_*\mm_\X=\mm_\Y\). Then there exists a \(\mm_\Y\)-a.e.\ uniquely
determined Borel family of measures \(\{\mm_\X^y\}_{y\in\Y}\subseteq\mathscr P(\X)\)
such that
\begin{subequations}\begin{align}\label{eq:disint1}
\mm_\X^y\big(\X\setminus\varphi^{-1}(y)\big)=0&\quad\text{ for }\mm_\Y\text{-a.e.\ }
y\in\Y,\\
\label{eq:disint2}\int f\,\d\mm_\X=\int\bigg(\int f\,\d\mm_\X^y\bigg)\d\mm_\Y(y)&
\quad\text{ for every }f\colon\X\to[0,+\infty]\text{ Borel.}
\end{align}\end{subequations}
We abbreviate the conditions \eqref{eq:disint1} and \eqref{eq:disint2}
to the single expression \(\mm_\X=\int\mm_\X^y\,\d\mm_\Y(y)\).
\end{theorem}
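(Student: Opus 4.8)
The plan is to follow the classical Radon--Nikodym construction, exploiting the separability of the metric spaces in order to collapse the uncountably many almost-everywhere identities into a single exceptional null set. First I would, for each Borel set \(B\subseteq\X\), introduce the finite measure \(\nu_B\) on \(\Y\) defined by \(\nu_B(A):=\mm_\X\big(B\cap\varphi^{-1}(A)\big)\). Since the hypothesis \(\varphi_*\mm_\X=\mm_\Y\) gives \(\nu_B\leq\mm_\Y\), each \(\nu_B\) is absolutely continuous with respect to \(\mm_\Y\), so the Radon--Nikodym theorem produces a density \(g_B:=\d\nu_B/\d\mm_\Y\) with \(0\leq g_B\leq 1\) holding \(\mm_\Y\)-a.e. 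Heuristically \(g_B(y)\) is the quantity \(\mm_\X^y(B)\) we wish to construct, and the identity \(\mm_\X(B)=\int g_B\,\d\mm_\Y\) (take \(A=\Y\)) is precisely the desired disintegration formula for \(f=\nchi_B\).

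Next I would fix a countable algebra \(\mathcal A\) of Borel subsets of \(\X\) generating \(\mathscr B(\X)\) (for instance the algebra generated by the balls centred at a countable dense set with rational radii). Because \(\mathcal A\) is countable, a single \(\mm_\Y\)-negligible set \(N\subseteq\Y\) can be discarded so that, for every \(y\in\Y\setminus N\) simultaneously, the set function \(A\mapsto g_A(y)\) on \(\mathcal A\) is nonnegative, monotone, finitely additive, and satisfies \(g_\emptyset(y)=0\) and \(g_\X(y)=1\). Thus for each such \(y\) one obtains a finitely additive premeasure of total mass one on \(\mathcal A\).

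The main obstacle, and the genuine content of the theorem, is to upgrade each of these finitely additive premeasures to a bona fide countably additive Borel probability measure \(\mm_\X^y\) on \(\X\). This is where the Polish (more precisely, Radon) structure of \(\X\) is essential: one must show that for \(\mm_\Y\)-a.e.\ \(y\) the premeasure is tight, i.e.\ inner regular along a fixed countable exhaustion of \(\X\) by compact sets (available since \(\mm_\X\) is a finite Radon measure on a separable metric space), after which Carath\'{e}odory's extension theorem yields a unique Borel probability measure \(\mm_\X^y\). The delicate point is again the uniformity of null sets: the tightness estimate must be arranged to hold outside a \(\mm_\Y\)-negligible set that does \emph{not} depend on the approximating compacta, which one secures precisely by working with a single countable compact exhaustion.

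It then remains to verify the three asserted properties. Borel measurability of \(y\mapsto\int f\,\d\mm_\X^y\) follows from the measurability of the densities \(g_A\), \(A\in\mathcal A\), by a monotone-class argument (indicators of \(\mathcal A\), then simple functions, then monotone limits). The integral formula \eqref{eq:disint2} holds for \(f=\nchi_A\) with \(A\in\mathcal A\) by construction and extends to all nonnegative Borel \(f\) by monotone convergence. Finally, the concentration property \eqref{eq:disint1} follows from the computation \(\nu_{\varphi^{-1}(A)}(A')=\mm_\Y(A\cap A')\), which gives \(\mm_\X^y\big(\varphi^{-1}(A)\big)=\nchi_A(y)\) for \(\mm_\Y\)-a.e.\ \(y\) and every \(A\) in a countable generating algebra of \(\Y\); intersecting over those \(A\) that shrink to \(y\) forces \(\mm_\X^y\big(\varphi^{-1}(y)\big)=1\). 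Uniqueness is then immediate, since any two disintegrations must agree \(\mm_\Y\)-a.e.\ on the countable algebra \(\mathcal A\) and hence coincide as measures. As this argument is entirely classical, one may alternatively just invoke the cited reference \cite[Theorem 5.3.1]{AmbrosioGigliSavare08}.
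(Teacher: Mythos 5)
The paper does not actually prove this statement: Theorem \ref{thm:disint} is imported as a classical result, and the proof is delegated to \cite[Theorem 5.3.1]{AmbrosioGigliSavare08}, which in turn refers the reader to the literature. So your closing remark --- that one may simply invoke the cited reference --- coincides exactly with what the paper does, and your sketch is a reconstruction of the classical argument behind that citation. Its overall architecture is the standard one: Radon--Nikod\'{y}m densities \(g_B=\d\varphi_*(\nchi_B\,\mm_\X)/\d\mm_\Y\), a countable generating algebra \(\mathcal A\) to collapse the uncountably many a.e.\ identities into one null set, an upgrade from finite to countable additivity, Carath\'{e}odory extension, and then monotone-class arguments for measurability, for \eqref{eq:disint2}, and for uniqueness, with \eqref{eq:disint1} obtained from \(g_{\varphi^{-1}(A)}=\nchi_A\) and a shrinking family of neighbourhoods of \(y\). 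The parts you spell out (a.e.-uniform finite additivity, the computation giving \eqref{eq:disint1}, uniqueness via the countable algebra) are correct.

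However, the mechanism you propose for the one step you yourself call ``the genuine content'' is insufficient as stated. Knowing that \(\mu_y(K_j)\to1\) along a fixed compact exhaustion (global tightness of the finitely additive set function \(\mu_y:=g_\bullet(y)\)) does \emph{not} imply countable additivity on \(\mathcal A\): when \(\X\) is itself compact this condition is vacuous, yet finitely additive probabilities on an algebra over a compact metric space can fail \(\sigma\)-additivity --- for instance, on the algebra of finite unions of rational-endpoint intervals in \([0,1]\), the set function with \(\mu(A)=1\) if \(A\supseteq(0,\eps)\) for some \(\eps>0\) and \(\mu(A)=0\) otherwise is finitely additive, assigns full mass to the compact set \([0,1]\), but gives \(\mu\big((0,1/n]\big)=1\) for all \(n\). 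What is actually needed is inner regularity of each \emph{algebra element}: for every \(A\in\mathcal A\) and \(\eps>0\) a closed set \(F\subseteq A\) with \(F\in\mathcal A\) and \(\mu_y(A\setminus F)<\eps\); then \(F\cap K_j\) is compact and lies inside \(A\), and the finite-intersection-property argument yields \(\sigma\)-additivity. The catch is that these inner approximations must belong to the countable family fixed \emph{before} the null set is discarded, which forces you either to generate \(\mathcal A\) by closed balls and recursively adjoin \(\mm_\X\)-inner closed approximations of all of its members, or to bypass the issue altogether via a Borel isomorphism of \(\X\) onto a Borel subset of \([0,1]\) and conditional distribution functions. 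Your approach is therefore completable, but as written the crucial step has a genuine gap --- which is, of course, one more reason why quoting \cite[Theorem 5.3.1]{AmbrosioGigliSavare08}, as the paper does, is the sensible choice.
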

We are now ready to prove that the sections of the pullback bundle can be identified
with the elements of the pullback of the space of sections of the bundle itself.
\begin{theorem}[Pullback and section functors commute]\label{thm:pullback_sect_comm}
Let \((\X,\sfd_\X,\mm_\X)\), \((\Y,\sfd_\Y,\mm_\Y)\) be metric measure spaces,
with \(\mm_\X\), \(\mm_\Y\) finite. Let \(\B\) be a universal separable Banach
space and let \(\varphi\colon\X\to\Y\) be a Borel map satisfying
\(\varphi_*\mm_\X=\mm_\Y\). Let \(\mathbf E\) be a separable Banach \(\B\)-bundle
over \(\Y\). Then it holds that
\[
\varphi^*\Gamma(\mathbf E)\cong\Gamma(\varphi^*\mathbf E),
\]
the pullback map \(\varphi^*\colon\Gamma(\mathbf E)\to\Gamma(\varphi^*\mathbf E)\)
being given by \((\varphi^*s)(x)\coloneqq s\big(\varphi(x)\big)\) for
\(\mm_\X\)-a.e.\ \(x\in\X\).
\end{theorem}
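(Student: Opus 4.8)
The plan is to invoke the universal property of the pullback of a normed module (Theorem \ref{thm:pullback_mod}): it suffices to exhibit a linear map $P\colon\Gamma(\mathbf E)\to\Gamma(\varphi^*\mathbf E)$ satisfying properties i) and ii) of that statement (with respect to $\mathscr M=\Gamma(\mathbf E)$), since then the uniqueness clause forces $\Gamma(\varphi^*\mathbf E)\cong\varphi^*\Gamma(\mathbf E)$ through an isomorphism carrying the canonical pullback map into $P$. The natural candidate for $P$ is precomposition: given $s\in\Gamma(\mathbf E)$ with a measurable representative $\bar s\colon\Y\to\B$ (recall the identification $\Gamma(\mathbf E)\cong\bar\Gamma(\mathbf E)/\sim$ from Remark \ref{rmk:other_def_Gamma(E)}), set $P(s)\coloneqq[\bar s\circ\varphi]_\sim$. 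Since $\varphi^*\mathbf E(x)=\mathbf E(\varphi(x))$ by Definition \ref{def:pullback_bundle}, one has $(\bar s\circ\varphi)(x)\in\mathbf E(\varphi(x))=\varphi^*\mathbf E(x)$, so $\bar s\circ\varphi$ is indeed a section of $\varphi^*\mathbf E$.

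First I would check that $P$ is well posed. The measurability of $\bar s\circ\varphi$ follows from the facts that $\varphi$ is Borel, that $\bar s$ is $\Sigma_\Y$-measurable, and that $\Sigma_\X$ is $\mm_\X$-complete: the preimage under $\bar s\circ\varphi$ of an open set is the $\varphi$-preimage of a set in $\Sigma_\Y$, hence of a Borel set modulo an $\mm_\Y$-null set, whose $\varphi$-preimage is $\mm_\X$-null precisely because $\varphi_*\mm_\X=\mm_\Y$. The same pushforward identity shows that $P(s)$ is independent of the chosen representative $\bar s$, so $P$ is a well-defined linear map. Property i) is then immediate, since $|P(s)|(x)=\|\bar s(\varphi(x))\|_\B=(|s|\circ\varphi)(x)$ for $\mm_\X$-a.e.\ $x\in\X$.

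The main step is property ii), namely that $\{P(s)\,:\,s\in\Gamma(\mathbf E)\}$ generates $\Gamma(\varphi^*\mathbf E)$. By item ii) of Proposition \ref{prop:equiv_Banach_bundle} I would fix a sequence $(\bar s_n)_n\subseteq\bar\Gamma(\mathbf E)$ with $\mathbf E(y)={\rm cl}_\B\{\bar s_n(y)\,:\,n\in\N\}$ for every $y\in\Y$, and write $s_n\coloneqq[\bar s_n]_\sim$. Composing with $\varphi$ yields sections $P(s_n)=[\bar s_n\circ\varphi]_\sim$ of $\varphi^*\mathbf E$ that are fiberwise dense, since $\varphi^*\mathbf E(x)=\mathbf E(\varphi(x))={\rm cl}_\B\{(\bar s_n\circ\varphi)(x)\,:\,n\in\N\}$ for every $x\in\X$. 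At this point the argument in the proof of Lemma \ref{lem:Gamma(E)_separable}, applied to the bundle $\varphi^*\mathbf E$ over $(\X,\Sigma_\X,\mm_\X)$, shows that the truncated sections $[\nchi_{B_{nk}}]_{\mm_\X}\cdot P(s_n)$, with $B_{nk}\coloneqq\{|P(s_n)|\leq k\}$, generate $\Gamma(\varphi^*\mathbf E)$. Since each such truncation is an $L^0(\mm_\X)$-multiple of $P(s_n)$, it lies in the $L^0(\mm_\X)$-module generated by $\{P(s)\,:\,s\in\Gamma(\mathbf E)\}$, whence the latter generates $\Gamma(\varphi^*\mathbf E)$ and ii) holds.

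Having verified i) and ii), the uniqueness part of Theorem \ref{thm:pullback_mod} yields a unique isomorphism $\varphi^*\Gamma(\mathbf E)\cong\Gamma(\varphi^*\mathbf E)$ intertwining the canonical pullback map $\varphi^*\colon\Gamma(\mathbf E)\to\varphi^*\Gamma(\mathbf E)$ with $P$, which is exactly the assertion. I expect the main obstacle to be property ii) — concretely, ensuring that the fiberwise-dense pulled-back sections genuinely generate after truncation — together with the measurability subtleties for $\bar s\circ\varphi$ under the completed $\sigma$-algebras; both are resolved by the hypothesis $\varphi_*\mm_\X=\mm_\Y$ and the completeness of $\Sigma_\X$. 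Notably, the Disintegration Theorem \ref{thm:disint} is \emph{not} needed for this particular statement, its role being confined to the later analysis of the projection operator ${\rm Pr}_\varphi$.
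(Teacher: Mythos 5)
Your proof is correct, and it shares the paper's overall skeleton: both reduce the statement to checking properties i) and ii) of Theorem \ref{thm:pullback_mod} for the precomposition map \(s\mapsto s\circ\varphi\) (well-posedness and property i) being routine, as you note), after which the uniqueness clause of that theorem gives the claimed identification. Where you genuinely diverge from the paper is in the proof of the generation property ii). The paper fixes \(t\in\Gamma(\varphi^*\mathbf E)\) and \(\varepsilon>0\), picks a Borel partition \((A_n)_n\) of \(\X\) on which \(t\) oscillates by at most \(\varepsilon\), and manufactures sections \(s_n\in\Gamma(\mathbf E)\) by averaging \(t\) over the fibers of \(\varphi\) via the Disintegration Theorem \ref{thm:disint}, namely \(s_n(y)=\fint_{A_n}t\,\d\mm_\X^y\) as a Bochner integral, so that \(\sum_n[\nchi_{A_n}]_{\mm_\X}\cdot\varphi^*s_n\) is \(\varepsilon\)-close to \(t\). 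You instead pull back a countable fiberwise-dense family of sections of \(\mathbf E\) (item ii) of Proposition \ref{prop:equiv_Banach_bundle}, hence ultimately Kuratowski--Ryll-Nardzewski); since that density holds at \emph{every} \(y\in\Y\), it holds at every \(x\in\X\) after composing with \(\varphi\), and the truncation-and-partition argument of Lemma \ref{lem:Gamma(E)_separable} then yields generation. Both arguments are complete, and your treatment of the measurability of \(\bar s\circ\varphi\) (via completeness of \(\Sigma_\X\) and \(\varphi_*\mm_\X=\mm_\Y\); one could equivalently pass to Borel representatives) is sound. As for what each approach buys: your route avoids disintegration entirely, so it needs neither the metric structure nor the finiteness of the measures, and the theorem thus holds for complete \(\sigma\)-finite measure spaces under the sole hypothesis \(\varphi_*\mm_\X\ll\mm_\Y\) -- confirming your closing remark that the restrictions the paper imposes ``for technical reasons'' are unnecessary for this particular statement (the paper only recovers the \(\sigma\)-finite case through a reduction in the remark following its proof). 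The paper's averaging construction is not wasted, however: the fiberwise integrals \(\fint_{A_n}t\,\d\mm_\X^y\) are exactly the localized projection operator, and that computation is reused almost verbatim to prove the fiberwise characterisation of \({\rm Pr}_\varphi\) in Proposition \ref{prop:char_Pr_phi_mod}, which is the other main goal of that section.
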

\begin{proof}
First of all, observe that \(|\varphi^*s|(x)=\big\|(s\circ\varphi)(x)\big\|_\B
=\big(|s|\circ\varphi\big)(x)\) holds for every \(s\in\Gamma(\mathbf E)\) and
\(\mm_\X\)-a.e.\ \(x\in\X\). Consequently, in order to prove the statement,
it suffices to show that the family \(\big\{\varphi^*s\,:\,s\in\Gamma(\mathbf E)\big\}\)
generates \(\Gamma(\varphi^*\mathbf E)\) on \(\X\). To this aim, let
\(t\in\Gamma(\varphi^*\mathbf E)\) and \(\varepsilon>0\) be fixed. Thanks
to the separability of \(\B\), we can find a Borel partition \((A_n)_{n\in\N}\)
of \(\X\) such that for every \(n\in\N\) it holds that
\(\big\|t(x)-t(x')\big\|_\B\leq\varepsilon\) for \(\mm_\X\)-a.e.\ \(x,x'\in A_n\).
By using Theorem \ref{thm:disint}, we can disintegrate the measure
\(\mm_\X\) along \(\varphi\) as \(\int\mm_\X^y\,\d\mm_\Y(y)\).
Then let us define
\begin{equation}\label{eq:pullback_sect_comm_aux1}
\tilde t\coloneqq\sum_{n\in\N}[\nchi_{A_n}]_{\mm_\X}\cdot\varphi^*s_n
\in\Gamma(\varphi^*\mathbf E),
\end{equation}
where for every \(n\in\N\) the section \(s_n\in\Gamma(\mathbf E)\) is given by
\begin{equation}\label{eq:pullback_sect_comm_aux2}
s_n(y)\coloneqq\fint_{A_n}t(x)\,\d\mm_\X^y(x)
\in\mathbf E(y)\quad\text{ for }\mm_\Y\text{-a.e.\ }y\in\Y,
\end{equation}
with the convention that \(\fint_{A_n}t\,\d\mm_\X^y\coloneqq 0_{\mathbf E(y)}\)
if \(\mm_\X^y(A_n)=0\). Some verifications are in order: in view of \eqref{eq:disint1},
we know that for \(\mm_\Y\)-a.e.\ \(y\in\Y\) it holds \(t(x)\in\mathbf E(y)\)
for \(\mm_\X^y\)-a.e.\ \(x\in\X\). Being the map
\(t\) Borel (thus strongly Borel, as \(\B\) is separable) and bounded on the set
\(A_n\), we have that the Bochner integral \(\fint_{A_n}t\,\d\mm_\X^y\in\mathbf E(y)\)
exists for \(\mm_\Y\)-a.e.\ \(y\in\Y\), whence the well-posedness of
\eqref{eq:pullback_sect_comm_aux1} follows. Moreover, given any element
\(\omega\in\B'\), it holds that
\[
\omega\big[s_n(y)\big]=\nchi_{\{y'\in\Y\,:\,\mm_\X^{y'}(A_n)>0\}}(y)\,
\frac{\int\omega[t(x)]\,\d\mm_\X^y(x)}{\int\nchi_{A_n}\,\d\mm_\X^y}
\quad\text{ for }\mm_\Y\text{-a.e.\ }y\in\Y.
\]
Given that \(\{\mm_\X^y\}_{y\in\Y}\) is a Borel family of measures, we deduce
that \(\Y\ni y\mapsto\omega\big[s_n(y)\big]\in\R\) is a Borel function, thus accordingly
the map \(s_n\colon\Y\to\B\) is weakly Borel. Being \(\B\) separable, we conclude that
\(s_n\) is Borel. Therefore, we have that \((s_n)_{n\in\N}\subseteq\Gamma(\mathbf E)\),
so that the definition in \eqref{eq:pullback_sect_comm_aux1} is meaningful.
Given any \(n\in\N\) and \(\mm_\X\)-a.e.\ \(x\in A_n\), we may estimate
\[\begin{split}
\big\|\tilde t(x)-t(x)\big\|_\B&=\big\|t(x)-(\varphi^*s_n)(x)\big\|_\B
=\bigg\|t(x)-\fint_{A_n}t(x')\,\d\mm_\X^{\varphi(x)}(x')\bigg\|_\B\\
&=\bigg\|\fint_{A_n}\big(t(x)-t(x')\big)\,\d\mm_\X^{\varphi(x)}(x')\bigg\|_\B
\leq\fint_{A_n}\big\|t(x)-t(x')\big\|_\B\,\d\mm_\X^{\varphi(x)}(x')
\leq\varepsilon,
\end{split}\]
whence \(\sfd_{\Gamma(\varphi^*\mathbf E)}(\tilde t,t)\leq\varepsilon\).
This shows that \(\big\{\varphi^*s\,:\,s\in\Gamma(\mathbf E)\big\}\) generates
\(\Gamma(\varphi^*\mathbf E)\) on \(\X\).
\end{proof}
\begin{remark}{\rm
Some of the assumptions of Theorem \ref{thm:pullback_sect_comm} might be dropped:
\begin{itemize}
\item[\(\rm i)\)] The result holds whenever \(\mm_\X\) and \(\mm_\Y\)
are \(\sigma\)-finite. Indeed, by arguing as we did at the beginning of
\S\ref{ss:norm_L0_mod}, one can find \(\mm_\X'\in\mathscr P(\X)\) such
that \(\mm_\X\ll\mm_\X'\ll\mm_\X\). Observe that \(\mm_\Y\ll\varphi_*\mm_\X'\ll\mm_\Y\).
Given that \(L^0(\mm_\X')=L^0(\mm_\X)\) and \(L^0(\varphi_*\mm_\X')=L^0(\mm_\Y)\),
the claim immediately follows from Theorem \ref{thm:pullback_sect_comm}.
\item[\(\rm ii)\)] In addition, the assumption \(\varphi_*\mm_\X=\mm_\Y\) can be relaxed
to \(\varphi_*\mm_\X\ll\mm_\Y\) (requiring that the measure \(\varphi_*\mm_\X\) is
\(\sigma\)-finite when \(\mm_\X(\X)=+\infty\)).
To show it, consider the following equivalence relation \(\sim\) on
a given normed \(L^0(\mm_\Y)\)-module \(\mathscr M\): for any \(v,w\in\mathscr M\),
we declare that \(v\sim w\) if and only if \(|v-w|=0\) holds \(\varphi_*\mm_\X\)-a.e.\ on
\(\Y\). Then the quotient \(\mathscr M_{\varphi_*\mm_\X}\coloneqq\mathscr M/\sim\)
inherits a natural structure of normed \(L^0(\varphi_*\mm_\X)\)-module. Moreover, it is
easy to prove that \(\varphi^*\mathscr M_{\varphi_*\mm_\X}\cong\varphi^*\mathscr M\).
Hence, given a separable Banach \(\B\)-bundle \(\mathbf E\) over \(\Y\),
we can deduce from Theorem \ref{thm:pullback_sect_comm} that \(\varphi^*\Gamma(\mathbf E)
\cong\varphi^*\Gamma(\mathbf E)_{\varphi_*\mm_\X}\cong\Gamma(\varphi^*\mathbf E)\).
\end{itemize}
We omit the details and will not insist further on these observations.
\fr}\end{remark}
\subsection{The projection operator \texorpdfstring{\({\rm Pr}_\varphi\)}{Prphi}}
Let us begin by recalling a few basic notions and results about the projection
operator \({\rm Pr}_\varphi\) for bounded functions. Cf.\ \cite{GP16-2} for a
related discussion.
\begin{definition}[The operator \({\rm Pr}_\varphi\) for functions]
Let \((\X,\Sigma_\X,\mm_\X)\), \((\Y,\Sigma_\Y,\mm_\Y)\) be \(\sigma\)-finite
measure spaces. Let \(\varphi\colon\X\to\Y\) be a measurable map satisfying
\(\varphi_*\mm_\X=\mm_\Y\). Then we define the \emph{projection} operator
\({\rm Pr}_\varphi\colon L^\infty(\mm_\X)\to L^\infty(\mm_\Y)\) as the
linear, \(1\)-Lipschitz mapping
\[
{\rm Pr}_\varphi(f)\coloneqq\frac{\d\varphi_*(f^+\mm_\X)}{\d\mm_\Y}
-\frac{\d\varphi_*(f^-\mm_\X)}{\d\mm_\Y}\in L^\infty(\mm_\Y)
\quad\text{ for every }f\in L^\infty(\mm_\X),
\]
where \(\frac{\d\varphi_*(f^\pm\mm_\X)}{\d\mm_\Y}\) stands for the Radon--Nikod\'{y}m
derivative of \(\varphi_*(f^\pm\mm_\X)\) with respect to \(\mm_\Y\), while
\(f^+\coloneqq f\vee 0\) and \(f^-\coloneqq-f\wedge 0\) are the positive part and
the negative part of \(f\), respectively.
\end{definition}
Let us briefly comment on the well-posedness of \({\rm Pr}_\varphi\): given that
\[
\varphi_*(f^\pm\mm_\X)\leq\varphi_*\big(\|f\|_{L^\infty(\mm_\X)}\mm_\X\big)
=\|f\|_{L^\infty(\mm_\X)}\,\varphi_*\mm_\X=\|f\|_{L^\infty(\mm_\X)}\mm_\Y,
\]
we know that \(\varphi_*(f^\pm\mm_\X)\) is \(\sigma\)-finite and absolutely
continuous with respect to \(\mm_\Y\), so that the Radon--Nikod\'{y}m derivative
\(\frac{\d\varphi_*(f^\pm\mm_\X)}{\d\mm_\Y}\) exists. Moreover, the same estimate
shows that
\[\begin{split}
\big|{\rm Pr}_\varphi(f)\big|&=\bigg|\frac{\d\varphi_*(f^+\mm_\X)}{\d\mm_\Y}
-\frac{\d\varphi_*(f^-\mm_\X)}{\d\mm_\Y}\bigg|\leq
\frac{\d\varphi_*(f^+\mm_\X)}{\d\mm_\Y}+\frac{\d\varphi_*(f^-\mm_\X)}{\d\mm_\Y}\\
&=\frac{\d\varphi_*\big((f^+ +f^-)\,\mm_\X\big)}{\d\mm_\Y}=
\frac{\d\varphi_*(|f|\,\mm_\X)}{\d\mm_\Y}\leq\|f\|_{L^\infty(\mm_\X)}
\end{split}\]
holds \(\mm_\Y\)-a.e.\ on \(\Y\) for any given function \(f\in L^\infty(\mm_\X)\), thus
\(\big\|{\rm Pr}_\varphi(f)\big\|_{L^\infty(\mm_\Y)}\leq\|f\|_{L^\infty(\mm_\X)}\).
Finally, the linearity of \({\rm Pr}_\varphi\) can be directly checked from
its definition.
\begin{remark}{\rm
It is straightforward to check the validity of the following properties:
\begin{subequations}\begin{align}
\label{eq:extra_prop_Pr_phi1}
{\rm Pr}_\varphi(c)=c&\quad\text{ for every }c\in\R,\\
\label{eq:extra_prop_Pr_phi2}
{\rm Pr}_\varphi(f)\leq{\rm Pr}_\varphi(g)&\quad\text{ for every }
f,g\in L^\infty(\mm_\X)\text{ with }f\leq g,\\
\label{eq:extra_prop_Pr_phi3}
{\rm Pr}_\varphi(g\circ\varphi\,f)=g\,{\rm Pr}_\varphi(f)&
\quad\text{ for every }f\in L^\infty(\mm_\X)\text{ and }g\in L^\infty(\mm_\Y),
\end{align}\end{subequations}
where all equalities and inequalities are intended in the almost everywhere sense.
\fr}\end{remark}
\begin{example}{\rm
In general, the projection operator
\({\rm Pr}_\varphi\colon L^\infty(\mm_\X)\to L^\infty(\mm_\Y)\) cannot
be extended to a continuous map from \(L^0(\mm_\X)\) to \(L^0(\mm_\Y)\),
as shown by the following example.

Let us consider \(\X\coloneqq\N\) and \(\Y\coloneqq\{0\}\), endowed with the
Borel measures \(\mm_\X\coloneqq\sum_{i\in\N}2^{-i}\,\delta_i\)
and \(\mm_\Y\coloneqq\delta_0\), respectively. The unique map
\(\varphi\colon\X\to\Y\) (sending all elements to \(0\)) is Borel
and satisfies \(\varphi_*\mm_\X=\mm_\Y\). We argue by contradiction:
suppose there exists a continuous extension \(T\colon L^0(\mm_\X)\to L^0(\mm_\Y)\)
of \({\rm Pr}_\varphi\). Define \(f_n\coloneqq\sum_{i=1}^n 2^i\,\nchi_i
\in L^\infty(\mm_\X)\) for every \(n\in\N\) and
\(f\coloneqq\sum_{i\in\N}2^i\,\nchi_i\in L^0(\mm_\X)\). Notice that
\(f_n\to f\) in \(L^0(\mm_\X)\) as \(n\to\infty\) and \(L^0(\mm_\Y)\cong\R\).
Given that \({\rm Pr}_\varphi(f_n)=n\) for every \(n\in\N\), we deduce that
\[
T(f)=\lim_{n\to\infty}T(f_n)=\lim_{n\to\infty}{\rm Pr}_\varphi(f_n)=+\infty,
\]
which leads to a contradiction. Therefore, \({\rm Pr}_\varphi\) cannot be
extended to such a map \(T\).
\fr}\end{example}
In the metric measure space setting, the operator
\({\rm Pr}_\varphi\colon L^\infty(\mm_\X)\to L^\infty(\mm_\Y)\) can be
alternatively described (thanks to the disintegration theorem) in the following way:
\begin{proposition}[Characterisation of \({\rm Pr}_\varphi\) for functions]
\label{prop:char_Pr_phi_fcts}
Let \((\X,\sfd_\X,\mm_\X)\), \((\Y,\sfd_\Y,\mm_\Y)\) be metric measure spaces,
with \(\mm_\X\), \(\mm_\Y\) finite. Let \(\varphi\colon\X\to\Y\) be a Borel map
with \(\varphi_*\mm_\X=\mm_\Y\). Then for every function \(f\in L^\infty(\mm_\X)\)
it holds that
\begin{equation}\label{eq:char_Pr_phi_fcts_cl}
{\rm Pr}_\varphi(f)(y)=\int f\,\d\mm_\X^y\quad\text{ for }\mm_\Y\text{-a.e.\ }y\in\Y,
\end{equation}
where \(\mm_\X=\int\mm_\X^y\,\d\mm_\Y(y)\).
\end{proposition}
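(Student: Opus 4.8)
The plan is to show that the $\mm_\Y$-measurable function $y\mapsto\int f\,\d\mm_\X^y$ is precisely the Radon--Nikod\'ym derivative $\frac{\d\varphi_*(f\mm_\X)}{\d\mm_\Y}$, and then to recover the general case by splitting $f$ into its positive and negative parts. First I would reduce to $f\geq 0$: since ${\rm Pr}_\varphi(f)=\frac{\d\varphi_*(f^+\mm_\X)}{\d\mm_\Y}-\frac{\d\varphi_*(f^-\mm_\X)}{\d\mm_\Y}$ and the integral $g\mapsto\int g\,\d\mm_\X^y$ is linear, it suffices to prove that for every \emph{nonnegative} Borel function $g\colon\X\to[0,+\infty)$ with $g\in L^\infty(\mm_\X)$ one has $\frac{\d\varphi_*(g\mm_\X)}{\d\mm_\Y}(y)=\int g\,\d\mm_\X^y$ for $\mm_\Y$-a.e.\ $y$; applying this to $g=f^+$ and $g=f^-$ and subtracting then yields \eqref{eq:char_Pr_phi_fcts_cl}. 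Note that the candidate density $y\mapsto\int g\,\d\mm_\X^y$ is a well-defined $\mm_\Y$-measurable function precisely because $\{\mm_\X^y\}_{y\in\Y}$ is a Borel family of measures, and it is finite since $g$ is bounded and each $\mm_\X^y$ is a probability measure.

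Next I would fix an arbitrary Borel set $B\subseteq\Y$ and evaluate $\varphi_*(g\mm_\X)(B)$ via disintegration. By definition of the pushforward, $\varphi_*(g\mm_\X)(B)=\int g\,\nchi_{\varphi^{-1}(B)}\,\d\mm_\X$, so applying the disintegration identity \eqref{eq:disint2} to the nonnegative Borel integrand $g\,\nchi_{\varphi^{-1}(B)}$ gives
\[
\varphi_*(g\mm_\X)(B)=\int\bigg(\int g\,\nchi_{\varphi^{-1}(B)}\,\d\mm_\X^y\bigg)\,\d\mm_\Y(y).
\]
The crucial simplification comes from \eqref{eq:disint1}: for $\mm_\Y$-a.e.\ $y$ the measure $\mm_\X^y$ is concentrated on $\varphi^{-1}(y)$, whence $\nchi_{\varphi^{-1}(B)}(x)=\nchi_B\big(\varphi(x)\big)=\nchi_B(y)$ for $\mm_\X^y$-a.e.\ $x$. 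Thus the inner integral equals $\nchi_B(y)\int g\,\d\mm_\X^y$, and we conclude
\[
\varphi_*(g\mm_\X)(B)=\int_B\bigg(\int g\,\d\mm_\X^y\bigg)\,\d\mm_\Y(y).
\]
By arbitrariness of $B$, this identifies $y\mapsto\int g\,\d\mm_\X^y$ with $\frac{\d\varphi_*(g\mm_\X)}{\d\mm_\Y}$, completing the reduction and hence the proof.

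I do not expect a serious obstacle here, as the argument is essentially a direct invocation of Theorem \ref{thm:disint}. The only two points that require genuine care are the measurability of the candidate density (handled by the Borel-family property of $\{\mm_\X^y\}_{y\in\Y}$) and the $\mm_\X^y$-a.e.\ replacement of $\nchi_{\varphi^{-1}(B)}$ by the constant $\nchi_B(y)$, which hinges on the concentration property \eqref{eq:disint1}. The finiteness of $\mm_\X,\mm_\Y$ together with $f\in L^\infty(\mm_\X)$ guarantees that all integrals in sight are finite, so the positive/negative decomposition and the subtraction of Radon--Nikod\'ym derivatives are unproblematic.
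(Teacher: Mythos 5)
Your proof is correct and follows essentially the same route as the paper's: both identify the Radon--Nikod\'ym derivative by testing $\varphi_*(f\mm_\X)$ against arbitrary Borel sets, applying the disintegration identity \eqref{eq:disint2} and then using the concentration property \eqref{eq:disint1} to replace $\nchi_A\circ\varphi$ by a constant on $\mm_\X^y$-a.e.\ fiber. The only (cosmetic) difference is that you split $f$ into $f^\pm$ so that \eqref{eq:disint2} applies verbatim to nonnegative integrands, whereas the paper merges the two derivatives into $\frac{\d\varphi_*(f\mm_\X)}{\d\mm_\Y}$ and runs the computation for signed $f$ directly.
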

\begin{proof}
Given that \(\mm_\X\) is finite, it holds that
\({\rm Pr}_\varphi(f)=\frac{\d\varphi_*(f\mm_\X)}{\d\mm_\Y}\). Therefore, in order to
prove the statement it is sufficient to show that
\begin{equation}\label{eq:char_Pr_phi_fcts}
\varphi_*(f\mm_\X)(A)=\int_A\bigg(\int f\,\d\mm_\X^y\bigg)\d\mm_\Y(y)
\quad\text{ for every }A\subseteq\Y\text{ Borel.}
\end{equation}
Since for \(\mm_\Y\)-a.e.\ \(y\in\Y\) it holds \(\varphi(x)=y\) for
\(\mm_\X^y\)-a.e.\ \(x\in\X\) by \eqref{eq:disint1}, we have that
\[\begin{split}
\varphi_*(f\mm_\X)(A)&=\int\nchi_A\circ\varphi\,f\,\d\mm_\X\overset{\eqref{eq:disint2}}=
\int\bigg(\int\nchi_A\big(\varphi(x)\big)f(x)\,\d\mm_\X^y(x)\bigg)\d\mm_\Y(y)\\
&=\int_A\bigg(\int f\,\d\mm_\X^y\bigg)\d\mm_\Y(y),
\end{split}\]
thus proving \eqref{eq:char_Pr_phi_fcts}. Consequently, the proof is complete.
\end{proof}
We are now in a position to generalise the object \({\rm Pr}_\varphi\) to the framework
of normed modules. This construction has been first obtained in \cite{Gigli14} and later
studied in \cite{GP16-2}. Here we work with normed \(L^\infty\)-modules equipped with
a pointwise norm taking values in \(L^\infty\), thus we need to provide a slightly
different proof, but the ideas are essentially borrowed from \cite{Gigli14,GP16-2}.
\begin{theorem}[The operator \({\rm Pr}_\varphi\) for modules]\label{thm:Pr_phi_mod}
Let \((\X,\Sigma_\X,\mm_\X)\), \((\Y,\Sigma_\Y,\mm_\Y)\) be \(\sigma\)-finite
measure spaces. Let \(\varphi\colon\X\to\Y\) be a measurable map satisfying
\(\varphi_*\mm_\X=\mm_\Y\). Let \(\mathscr M^\infty\) be a normed
\(L^\infty(\mm_\Y)\)-module. Denote \(\mathscr M^0\coloneqq{\sf C}(\mathscr M^\infty)\).
Then there exists a unique linear and continuous operator
\({\rm Pr}_\varphi\colon{\sf R}(\varphi^*\mathscr M^0)\to\mathscr M^\infty\) --
called the \emph{projection operator} -- such that
\begin{equation}\label{eq:Pr_phi_mod1}
{\rm Pr}_\varphi(f\cdot\varphi^*v)={\rm Pr}_\varphi(f)\cdot v
\quad\text{ for every }f\in L^\infty(\mm_\X)\text{ and }v\in\mathscr M^\infty.
\end{equation}
Moreover, it holds that
\begin{equation}\label{eq:Pr_phi_mod2}
\big|{\rm Pr}_\varphi(w)\big|\leq{\rm Pr}_\varphi\big(|w|\big)\;\;\;\mm_\Y\text{-a.e.}
\quad\text{ for every }w\in{\sf R}(\varphi^*\mathscr M^0).
\end{equation}
\end{theorem}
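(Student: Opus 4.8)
The plan is to build \({\rm Pr}_\varphi\) first on the explicit generators of \({\sf R}(\varphi^*\mathscr M^0)\), to verify the fundamental inequality \eqref{eq:Pr_phi_mod2} there by a duality argument, and only then to extend it — the extension being the delicate point. Concretely, let \(\mathcal V\subseteq{\sf R}(\varphi^*\mathscr M^0)\) denote the space of finite sums \(u=\sum_{i=1}^n f_i\cdot\varphi^*v_i\) with \(f_i\in L^\infty(\mm_\X)\) and \(v_i\in\mathscr M^\infty\), and set \({\rm Pr}_\varphi(u)\coloneqq\sum_{i=1}^n{\rm Pr}_\varphi(f_i)\cdot v_i\in\mathscr M^\infty\), the formula being forced by \eqref{eq:Pr_phi_mod1} and linearity. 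Everything hinges on showing that this is well posed, and the cleanest way is to prove \eqref{eq:Pr_phi_mod2} directly on \(\mathcal V\): since \(u=0\) forces \(|u|=0\) and hence \({\rm Pr}_\varphi(|u|)=0\), the inequality yields \({\rm Pr}_\varphi(u)=0\), so the value is independent of the chosen representation.

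To prove \(\big|{\rm Pr}_\varphi(u)\big|\le{\rm Pr}_\varphi(|u|)\) on \(\mathcal V\) I would test against the dual. Recall the standard fact for normed \(L^0\)-modules that the pointwise norm is recovered as \(|m|={\rm ess\,sup}\{\omega(m)\,:\,\omega\in(\mathscr M^0)^*,\ |\omega|\le 1\}\) (this is \eqref{eq:dual_ptwse_norm} read through the isometric embedding into the bidual). Fix such an \(\omega\); note \(\omega(v_i)\in L^\infty(\mm_\Y)\) since \(|\omega(v_i)|\le|v_i|\). Using the pullback covector \(\varphi^*\omega\) of Gigli's theory \cite{Gigli14}, whose defining pairing is \(\la\varphi^*\omega,\varphi^*v\ra=\omega(v)\circ\varphi\), together with property \eqref{eq:extra_prop_Pr_phi3}, a short computation gives \(\omega\big({\rm Pr}_\varphi(u)\big)=\sum_i{\rm Pr}_\varphi\big((\omega(v_i)\circ\varphi)\,f_i\big)={\rm Pr}_\varphi\big(\la\varphi^*\omega,u\ra\big)\). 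Since \(|\varphi^*\omega|=|\omega|\circ\varphi\le 1\), we have \(\big|\la\varphi^*\omega,u\ra\big|\le|u|\) \(\mm_\X\)-a.e., so the monotonicity \eqref{eq:extra_prop_Pr_phi2} forces \({\rm Pr}_\varphi\big(\la\varphi^*\omega,u\ra\big)\le{\rm Pr}_\varphi(|u|)\). Taking the essential supremum over \(\omega\) proves the inequality, and in particular \(\big\|{\rm Pr}_\varphi(u)\big\|_{\mathscr M^\infty}\le\|u\|\), so \({\rm Pr}_\varphi\) is \(1\)-Lipschitz on \(\mathcal V\).

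The main obstacle is the extension to all of \({\sf R}(\varphi^*\mathscr M^0)\): the space \(\mathcal V\) is \emph{not} \(L^\infty\)-dense (e.g.\ the pullback of an \(\ell^2\)-valued module contains bounded sections whose ``fibre index'' is unbounded, which no finite combination approximates uniformly). I would therefore enlarge \(\mathcal V\) to the class \(\mathcal W\) of countable glueings \(w=\sum_j\nchi_{A_j}\cdot w_j\), with \((A_j)_j\) a partition of \(\X\), \(w_j\in\mathcal V\), and \(\sup_j{\rm ess\,sup}_{A_j}|w_j|<\infty\). An Egorov-plus-glueing argument, exactly as in the proof of Lemma \ref{lem:C=inverse_R}, shows that \(\mathcal W\) \emph{is} \(L^\infty\)-dense in \({\sf R}(\varphi^*\mathscr M^0)\). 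The subtle point is that the candidate series \(\sum_j{\rm Pr}_\varphi(\nchi_{A_j}w_j)\) need \emph{not} converge in \(\mathscr M^\infty\); however, writing \(B_N\coloneqq\bigcup_{j>N}A_j\downarrow\emptyset\) and using that \({\rm Pr}_\varphi(\nchi_{B_N})=\frac{\d\varphi_*(\nchi_{B_N}\mm_\X)}{\d\mm_\Y}\downarrow 0\) in \(L^0(\mm_\Y)\), the partial sums are Cauchy for the coarser distance \(\sfd_{\mathscr M^0}\) of \(\mathscr M^0=\sfC(\mathscr M^\infty)\); their limit lies in \(\mathscr M^0\), and being uniformly bounded it belongs to \(\sfR(\mathscr M^0)\cong\mathscr M^\infty\). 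This defines \({\rm Pr}_\varphi\) on \(\mathcal W\). Passing the finite-sum inequality through this \(\sfd\)-limit — here \(\sum_{j\le N}\nchi_{A_j}|w_j|\uparrow|w|\) by disjointness, whence \({\rm Pr}_\varphi\big(\sum_{j\le N}\nchi_{A_j}|w_j|\big)\uparrow{\rm Pr}_\varphi(|w|)\) by monotone convergence of Radon--Nikod\'ym derivatives — yields \eqref{eq:Pr_phi_mod2} on \(\mathcal W\), which simultaneously gives well-posedness there and keeps \({\rm Pr}_\varphi\) \(1\)-Lipschitz.

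Finally, since \(\mathcal W\) is \(L^\infty\)-dense and \({\rm Pr}_\varphi\) is \(1\)-Lipschitz on it with values in the complete space \(\mathscr M^\infty\), it extends uniquely to a \(1\)-Lipschitz linear operator on \({\sf R}(\varphi^*\mathscr M^0)\); both \eqref{eq:Pr_phi_mod1} (which constrains only elements of \(\mathcal V\)) and the inequality \eqref{eq:Pr_phi_mod2} survive the limit. For uniqueness, any linear continuous operator satisfying \eqref{eq:Pr_phi_mod1} agrees with \({\rm Pr}_\varphi\) on \(\mathcal V\), hence on \(\mathcal W\) through the glueing construction, hence on all of \({\sf R}(\varphi^*\mathscr M^0)\) by density and continuity. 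I expect the genuinely non-routine steps to be the \(\sfd\)-limit construction on \(\mathcal W\) and the verification that the limit lands back in \(\mathscr M^\infty\); the duality identity and the estimate \({\rm Pr}_\varphi(\nchi_{B_N})\downarrow 0\) are the two facts that make the whole scheme run.
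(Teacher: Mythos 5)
Your argument for existence and for the inequality \eqref{eq:Pr_phi_mod2} is correct, but it runs on a different engine from the paper's. The paper's core class is the set of countable glueings \(\sum_{n\in\N}[\nchi_{A_n}]_{\mm_\X}\cdot\varphi^*v_n\) of \emph{single} pullbacks along a partition: on these, convergence of the defining series, independence of the representation, and \eqref{eq:Pr_phi_mod2} all drop out of a short computation using only the multiplicativity \eqref{eq:extra_prop_Pr_phi3} and the monotonicity \eqref{eq:extra_prop_Pr_phi2} of \({\rm Pr}_\varphi\) on functions; no duality enters. You instead start from finite sums \(\sum_i f_i\cdot\varphi^*v_i\), where representation-independence is genuinely non-obvious, and you settle it by duality: norming functionals (the isometric embedding of a normed \(L^0\)-module into its bidual) together with the pullback pairing \(\la\varphi^*\omega,\varphi^*v\ra=\omega(v)\circ\varphi\), \(|\varphi^*\omega|=|\omega|\circ\varphi\); your identity \(\omega({\rm Pr}_\varphi(u))={\rm Pr}_\varphi(\la\varphi^*\omega,u\ra)\) and the ensuing estimate are correct. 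This costs two external ingredients from Gigli's theory that the paper never needs, and it forces your extra layer \(\mathcal W\) with the \(\sfd_{\mathscr M^0}\)-limit construction (tail bound \({\rm Pr}_\varphi(\nchi_{B_N})\downarrow0\), then Lemma \ref{lem:C=inverse_R} to land back in \(\mathscr M^\infty\)) --- a step the paper builds directly into its choice of core class. In exchange, your route is more honest about density: your observation that the finite sums are \emph{not} norm-dense in \({\sf R}(\varphi^*\mathscr M^0)\) is correct (the \(\ell^2\) example works), and your Egorov-plus-glueing proof of the density of \(\mathcal W\) is precisely the argument that the paper's unproved assertion that its \(\mathcal V\) is a dense subspace secretly requires. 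A small bonus of your setup is that \eqref{eq:Pr_phi_mod1} holds on the core class by definition, whereas the paper needs a final density-of-simple-functions step.

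The one step that does not hold up as written --- and here you share a weakness with the paper's own proof rather than falling below it --- is uniqueness. You claim a competitor \(T\) agrees with \({\rm Pr}_\varphi\) on \(\mathcal V\), ``hence on \(\mathcal W\) through the glueing construction''; but the glueing series converges only in \(\sfd_{\mathscr M^0}\), not in norm, and norm-continuity of \(T\) does not allow you to pass \(T\) through a \(\sfd\)-limit. By your own non-density observation this cannot be repaired by continuity alone: \(\mathcal W\) is not contained in the norm-closure of \(\mathcal V\), and in your \(\ell^2\) example a Hahn--Banach functional vanishing on that closure yields a norm-continuous linear \(T\) satisfying \eqref{eq:Pr_phi_mod1} yet different from \({\rm Pr}_\varphi\). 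The statement one can actually prove is uniqueness among operators satisfying \eqref{eq:Pr_phi_mod1} \emph{and} \eqref{eq:Pr_phi_mod2}: for such a \(T\), the tail estimate \(|T(\nchi_{B_N}\cdot w)|\le{\rm Pr}_\varphi(\nchi_{B_N}|w|)\le\|w\|\,{\rm Pr}_\varphi(\nchi_{B_N})\to0\) \(\mm_\Y\)-a.e.\ supplies exactly the \(\sfd\)-continuity along glueing sums that is missing, after which agreement on \(\mathcal W\) and norm-density conclude.
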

\begin{proof}
Denote by \(\mathcal V\) the family of all those elements
\(w\in{\sf R}(\varphi^*\mathscr M^0)\) that can be written as
\(w=\sum_{n\in\N}[\nchi_{A_n}]_{\mm_\X}\cdot\varphi^*v_n\), for a partition
\((A_n)_{n\in\N}\subseteq\Sigma_\X\) of \(\X\) and a sequence
\((v_n)_{n\in\N}\subseteq\mathscr M^\infty\). Note that \(\mathcal V\) is a dense
linear subspace of \({\sf R}(\varphi^*\mathscr M^0)\). We are forced to set
\({\rm Pr}_\varphi\colon\mathcal V\to\mathscr M^\infty\) as
\[
{\rm Pr}_\varphi(w)\coloneqq\sum_{n\in\N}{\rm Pr}_\varphi\big([\nchi_{A_n}]_{\mm_\X}\big)
\cdot v_n\in\mathscr M^\infty\quad\text{ for every }
w=\sum_{n\in\N}[\nchi_{A_n}]_{\mm_\X}\cdot\varphi^*v_n\in\mathcal V,
\]
where the sum is intended with respect to the distance \(\sfd_{\mathscr M^0}\).
Let us check that such sum is actually well-defined: given any \(k\in\N\), we have the
\(\mm_\Y\)-a.e.\ inequality
\[\begin{split}
\sum_{n=1}^k\Big|{\rm Pr}_\varphi\big([\nchi_{A_n}]_{\mm_\X}\big)\cdot v_n\Big|
&=\sum_{n=1}^k{\rm Pr}_\varphi\big([\nchi_{A_n}]_{\mm_\X}\big)|v_n|
\overset{\eqref{eq:extra_prop_Pr_phi3}}=
\sum_{n=1}^k{\rm Pr}_\varphi\big([\nchi_{A_n}]_{\mm_\X}|v_n|\circ\varphi\big)\\
&={\rm Pr}_\varphi\bigg(\Big|\sum_{n=1}^k[\nchi_{A_n}]_{\mm_\X}\cdot
\varphi^*v_n\Big|\bigg)\overset{\eqref{eq:extra_prop_Pr_phi2}}\leq
{\rm Pr}_\varphi\big(|w|\big),
\end{split}\]
whence it follows that \(\sum_{n\in\N}\big|{\rm Pr}_\varphi
\big([\nchi_{A_n}]_{\mm_\X}\big)\cdot v_n\big|\leq{\rm Pr}_\varphi\big(|w|\big)\)
holds \(\mm_\Y\)-a.e.\ on \(\Y\). This grants that the sum
\(\sum_{n\in\N}{\rm Pr}_\varphi\big([\nchi_{A_n}]_{\mm_\X}\big)\cdot v_n\) exists in
\(\mathscr M^0\) and defines an element \({\rm Pr}_\varphi(w)\in\mathscr M^\infty\).
Moreover, the same estimates show that \eqref{eq:Pr_phi_mod2} holds for every
\(w\in\mathcal V\), thus \({\rm Pr}_\varphi\) can be uniquely extended to a linear and
continuous map \({\rm Pr}_\varphi\colon{\sf R}(\varphi^*\mathscr M^0)\to\mathscr M^\infty\),
which also satisfies \eqref{eq:Pr_phi_mod2}. By construction, the resulting map
\({\rm Pr}_\varphi\) is the unique linear and continuous operator satisfying
\eqref{eq:Pr_phi_mod1} for functions \(f\in L^\infty(\mm_\X)\) of the form
\(f=[\nchi_A]_{\mm_\X}\), where \(A\in\Sigma_\X\). Finally,
since simple functions are dense in \(L^\infty(\mm_\X)\),
one can easily deduce that \eqref{eq:Pr_phi_mod1} is verified.
\end{proof}
\begin{remark}{\rm
Observe that \(L^\infty(\mm_\X)\) and \(L^0(\mm_\X)\) have a natural structure
of normed \(L^\infty(\mm_\X)\)-module and normed \(L^0(\mm_\X)\)-module, respectively,
and \({\sf C}\big(L^\infty(\mm_\X)\big)=L^0(\mm_\X)\); the same holds for
\(L^\infty(\mm_\Y)\) and \(L^0(\mm_\Y)\). Moreover, the pullback map
\(\varphi^*\colon L^0(\mm_\Y)\to L^0(\mm_\X)\) is given by
\(\varphi^*g=g\circ\varphi\) for every \(g\in L^0(\mm_\Y)\).
Therefore, by using \eqref{eq:extra_prop_Pr_phi3} we deduce that
the operator \({\rm Pr}_\varphi\colon L^\infty(\mm_\X)\to L^\infty(\mm_\Y)\)
actually coincides with the projection operator between normed modules,
thus accordingly no ambiguity may arise.
\fr}\end{remark}
We conclude by showing that the projection operator \({\rm Pr}_\varphi\)
(associated with a separable normed module) can be characterised
in a fiberwise way, thus generalising Proposition \ref{prop:char_Pr_phi_fcts}.
\begin{proposition}[Characterisation of \({\rm Pr}_\varphi\) for modules]
\label{prop:char_Pr_phi_mod}
Let \((\X,\sfd_\X,\mm_\X)\), \((\Y,\sfd_\Y,\mm_\Y)\) be metric measure spaces,
with \(\mm_\X\), \(\mm_\Y\) finite. Let \(\varphi\colon\X\to\Y\) be a Borel map
with \(\varphi_*\mm_\X=\mm_\Y\). Let \(\B\) be a universal separable Banach space.
Let \(\mathbf E\) be a separable Banach \(\B\)-bundle over \(\Y\). Then the projection
operator \({\rm Pr}_\varphi\colon\Gamma_b(\varphi^*\mathbf E)\to\Gamma_b(\mathbf E)\)
is given by
\[
{\rm Pr}_\varphi(t)(y)=\int t(x)\,\d\mm_\X^y(x)\in\mathbf E(y)\quad\text{ for every }
t\in\Gamma_b(\varphi^*\mathbf E)\text{ and }\mm_\Y\text{-a.e.\ }y\in\Y,
\]
where \(\mm_\X=\int\mm_\X^y\,\d\mm_\Y(y)\).
\end{proposition}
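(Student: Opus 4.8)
The plan is to recognise the claimed right-hand side as a concrete linear operator $T$ and then to invoke the uniqueness part of Theorem \ref{thm:Pr_phi_mod}. Under the identification $\varphi^*\Gamma(\mathbf E)\cong\Gamma(\varphi^*\mathbf E)$ provided by Theorem \ref{thm:pullback_sect_comm}, the space $\sfR(\varphi^*\mathscr M^0)$ with $\mathscr M^0=\sfC\big(\Gamma_b(\mathbf E)\big)=\Gamma(\mathbf E)$ becomes exactly $\Gamma_b(\varphi^*\mathbf E)$, while $\mathscr M^\infty=\Gamma_b(\mathbf E)$, so the operator ${\rm Pr}_\varphi$ of Theorem \ref{thm:Pr_phi_mod} really is a map $\Gamma_b(\varphi^*\mathbf E)\to\Gamma_b(\mathbf E)$. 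Fixing a disintegration $\mm_\X=\int\mm_\X^y\,\d\mm_\Y(y)$ as in Theorem \ref{thm:disint}, I would define $T\colon\Gamma_b(\varphi^*\mathbf E)\to\Gamma_b(\mathbf E)$ by $T(t)(y)\coloneqq\int t\,\d\mm_\X^y$, show that $T$ satisfies the characterising property \eqref{eq:Pr_phi_mod1}, and conclude $T={\rm Pr}_\varphi$ from uniqueness.

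First I would check that $T$ is well posed and takes values in $\Gamma_b(\mathbf E)$. For $\mm_\Y$-a.e.\ $y$ the measure $\mm_\X^y$ is a probability concentrated on $\varphi^{-1}(y)$ by \eqref{eq:disint1}, and any representative $\bar t$ of $t$ satisfies $\bar t(x)\in\varphi^*\mathbf E(x)=\mathbf E\big(\varphi(x)\big)=\mathbf E(y)$ for $\mm_\X^y$-a.e.\ $x$; being $\bar t$ bounded and strongly measurable (here separability of $\B$ is used), the Bochner integral $\int\bar t\,\d\mm_\X^y$ exists and, since $\mathbf E(y)$ is a closed subspace of $\B$, belongs to $\mathbf E(y)$. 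Measurability of $y\mapsto T(t)(y)$ follows from the same weak-measurability argument used in the proof of Theorem \ref{thm:pullback_sect_comm} (test against $\omega\in\B'$, use that $\{\mm_\X^y\}_y$ is a Borel family, then invoke separability of $\B$). Independence of the chosen representative $\bar t$ and of the disintegration follows from \eqref{eq:disint2} together with the $\mm_\Y$-a.e.\ uniqueness in Theorem \ref{thm:disint}.

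Continuity of $T$ would then come from the pointwise bound $\big\|T(t)(y)\big\|_\B=\big\|\int t\,\d\mm_\X^y\big\|_\B\le\int|t|\,\d\mm_\X^y={\rm Pr}_\varphi\big(|t|\big)(y)$ valid for $\mm_\Y$-a.e.\ $y$, where the final equality is Proposition \ref{prop:char_Pr_phi_fcts} applied to the bounded scalar function $|t|$. Since ${\rm Pr}_\varphi$ is $1$-Lipschitz on $L^\infty$, this yields ${\big\|T(t)\big\|}_{\Gamma_b(\mathbf E)}\le{\|t\|}_{\Gamma_b(\varphi^*\mathbf E)}$, so $T$ is linear and continuous and in particular it recovers \eqref{eq:Pr_phi_mod2}.

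It then remains to verify \eqref{eq:Pr_phi_mod1}. For $f\in L^\infty(\mm_\X)$ and $v\in\Gamma_b(\mathbf E)$ the pullback section is $(\varphi^*v)(x)=v\big(\varphi(x)\big)$, so for $\mm_\Y$-a.e.\ $y$, using again that $\mm_\X^y$ lives on $\varphi^{-1}(y)$ (whence $v\big(\varphi(x)\big)=v(y)$ for $\mm_\X^y$-a.e.\ $x$) and pulling the constant vector $v(y)$ out of the integral, one obtains $T(f\cdot\varphi^*v)(y)=\big(\int f\,\d\mm_\X^y\big)\,v(y)={\rm Pr}_\varphi(f)(y)\,v(y)$, the last step by Proposition \ref{prop:char_Pr_phi_fcts}. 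Hence $T(f\cdot\varphi^*v)={\rm Pr}_\varphi(f)\cdot v$, which is precisely \eqref{eq:Pr_phi_mod1}, and the uniqueness statement in Theorem \ref{thm:Pr_phi_mod} forces $T={\rm Pr}_\varphi$. The only genuinely delicate point is the measurability of $y\mapsto\int t\,\d\mm_\X^y$ together with the fact that this Bochner integral never leaves the fiber $\mathbf E(y)$; both are handled exactly as in Theorem \ref{thm:pullback_sect_comm}, so no new ideas are needed and the conclusion is a direct consequence of the uniqueness characterisation of ${\rm Pr}_\varphi$.
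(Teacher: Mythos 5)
Your proposal is correct and follows essentially the same route as the paper's proof: identify \(\Gamma_b(\varphi^*\mathbf E)\cong\sfR\big(\varphi^*\Gamma(\mathbf E)\big)\) via Theorem \ref{thm:pullback_sect_comm}, define the candidate operator \(t\mapsto\big(y\mapsto\int t\,\d\mm_\X^y\big)\), check it is a well-posed linear \(1\)-Lipschitz map into \(\Gamma_b(\mathbf E)\) satisfying \eqref{eq:Pr_phi_mod1} by means of Proposition \ref{prop:char_Pr_phi_fcts}, and conclude by the uniqueness in Theorem \ref{thm:Pr_phi_mod}. The only difference is that you spell out the well-posedness and continuity details which the paper delegates to the proof of Theorem \ref{thm:pullback_sect_comm}.
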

\begin{proof}
First of all, let us observe that Theorem \ref{thm:pullback_sect_comm} gives
\(\Gamma_b(\varphi^*\mathbf E)={\sf R}\big(\Gamma(\varphi^*\mathbf E)\big)
\cong{\sf R}\big(\varphi^*\Gamma(\mathbf E)\big)\), thus the projection operator
\({\rm Pr}_\varphi\) associated with \(\Gamma(\mathbf E)\) can be seen as a
mapping from \(\Gamma_b(\varphi^*\mathbf E)\) to \(\Gamma_b(\mathbf E)\).
Given any \(t\in\Gamma_b(\varphi^*\mathbf E)\), we define
\(\Phi(t)(y)\coloneqq\int t\,\d\mm_\X^y\in\mathbf E(y)\) for \(\mm_\Y\)-a.e.\ \(y\in\Y\).
By arguing as in the proof of Theorem \ref{thm:pullback_sect_comm}, one can see
that \(\Phi(t)\in\Gamma_b(\mathbf E)\). The resulting map
\(\Phi\colon\Gamma_b(\varphi^*\mathbf E)\to\Gamma_b(\mathbf E)\) is linear and
\(1\)-Lipschitz. For any \(f\in L^\infty(\mm_\X)\) and \(s\in\Gamma_b(\mathbf E)\) we have
\[
\Phi(f\cdot\varphi^*s)(y)=\int f(x)\,s\big(\varphi(x)\big)\,\d\mm_\X^y(x)
=\bigg(\int f\,\d\mm_\X^y\bigg)s(y)\overset{\eqref{eq:char_Pr_phi_fcts_cl}}=
\big({\rm Pr}_\varphi(f)\cdot s\big)(y)
\]
for \(\mm_\Y\)-a.e.\ \(y\in\Y\), thus accordingly \(\Phi={\rm Pr}_\varphi\)
by Theorem \ref{thm:Pr_phi_mod}, which yields the statement.
\end{proof}
\sectionlinetwo{Black}{88} 
\bigskip

\noindent{\bf Acknowledgements.}
The authors would like to thank Nicola Gigli for having suggested the study
of liftings in the setting of normed modules. They would also like to
thank Tapio Rajala for his useful comments on \S\ref{s:sep_Ban_bundle},
as well as Milica Lu\v{c}i\'{c}, Ivana Vojnovi\'{c}, and Daniele Semola
for the careful reading of a preliminary version of the manuscript.
The second and third named authors acknowledge the support by the Academy
of Finland, project number 314789. The second named author also
acknowledges the support by the project 2017TEXA3H ``Gradient flows,
Optimal Transport and Metric Measure Structures'', funded by the Italian
Ministry of Research and University. The third named author also
acknowledges the support by the Balzan project led by Luigi Ambrosio.
\def\cprime{$'$} \def\cprime{$'$}

\end{document}